\documentclass[11pt]{article}
\usepackage{graphicx} % Required for inserting images
\pdfoutput=1
\usepackage[margin=1.0in]{geometry}
\usepackage{xcolor}

\usepackage{amsmath}
\usepackage{amsthm}
\usepackage{mathrsfs}

\newtheorem{theorem}{Theorem}[section] % numbering by section (optional)

\newtheorem{example}{Example}

\newtheorem{remark}{Remark}[section]
\newtheorem{lemma}{Lemma}[section]

\usepackage{algorithm}
\usepackage{algpseudocode}

\DeclareMathOperator\supp{supp}

\usepackage{mathtools} % optional but nice
\numberwithin{equation}{section} % makes numbers like (2.1), (2.2), ...

\usepackage{indentfirst}
\usepackage{titlesec}
\titleformat{\section}
  {\normalfont\Large\bfseries}
  {\thesection.}
  {0.5em}
  {}
\titleformat{\subsection}
  {\normalfont\large\bfseries}
  {\thesubsection.}
  {0.5em}
  {}

\graphicspath{{./}}
\usepackage{float}
\usepackage{capt-of}
\usepackage{diagbox}
\usepackage{subcaption}

\usepackage{amssymb,latexsym}

\usepackage{setspace}

\usepackage{amsmath}

\usepackage{mathrsfs,amsfonts}
\usepackage{bbm}

\usepackage{amsthm,amsxtra}

\usepackage[final]{showkeys}

\usepackage{stmaryrd}

\usepackage{algorithm}
\usepackage{algorithmicx}
\usepackage{algpseudocode}
\usepackage{etaremune}
\usepackage{enumitem} 

\usepackage[hidelinks]{hyperref}

\usepackage{graphicx} % Required for inserting images
\usepackage{cite}  % Optional for better citation formatting
\newcommand{\meas}{u^\infty_{\text{meas}}}

\newcommand{\Jacobian}{\mathcal{J}}

\newcommand{\floor}[1]{\lfloor {#1} \rfloor}

\title{Iterative Truncated SVD  for Inverse Scattering Problems} %Based on Information Content of the Forward Operator} %with Application to Inverse Scattering Problems}
\author{Zhinan Han \and Ying Liang \and Hongkai Zhao}
\date{}  % Leave empty to remove the date

\begin{document}

\maketitle

\begin{abstract}
We propose an iterative truncated SVD (iTSVD) algorithm based on a nonlinear least squares formulation for Newton's method for inverse scattering problems. The key idea is to estimate the effective rank of the corresponding linearized operator (or the Jacobian matrix after discretization), which defines the dimension of the signal space of the associated forward operator. Once the effective rank is estimated, truncated SVD (TSVD) reconstruction in the signal space can be computed efficiently based on fast random SVD (RSVD) -- only requiring matrix-vector multiplication which can be implemented in a matrix-free formulation by solving multiple forward and adjoint problems of the same type. Our analysis and numerical experiments show that iTSVD can markedly improve computational efficiency by reducing the number of iterations and enhance stability by physics/information based inversion. Moreover, the effective rank estimate is naturally model- and data-based. Extensive experiments, including far- or near-field, limited aperture, and random measurement data without or with noise, are presented.   
\end{abstract}

\section{Introduction}
 Due to ubiquitous applications of inverse problems in science, engineering, technology, and everyday life, they have attracted a lot of research from both mathematical and computational perspectives in the past few decades. Various formulations, computational approaches, and numerical algorithms have been proposed, developed, and studied to deal with the key challenges due to ill-posedness, nonlinearity, and the large scale of the underlying inverse problem to achieve a desired balance of stability vs. accuracy and overall computational efficiency. In real practice, data-driven and problem-specific approaches are crucial in achieving these goals. 

Most popular and generic approaches are based on optimization by designing a cost functional (in continuous formulation) or cost function (in discrete version) without or with constraints that include data fidelity, forward models, and possible regularization and/or prior information. Then various computational methods based on different optimization techniques and fast algorithms are applied to solve the optimization problem. A general inverse problem often requires solving a large-scale nonconvex optimization problem after discretization. Hence, first-order gradient-based methods, such as adjoint methods, are the popular choices instead of second-order methods, for which forming the Hessian matrix and computing its (pseudo) inverse poses serious computational challenges. However, intrinsic ill-posedness of the underlying inverse problem, overparameterization due to oversampling in numerical discretization,  and/or insufficient measurement data will commonly lead to ill-conditioning of the corresponding optimization problems, which will result in slow convergence and hence inefficiency for gradient-based methods. In this work, we propose an effective Newton's method in least-squares formulation, iterative truncated singular value decomposition (iTSVD), based on an effective rank estimate and randomized SVD algorithms to solve an inverse problem.

The effectiveness of iTSVD is based on the following observations.
First, the linearized operator is often a compact operator, e.g., an integral operator the kernel of which is related to the Green's function of the underlying forward problem. Often, the probing sources and measurements are exterior to the domain of interest, which means no singularity for the integral kernel. On the one hand, this is the cause of the ill-posedness of the inverse problem. On the other hand, it may render an approximate low-rank structure that one can take advantage of when solving the linearized problem. In particular, randomized SVD provides an ideal computational tool that takes advantage of the underlying low-rank structure to compute a TSVD solution of the linearized problem with computational efficiency, accuracy, and stability.
Compared to gradient descent-based updates along a single compromised direction, the TSVD solution provides a properly regularized simultaneous update in all significant directions, which span the signal space of the linearized operator, in one step.  
As an important computational note, RSVD only involves matrix-vector multiplications. This may lead to a matrix-free implementation for the RSVD to compute the TSVD solution to the linearized problem, which is crucial for the computational feasibility of using Newton's method in practice.  
%In our case, as we will show in Section~\ref{sec: lstsq}, the application of the linearized operator can be transformed to solving a fixed linearized problem with the current state of parameters/coefficients but different forcing terms, i.e., a set of linear systems with the same matrix but different right-hand sides in computation.
  
%The key point of the method is to have a good characterization and estimate of the effective rank of the linearized operator based on the forward problem and measurement data. 
Hence, the key idea of iTSVD is to exploit an approximate low-rank structure determined together by the forward model and measurement data and obtain an effective rank estimate for the linearized operator. It provides both the basis for fast RSVD algorithms and the appropriate regularization threshold for TSVD solution. As a result, iTSVD accelerates the convergence and reduces the computational cost significantly while improving the stability due to the use of Newton's method, fast RSVD algorithms, and a properly regularized inversion. iTSVD is also naturally data-driven and model-based through an effective rank estimate. In this work, we focus on wave-related inverse problems, e.g., inverse medium scattering.

The inverse medium scattering problem concerns the recovery of spatially varying material properties, such as the refractive index or dielectric permittivity, from measured wave field data resulting from the interaction of incident waves with a heterogeneous medium. Although the forward scattering problem is linear and well-posed, where the wave field is computed for a known medium, the inverse problem is inherently nonlinear and ill-posed. This class of problems plays a critical role in a wide range of applications, including medical imaging \cite{inverse_imaging,medical_imaging}, geophysical exploration \cite{geophysics}, and nondestructive testing \cite{nondestructive_testing}. Despite recent advances in regularization theory, optimization-based methods, and machine learning, fundamental difficulties remain, rooted in the following physical, mathematical, and computational features of the problem:
\begin{enumerate}
    \item While interactions between the incident waves and inhomogeneity can encode significant medium information in the measurements, they may lead to strong nonlinearity in the inverse problem, the strength of which is proportional to the product of the squared wavenumber/frequency and the medium contrast. This reality is the ultimate physical limit to practical stability and resolution for wave-based inverse problems. A good initial guess and/or prior information combined with effective iterative reconstruction can alleviate the difficulty.     
    \item  Accurate simulation of wave fields, which requires a mesh resolving the wavelength and a numerical algorithm that captures wave propagation and scattering correctly and effectively,  poses computational challenges that grow dramatically with the ratio between the domain size and the wavelength, and spatial dimensions. 
    \item Ill-posedness of the inverse problem leads to ill-conditioning and instability for inversion algorithms, which can be further exacerbated by sparse measurements, e.g., due to limited aperture, and noisy data. Desirable approaches should be adaptive to the forward model, measurement configuration, and data noise.
\end{enumerate}
Addressing these challenges is essential for developing efficient, robust, and accurate inversion techniques for complex media. 

 Most often, inverse problems are cast into an optimization problem. 
 % For wave-based inverse problems, the wavelength, which determines the scale of resolution, is usually a fraction of the size of the domain or the object of interest for a desirable resolution.
 For wave-based inverse problems, achieving a desirable resolution typically requires the wavelengths to be small relative to the size of the domain or object of interest. Hence, for the nonlinear inverse problem, a sufficiently fine mesh is required to resolve the wavelength for faithful numerical simulations, leading to a large-scale non-convex optimization problem after discretization. For such problems, gradient-based methods, e.g., the adjoint method, are usually the favorable approach. However, the optimization problem is often ill-conditioned due to the ill-posedness of the underlying inverse problem, oversampled numerical discretization -- requiring a mesh size that is a fraction of the wavelength, and potentially incomplete data. The ill-conditioning of the optimization problem results in slow convergence for first-order gradient-based methods and, hence, requires many iterations in practice.  
 Moreover, each iteration involves solving many forward problems, e.g., computing the residual (measurement mismatch) and backpropagation, which is typically the most computationally expensive part. Hence, an effective iterative method is crucial for computational efficiency.  

 Here we propose to use  Newton's method in the least-squares formulation. The main computational challenge comes from solving the linearized problem. In this work, we show that solving the linearized problem by TSVD is computationally effective and stable by exploiting the spectral property induced by the physical and mathematical properties of the underlying wave scattering operator. The key ingredients are:
 \begin{itemize}
\item 
Estimate of the effective rank of the linearized operator (in continuous space) or the Jacobian matrix (in discrete space). For inverse scattering problems, the linearized operator behaves like a low-pass filter. There is a physical wavelength-induced spectral gap or threshold that naturally defines the signal space. For the far-field scattering pattern, the dimension of the signal space can be related to the effective dimension of the space of “essentially” space and band-limited functions, i.e., the Shannon number~\cite{Shannon_number_estimate}.

\item
Fast SVD algorithm based on the estimate of effective rank. It leads to efficient and stable computation of the TSVD solution of the linearized problem in the signal space. In particular, RSVD only requires the application of the matrix to a set of random vectors whose cardinality is comparable to the effective rank. More importantly, these matrix-vector multiplications can be implemented in a matrix-free formulation which only requires to solve the same forward problem with different forcing terms. 
\end{itemize}

In summary, the TSVD solution based on the effective rank estimate and the corresponding signal space -- determined by the underlying forward model and the measurement data -- provides the optimal (in the least squares sense) linearized reconstruction one can compute stably at the current iteration in one step. In other words, it reduces the need for frequent and incremental updates of the medium -- each update requires solving many forward problems.

We also show an interesting connection to information or function approximation perspectives when far-field scattering data is properly transformed to the Fourier domain. The transform leads to a weighted least squares formulation that has no frequency bias. The effective rank is also related to finding and representing signals/functions that are optimally concentrated in both the spatial and Fourier domains. The effective rank can be estimated by the Landau-Pollak-Slepian "Shannon number" introduced in\cite{Simons2015, Landau_Pollak}. We provide numerical approximations of the "Shannon number" in more general setups and for discrete data. 

%Our numerical results show that iTSVD not only significantly reduces the number of iterations but also enhance the stability markedly compared to gradient based methods. Extensive tests on far-field and near-field data, limited aperture data, sparse random measurements, noisy data.

Another important factor to achieve the overall stability of Newton's method is a good initial guess. For the wave scattering problem, one can take the frequency continuation approach: start from more stable and faster reconstruction at a coarse scale with lower frequency data and use it as the initial guess for reconstruction at a finer scale with higher frequency data as proposed in \cite{Bao_example}. We show the effectiveness of iTSVD by using larger frequency leaps compared to gradient-based methods. 

We use extensive numerical experiments in different setups, including far-field data, near-field data, limited aperture data, sparse random measurements, and noisy data, to demonstrate that iTSVD not only significantly reduces the number of iterations but also enhances the stability markedly compared to gradient methods because the TSVD solution is adaptive to the model and the data. 
 
 Our work is most related to the recent article\cite{zhou2024exploringlowrankstructureinverse}, in which fast linear reconstruction around a homogeneous background is obtained using explicit eigenfunctions and eigenvalues for the far-field scattering operator for a homogeneous medium with full aperture data, computed based on the disk spheroidal wave functions and connection to a Sturm-Liouville operator.  In contrast, iTSVD is an iterative method for nonlinear reconstruction that utilizes effective rank estimates and efficient RSVD algorithms to compute the singular values and singular vectors of the linearized operator corresponding to a heterogeneous medium in general setups. 
 In earlier work \cite{Greengard}, recursive linearization and frequency continuation were used to solve two-dimensional inverse scattering problems. Each linearized problem is solved through a normal matrix system corresponding to a least-squares formulation projected to a fixed bandwidth of Fourier modes. In other words, the solution to each linearized problem is low-pass filtered using Fourier modes. In contrast, iTSVD solves each linearized system through truncated SVD, i.e., low-pass filtered using the true leading singular modes of the linearized operator. Moreover, the matrix is never explicitly formed for each linearized problem in iTSVD.
 % However, the ill-posedness and non-convexity of the optimization problem requires the successive frequency marching and many iterations at each frequency to obtain certain accuracy.
%In this paper, we explore a consistent low rank structure of the far-field operator so that the number of iterations per frequency is massively reduced, and only data at high frequency is needed. (\textcolor{red}{This is a very dangerous statement, need to try higher frequency with some contrast containing low to high frequency})

% 

Here is the organization of this paper. We set up the mathematical formulation for the inverse scattering problem in Section ~\ref{sec:formulation}.   Then, we establish spectral analysis of the forward scattering operator, its connection to the band-limited Fourier transform, and an effective rank estimate in Section ~\ref{sec:spectral}. The iTSVD algorithm is provided in Section ~\ref{sec: algorithm}, and its convergence analysis is in Section ~\ref{sec: analysis}. We present implementation details, complexity analysis, and extensive numerical experiments in Section ~\ref{sec: numerical}. Finally, Section ~\ref{sec: conclusion} is on conclusion remarks and future work.

\section{Forward and  Inverse Scattering Problems}
\label{sec:formulation}
In this section, we introduce the forward medium scattering problems and the least-squares formulations of Newton's method for the inverse problems using far-field and near-field measurements.
\subsection{Forward Scattering Problem}
\label{sec:forward}
Let the domain of interest $\Omega \subset \mathbb{R}^2$ be bounded with Lipschitz boundary $\partial \Omega$. 
The real-valued contrast function $q(x) \in L^\infty(\mathbb{R}^d)$ satisfies $q > -1$ in 
$\Omega$ and $q = 0$ outside $\Omega$. Let $k > 0$ be the wavenumber and denote the incident wave by $u^{in}(x)$, which satisfies the Helmholtz equation in the homogeneous medium. Let $u^t(x)=u^s(x)+u^{in}(x)$ be the total field, where $u^s(x)$ is the scattered field. The forward scattering problem is to find the total field  $u^t(x) \in H^1_{\mathrm{loc}}(\mathbb{R}^2)$ given $u^{in}(x)$ and the medium coefficient $q(x)$ by solving the Helmholtz equation
\begin{equation}\label{eq:total-field}
    \Delta u^t(x) + k^2\bigl(1+q(x)\bigr)\,u^t(x) = 0 
    \quad x\in \Omega,
\end{equation}
 with the scattered field $u^s$ satisfying the Sommerfeld radiation condition
\begin{equation}\label{eq:sommerfeld}
    \lim_{r:=|x|\to\infty} r^{\frac{1}{2}}
    \left(\frac{\partial u^s(x)}{\partial r} - ik\,u^s(x)\right) = 0,
\end{equation}
uniformly with respect to all directions $x/|x|$. In particular, the scattered field satisfies
\begin{equation}\label{eq:scattered-pde}
    \Delta u^s(x) + k^2(1+q(x))\,u^s = -k^2 q(x) u^{in}(x) \quad x\in \Omega,
\end{equation}
or equivalently, the Lippmann-Schwinger integral equation\cite{kirsch1996introduction}, 
\begin{equation}\label{eq:LS-plane}
\begin{array}{lc}
%& u^s(x) - k^2\!\int_{\Omega} %G_k(x,y)\,q(y)\,u^s(y)\,dy = k^2\!\int_{\Omega} %G_k(x,y)\,q(y)\,u^{in}(y)\,dy, \\
%\mbox{or} &
u^s(x) = k^2\!\int_{\Omega} G_k(x,y)\,q(y)\,u^t(y)\,dy,
\end{array}
\end{equation}
where $G_k(x,y)$ is the fundamental solution to the free-space Helmholtz equation. In the weak scattering regime, the linearized Born approximation is obtained by replacing the total field $u^t$ in the integral by the incident field $u^{in}$,
\begin{equation}\label{eq:Born}
u^s(x) = k^2\!\int_{\Omega} G_k(x,y)\,q(y)\,u^{in}(y)\,dy.   
\end{equation}

To formulate the forward maps used in the inverse problem, we consider two measurement configurations: far-field measurements generated by a plane wave, and near-field measurements generated by a point source. For each configuration, we specify the incident field and the corresponding observation of the scattered field.

\textbf{Far-Field Case.} Let the incident wave be a plane wave with propagation direction 
$\hat{\theta} \in \mathbb{S} := \{x \in \mathbb{R}^2 : |x| = 1\}$,
\begin{equation}\label{eq:incident}
    u^{in}(x;\hat{\theta}) := e^{ikx\cdot\hat{\theta}}, \quad x \in \mathbb{R}^d.
\end{equation}
Since $|x-y| = r - \hat{x} \cdot y + O(\frac{1}{r})$ as $r=|x| \rightarrow \infty$, where $\hat{x} := \frac{x}{|x|}$, $G_k(x,y) = \frac{e^{i\frac{\pi}{4}}}{\sqrt{8k\pi}}\frac{e^{i k r}}{\sqrt{r}} e^{-ik\hat{x}\cdot y} + o(r^{-\frac{1}{2}})$,
% \[\Rightarrow G_k(x,y) \sim \frac{e^{i\frac{\pi}{4}}}{\sqrt{8k\pi}} \frac{e^{ikr}}{\sqrt{r}} e^{-i k \hat{x} \cdot y}\]
from the integral equation~\eqref{eq:LS-plane}, the scattered field has an asymptotic behavior
\[
u^s(x;\hat{\theta}) = \frac{e^{ikr}}{\sqrt{r}} u_o^{\infty} (\hat{x};\hat{\theta}) + O(r^{-\frac{3}{2}}),
\] 
and the integral relation becomes
\[
u_o^{\infty} (\hat{x};\hat{\theta}) - k^2\int_{\Omega} \frac{e^{i\frac{\pi}{4}}}{\sqrt{8k\pi}}\, e^{-i k \hat{x} \cdot y}\,q(y)\,u^s(y;\hat{\theta})\,dy = k^2\int_{\Omega} \frac{e^{i\frac{\pi}{4}}}{\sqrt{8k\pi}}\,e^{-i k \hat{x} \cdot y}\,q(y)\,e^{i k y\cdot \hat{\theta}}\,dy.
\] 
For convenience,
we introduce the rescaled far-field pattern  $u^\infty(\hat{x};\hat{\theta}) := \frac{1}{k^2} \sqrt{8k\pi}e^{-i\frac{\pi}{4}}u_o^\infty(\hat{x};\hat{\theta})$, 
then we can define the far-field operator $\mathcal{F}$ that maps $q$ to the scaled far-field measurement $u^\infty$:
\begin{equation}\label{eq:farfield-compact}
u^\infty(\hat{x};\hat{\theta})= \mathcal{F}q(\hat{x};\hat{\theta}) := \int_{\Omega} e^{-ik\hat{x}\cdot y}\,q(y)\,u^{t}(y;\hat{\theta})\,dy .
\end{equation}

\textbf{Near-Field Case.}
The incident wave corresponding to a point source located at $z\in \mathbb{R}^d$ ($d=2,3$) outside $\Omega$ is given by
\begin{equation}\label{eq:point-source}
    u^{in}(y;z)=G_k(y,z), \quad y\in \mathbb{R}^d.
\end{equation}
The measurement is taken at $x\in \mathbb{R}^d$ outside $\Omega$. Hence, the near-field operator $\mathcal{F}$ that maps $q$ to near-field measurement corresponding to source-observation pair $(x;z)$ is
\begin{equation}\label{eq:near-field}
    u^s(x;z) = \mathcal{F}q(x; z) := k^2\!\int_{\Omega} G_k(x,y)\,q(y)\,u^t(y;z)\,dy,
\end{equation}
where $u^t(y;z)$ is the total field corresponding to $u^{in}(y;z)=G_k(y,z)$.

\begin{remark}
In general, the incident wave can be a distribution or superposition of point sources or plane waves. 
\end{remark}

\subsection{Newton's method in least squares formulation for the inverse problem} \label{sec: lstsq}
The inverse scattering problem is to recover $q$ from the source-to-measurement operator, $\mathcal{F}$, e.g., the far-field operator~\eqref{eq:farfield-compact}, near-field operator~\eqref{eq:near-field}, or some combinations. Although the forward scattering problem is linear (in terms of $u$), the inverse problem is nonlinear (in terms of $q$) since $u$ depends on $q$ (globally). Moreover, the forward problem is well-posed while the inverse problem is ill-posed. Due to model error, measurement noise, or other uncertainties, the measurement data cannot be exactly matched in practice. The most general approach is to formulate the inverse problem as an optimization problem that minimizes the data mismatch in some norm. One can also view each measurement as an equation for the unknown and hence the inverse problem is to solve a collection of nonlinear equations concurrently in some optimal way. Below, we propose Newton's method in a least-squares formulation for inverse scattering problems. 

\textbf{Far-Field Case.}
With the far-field measurement in \eqref{eq:farfield-compact}, the inverse scattering problem can be formulated as: given an initial guess of the medium, $q_0(x)$, we look for an update, $\delta q(x)$, that satisfies the non-linear relation in:
\begin{equation}\label{eq:min-far-field}
\min_{\delta q}\Bigl\|\meas
- \mathcal{F}(q_0 + \delta q)\Bigr\|^2_{L^2}.
\end{equation}
% \begin{equation}\label{eq:min-far-field}
% \min_{\delta q}\Bigl\|\meas
% - \mathcal{F}(q_0 + \delta q)\Bigr\|_{L^2(\mathbb{S} \times \mathbb{S},\mathbb{C})}.
% \end{equation}
Here we prefer to be vague about the $L^2$ norm in the measurement space, the choice of which could result in undesirable or desirable bias, e.g., regularization, in practice as we will discuss in Section \ref{sec:spectral}.
% For example, a straightforward choice with equal weight for all measurement data in discrete form is
% \begin{equation}
%     \min_{\delta q}\frac{1}{N^2}\sum_{i=1}^{N}\sum_{j=1}^N \Bigl|u_{meas}^{\infty}(\hat{x}_i,\hat{\theta}_j)
% - \mathcal{F}(q_0 + \delta q)(\hat{x}_i,\hat{\theta}_j)\Bigr|^2.
% \end{equation}
% For full aperture data and evenly distributed directions, this discrete least squares formulation converges to 
% $L^2(\mathbb{S} \times \mathbb{S},\mathbb{C})$ in 2D as $N\rightarrow \infty$. As we will see in the next section, this $L^2$ norm will introduce some frequency bias, which can be corrected with appropriate weighting. \ylfn{Do we need to mention frequency bias here? We can't properly define or explain it yet.}

Define $F: q \mapsto u_q^t$ as the map from $q$ to the total field that satisfies \eqref{eq:total-field} and \eqref{eq:sommerfeld}.
It then follows \eqref{eq:farfield-compact} that \eqref{eq:min-far-field} gives
% \[\min_{\delta q}\Bigl\|u^{\infty}(\hat{x};\hat{\theta})
% - \int_{\Omega} e^{-i k\hat{x}\cdot y}\cdot\,u_{q_0+\delta q}^{t}(y;\hat{\theta})\cdot\,(q_0+\delta q)\,dy\Bigr\|_{L^2(\mathbb{S} \times \mathbb{S},\mathbb{C})}\]
% \begin{equation*}\label{eq:min3}
% \min_{\delta q}\Bigl\|\int_{\Omega} e^{-ik\hat{x}\cdot y}\Big(F(q)\cdot\,q
% - F(q_0+\delta q) \cdot\,(q_0+\delta q)\Big) (y;\hat{\theta})\,dy\Bigr\|_{L^2(\mathbb{S} \times \mathbb{S},\mathbb{C})}.
% \end{equation*}

% \begin{equation}\label{eq:min2}
% = \min_{\delta q}\Bigl\|u^{\infty}(\hat{x};\hat{\theta})
% - \int_{\Omega} e^{-i k\hat{x}\cdot y}\cdot\,F(q_0+\delta q)(y;\hat{\theta})\cdot\,(q_0+\delta q)\,dy\Bigr\|_{L^2(\mathbb{S} \times \mathbb{S},\mathbb{C})}.
% \end{equation}
% Plug in \eqref{eq:farfield-compact}, we get
\begin{equation}\label{eq:min3}
\min_{\delta q}\Bigl\|\meas - \int_{\Omega} e^{-ik\hat{x}\cdot y}\Big(F(q_0+\delta q) \cdot\,(q_0+\delta q)\Big) (y;\hat{\theta})\,dy\Bigr\|^2_{L^2}.
\end{equation}
% \ylfn{notations in the eq above need double check ($q$ does not depend on $\hat{\theta}$}
Using the 1st-order functional Taylor expansion,
\begin{align*}
F(q_0+\delta q) \cdot (q_0 + \delta q)
% = &F(q)\cdot q - F(q_0)\cdot (q_0 + \delta q) - F'(q_0)[\delta q] \cdot (q_0+\delta q) + O(\|\delta q\|^2)\\
= F(q_0)\cdot q_0 + (F(q_0) \cdot \delta q + F'(q_0)[\delta q] \cdot q_0) + O(\|\delta q\|^2),
\end{align*}
where $F'(q_0)$ denotes the Fr\'{e}chet derivative of $F$ at $q_0$. Define the linear operator
% Define the data mismatch $\delta\tilde{u}^{\infty}(\hat{x};\hat{\theta}) := \meas(\hat{x};\hat{\theta}) - \mathfrak{F}q_0(\hat{x};\hat{\theta})$
% \ylfn{Please add a line to explain how this Taylor expansion is related to Eq. 3.2. Shall eq 3.2 LHS start from 3.1? Also we need to define $F'$ delete line 2}
% \begin{multline}\label{eq:linearized-eq}
% u^{\infty}(\hat{x};\hat{\theta})
% -\int_{\Omega} e^{-ik\hat{x}\cdot y}\cdot\,F(q_0)(y;\hat{\theta})\cdot\,q_0(y)\,dy \\
% = \int_{\Omega} e^{-ik\hat{x}\cdot y}\,\big(F(q_0)(y;\hat{\theta}) \cdot\delta q(y)
% + F'(q_0)[\delta q] \cdot q_0(y)\big)\,dy + O(\|\delta q \|^2).
% \end{multline}
% \begin{equation}\label{eq:linearized-eq}
% \delta\tilde{u}^{\infty}(\hat{x};\hat{\theta}) 
% = \int_{\Omega} e^{-ik\hat{x}\cdot y}\,\big(F(q_0)(y;\hat{\theta}) \cdot\delta q(y)
% + F'(q_0)[\delta q] \cdot q_0(y)\big)\,dy + O(\|\delta q \|^2).
% \end{equation}
% the linearized operator around $q_0$ is
\begin{equation}\label{eq:linearized-operator}
\Jacobian_{q_0}\delta q
:= \int_{\Omega} e^{-ik\hat{x}\cdot y}\,\big(F(q_0)(y;\hat{\theta}) \cdot\delta q(y)
+ F'(q_0)[\delta q] \cdot q_0(y)\big)\,dy ,
\end{equation}
which maps a medium contrast to the far-field data. Then the linearized problem in each step of Newton's method for \eqref{eq:min3} can be formulated as a linear least squares problem:
\begin{equation}\label{eq:min-angle}
\min_{\delta q}\Bigl\| \delta\tilde{u}^{\infty}(\hat{x};\hat{\theta}) - \Jacobian_{q_0}\delta q\Bigr\|^2_{L^2} ,
\end{equation}
where $\delta\tilde{u}^{\infty}(\hat{x};\hat{\theta}) := \meas(\hat{x};\hat{\theta}) - \mathcal{F}q_0(\hat{x};\hat{\theta})$ is the residual and $\mathcal{F}$ is the far-field operator~\eqref{eq:farfield-compact}.
% In practice, $\Jacobian_{q_0}$ can be formulated by solving a corresponding adjoint problem without computing the Fr\'{e}chet derivative $F'(q_0)$.

To obtain a representation of $\mathcal{J}_{q_0}$ in \eqref{eq:min-angle} that avoids explicitly computing the Fréchet derivative \(F'(q_0)\), we introduce the linearized field and use an adjoint formulation. Let $\delta u = F'(q_0)[\delta q]$, then $\delta u$ satisfies
\begin{equation}\label{eq:lin-PDE}
\Delta \delta u + k^2(1+q_0)\delta u = -k^2 \delta q u^t_{q_0}
\end{equation}
with the Sommerfeld radiation condition \eqref{eq:sommerfeld}.
For the second term in \eqref{eq:linearized-operator},
\begin{equation}\label{eq:adj-equivalence}
   \int_{\Omega} e^{-ik\hat{x}\cdot y}\cdot F'(q_0)[\delta q]  \cdot q_0(y)\,dy = \int_{\Omega} e^{-ik\hat{x}\cdot y}\,\delta u(y;\hat{\theta})  \, q_0(y)\,dy =
\int_\Omega 
\lambda_{q_0}^s(y;\hat{x})
\,u_{q_0}^t(y;\hat{\theta})
\,\delta q(y)\,dy, 
\end{equation}
% \[ \int_{\Omega} e^{-ik\hat{x}\cdot y}\cdot F'(q_0)[\delta q]  \cdot q_0(y)\,dy = \int_{\Omega} e^{-ik\hat{x}\cdot y}\,\delta u(y;\hat{\theta})  \, q_0(y)\,dy =
% \int_\Omega 
% \lambda_{q_0}^s(y;\hat{x})
% \,u_{q_0}^t(y;\hat{\theta})
% \,\delta q(y)\,dy, 
% \]
where $\lambda_{q_0}^s(y;\hat{x})$ satisfies the adjoint problem
% Let \(\psi(y;\hat{x})\) solve the adjoint equation
% \[
% A_0^T\psi=b,
% \]
% and define $\lambda(y;\hat{x})=k^2\mathcal G^T\psi(y;\hat{x})$ which satisfies
\begin{equation}\label{eq:adj-PDE}
\Delta_y \lambda_{q_0}^s(y;\hat{x}) + k^2(1+q_0)\lambda_{q_0}^s(y;\hat{x}) = -k^2q_0(y)e^{-ik\hat{x}\cdot y}
\end{equation}
and the Sommerfeld radiation
condition \eqref{eq:sommerfeld}. Hence, define the adjoint total field $\lambda_{q_0}^t(y;\hat{x}) := e^{-ik\hat{x} \cdot y} + \lambda_{q_0}^s(y;\hat{x}) $ and we can write $\Jacobian_{q_0}$ as
\begin{equation}\label{eq:operator-adj}
\Jacobian_{q_0}\delta q = \int_\Omega 
\lambda^t_{q_0}(y;\hat{x})
\,u_{q_0}^t(y;\hat{\theta})
\,\delta q(y)\,dy.
\end{equation}
This formulation allows one to apply $\Jacobian_{q_0}$ to a function by solving the forward and adjoint problem without computing the Fr\'{e}chet derivative $F'(q_0)$ explicitly. We leave the derivation in the appendix and refer to \cite{Bao_example} for detailed analysis. More importantly, it provides a way to apply the Jacobian matrix to a vector without explicitly forming the matrix in numerical computation, which is crucial in applying RSVD to the Jacobian matrix. 

When $q_0 = 0$, $\Jacobian_{q_0}$ is reduced to
\begin{equation}\label{eq:linearized-operator_0}
\Jacobian_{0}\delta q := \int_{\Omega} e^{-i k\hat{x}\cdot y}\, e^{iky\cdot\hat{\theta}} \,\delta q(y)\,dy ,
\end{equation} 
which is known as the linear Born approximation, and the corresponding Born far-field data
\begin{equation}\label{eq:Born data}
u_b^{\infty}(\hat{x};\hat{\theta};k) := \int_{\Omega} e^{-i k\hat{x}\cdot y}\, e^{iky\cdot\hat{\theta}} \, q(y)\,dy
\end{equation}
leads to a linear inverse scattering problem.
% In the next section, we will link it to the restricted Fourier transform and show its spectral property.

\textbf{Near-Field Case.}
Following the same derivation, the linearized operator for near-field data is exactly of the same form as~\eqref{eq:operator-adj} with $k^2$ scaling except that $u^t_{q_0}(y;z)$ and $\lambda^t_{q_0}(y;x)$ are the total fields (with respect to medium $q_0$) corresponding to incident waves from point sources located at $z$ and $x$, i.e.,  $G_k(y,z)$ and $G_k(y,x)$, respectively.
% \textcolor{cyan}{Better present the linear inverse problems corresponding to 2.13, then 2.16 and 2.17 explicitly.}

\section{Spectral Behavior}\label{sec:spectral}
Truncated SVD (TSVD) is a typical choice to solve the linear least squares problem~\eqref{eq:min-angle}. Hence, it is important to understand the spectral behavior of the linear operator $\Jacobian_{q_0}$. In this section, we study the spectral behaviors of the linearized operators in each Newton iteration, and illustrate the low-rank structure across different measurement configurations.

\subsection{\texorpdfstring{$p$}{p}-transform and reweighted least-squares formulation for far-field case}
\label{sec:far-field data}

%Innovated by the spectrum of the restricted Fourier transform which connects to $\Jacobian_0$, in the section we study the spectral behavior of $\Jacobian_{q_0}$. 
% \ylfn{How to justify why we need to do the p-transform and why balance in $p$ space is important? Starting from \eqref{eq:linearized-operator_0},  this $p$ is in the position $e^{i 2kp\cdot x}$ thus:  1. it is convenient to introduce this change of variable and 2. it is naturally a frequency variable ( for $\delta q$)  (when $q_0 = 0$), which is why we say frequency bias in the space of $q$ later. But moving to \eqref{eq:operator-adj}, it is not clear that $p$ is a frequency variable (is it?), thus it is something we propose and then verify with numerical results.}
Before studying the spectrum of the forward operator \eqref{eq:linearized-operator} for the general medium scattering problem, we first study the problem in the weak scattered regime, which motivates the TSVD algorithm. Recall that in the weak scattered regime, the least-squares problem corresponding to the Born far-field data \eqref{eq:Born data} is based on the linear operator $\Jacobian_{0}$ in \eqref{eq:linearized-operator_0}. We introduce a map $P$ to represent the change of variables
\begin{equation}\label{eq:p transform}
    P:\mathbb{S} \times \mathbb{S} \mapsto B,\quad p=P(\hat{x},\hat{\theta}) := \frac{\hat{\theta} - \hat{x}}{2},
\end{equation}
% $$
% P:\mathbb{S} \times \mathbb{S} \mapsto B(0,1) ,\quad p=P(\hat{x},\hat{\theta}) := \frac{\hat{\theta} - \hat{x}}{2},$$
%an observation on the right-hand side of \eqref{eq:linearized-operator_0} links the Born model to the restricted Fourier transform by a change of variable $p = \frac{\hat{\theta}-\hat{x}}{2}$ which maps from $\mathbb{S} \times \mathbb{S}$ to $B(0,1) =: B$.
where $B:=B(0,1)\subset \mathbb{R}^2$ is the unit ball centered at the origin, and we refer to the mapping as the $p$-transform.
Then, the linear operator $\Jacobian_{0}$  becomes the restricted Fourier transform of $q$ (since $\supp q \subset \Omega$):
\begin{equation}\label{eq:J0}
\Jacobian_{0} q (\hat{x};\hat{\theta}) = \int_{\Omega} e^{-i k\hat{x}\cdot y}\, e^{iky\cdot\hat{\theta}} \,q(y)\,dy = \widehat{q}(2kp):=\hat{\Jacobian_0}q(p).
\end{equation}
% After applying the $p$-transform
%\[(\hat{x},\hat{\theta}) \mapsto \frac{\hat{\theta} - \hat{x}}{2}.\] 

Next we consider the least-squares formulation under the $p$-transform. We first present our study in 2D. The situation is similar in 3D and the difference is explained in Remark~\ref{remark:3D}.
% \textcolor{blue}{Previous line:In order to have bijection between $B$ and $\mathbb{S} \times \mathbb{S}$, we consider the pair where $\det(\hat{\theta},\hat{x}) = \hat{\theta}_1\hat{x}_2 - \hat{\theta}_2\hat{x}_1 \ge 0$ and $\hat{x} \neq \hat{\theta}$.}

If we restrict the domain of the $p$-transform to $D = \{ (\hat{x}, \hat{\theta})\in  \mathbb{S} \times \mathbb{S}:\det(\hat{\theta},\hat{x}) = \hat{\theta}_1\hat{x}_2 - \hat{\theta}_2\hat{x}_1 \ge 0,\hat{x} \neq \hat{\theta}\}$, then the $p$-transform and its inverse exist, i.e., for any $p \in B\setminus \{0\}$, there exists a unique pair $(\hat{x}(p),\hat{\theta}(p))=P^{-1}(p) \in D$ such that
%\begin{equation}\label{eq:p transform}
$p = \frac{\hat{\theta}(p) - \hat{x}(p)}{2}$.
%\end{equation}
% Under the $p$-transform, the Born far-field data for $q(x)$
% \begin{equation}\label{eq:J0}
% \Jacobian_{0} q = \int_{\Omega} e^{-i k\hat{x}\cdot y}\, e^{iky\cdot\hat{\theta}} \,q(y)\,dy =\widehat{q}(2kp):=\hat{\Jacobian_0}q.
% \end{equation}
Moreover, the product measure \(ds_{\hat{x}} \, ds_{\hat{\theta}}\) on $D$ pushes forward to the $p$-measure $\frac{2}{|p|\sqrt{1-|p|^2}}dp$ on \(B\). More precisely 
% \[
% (p)_\#(ds_x\,ds_\theta)
% =
% \frac{2}{|p|\sqrt{1-|p|^2}}\,dp,
% \qquad 0<|p|<1.
% \]
for any integrable function $f$ on $D$,
% \[
% \int_{\mathbb{S}}\int_{\mathbb{S}} f(\frac{\hat{\theta}-\hat{x}}{2})ds_{\hat{x}}ds_{\hat{\theta}} = \int_{B} f(p) \frac{2}{|p|\sqrt{1-|p|^2}}dp.
% \]
\[
\int_D f(\hat{x}, \hat{\theta})ds_{\hat{x}}ds_{\hat{\theta}} = \int_{B} f(\hat{x}(p), \hat{\theta}(p)) \frac{2}{|p|\sqrt{1-|p|^2}}dp.
\]
\begin{remark} \label{rmk:reciprocity}
    Note that for $\forall p \in B\setminus \{0\}$, there exists 2 pairs $\Big(\hat{x}(p),\hat{\theta}(p)\Big)$ and $\Big(-\hat{\theta}(p),-\hat{x}(p)\Big)$ that corresponds to the same $p$ by $p$-transform. However, there is no inconsistency issue in using all measurement data due to the reciprocity principle \cite{kirsch1996introduction},
\begin{equation*}\label{eq:reciprocity}
    u^\infty(\hat{x},\hat{\theta}) = u^\infty(-\hat{\theta},-\hat{x})\quad for \,\, all \,\,\hat{x}, \hat{\theta} \in \mathbb{S}. 
\end{equation*}
\end{remark}
 To optimize \eqref{eq:min-far-field}, a straightforward choice with equal weight for all measurement data in discrete form is
\begin{equation}\label{eq: discrete unweighted}
    \min_{\delta q}\frac{1}{M_1M_2}\sum_{i=1}^{M_2}\sum_{j=1}^{M_1} \Bigl|u_{meas}^{\infty}(\hat{x}_i,\hat{\theta}_j)
- \mathcal{F}(q_0 + \delta q)(\hat{x}_i,\hat{\theta}_j)\Bigr|^2.
\end{equation}
For evenly distributed directions, this discrete least squares formulation converges to 
$L^2(\mathbb{S} \times \mathbb{S},\mathbb{C})$ in 2D as $M_1,M_2\rightarrow \infty$. By the argument in Remark ~\ref{rmk:reciprocity}, the measurement space $\mathbb{S} \times \mathbb{S}$ is equal to two copies of $D$. Let $\hat{u}^\infty(p): = u^\infty(P^{-1}(p))$ for $p\in B$, then  
\[
\frac{1}{2}\Bigl\|u^{\infty}(\hat{x};\hat{\theta}) - \Jacobian_{0} \delta q\Bigr\|^2_{L^2(\mathbb{S}\times\mathbb{S},\mathbb{C})}=\Bigl\|u^{\infty}(\hat{x};\hat{\theta}) - \Jacobian_{0} \delta q\Bigr\|^2_{L^2(D,\mathbb{C})} = \int_{B} |\widehat{\delta q}(2kp) - \hat{u}^\infty(p)|^2 \frac{2}{|p|\sqrt{1-|p|^2}} dp, 
\] 
% By the argument in Remark ~\ref{rmk:reciprocity}, $
% \Bigl\|u^{\infty}(\hat{x};\hat{\theta}) - \Jacobian_{0} \delta q\Bigr\|^2_{L^2(\mathbb{S} \times \mathbb{S},\mathbb{C})} = 2 \Bigl\|u^{\infty}(\hat{x};\hat{\theta}) - \Jacobian_{0} \delta q\Bigr\|^2_{L^2(D,\mathbb{C})}$, thus the least squares formulation is equivalent to minimizing RHS of .... Then frequency bias... then the discretized form \eqref{eq:weighted discrete} ...
% Previous lines: Let $\hat{u}^\infty(p)$ denote the $p$-transform of the far-field data $u^\infty(\hat{x};\hat{\theta})$ from $\mathbb{S} \times \mathbb{S}$ to $B$, and denote $\hat{\mathcal{F}}$ the far-field operator after the $p$-transform. In particular, in the case of $q_0 = 0$, the discrete least-squares formulation with equal weight for all measurement data for the linearized inverse scattering problem converges to the continuous limit
% \[
% \Bigl\|u^{\infty}(\hat{x};\hat{\theta}) - \Jacobian_{0} \delta q\Bigr\|^2_{L^2(\mathbb{S} \times \mathbb{S},\mathbb{C})}=\int_{B} |\widehat{\delta q}(2kp) - \hat{u}^\infty(p)|^2 \frac{2}{|p|\sqrt{1-|p|^2}} dp, 
% \] 
which exhibits a bias towards the low-frequency regime (near $p=0$) and the high-frequency regime near $|p|=1$ ($|2kp|=2k$). %See Figure ~\ref{fig:density_p} for a density plot of $p$ from $\hat{x}$ and $\hat{\theta}$ evenly distributed on $\mathbb{S}$.
% The Jacobian $\frac{2}{|p|\sqrt{1-|p|^2}}$ exhibits a bias towards the low-frequency regime ($p = 0$) and the high-frequency regime ($p=1$) if we consider \eqref{eq:min-angle}.
% However, this $L^2$ norm will introduce some frequency bias, which can be corrected with appropriate weighting.
% \textcolor{cyan}{This motivates us to propose a reweighted least-squares formulation that, after the $p$-transform, assigns uniform weight to the frequency components of $\delta q$ within the accessible frequency disk. In the discrete form, the least-squares formulation becomes:} 
Here we propose to remove the frequency bias by using $L^2(B, \mathbb{C})$ for the data mismatch, which is equivalent to a weighted least-squares formulation in discrete form
\begin{equation}\label{eq:weighted discrete}
    \min_{\delta q}\frac{1}{M_1M_2}\sum_{i=1}^{M_2}\sum_{j=1}^{M_1} \frac{|p_{ij}|\sqrt{1-|p_{ij}|^2}}{2}\Bigl|u_{meas}^{\infty}(\hat{x}_i,\hat{\theta}_j)
- \mathcal{F}(q_0 + \delta q)(\hat{x}_i,\hat{\theta}_j)\Bigr|^2, \quad\text{where} ~~p_{ij}=\frac{\hat{\theta}_j-\hat{x}_i}{2}.
\end{equation}
% \begin{figure}[H]
% \centering
% \begin{tabular}{c@{\vspace{2pt}}c@{\vspace{2pt}}c}
% \includegraphics[width=0.4\linewidth]{density_p1.png}&
% \includegraphics[width=0.4\linewidth]{density_p2.png}\\
% (a)&(b)
% \end{tabular}
% \caption{(a) $p$ distribution from  $\hat{x}$, $\hat{\theta}$ evenly distributed in $\mathbb{S}$; (b) area-uniform $p$ distribution.}
% \label{fig:density_p}
% \end{figure}

% \begin{figure}[H]
% \centering
% \includegraphics[width=0.3\linewidth]{density_p1.png}
% \caption{$p$ distribution from  $\hat{x}$, $\hat{\theta}$ evenly distributed on $\mathbb{S}$.}
% \label{fig:density_p}
% \end{figure}
% Thus, it reduces the redundancy of the data while retaining the information.

With this observation on frequency bias, we adopt the unbiased least squares formulation in $L^2(B, \mathbb{C})$ for~\eqref{eq:min-far-field}, and hence \eqref{eq:min-angle} is transformed into
\begin{equation}\label{eq:min_p}
\min_{\delta q}\Bigl\| \delta\hat{u}^\infty(p) - \hat{\Jacobian}_{q_0}\delta q(p)\Bigr\|^2_{L^2(B,\mathbb{C})}, 
\end{equation}
where 
\begin{equation}\label{eq:p_integral}
\hat{\Jacobian}_{q_0}\delta q(p) := \int_\Omega 
\lambda^t_{q_0}(y;\hat{x})
\,u_{q_0}^t(y;\hat{\theta})
\,\delta q(y)\,dy, \quad (\hat{x},\hat{\theta}) = \Big(\hat{x}(p),\hat{\theta}(p)\Big),
\end{equation}
and $\delta\hat{u}^\infty(p) = \delta\tilde{u}^{\infty}(P^{-1}(p))$ in \eqref{eq:min-angle}.
% Since $\mu(\{0\}) = 0$ \textcolor{cyan}{Do we also need this argument for the homogeneous part?} where $\mu$ denotes the Lebesgue measure, $L^2(B)$ is isomorphic to $L^2(B\setminus \{0 \})$,
% $\hat{\mathcal{J}}_{q_0}$ is compact from $L^2(\Omega)$ to
% $L^2(B)$. 
% By compactness, there exists a singular value and corresponding singular mode system $\{\hat{\sigma}_i,\hat{u}_i,\hat{v}_i\}_{i=1}^\infty$ such that
% \begin{equation}\label{eq:SVD}
% \hat{\mathcal{J}}_{q_0}\hat{v}_i= \hat{\sigma}_i \hat{u}_i,
% \qquad 
% \{\hat{u}_i\}\subset L^2(B),\ \{\hat{v}_i\}\subset L^2(\Omega),
% \end{equation}
% where $\{\hat{u}_i\}$ and $\{\hat{v}_i\}$ form orthonormal bases of $L^2(B)$
% and $L^2(\Omega)$ respectively, 

%\subsection{Far-Field Case} \label{sec:far-field data}

First, we study the spectral property for $\hat{\Jacobian}_0$ defined in \eqref{eq:J0}. For full aperture data and $\Omega = B$, the analysis in \cite{zhou2024exploringlowrankstructureinverse} connects the eigen-system of $\hat{\Jacobian}_{0}$ with prolate spheroidal wave functions (PSWFs) and the restricted Fourier transform
\[\mathcal{F}_{B}^{2k}\psi_{m,n,l}(p;2k) = \int_{B} e^{i2kp \cdot y}\psi_{m,n,l}(y;2k)dy = \alpha_{m,n}(2k)\psi_{m,n,l}(p;2k),\]
% From the 1st-order Born approximation and the asymptotic behavior of the scattered wave field (c.f. \cite{book}), we have
% \[u_b^s(x;\hat{\theta};k) = \frac{e^{i\frac{\pi}{4}}}{\sqrt{8k\pi}} \frac{e^{ikr}}{\sqrt{r}} \left(u_b^{\infty}(\hat{x};\hat{\theta};k) + O\left(r^{-1}\right)\right) \quad \text{as} \enspace r := |x| \rightarrow \infty \]
% \[u_b^{\infty}(\hat{x};\hat{\theta};k) = k^2 \int_{\Omega} e^{-ik\hat{x}\cdot y} q(y) e^{i k y \cdot \hat{\theta}} dy = k^2 \int_{\Omega} e^{ik (\hat{\theta}-\hat{x})\cdot y} q(y) dy.\]
% A low rank structure of mapping from $q$ to Born far-field data is discovered in \cite{zhou2024exploringlowrankstructureinverse} using prolate spheroidal wave functions and restricted Fourier transform, i.e.
% \[\mathcal{F}_{B(0,1)}^c \psi_{m,n,l}(x;c) = \int_{B(0,1)} e^{icx \cdot y}\psi_{m,n,l}(y;c)dy = \alpha_{m,n}(c)\psi_{m,n,l}(x;c), \]
where $\psi_{m,n,l}(x;2k)$ are the \textit{disk prolate spheroidal wave functions}, and $\alpha_{m,n}$ are the corresponding eigenvalues. The efficiency of the linearized inversion algorithm in \cite{zhou2024exploringlowrankstructureinverse} relies on two features: the explicit form of the eigensystem of $\hat{\Jacobian}_{0}$ by its connection to the Sturm-Liouville operator,
%which allows fast and accurate computation of $\psi_{m,n,l}$ and $\alpha_{m,n}$. In particular, 
and the effective low-rank structure of $\mathcal{F}_{B}^{2k}$, reflected by the 'flat-then-cliff' phenomenon of its spectrum (see Figure 1 in \cite{zhou2024exploringlowrankstructureinverse}  and Figure ~\ref{fig:spectrum for both}(a) in this paper). 

%The idea of the paper is to explore the low rank structure iteratively so that a better approximation can be obtained.  However, a more general and efficient approach of finding the eigenbasis for an the inhomogeneous medium is needed.  For explicit characterization of the estimate given in section 7 and alignment of the spectrum of the forward operator with $\alpha_{m,n}$, we set $\Omega := B(0,1)$ without any harm, see \cite{Meng_2023} for analysis of generalization of $\Omega$.

% \textcolor{cyan}{This effective low-rank structure has been studied through the theory of spatial–spectral concentration, which provides estimates of the effective rank and motivates our choice of a spectral cutoff.}
From the $L^2$ perspective, the eigenvalue of $\hat{\Jacobian}_{0}^*\hat{\Jacobian}_{0}$,
\begin{align*}
(\hat{\Jacobian}_{0}^*\hat{\Jacobian}_{0})f(p) = \int_{B}\int_{\Omega}e^{i 2k x \cdot z}\,f(z)\,dz\,e^{-i 2k x \cdot p}\,dx
 = \int_{\Omega} \int_{B} e^{i2kx \cdot (z-p)}\, dx\, f(z)\,dz,
\end{align*}
quantifies how much a space-limited function can be concentrated on a given frequency domain, which is often referred to as the \textit{spatio-spectral concentration problem}. Specifically, for the case of $\Omega = B$, the 'numerical dimension' of the space of bandlimited functions is finite and depends on the decay of the singular values, as shown in \cite{Landau1975OnSE}; also see \cite{SHKOLNISKY2007235}, and the dimension is $k^2 + o(k^2)$ asymptotically. To obtain a more accurate eigenvalue distribution, we refer to \cite{hughes2026wavepacketseigenvalueestimates}, which considers a more general case where the spatial domain $\Omega$ is an ellipse centered at the origin in $\mathbb{R}^2$, and the frequency domain is well-shaped in $\mathbb{R}^2$ following Definition 2.1 in \cite{hughes2026wavepacketseigenvalueestimates} (indeed, $B$ is well-shaped).
By a change of variable $x' = 2kx$, the scaled spatial domain is $\Omega' = 2k\Omega$ and the frequency domain is $B$, then we can define the 
% Set $c = 1$, $\Omega = [0,1]^d$, $B = B(0,2k)$, and define the
\textit{spatio-spectral limiting operator} $\tilde{T}_{\Omega',B}=\frac{k^2}{\pi^2}\hat{\Jacobian}_{0}^*\hat{\Jacobian}_{0}$. Theorem 1.1 in \cite{hughes2026wavepacketseigenvalueestimates} demonstrates the clustering behavior of eigenvalues of $\tilde{T}_{\Omega',B}$ near 0 and 1. The number of eigenvalues close to 1 is $\frac{1}{4\pi^2}\mu(\Omega')\mu(B)$, aligned with the Shannon number estimate given in \cite{Simons2015}, which can be generalized to any $B$ and $\Omega$. The number of intermediate eigenvalues between 0 and 1 is at most $O(k(\log k)^{1+2s})$ for any $s>1$.
% To obtain a more accurate eigenvalue distribution, we refer to \cite{Eigenvalue_distribution}, which considers a more general case where the spatial domain $\Omega$ is a hypercube in $\mathbb{R}^2$, and the frequency domain is a convex body in $\mathbb{R}^2$ (Indeed, $B$ is convex in our case).
% By a change of variable $x' = 2kx$ and setting $\Omega = [0,1]^2$, the scaled spatial domain is $\Omega' = [0,2k]^2$ and the frequency domain is $B$, then we can define the 
% % Set $c = 1$, $\Omega = [0,1]^d$, $B = B(0,2k)$, and define the
% \textit{spatio-spectral limiting operator} $\tilde{T}_{\Omega',B}=\frac{k^2}{\pi^2}\hat{\Jacobian}_{0}^*\hat{\Jacobian}_{0}$. Theorem 1.1 in \cite{Eigenvalue_distribution} demonstrates the clustering behavior of eigenvalues of $\tilde{T}_{\Omega',B}$ near 0 and 1. The number of eigenvalues close to 1 is $\frac{1}{4\pi^2}\mu(\Omega')\mu(B) = \frac{1}{\pi}k^2$, and the number of intermediate eigenvalues between 0 and 1 is at most $O(k(\log k)^{\frac{5}{2}})$.
As a result, singular values of $\hat{\Jacobian}_{0}$ would have the same clustering phenomenon near $0$ and $\frac{\pi}{k}$. 
% \ylfn{I assume that they are not clustering near $1$ due to some change of variable. Shall we show Theorem 1.1 in [8] as a lemma and write the change explicitly?}
% It is worth noting that the estimate of the number of eigenvalues close to 1? given in \cite{Eigenvalue_distribution} aligns with the Shannon number estimate given in \cite{Simons2015}, which can be generalized to any $B$ and $\Omega$. 
% Without loss of generality, we set $\Omega = B$ for the rest of the paper. \textcolor{red}{delete, case $\Omega =B$}
% \begin{figure}%[HTB]
%     \centering
%     \includegraphics[width=0.8\linewidth]{spectrum_shannon.png}
%     \caption{Spectrum of $\hat{\Jacobian}_0$ with different $k$; the dash line indicates the Shannon number.}
%     \label{fig:svd-spectrum}
% \end{figure}

Motivated by the clustering phenomenon, we propose to choose the spectral cutoff by computing the Landau-Pollak-Slepian "Shannon number" introduced in\cite{Simons2015, Landau_Pollak}, which is used to define the dimension of a signal subspace spanned by eigenmodes with eigenvalues close to $1$,
\begin{equation}\label{eq:Shannon number}
S(\Omega;\mathcal{P}) = (\frac{k}{\pi})^2 \mu(\mathcal{P}) \cdot \mu(\Omega),
% (2k)^2\frac{\mu(\mathcal{P}) \cdot \mu(\Omega)}{(2\pi)^2},
\end{equation}
where $\mathcal{P} := \{p: p = \frac{\hat{\theta} - \hat{x}}{2} \}$, and $\mu(A)$ means the measure of set $A$. For example, for full aperture data, $\mathcal{P}=B(0,1)$, $\Omega = B(0,1)$, and  $S = k^2$.
% \ylfn{$\mu$ seems to be used to denote measure in $\mathbb{R}^n$?}
In our case, the TSVD solution corresponding to the Shannon number cutoff provides the best (in terms of least squares) update in the signal subspace at each Newton step, which significantly reduces the number of steps with stability control.

Next, we compare $\hat{\Jacobian}_{q_0}$ in \eqref{eq:p_integral} and $\hat{\Jacobian}_{0}$. In particular, we compare the singular values of these two operators around $S$ defined in \eqref{eq:Shannon number}, since such a comparison would inform whether $S$ remains useful for SVD truncation of $\hat{\Jacobian}_{q_0}$ for the algorithm proposed in Section ~\ref{sec: algorithm}.
In fact, numerical evidence shows that both the regularity and the amplitude of $q_0$ factor into the effectiveness of the Shannon number for describing the spectrum transition. A smoother $q_0$ or $q_0$ with smaller amplitude makes them more comparable between $\sigma_{S}(\hat\Jacobian_{q_0})$ and $\sigma_{S}(\hat\Jacobian_0)$, where smaller amplitude belongs to the Born regime, where we can apply Weyl's inequality to quantify the gap between the singular values close to $\frac{\pi}{k}$ and those close to $0$. Outside the Born regime, we observe that even if a few leading singular values of $\hat{\Jacobian}_{q_0}$ could be amplified, the spectrum remains comparable around $S$, which will be studied further in our future work. From the computational viewpoint, one only needs a good estimate of the effective rank $N$ such that $\sigma_N(\hat\Jacobian_{q_0})$ starts to lie in the tail part (close to 0) of the spectrum of the compact operator $\hat\Jacobian_{q_0}$, then RSVD (see Section ~\ref{sec: algorithm}) can provide a good approximation with $N+r$ ($r$ is a smaller number) sampling of random vectors along which the matrix is projected. After RSVD, one can apply a simple threshold strategy to control the stability of Algorithm ~\ref{alg:iterative-low-rank} if needed.     
\subsection{Limited Aperture Case}\label{sec: limited aperture analysis}

We next examine how limited-aperture geometry affects the information available for reconstruction. The incident and observation directions, denoted by $\Gamma^i$ and $\Gamma^o$ respectively, are restricted to certain directions, i.e., $\Gamma^i\times \Gamma^o \subset \mathbb{S}\times \mathbb{S}$, which leads to a more ill-posed problem compared to full aperture data for inverse scattering problems. In the Born approximation~\eqref{eq:Born data}, limited aperture data correspond to the Fourier transform of $q$ on a subset of the Fourier ball $B(0,2k)$, denoted by $\mathcal{P}(\Gamma^o\times\Gamma^i)$, after the $p$-transform \eqref{eq:p transform}.
Although the far-field pattern is analytic in $(\hat{x},\hat{\theta})\in \mathbb{S}\times \mathbb{S}$, which means the full-aperture data are uniquely determined by analytic continuation \cite{GriesmaierHyvonenSeiskari2013}, this continuation is severely unstable. Moreover, not only the size of the aperture matters, but also the geometric configuration of $\Gamma^i$ and $\Gamma^o$, which determines $\mathcal{P}(\Gamma^o\times\Gamma^i)$ as shown in Figure \ref{fig:limited_angle}, will lead to different reconstruction results in terms of both stability and resolution in different directions and at different scales.

% the problem gets more ill-posed. Since the far-field pattern is analytic in $\hat{x}$ and $\hat{\theta}$,
%  the full-aperture data are uniquely determined by analytic continuation \cite{GriesmaierHyvonenSeiskari2013}, but this continuation is severely unstable.
%  %The far-field pattern is analytic in $\hat{x}$ and $\hat{\theta}$ and hence by analytic continuation \textcolor{cyan}{Cite a work on uniqueness}, full aperture data can be uniquely determined. However, analytic continuation is exponentially unstable.
% % There exists a H\"older-logarithmic stability estimate for the inverse of restricted Fourier transform\cite{Holder_estimate}.
% In the Born approximation case, the Born data from \eqref{eq:Born data} is equivalent to the Fourier transform of $q$ at frequency $k(\hat{\theta} - \hat{x})$. Hence, limited aperture data determines $\widehat{q}$ on a subset of the Fourier ball of radius $2k$ after the $p$-transform \eqref{eq:p transform}. Moreover, the subset is typically anisotropic in directions in the Fourier domain as shown in Figure \ref{fig:limited_angle}, which implies different directional stability and/or resolution in the spatial domain. 

We analyze how the size and relative orientation of the incident and observation apertures affect the reconstruction results. 
% First, in the continuous setting, the dimension of the signal space, or the effective rank, can be estimated by the Shannon number \eqref{eq:Shannon number}.
The Shannon number \eqref{eq:Shannon number} provides an estimate of the effective rank associated with a given aperture configuration.
To be specific, consider limited aperture data corresponding to directions $\{u^{\infty}(\hat{x};\hat{\theta}): (\hat{x},\hat{\theta}) \in \Gamma^o\times \Gamma^i \}$, where $  \Gamma^o=\Gamma^i=\mathbb{S}_L:= \{\hat{x} \in \mathbb{S}: \arg \hat{x} \in [-L/2, L/2] \}$ for  $0<L<2\pi$, which corresponds to transmission measurement as shown in Figure \ref{fig:limited_angle}(a). 
After the $p$-transform, one can determine the region in the Fourier domain covered by the limited aperture data and compute its area $\mu(\mathcal{P}(\mathbb{S}_L\times \mathbb{S}_L)) = \frac{1}{2}(L - \sin L)$. More generally, let's consider different measurement setups, i.e., different $\Gamma^o$ and $\Gamma^i$ configurations, and hence, $\mathcal{P}(\Gamma^o\times\Gamma^i)$ corresponds to different regions inside a Fourier ball after the $p$-transform. Based on these Fourier regions, one can compute the effective rank and infer different imaging results. For example, $\Gamma^o = \Gamma^i$ represents the transmission measurement, while $\Gamma^o = -\Gamma^i$ corresponds to reflection measurement. Under the Born approximation, the coverage in the Fourier domain dictates the frequency modes of $q$ available from measurement by ~\eqref{eq:J0}. Figure ~\ref{fig:limited_angle} gives the coverage under different configurations of $\Gamma^o$ and $\Gamma^i$ with a fixed aperture size $L = \frac{3}{4}\pi$. Let $c_{\hat{\theta}}$ and $c_{\hat{x}}$ denote the center points of $\Gamma^i$ and $\Gamma^o$, respectively, and set $\omega = |c_{\hat{\theta}} -c_{\hat{x}}|$. For $L < \frac{\pi}{2}$, the maximal area of coverage happens at $\omega = \frac{\pi}{2}$. For $L \ge \pi$, it happens when $\omega \leq L-\pi$. In general, a larger area of coverage means a larger effective rank and more significant modes in the signal space. The practical implication is that only those frequency modes of $q$ inside $\mathcal{P}(\Gamma^o\times\Gamma^i)$ can be recovered well from the measurement data. Conversely, one can use both the area of coverage and information on $q$'s frequency modes as a prior to choose an appropriate limited aperture setup when possible. Moreover, the geometry of $\mathcal{P}(\Gamma^o\times\Gamma^i)$ implies reconstruction resolution and stability in different frequency modes. For example, Figure ~\ref{fig:limited_angle}(a) shows that the transmission measurement covers Fourier modes close to the transversal (with respect to the central direction of the aperture) directions with much more extent than those close to the range directions, which means much better imaging resolution and stability along the transversal direction than along the range direction. 
%The covered frequency region corresponding to transmission measurement is symmetric with respect to the origin due to the symmetry in incident directions and observation directions and results in double coverage and redundancy since $q$ is real.
The skewness towards low frequency implies better stability but worse resolution. In comparison, the region of $p$-transformed reflection measurement, as demonstrated in Figure ~\ref{fig:limited_angle}(e), not only has a smaller coverage in the frequency domain but also is highly concentrated on the high-frequency modes along the illumination directions. It implies a smaller effective rank and signal space dimension and, more importantly, more instability in the reconstruction, since in the higher-frequency regime, the nonlinearity is stronger (reflected by the remainder estimate in Section ~\ref{sec: analysis}), making the reconstruction more sensitive to the initial guess and perturbations, e.g., noise. The configuration shown in Figure~\ref{fig:limited_angle}(c), where $\omega = \frac{\pi}{2}$, renders the largest area of coverage in the Fourier domain among all configurations. From Figure~\ref{fig:limited_angle}, one can see that, as the configuration transitions from transmission to reflection, i.e., as $\omega$ increases, the covered frequency region becomes more and more skewed towards higher frequency. 

In summary, 1) $\mathcal{P}(\Gamma^o\times\Gamma^i)$ corresponds to those Fourier modes that can be recovered well from the measurement data, 2) lower frequency modes can be recovered more stably than higher frequency modes in general, 3) a reconstruction result depends on to what extent the significant Fourier modes of the medium contrast $q(x)$ are
contained in $\mathcal{P}(\Gamma^o\times\Gamma^i)$ corresponding to a specific measurement setup.   
Our numerical experiments in Section~\ref{sec: numerical on limited aperture} corroborate our analysis.

%An iterative method usually reconstructs the medium from low-frequency to high-frequency components, since higher frequency strengthens the non-linearity (reflected by the remainder estimate in Section ~\ref{sec: analysis}), making reconstruction more sensitive to the initial guess. A lower frequency generally means a more stable reconstruction.

\begin{figure}[htbp]
\centering
\begin{tabular}{c@{\vspace{2pt}}c@{\vspace{2pt}}c@{\vspace{2pt}}c@{\vspace{2pt}}c}
\includegraphics[width=0.2\linewidth]{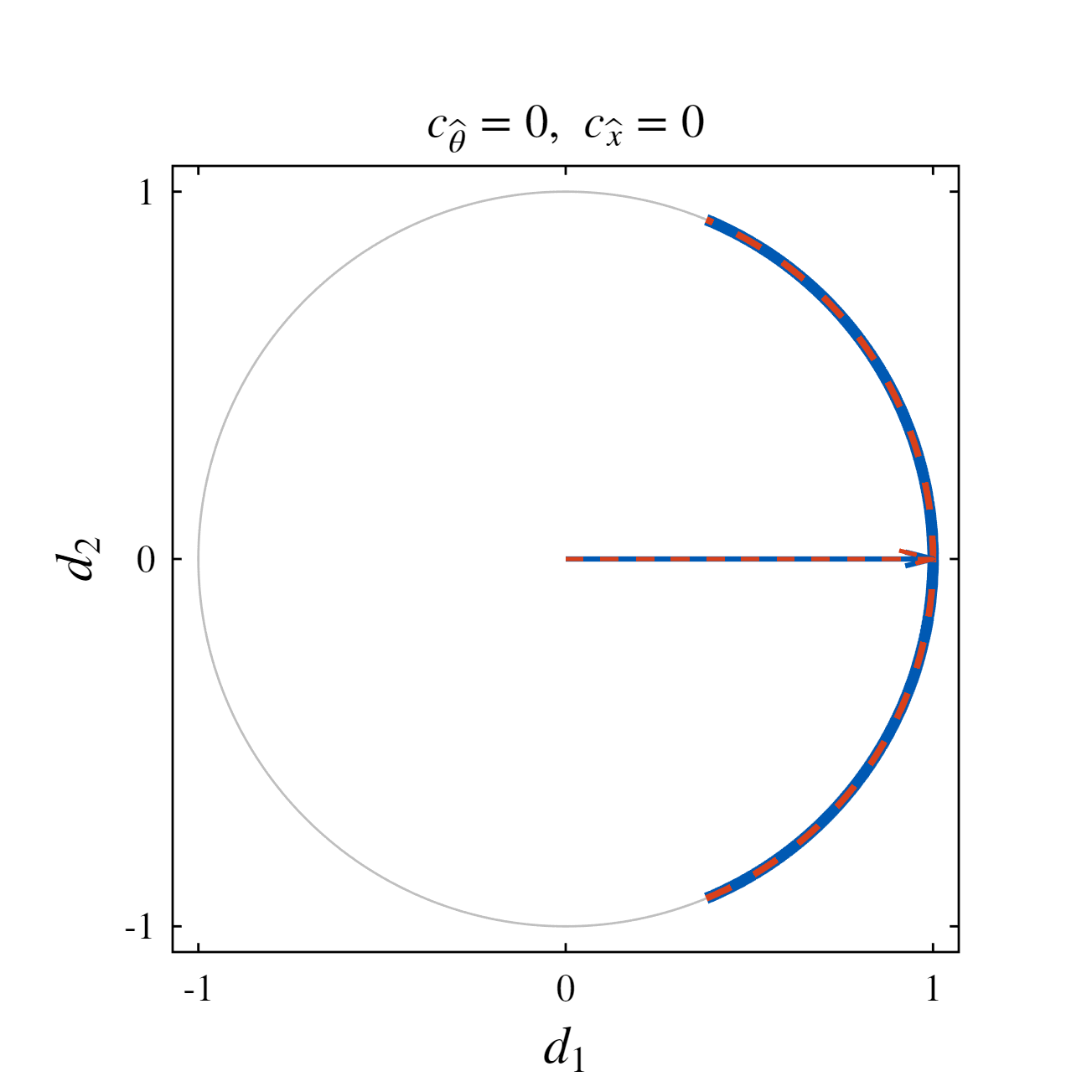}&
\includegraphics[width=0.2\linewidth]{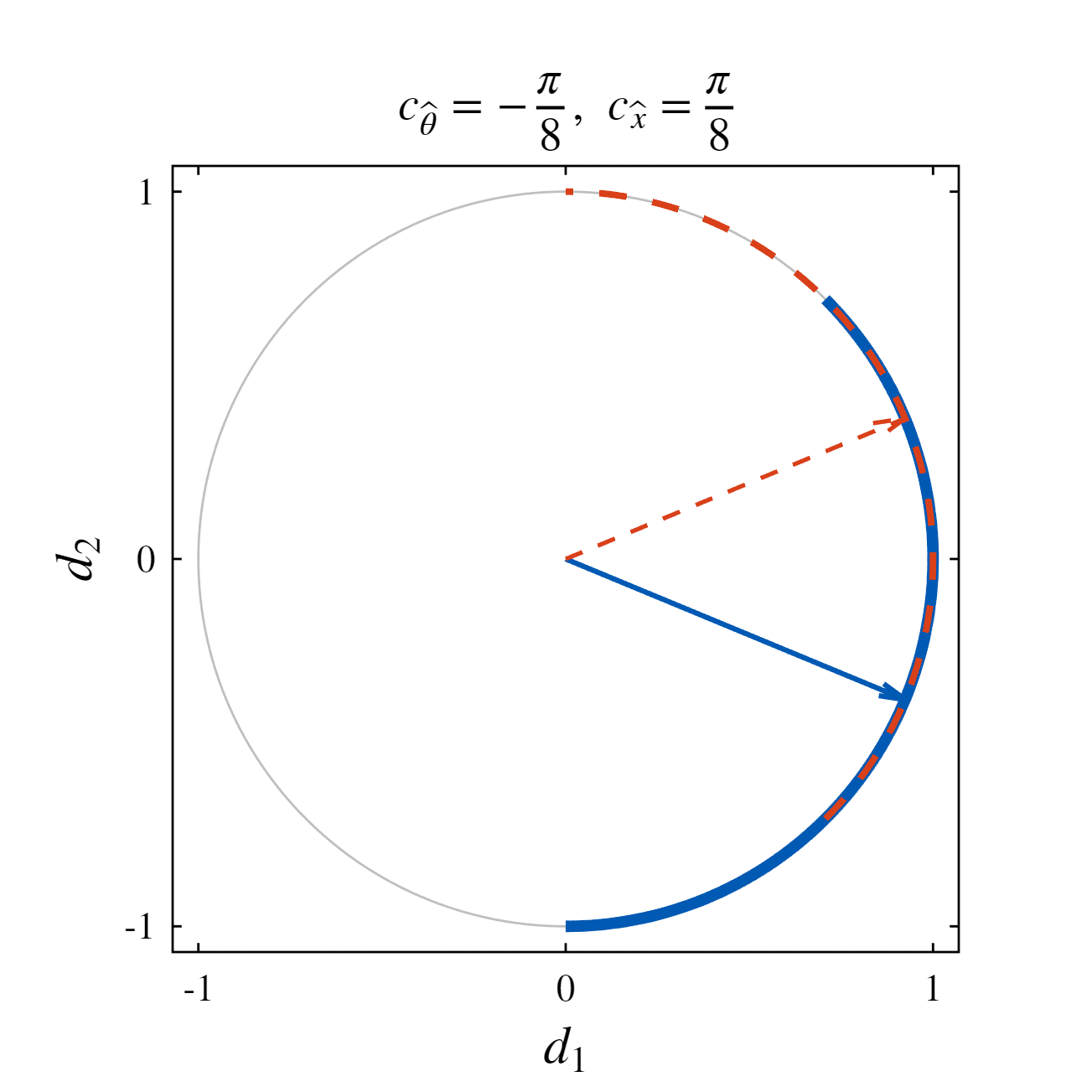}&
\includegraphics[width=0.2\linewidth]{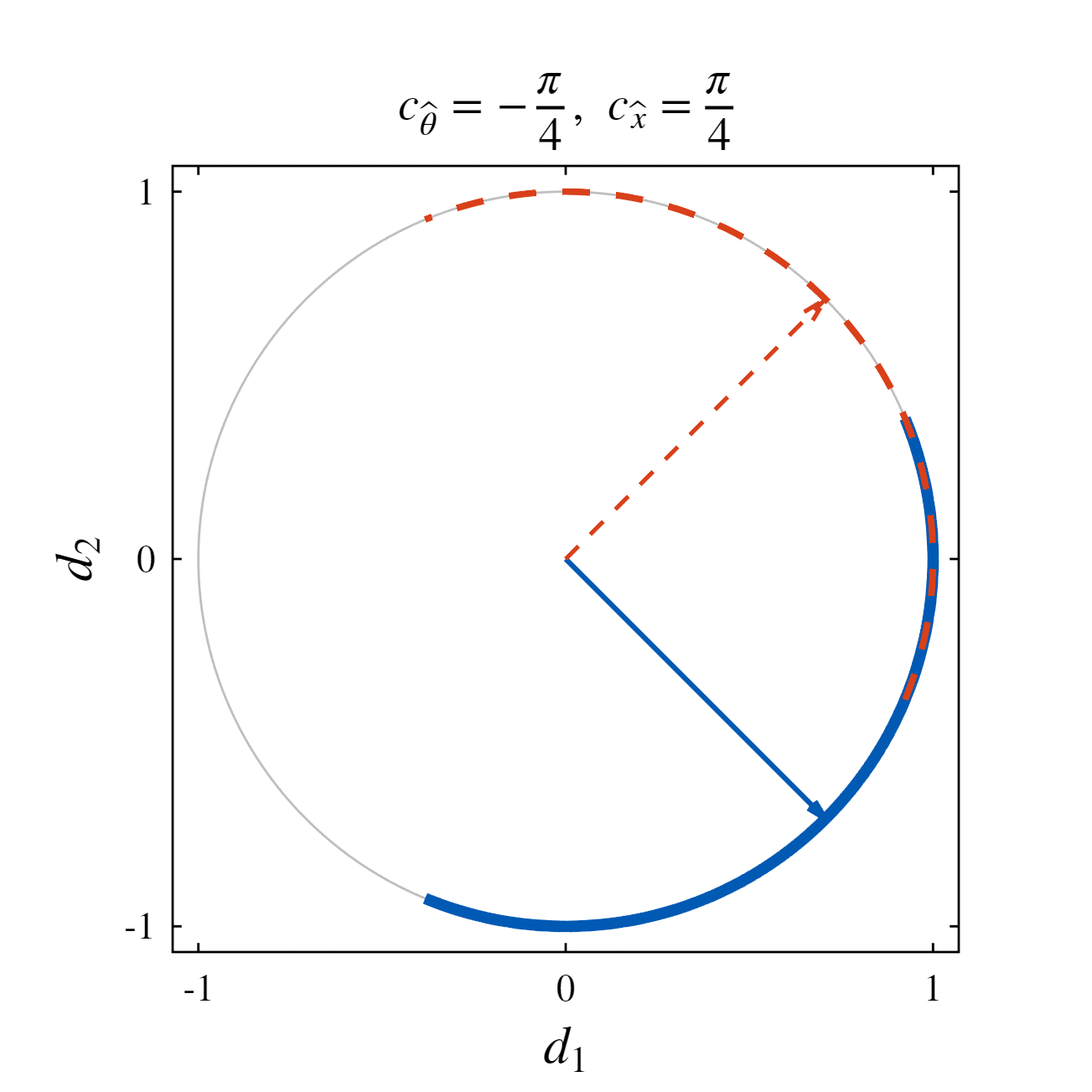}&
\includegraphics[width=0.2\linewidth]{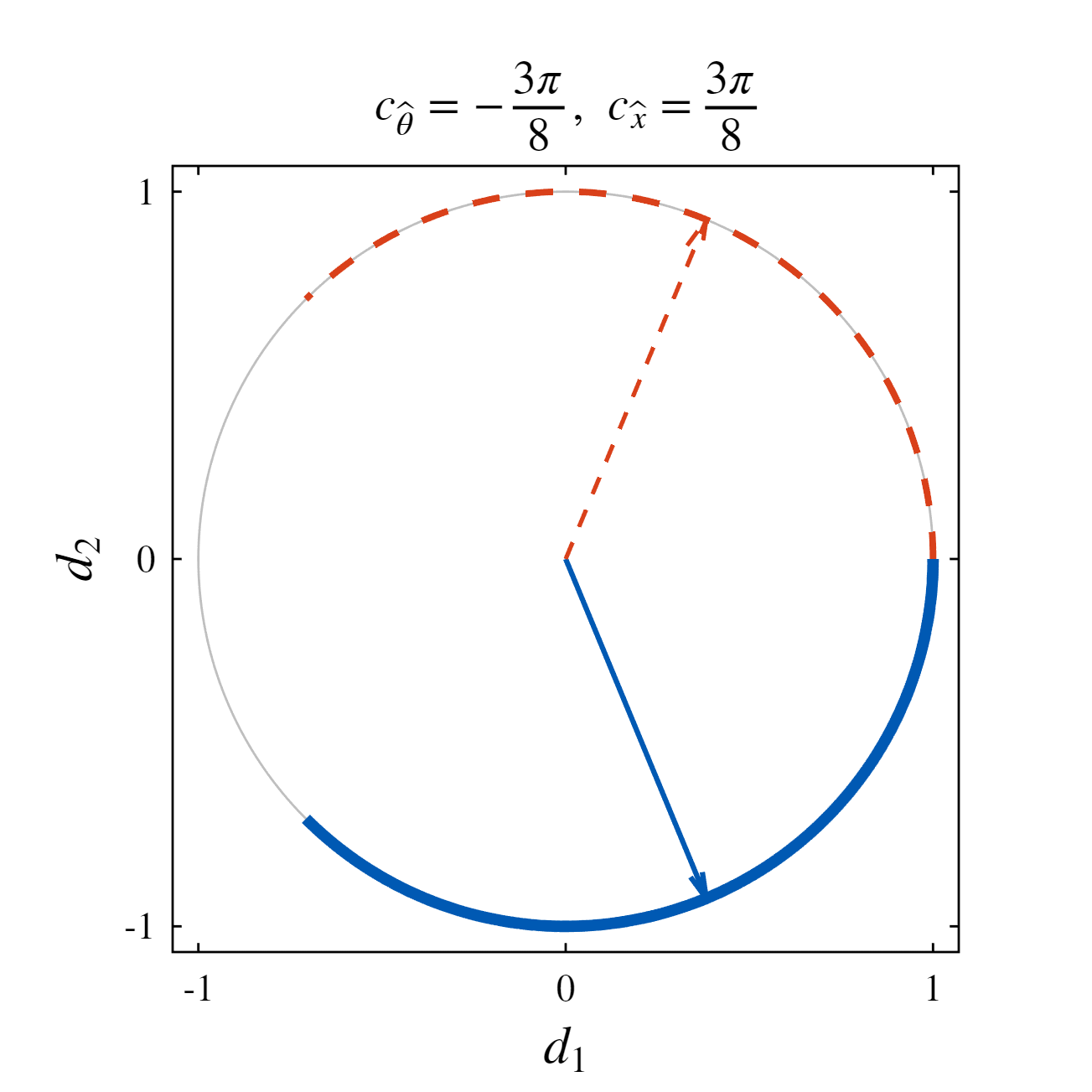}&
\includegraphics[width=0.2\linewidth]{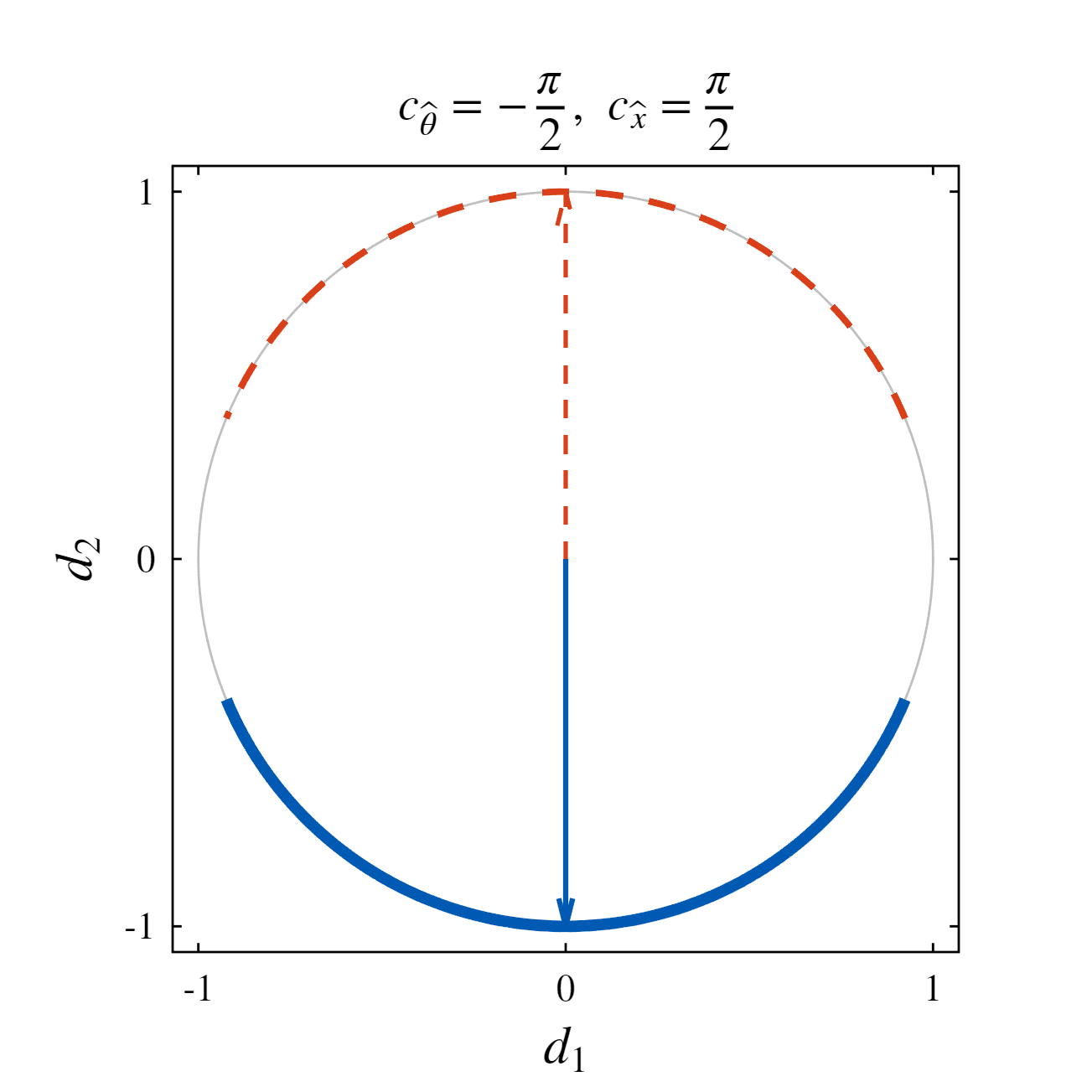}\\
\includegraphics[width=0.2\linewidth]{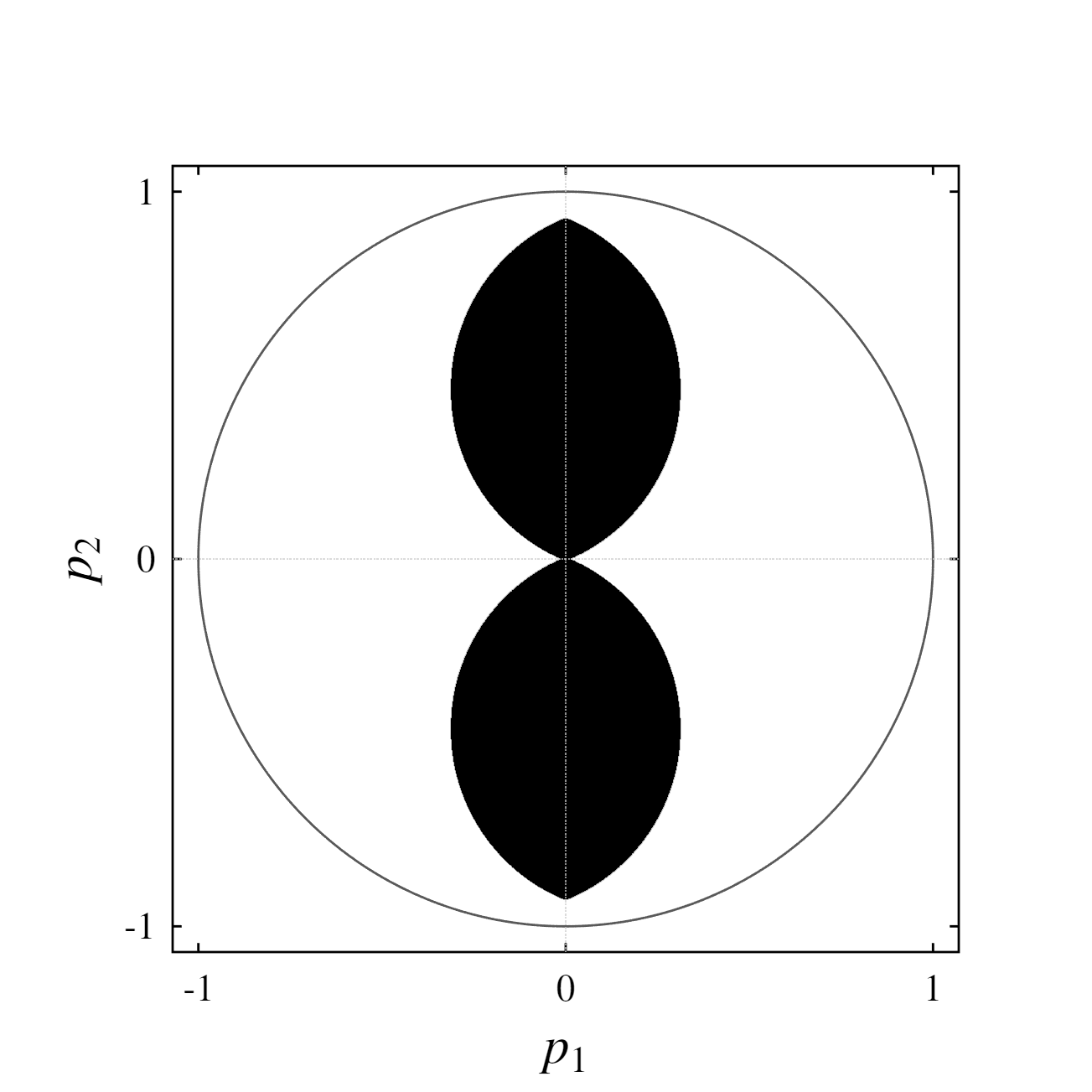}&
\includegraphics[width=0.2\linewidth]{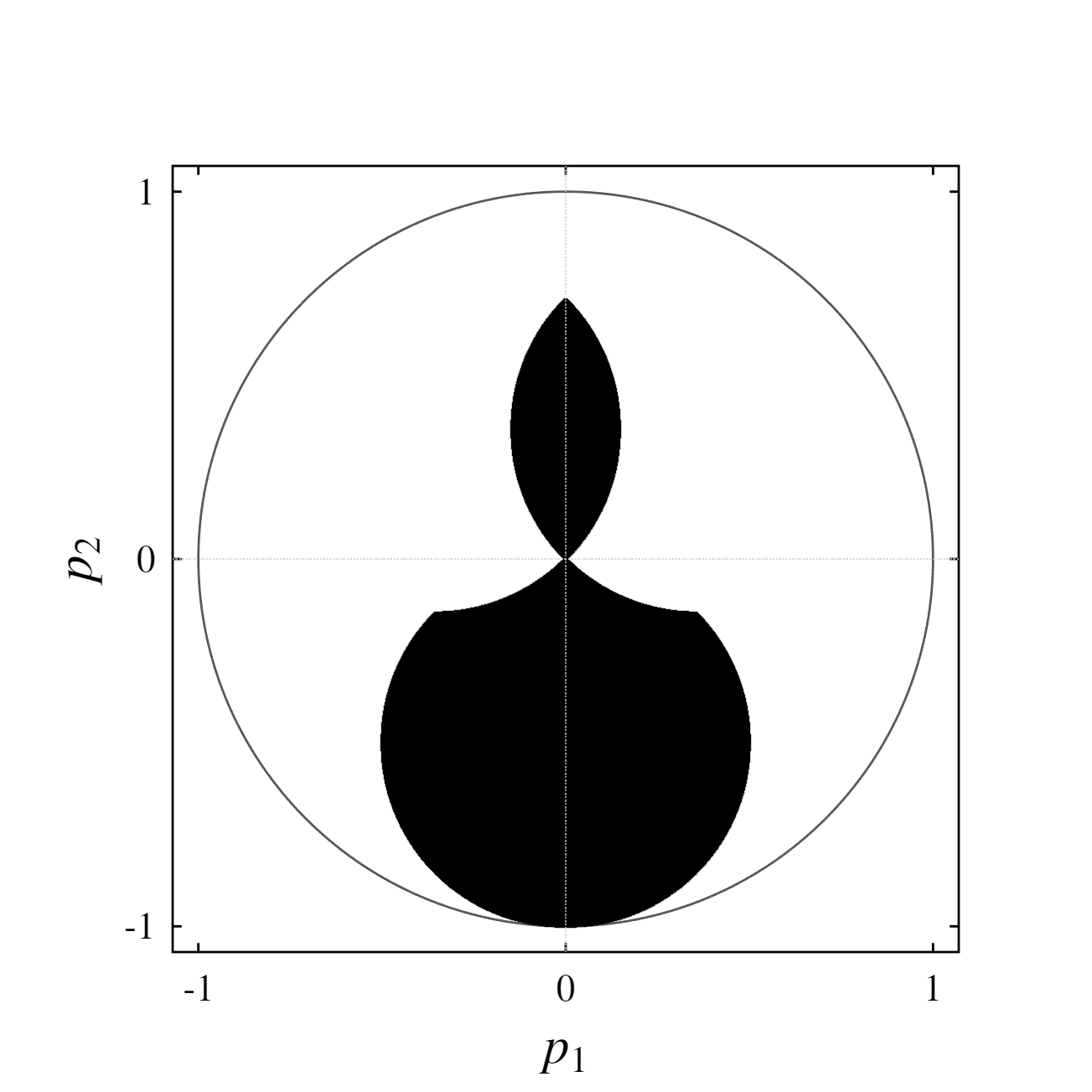}&
\includegraphics[width=0.2\linewidth]{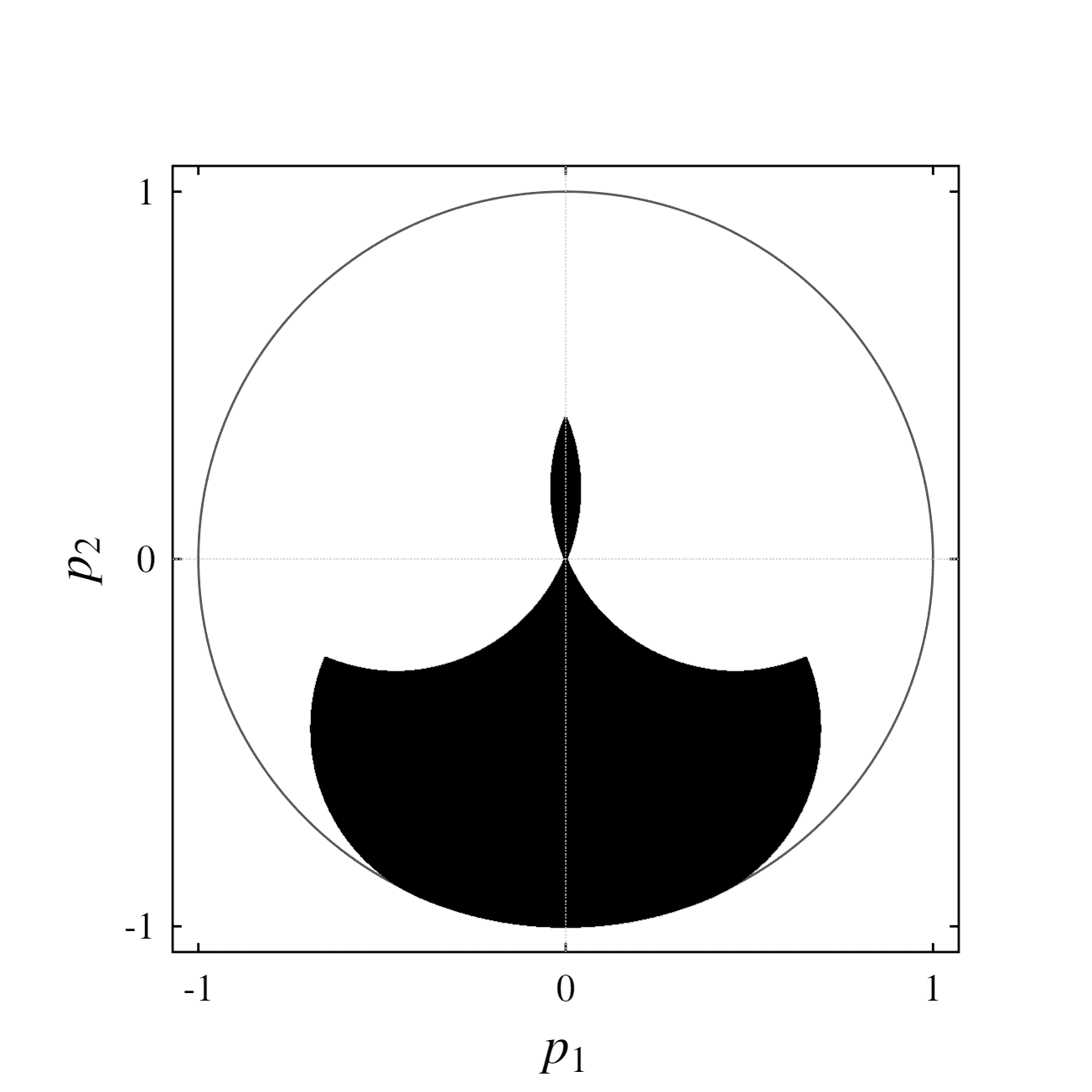}&
\includegraphics[width=0.2\linewidth]{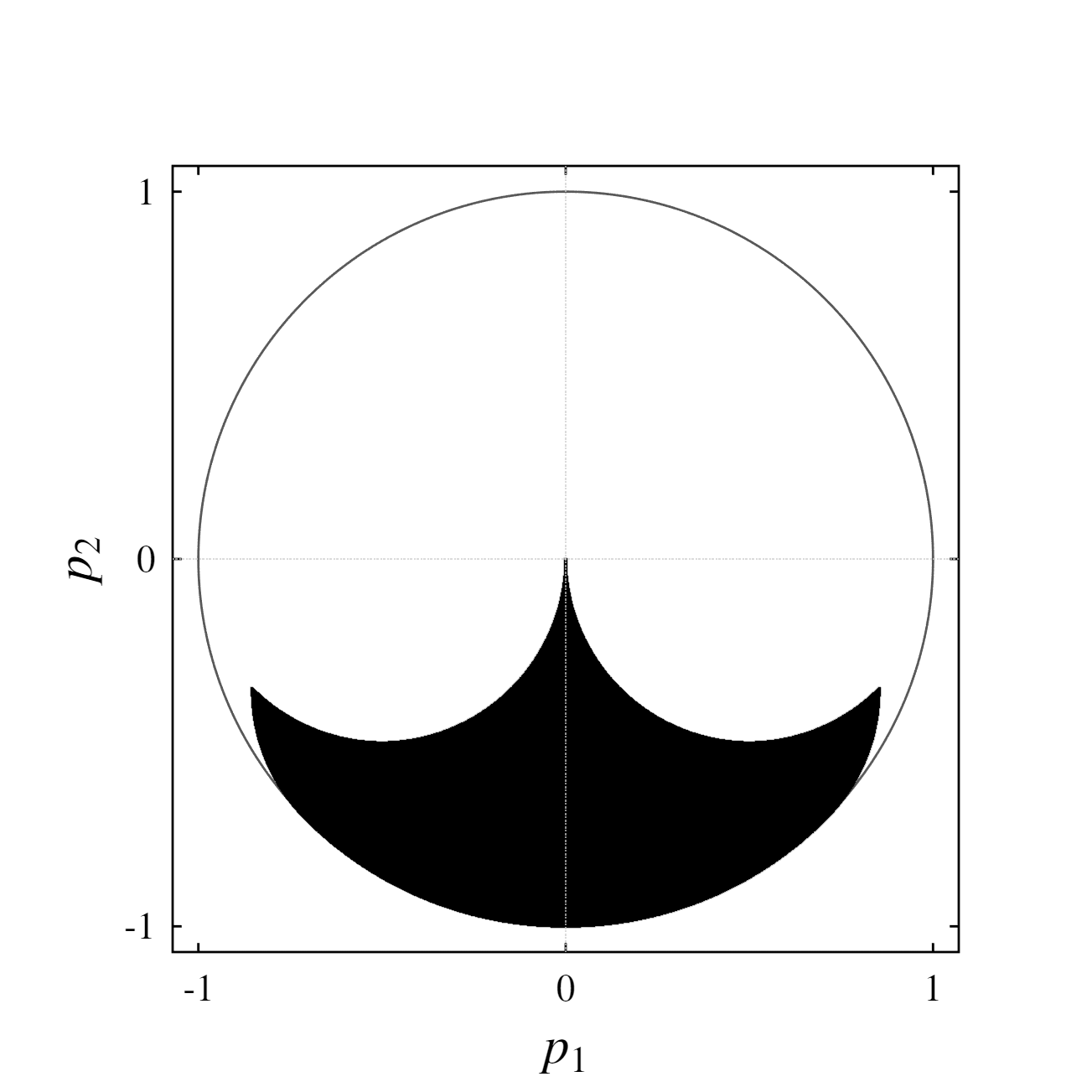}&
\includegraphics[width=0.2\linewidth]{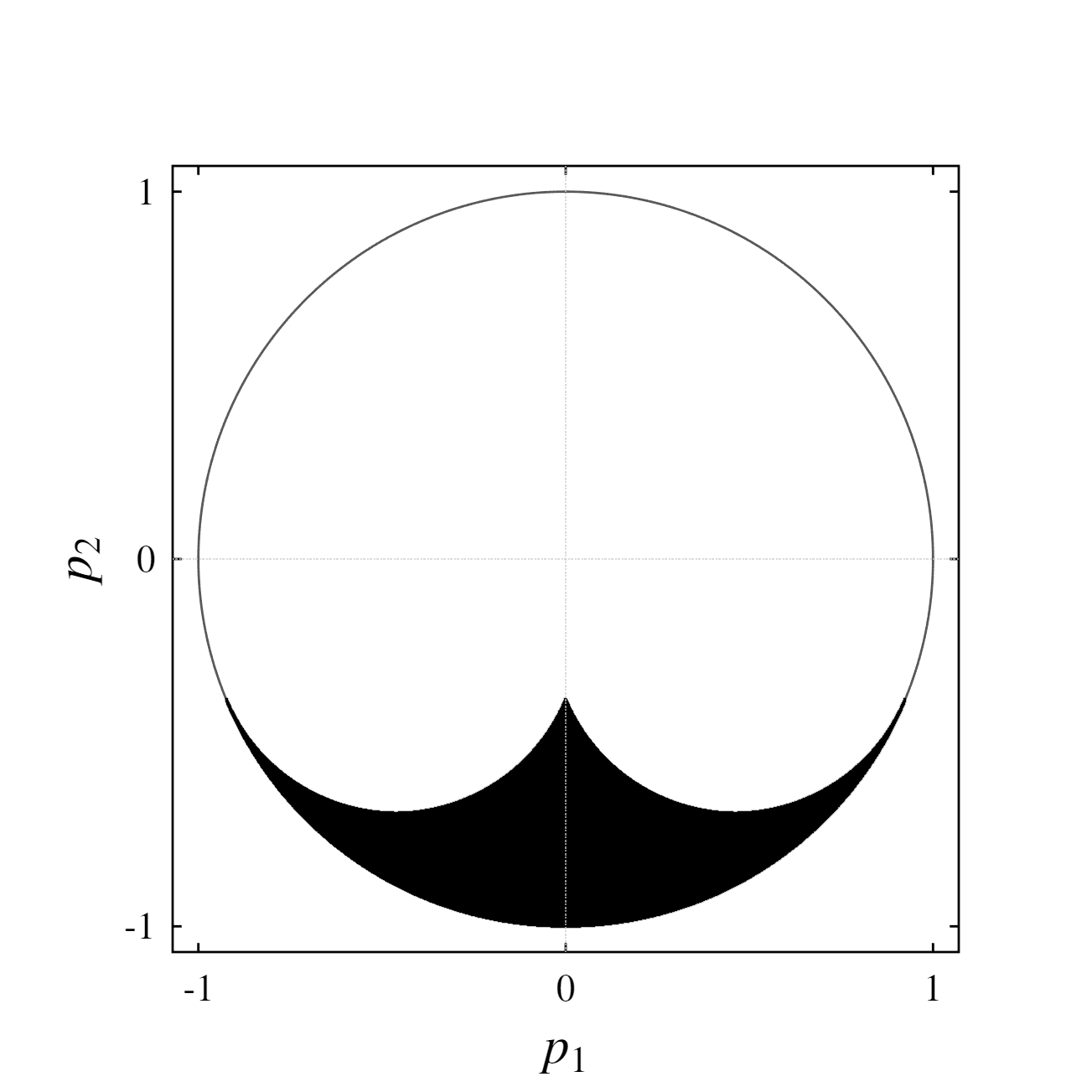}
\\
(a) $\omega = 0$ &(b) $\omega = \frac{\pi}{4}$ &(c) $\omega = \frac{\pi}{2}$ &(d) $\omega = \frac{3}{4}\pi$ &(e) $\omega = \pi$
\end{tabular}
\caption{Top: incident
and observation windows; the blue and orange curves show the incident directions $\hat{\theta}$ and observation directions $\hat{x}$, respectively. Bottom: corresponding $p$-space (Fourier domain) coverage in black.}
\label{fig:limited_angle}
\end{figure}

% \begin{figure}[H]
% \centering
% \begin{minipage}{0.75\linewidth}
%     \noindent
%     \includegraphics[width=0.905\linewidth]
%         {limited aperture/separation of angles/windows.png}

%     \par\vspace{4pt}

%     \noindent\makebox[\linewidth][l]{%
        
%         \includegraphics[width=0.905\linewidth]
%             {limited aperture/separation of angles/p_bands.png}%
%     }
% \end{minipage}
% \caption{Top: incident
% and observation windows; blue curve gives $\{\hat{\theta}\}$, orange curve gives $\{\hat{x}\}$. Bottom: corresponding $p$-space (Fourier domain) coverage in black.}
% \label{fig:limited_angle}
% \end{figure}

% In \cite{Meng_2023}, it assumes that $\{\hat{\theta} - \hat{x} \}$ contains a small neighborhood of the origin so that generalized PSWFs exist. However, this assumption is not necessary for our algorithm.
% From a more general perspective, a density argument of the data can be made, i.e., the data is only dense in $\mathbb{S}_L$, which leads to a general framework that connects the continuous case for analysis and the discrete case for numerical computation.

\begin{remark}\label{remark: small disk}
In practice, one has discrete measurements, i.e., discrete $\hat{\theta}_i, \hat{x}_j$ and corresponding $p_{i,j}$. An important question is how to compute an approximate measure for a set of discrete measurement points in $p$-space. For a discrete set that has dense enough sampling points to approximate a continuous limit (for full or limited aperture), $\mu(\mathcal{P})$ in \eqref{eq:Shannon number} can be computed. In the general situation, we propose to approximate $\mu(\mathcal{P})$ by the area of the union of discs of radius $\frac{\rho}{k}$ centered at each discrete point $p\in B(0,1)$, where $\rho$ is some constant which we will specify later in Section \ref{sec: numerical}. 
\end{remark}

\begin{remark} \label{remark:3D}
The only difference for the $p$-transform in 3D is that $\mathbb{S}^2 \times \mathbb{S}^2$ is one dimension higher than $B_3(0,1) \subset \mathbb{R}^3$, hence each point $p\ne 0 \in B_3(0,1)$ corresponds to two circles on the unit sphere whose lengths are $2\pi\sqrt{1-|p|^2}$. 
 Under the $p$-transform, the coarea formula~\cite{negro2021sampledistributiontheoryusing} gives, for integrable $g(\hat{x},\hat{\theta})$,
\begin{equation}\label{eq:p transform in 3D}
    \int_{S^2\times S^2}g(\hat{x},\hat{\theta})
                  d\sigma_{\hat{x}}d\sigma_{\hat{\theta}}=\int_{B_3(0,1)}\frac4{|p|}
       \int_0^{2\pi}g\bigl(\hat{x}(p,\phi),\hat{\theta}(p,\phi)\bigr)
                    d\phi dp,
\end{equation}
where $\phi$ is a parametrization along the fiber which corresponds to the same $p$. 
Hence, the unweighted discrete least squares~\eqref{eq: discrete unweighted} can lead to a low-frequency bias and can be corrected by introducing the Jacobian weights as in the 2D case. All the study and analysis for 2D far-field data can be extended to 3D. For example, the corresponding Shannon number is
\begin{equation}\label{eq:Shannon number 3D}
    S(\Omega;\mathcal{P}) = (\frac{k}{\pi})^3\mu(\Omega)\cdot\mu(\mathcal{P}),
\end{equation}
where $\mathcal{P}$ follows the same definition in 2D case.
% On the other hand, the study of PSWFs has been extended to 3D in \cite{zhou2026theoreticalcomputationalframeworkdimensional}.
\end{remark}

\subsection{Near-Field Case}
In the near-field case, assume we have a source-observation location pair: $(x,z)$, which are outside the domain of interest $\Omega$. The measurement, $u^s(x;z)$, corresponding to the scattered wave field at $x$ due to a point source at $z$ is given by the Lippmann-Schwinger integral equation~\eqref{eq:near-field}. In the following, we will provide asymptotic estimates for the effective rank using the Born approximation,
\begin{equation}\label{near-field-Born}
    u^s(x;z) = k^2\int_{\Omega} G_k(x,y)G_k(y,z)q(y)dy, 
\end{equation}
based on the separability of the Green's function of the Helmholtz equation in the high-frequency limit analyzed in Engquist and Zhao~\cite{Zhao-Engquist}. 

\textbf{Physical Setup.} Assume the domain of interest $\Omega\in \mathbb{R}^d, d=2, 3$ is compact. All near-field point sources and observation points are located on smooth compact manifolds, source manifold $\Gamma^s$ and observation manifold $\Gamma^o$, respectively, that are outside of $\Omega$. 

The kernel of the compact near-field operator defined by \eqref{near-field-Born} from the domain of interest $\Omega$ to the source-observation domain, $L^2(\Omega) \rightarrow L^2(\Gamma^2)$, where $\Gamma^2:=\Gamma^s\times\Gamma^o$, is 
\[
\mathcal{K}_k(x,z;y):= G_k(x,y)G_k(y,z), \quad y\in \Omega, ~~x\in \Gamma^o, ~~z\in  \Gamma^s.
\]

\textbf{Approximate Separability.} The approximate separability of the kernel ${\cal K}_k(x,z;y)$ is defined as the following: given $\epsilon >0$, there is a smallest integer $N^\epsilon_k \in {\cal N}$ such that there are $f_n(y), g_n(x,z), n=1, 2, \ldots, N^\epsilon_k,$ 
\[
\|{\cal K}_k(x,z;y)-\sum_{n=1}^{N^\epsilon_k}f_n(y)g_n(x,z)\|_{L^2(\Omega\times \Gamma^2)}\le \epsilon \|{\cal K}_k(x,z;y)\|_{L^2(\Omega\times \Gamma^2)}, \quad y\in \Omega, ~~x\in \Gamma^o, ~~z\in \Gamma^s.
\]
In our physical setup, ${\cal K}_k$ is a Hilbert-Schmidt (integral) operator from $L_2(\Omega)$ to $L_2(\Gamma^2)$ and let $\lambda_1\ge \lambda_2 \ge \cdots \ge \lambda_m \ge \cdots \ge 0$ be the singular values of  the operator,
 %singular value decomposition (SVD) of $G(\bx,\by)$,  
 then $N_k^{\epsilon}$ is the smallest integer such that $\sum_{n=N_k^{\epsilon}+1}^{\infty}\lambda^2_n\le \epsilon^2 \sum_{n=1}^{\infty}\lambda^2_n$. If one views ${\cal K}_k(x,z;y)$ as a family of functions in $L^2(\Omega)$ parameterized by $(x,z)\in \Gamma^2$ (or vice versa), this is related to the Kolmogorov $n$-width
\footnote{Kolmogorov $n$-width of a set $S$ in a normed space $W$ is its worst-case distance to the best $n$ dimensional linear subspace $L_n$:
\[
d_n(S, W) :=\inf_{L_n}\sup_{f\in S}\inf_{g\in L_n} \|f-g\|_W. 
\]
} ~\cite{Kolmogorov}.
An estimate of $N_k^{\epsilon}$ in terms of $k$ and $\epsilon$ is of great interest in both theory and practice.
For example, the separability estimate implies the rank estimate in the discrete setting when the mesh size for $\Omega$ and measurement setup resolve the wavelength $\frac{2\pi}{k}$. In the following, we will provide the separability estimate of the kernel ${\cal K}_k(x,z;y)$ in the high-frequency limit. 

The following relations are used to simplify notations: $x \gtrsim y$ means that there is a constant $\infty > c>0$ such that $x\ge cy$,  $x \lesssim y$ means that there is a constant $\infty>C>0$ such that $x\le Cy$
%and $x \sim y$ means there are two constants $0<c \le C <\infty$ such that $cy\le x \le Cy$. 
For our asymptotic results, all constants are independent of the wave number $k$ as $k\rightarrow \infty$. Generic constants $c$ and $C$ will typically depend on the geometry of $\Omega, \Gamma^s, \Gamma^o$ and their relative locations. Dependence of these constants on other factors will be specified in the statement.

\begin{theorem}\label{thm:near-field-2d}
Assume $\Gamma^s, \Gamma^o$ are smooth curves outside the compact $\Omega \subset \mathbb{R}^2$ that do not contain line segments, then there are constants $C, C_1, C_2 >0$ independent of $k$ and $\epsilon$ such that 
\[
C_1[1-(C_2\epsilon)^2]^2   k^{2-\delta}\le  N^\epsilon_k \le C k^{2+\delta}, \quad \mbox{as} ~k\rightarrow \infty.
\]
\end{theorem}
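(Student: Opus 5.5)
The plan is to reduce the estimate for the product kernel $\mathcal{K}_k(x,z;y)=G_k(x,y)G_k(y,z)$ to the known separability estimates of Engquist and Zhao~\cite{Zhao-Engquist} for the single Green's function $G_k(x,y)$, with $y\in\Omega$ and $x$ on a smooth curve $\Gamma$ disjoint from $\Omega$. In 2D, when the curve contains no line segments, those results give an upper bound $N^\epsilon_k(G)\lesssim k^{1+\delta}$. They also give a matching lower bound $N^\epsilon_k(G)\gtrsim [1-(C\epsilon)^2]\,k^{1-\delta}$. Both follow from the high-frequency asymptotics $G_k(x,y)\sim \frac{e^{ik|x-y|}}{\sqrt{k|x-y|}}$ and a count of how many "directions" $\nabla_y|x-y|$ the family $\{G_k(x,\cdot)\}_{x\in\Gamma}$ covers on $\Omega$. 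The product structure should square the exponent, giving $k^2$ up to $k^{\pm\delta}$.

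\textbf{Upper bound.} Take separable approximations $G_k(x,y)\approx\sum_{m=1}^{N_1}a_m(y)b_m(x)$ on $\Omega\times\Gamma^o$ and $G_k(y,z)\approx\sum_{n=1}^{N_2}c_n(y)d_n(z)$ on $\Omega\times\Gamma^s$. Each has relative $L^2$ error $\epsilon'$ and $N_1,N_2\lesssim k^{1+\delta/2}$. Multiplying the two expansions gives
\[
\mathcal{K}_k\approx\sum_{m,n}\big(a_m c_n\big)(y)\,\big(b_m(x)d_n(z)\big),
\]
which is of rank at most $N_1N_2\lesssim k^{2+\delta}$. To control the error I would use the smoothness of $G_k$ away from the diagonal, namely $\|G_k\|_{L^\infty(\Omega\times\Gamma)}\lesssim k^{-1/2}$, which is comparable to its $L^2$ size. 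Then the cross terms are bounded by, for example, $\|(G-G_{N_1})G\|_{L^2}\le \|G-G_{N_1}\|_{L^2}\sup|G|$. Stronger still, I would use the $L^\infty$-type (pointwise) approximation actually constructed in~\cite{Zhao-Engquist}, which is built from angular sectors and multipole/plane-wave expansions, so that products of approximants remain accurate. Choosing $\epsilon'\sim\epsilon$ then yields $N^\epsilon_k\le Ck^{2+\delta}$.

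\textbf{Lower bound.} This is the main obstacle, because one cannot simply multiply lower bounds. I would follow the Engquist--Zhao strategy: estimate the singular values through the Hilbert--Schmidt structure of the Gram operator. Consider the operator $T:L^2(\Omega)\to L^2(\Gamma^2)$ with kernel $\mathcal{K}_k$ and its normal operator $T^*T$, whose kernel is
\[
\int_{\Gamma^2}\overline{\mathcal{K}_k(x,z;y)}\mathcal{K}_k(x,z;y')\,dx\,dz.
\]
Stationary phase in $(x,z)$ shows that this kernel is microlocally a pseudo-differential-type operator whose symbol is concentrated on the set of frequencies
\[
\xi=k\big(\nabla_y|x-y|+\nabla_y|y-z|\big),\qquad x\in\Gamma^o,\ z\in\Gamma^s.
\]
The non-flatness of the curves, meaning no line segments, guarantees that this set is genuinely two-dimensional with area $\gtrsim k^2$ for $y$ in a fixed subregion. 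Consequently there are $\gtrsim k^{2-\delta}$ nearly orthogonal test functions in $L^2(\Omega)$: wave packets localized at scale $k^{-1+\delta/2}$ in $y$ and in distinct frequency cells. On each of these, $\|Tf\|$ is comparable to $\|T\|\,\|f\|$ up to constants. The inequality $\sum_{n>N}\lambda_n^2\le\epsilon^2\sum\lambda_n^2$, combined with the ratio $\|T\|_{HS}^2\lesssim k^{2}\|T\|^2$, then forces $N\ge C_1[1-(C_2\epsilon)^2]^2k^{2-\delta}$. Here the squared factor arises from applying the energy-capture argument of~\cite{Zhao-Engquist} once in each of the two variables. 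The delicate points are the uniformity of the stationary-phase estimates and the loss $k^{\delta}$ coming from the packet scale. I expect this step to require the most care.
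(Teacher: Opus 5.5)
\textbf{Upper bound.} Your upper bound is a valid alternative to the paper's route; the paper gets both bounds by feeding its decorrelation estimate into the Engquist--Zhao theorems. Multiplying the two expansions gives rank at most $N_1N_2\lesssim k^{2+\delta}$.

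The cross term $G_{N_1}(G'-G'_{N_2})$ needs no pointwise approximation if you take $G_{N_1}$ to be the truncated SVD. Then $G_{N_1}(\cdot,y)$ is an orthogonal projection of $G_k(\cdot,y)$ in $L^2(\Gamma^o)$, so $\sup_y\|G_{N_1}(\cdot,y)\|_{L^2(\Gamma^o)}\le\sup_y\|G_k(\cdot,y)\|_{L^2(\Gamma^o)}\lesssim k^{-1/2}$. Hence both error terms are $\lesssim\epsilon' k^{-1}\sim\epsilon'\|\mathcal{K}_k\|_{L^2(\Omega\times\Gamma^2)}$.

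\textbf{Lower bound: the gap.} The claim that your packets satisfy $\|Tf\|\approx\|T\|\,\|f\|$ is false in general. The symbol of $T^*T$ is the push-forward density of $(x,z)\mapsto k(u_1+u_2)$, with $u_1=\frac{y-x}{|y-x|}$ and $u_2=\frac{y-z}{|y-z|}$. Its Jacobian vanishes when $x,y,z$ are collinear. For curves surrounding $\Omega$, the transmission set $u_1=-u_2$ collapses onto $\xi=0$ and gives a $1/|\xi|$ singularity. This is the near-field analogue of the weight $\frac{2}{|p|\sqrt{1-|p|^2}}$ in Section 3.1.

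Concretely, $f\equiv 1$ on $\Omega$ already gives $\|Tf\|^2\sim k^{-3}$, by stationary phase transversal to the segment $[x,z]$. By contrast, $\|T\|_{HS}^2\sim k^{-2}$ is spread over $\sim k^2$ phase-space cells, so a generic packet gets only $\sim k^{-4}\|f\|^2$. Thus $\|T\|$ overshoots typical packets by a factor $k$, and $\|T\|_{HS}^2/\|T\|^2\lesssim k$.

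The final inference also does not close. Grant the packets, and also the near-orthogonality of their images $Tf_j$, which you would additionally need. Then $M$ singular values $\ge c\|T\|$ together with $\sum_{n>N}\lambda_n^2\le\epsilon^2\|T\|_{HS}^2\lesssim\epsilon^2k^2\|T\|^2$ give only $N\ge M-C\epsilon^2k^2$. This is vacuous for $M\sim k^{2-\delta}$. Your explanation of the square (``once in each variable'') has no counterpart either, since $N^\epsilon_k$ separates $y$ from the pair $(x,z)$.

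\textbf{The missing idea.} Use the product structure at the level of correlations rather than ranks, and use the fourth moment rather than $\|T\|$. Because $\mathcal{K}_k$ is a product and $\Gamma^2=\Gamma^o\times\Gamma^s$, the Gram kernel factors exactly:
\[
C(y_1,y_2):=\langle\mathcal{K}_k(\cdot,y_1),\mathcal{K}_k(\cdot,y_2)\rangle_{\Gamma^2}=\langle G_k(\cdot,y_1),G_k(\cdot,y_2)\rangle_{\Gamma^o}\,\langle G_k(y_1,\cdot),G_k(y_2,\cdot)\rangle_{\Gamma^s}.
\]
Stationary phase along each curve gives decay $k^{-\beta_i}$ with $\beta_i\ge\frac12$. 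The stationary points are the isolated points where the line through $y_1,y_2$ meets the curve; keeping them isolated is where the no-line-segment hypothesis enters, not in making your frequency set two-dimensional. Endpoints contribute $k^{-1}$. The powers add to $\beta_1+\beta_2\ge 1=\frac d2$.

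Integrating $|C|^2$ over $\Omega\times\Omega$ gives $\sum_n\lambda_n^4\lesssim k^{-2+\delta}\big(\sum_n\lambda_n^2\big)^2$. The borderline power costs only a logarithm, and the symbol singularities above do no harm here. Then $(1-\epsilon^2)\sum_n\lambda_n^2\le\sum_{n\le N}\lambda_n^2\le\sqrt N\big(\sum_n\lambda_n^4\big)^{1/2}$ yields the stated bound, squared factor included.

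In your microlocal language, the count should be $(\iint\rho)^2/\iint\rho^2$ up to constants, which tolerates integrable singularities of the symbol $\rho$. It should not be $\iint\rho/\sup\rho$.
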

\begin{proof}

We first estimate the decorrelation rate between $G_k(\cdot,y_1)$ and $G_k(\cdot,y_2)$ as functions along $\Gamma^o$
following the argument in Theorem 2.1 and Remark 2.4 in ~\cite{Zhao-Engquist}.  Given two points $y_1, y_2\in \Omega$, the ray, which is a straight line in a homogeneous medium, passing through the two points is a line, denoted by $l_{y_1}^{y_2}$.

Case 1: If $\Gamma^o$ is a closed curve without straight pieces, $l_{y_1}^{y_2}$ intersects $\Gamma^o$ outside the interval between $y_1$ and $y_2$ at a finite number (most likely two) of points. Since $\Gamma^o$ is a closed smooth curve of dimension 1, these intersections are points of stationary phase for the oscillatory function $G_k(x,y_1)\bar{G}_k(x,y_2)$ of $x$ along $\Gamma^o$ and contribute to the leading terms for the integral of order $\mathcal{O}(k^{-\frac{1}{2}})$ according to the stationary phase theorem \cite{evans, Zhao-Engquist}, i.e., 
\[
|\left\langle G_k(\cdot, y_1),G_k(\cdot,y_2)\right\rangle_{\Gamma^o}|=|\int_{\Gamma^o}G_k(x,y_1)\bar{G}_k(x,y_2) d\Gamma^o |\lesssim k^{-\frac{1}{2}}.
\]

Case 2: If $\Gamma^o$ is not closed, there are two situations: 
\\
a) The ray $l_{y_1}^{y_2}$ still intersects $\Gamma^o$ at a finite number of points. Then these stationary points still contribute to the leading terms and yield the above estimate.
\\
b) The ray $l_{y_1}^{y_2}$ does not intersect $\Gamma^o$. Using integration by parts, the leading terms of the contribution come from the boundary, which are the endpoints of $\Gamma^o$, with order $\mathcal{O}(k^{-1})$.

In summary, the following estimate holds 
\begin{equation}\label{eq:decorrelation-2d-1}
|\left\langle G_k(\cdot, y_1),G_k(\cdot,y_2)\right\rangle_{\Gamma^o}|=|\int_{\Gamma^o}G_k(x,y_1)\bar{G}_k(x,y_2) d\Gamma^o |\lesssim k^{-\beta_1}, \quad \beta_1 \ge \frac{1}{2}.
\end{equation}

For the same reason, we have 
\begin{equation}\label{eq:decorrelation-2d-2}
|\left\langle G_k(y_1, \cdot),G_k(y_2,\cdot)\right\rangle_{\Gamma^s}|=|\int_{\Gamma^s}G_k(y_1,z)\bar{G}_k(y_2, z) d\Gamma^s |\lesssim k^{-\beta_2},
\quad \beta_2 \ge \frac{1}{2}.
\end{equation}
Combining the above two estimates, we have 
\begin{equation*}
\begin{array}{l}
|\left\langle {\cal K}_k(\cdot, y_1),{\cal K}_k(\cdot,y_2)\right\rangle_{\Gamma^2}|=|\iint_{\Gamma^2}{\cal K}_k(x,z;y_1)\bar{\cal K}_k(x,z;y_2) d\Gamma^2 |
\\
=|\left\langle G_k(\cdot, y_1),G_k(\cdot,y_2)\right\rangle_{\Gamma^o}||\left\langle G_k(y_1, \cdot),G_k(y_2,\cdot)\right\rangle_{\Gamma^s}|\lesssim k^{-\beta_1-\beta_2}\lesssim k^{-1}.
\end{array}
\end{equation*}
For $\Omega \subset \mathbb{R}^d, d=2$, the decorrelation power $\beta_1+\beta_2\ge \frac{d}{2}=1$. The result comes from a direct application of Theorem 3.1 and Theorem 3.2 in \cite{Zhao-Engquist}.
\end{proof}

\begin{theorem}\label{thm:near-field-3d}
Assume $\Gamma^s, \Gamma^o$ are smooth manifolds outside the compact $\Omega \subset \mathbb{R}^3$ that do not contain line segments. If one of $\Gamma^s, \Gamma^o$ is a surface and the other is a surface or a curve,  then there are constants $C, C_1, C_2 >0$ independent of $k$ and $\epsilon$ such that 
\[
C_1[1-(C_2\epsilon)^2]^2   k^{3-\delta}\le  N^\epsilon_k \le C k^{3+\delta},
\quad \mbox{as} ~k\rightarrow \infty\]
\end{theorem}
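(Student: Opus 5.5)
The plan is to mirror the proof of Theorem~\ref{thm:near-field-2d}. We reduce the rank estimate to a decorrelation estimate for the kernel $\mathcal{K}_k(\cdot,\cdot;y)$ viewed as a family of functions on $\Gamma^2=\Gamma^s\times\Gamma^o$ parameterized by $y\in\Omega$. We then invoke Theorem 3.1 and Theorem 3.2 of \cite{Zhao-Engquist}, which convert a decorrelation power $\beta\ge \frac{d}{2}$ into the two-sided bound $C_1[1-(C_2\epsilon)^2]^2k^{d-\delta}\le N_k^\epsilon\le Ck^{d+\delta}$. With $d=3$, the task is to show
\[
|\langle \mathcal{K}_k(\cdot,y_1),\mathcal{K}_k(\cdot,y_2)\rangle_{\Gamma^2}|=|\langle G_k(\cdot,y_1),G_k(\cdot,y_2)\rangle_{\Gamma^o}|\,|\langle G_k(y_1,\cdot),G_k(y_2,\cdot)\rangle_{\Gamma^s}|\lesssim k^{-3/2}.
\]
The factorization is exact, since $\mathcal{K}_k$ is a product of a function of $(x,y)$ and a function of $(y,z)$.

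The first step is to estimate the single-manifold inner products by stationary phase. In 3D, $G_k(x,y)=e^{ik|x-y|}/(4\pi|x-y|)$, so $G_k(x,y_1)\bar G_k(x,y_2)$ has phase $k(|x-y_1|-|x-y_2|)$ and a smooth amplitude, since $\Gamma^o,\Gamma^s$ are separated from $\Omega$. Stationary points on the manifold are exactly the points where the tangential gradient of $|x-y_1|-|x-y_2|$ vanishes. For generic $(y_1,y_2)$ these are isolated intersections of the line $l_{y_1}^{y_2}$ with the manifold, or degenerate configurations of measure zero in $\Omega\times\Omega$.

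\begin{itemize}
\item For a surface, a nondegenerate stationary point in dimension 2 contributes $O(k^{-1})$, and boundary contributions, if any, decay at least as fast. So $\beta\ge 1$.
\item For a curve, the 1D stationary phase argument of Theorem~\ref{thm:near-field-2d} gives $\beta\ge \frac12$. The endpoint contributions are $O(k^{-1})$ when the line misses the curve.
\end{itemize}

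The hypothesis that the manifolds contain no line segments (and, for a surface, no flat pieces aligned with the ray) is what rules out a continuum of stationary points, which would destroy the decay.

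Combining the two factors, when one manifold is a surface and the other is a surface or a curve, we get $\beta_1+\beta_2\ge 1+\frac12=\frac32=\frac d2$. This is exactly the threshold required by Theorems 3.1 and 3.2 of \cite{Zhao-Engquist}. The lower bound also uses that $\|\mathcal{K}_k(\cdot,y)\|_{\Gamma^2}$ is bounded above and below uniformly in $y$ and $k$, which holds because $|G_k|$ is $k$-independent and nonvanishing away from $\Omega$. This also explains the hypothesis: two curves would give only $\beta_1+\beta_2=1<\frac32$.

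The main obstacle is making the stationary phase bound uniform. The estimate $|\langle\cdot,\cdot\rangle|\lesssim k^{-\beta}$ must hold for $y_1,y_2$ separated by $|y_1-y_2|\gtrsim k^{-1+\delta'}$, with constants independent of the pair. This degenerates as the Hessian of the phase on the surface becomes singular, or as stationary points approach the boundary of the manifold. I would handle this as \cite{Zhao-Engquist} does: isolate a small neighborhood of degenerate configurations, accept a $k^{\delta}$ loss there, and let that loss be absorbed into the $\pm\delta$ in the exponents.
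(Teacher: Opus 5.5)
Your proposal is correct and follows the paper's proof essentially step by step: factor the $\Gamma^2$ correlation into the $\Gamma^o$ and $\Gamma^s$ inner products, use stationary phase to get $\beta_1\ge 1$ on the surface and $\beta_2\ge \frac12$ on the other manifold, and feed $\beta_1+\beta_2\ge\frac32=\frac d2$ into Theorems 3.1 and 3.2 of \cite{Zhao-Engquist}. One minor imprecision, which the paper shares, concerns your claim that generic stationary points are only intersections with $l_{y_1}^{y_2}$: stationary points of $|x-y_1|-|x-y_2|$ on a manifold also include tangencies with its hyperboloidal level sets, and these can occur for an open set of pairs $(y_1,y_2)$, but nondegenerate ones contribute at the same order ($k^{-1}$ on a surface, $k^{-1/2}$ on a curve), so your exponents are unaffected.
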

\begin{proof}
 Without loss of generality, take $\Gamma^o$ to be a smooth surface without line segments. Again, we follow the argument in Theorem 2.1 and Remark 2.4 in ~\cite{Zhao-Engquist}. Given two points $y_1, y_2\in \Omega$, the ray, which is a straight line in a homogeneous medium, passing through the two points is a line, denoted by $l_{y_1}^{y_2}$.

 Case 1: If $\Gamma^o$ is a closed surface without line segments, $l_{y_1}^{y_2}$ intersects $\Gamma^o$ outside the interval between $y_1$ and $y_2$ at a finite number (most likely two) of points.  Since $\Gamma^o$ is a closed smooth surface of dimension 2, these intersections are points of stationary phase for the oscillatory function $G_k(x,y_1)\bar{G}_k(x,y_2)$ of $x$ along $\Gamma^o$ and contribute to the leading terms for the integral of order $\mathcal{O}(k^{-1})$ according to the stationary phase theorem \cite{evans, Zhao-Engquist}, 
\[
|\left\langle G_k(\cdot, y_1),G_k(\cdot,y_2)\right\rangle_{\Gamma^o}|=|\int_{\Gamma^o}G_k(x,y_1)\bar{G}_k(x,y_2) d\Gamma^o |\lesssim k^{-1}.
\]

 Case 2: If $\Gamma^o$ is a surface that contains no line segment and has boundaries, there are two situations: 
\\
a) The ray $l_{y_1}^{y_2}$ still intersects $\Gamma^o$ at a finite number of points. Then these stationary points still contribute to the leading terms and yield the same estimate as above.
\\
b) The ray $l_{y_1}^{y_2}$ does not intersect $\Gamma^o$. Using integration by parts, the leading terms of the contribution come from the boundaries, which are smooth curves of dimension 1, of order $\mathcal{O}(k^{-\frac{3}{2}})$.

In summary, the following estimate holds 
\begin{equation}\label{eq:decorrelation-3d-1}
|\left\langle G_k(\cdot, y_1),G_k(\cdot,y_2)\right\rangle_{\Gamma^o}|=|\int_{\Gamma^o}G_k(x,y_1)\bar{G}_k(x,y_2) d\Gamma^o |\lesssim k^{-\beta_1}, \quad \beta_1 \ge 1.
\end{equation}

Now consider $\Gamma^s$.
\begin{enumerate}
    \item 
 If $\Gamma^s$ is a smooth surface and does not contain line segments, the same estimate as for $\Gamma^o$ holds. 
 \item 
 If $\Gamma^s$ is a smooth curve without line segments, we get the same estimate as in Case 2 in Theorem \ref{thm:near-field-2d}. 
\end{enumerate}
In either case, we have 
\begin{equation}\label{eq:decorrelation-3d-2}
|\left\langle G_k(y_1, \cdot),G_k(y_2,\cdot)\right\rangle_{\Gamma^s}|=|\int_{\Gamma^s}G_k(y_1,z)\bar{G}_k(y_2, z) d\Gamma^s |\lesssim k^{-\beta_2},
\quad \beta_2 \ge \frac{1}{2}.
\end{equation}

Combining estimates $\Gamma^o$ and $\Gamma^s$, we have 
\begin{equation*}
\begin{array}{l}
|\left\langle {\cal K}_k(\cdot, y_1),{\cal K}_k(\cdot,y_2)\right\rangle_{\Gamma^2}|=|\iint_{\Gamma^2}{\cal K}_k(x,z;y_1)\bar{\cal K}_k(x,z;y_2) d\Gamma^2 |
\\
=|\left\langle G_k(\cdot, y_1),G_k(\cdot,y_2)\right\rangle_{\Gamma^o}||\left\langle G_k(y_1, \cdot),G_k(y_2,\cdot)\right\rangle_{\Gamma^s}|\lesssim k^{-\beta_1-\beta_2}\lesssim k^{-\frac{3}{2}}.
\end{array}
\end{equation*}
For $\Omega \subset \mathbb{R}^d, d=3$, the decorrelation power $\beta_1+\beta_2\ge \frac{d}{2}=\frac{3}{2}$. The result comes from a direct application of Theorem 3.1 and Theorem 3.2 in \cite{Zhao-Engquist}.

\end{proof}

\begin{remark}
The above separability result can be extended to a heterogeneous medium inside $\Omega$ as long as there are no trapped rays inside the domain of interest $\Omega$. Hence, each ray passing through two points $y_1,y_2\in \Omega$ will leave $\Omega$ and become a straight line. The geometric scenarios are similar to those discussed above as shown in Section 2.2 in \cite{Zhao-Engquist}.
\end{remark}
\begin{remark}
The above analysis can also be applied to far-field data. For example, the kernel for the far-field operator for 2D in homogeneous medium, $\Jacobian_0$ in \eqref{eq:J0}, is $e^{i k(\hat\theta-\hat{x})\cdot y}$.  The correlation between the kernels associated with two points $y_1,y_2\in \Omega$ is proportional to \(J_0^2(k|y_1-y_2|)\), and therefore decays $~\simeq (k|y_1-y_2|)^{-1}$ as $k\rightarrow \infty$. Hence, one can derive the same separability results. 
\end{remark}

The above estimates show that with dimension match between the physical space and the measurement space, i.e., $\Gamma^o\times \Gamma^s$, 
the effective rank is $O(k^2)$ generically (bar those special conditions stated in the Theorems), if the mesh size and measurement resolution resolve the wavelength. However, the constant depends on the concrete settings of the physical domain, the medium, and the measurement configuration, which is difficult to estimate. In practice, after normalizing the physical domain in a unit ball, one may use RSVD algorithms to numerically estimate the effective rank in a homogeneous medium first and use it as a reference for subsequent iterations (see Section~\ref{sec: numerical} for more detailed discussion). 

In Figure ~\ref{fig:spectrum for both} we plot the spectrum of the near-field operator \eqref{near-field-Born} corresponding to the following setup: the domain of interest $\Omega$ is a unit disk $B$ and both the observation and the sources are collocated on the boundary of a disk of radius 2 co-centered with $\Omega$.
% We also introduce the $p$-transform for source-observation pairs corresponding to their far-field analogues.
% \begin{figure}[H]
%     \centering
%     \includegraphics[width=0.8\linewidth]{near field/SVD/nearfield_spectrum_q0.png}
%     \caption{Spectrum of \eqref{near-field-Born} with different $k$; the dash line indicates the Shannon number.}
%     \label{fig:SVD near field}
% \end{figure}
\begin{figure}[H]
\centering
\begin{tabular}{@{}c@{\hspace{6pt}}c@{}}
\includegraphics[width=0.48\linewidth]{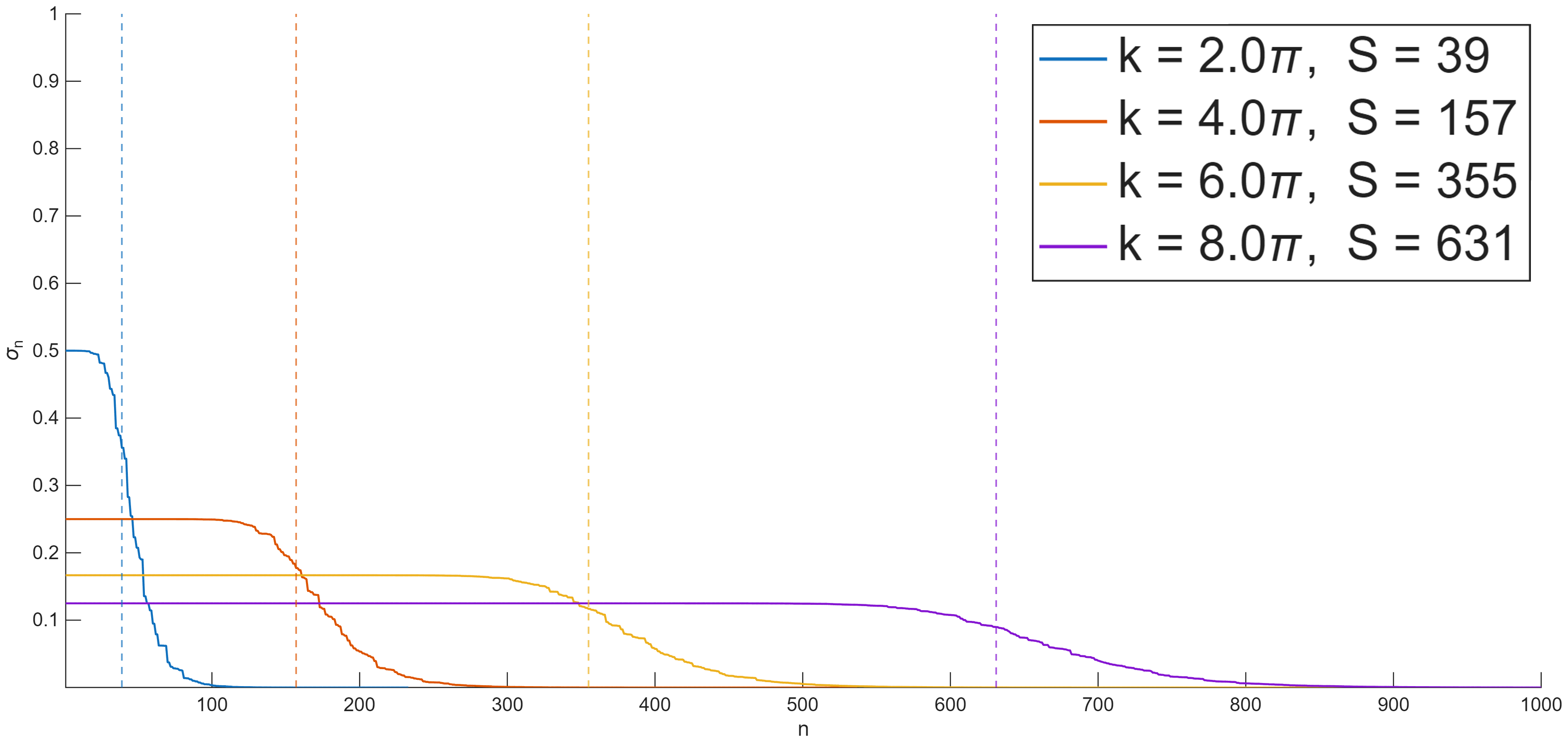}
&
\includegraphics[width=0.48\linewidth]{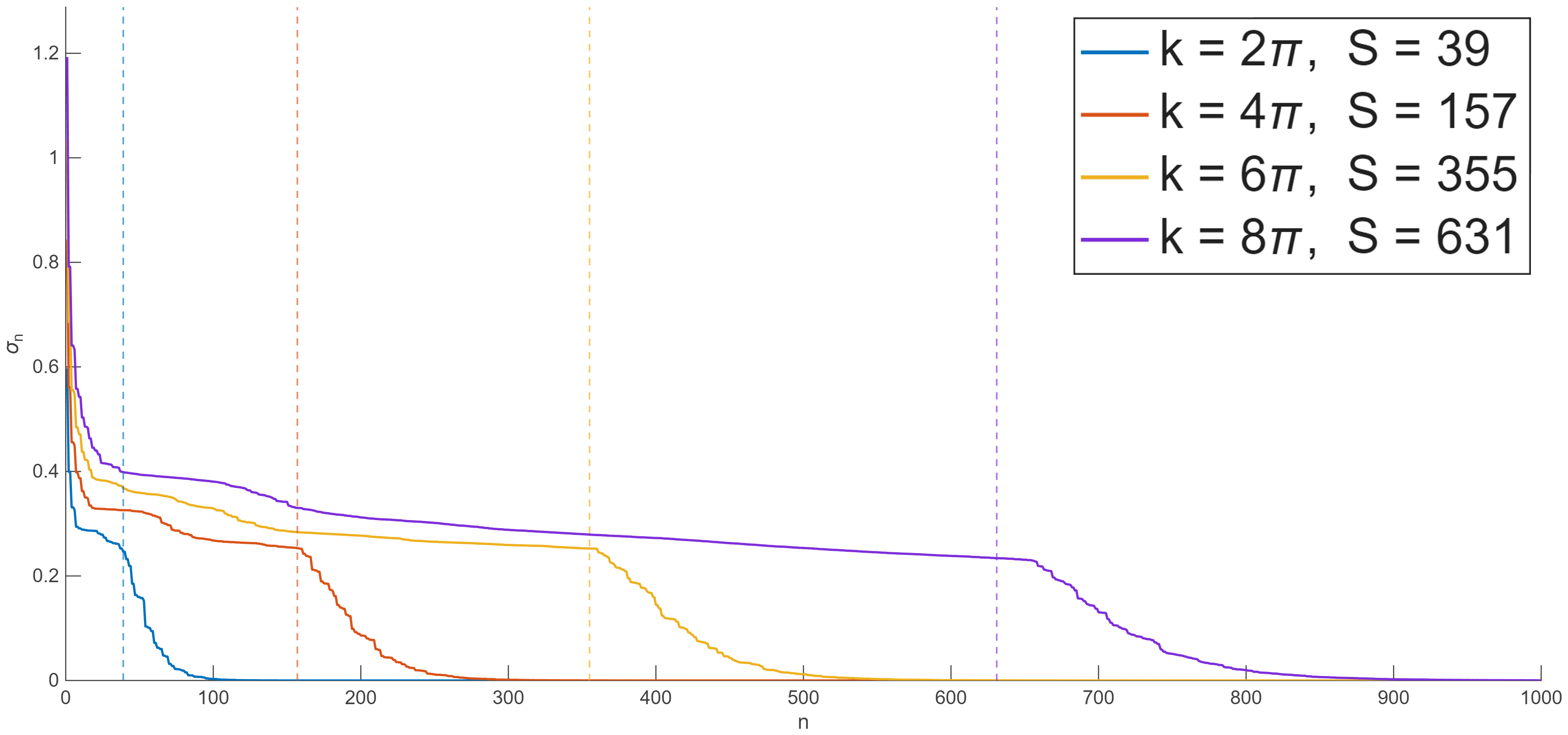}
\\[2pt]
(a) far-field operator $\hat{\Jacobian}_0$ \eqref{eq:J0} with $\Omega = B$ & (b) near-field operator \eqref{near-field-Born}
\end{tabular}
\caption{Spectrum of the linearized operator with different $k$. The dashed line indicates the Shannon number.}
\label{fig:spectrum for both}
\end{figure}

\section{iTSVD Algorithm}\label{sec: algorithm}
iTSVD is an iterative Newton's method that solves the linearized problem at each step using TSVD based on effective rank estimation and RSVD for efficient computation of leading singular values and singular vectors. $\hat{\mathcal{J}}_{q_0}$ is compact from $L^2(\Omega)$ to
$L^2(B)$. 
By compactness, there exists a singular value and corresponding singular mode system $\{\hat{\sigma}_i,\hat{u}_i,\hat{v}_i\}_{i=1}^\infty$ such that
\begin{equation}\label{eq:SVD}
\hat{\mathcal{J}}_{q_0}\hat{v}_i= \hat{\sigma}_i \hat{u}_i,
\qquad 
\{\hat{u}_i\}\subset L^2(B),\ \{\hat{v}_i\}\subset L^2(\Omega),
\end{equation}
where $\{\hat{u}_i\}$ and $\{\hat{v}_i\}$ form orthonormal bases of $L^2(B)$
and $L^2(\Omega)$ respectively. Given the current state $q_0$ and the corresponding singular  decomposition of the linearized operator $\hat{\Jacobian}_{q_0}$ by \eqref{eq:SVD} and the optimization \eqref{eq:min_p}, the TSVD solution with effective rank $N$ from measurement mismatch $\delta\hat{u}^\infty=\hat{u}^\infty-\mathcal{F}q_0$ yields the new update
\begin{equation}\label{eq:update}
a_i :=\langle \delta\hat{u}^\infty, \hat{u}_i\rangle_{L^2(B)}, ~ i=1, \ldots,N \quad  \delta q= \sum_{i=1}^N \frac{1}{\hat{\sigma}_i} a_i\, \hat{v}_i(y),\quad  \hat{q}(y)= q_0(y)+\delta q.
\end{equation}

The choice of effective rank $N$ controls the trade-off between approximation accuracy and numerical stability for the linearized problem. While Tikhonov regularization is a classical technique for stabilizing the inversion of modes with small singular values, it does so by introducing a regularization bias. Here, we instead use truncated singular value decomposition (TSVD) for the inversion.
% \textcolor{cyan}{While Tikhonov regularization stabilizes the inversion by smoothly attenuating singular modes, TSVD leaves retained modes unattenuated and discards those below a prescribed threshold.} %Consequently by \eqref{eq:Shannon number}, we take $N = S$ for analysis. 
% With a physical argument introduced by \cite{Landau_Pollak} and the numerical stability concern, we perform truncated SVD for the inversion. 
An effective numerical technique for approximating the leading singular values and modes in a low-rank structure is RSVD. Hence, we exploit Algorithm 4.2 in \cite{Random_SVD} for the implementation of RSVD to acquire a fixed-precision approximation: given a matrix $A \in \mathbb{R}^{m\times n}$, a tolerance $\epsilon$, and an integer $r$, produce an orthonormal matrix $Q$ such that
\begin{equation}\label{eq:fixed-precision}
\bigl\|(I-QQ^*)A\bigr\|_{2} \le \epsilon ,
\end{equation}
with probability no smaller than $ 1 - \min\{m,n\}10^{-r}$. The key idea of RSVD to obtain a good approximation of $A$ is to have enough samples such that the singular values (in decreasing order) after the number of samplings is small enough w.r.t. $\epsilon$, which is justified by an estimate of the fixed-rank approximation; see Theorem 10.8 in \cite{Random_SVD}. The consequence is twofold for step 7 in Algorithm ~\ref{alg:iterative-low-rank}. First, the number of columns of $Q$ is close to the effective rank of $A$ dictated by $\epsilon$. Second, based on the effective rank estimate $S$ of $\hat{\Jacobian}$ and the fast transition property of its spectrum, we propose to use $2S + r$ samples in practice since $r$ dominates the number of samples when $k$ is small, while $S$ dominates when $k$ is large; the extra $S$ simply acts as a safeguard since the width of the transition region of the spectrum is $o(S)$. This allows us to perform RSVD in one bulk of samplings, which is computationally efficient compared to gradual rank-revealing with one-by-one sampling. Furthermore, RSVD only requires matrix-vector multiplication, which is crucial in our case since forming the large Jacobian matrix for the linearized operator $\hat\Jacobian_{q_0}$ for a modest-size inverse scattering problem can be memory prohibitive. Moreover, the application of $\hat\Jacobian_{q_0}$ on a set of random perturbations $\delta q$ is given by \eqref{eq:p_integral}, which only requires solving 
a set of forward and adjoint problems in the same medium, the cardinality of which equals the number of sources and the number of observations, respectively, i.e., $u^t_{q_0}, \lambda^t_{q_0}$. Also, in the discrete formulation, the $L^2$ norm corresponds to the weighted square sum in \eqref{eq:weighted discrete}.

Since $\delta\hat{u}^\infty \in L^2(B,\mathbb{C})$, $\hat{\Jacobian}_{q_0}: L^2(\Omega;\mathbb{R}) \to L^2(B,\mathbb{C})$, and $\delta q \in L^2(\Omega;\mathbb{R})$, we split the real and imaginary part of $\hat{\Jacobian}_{q_0}$ in the implementation to obtain a viable basis for $\delta q$, i.e.,
\begin{equation}\label{eq:real-aug-ls_continuous}
\min_{\delta q}
\left\|
\begin{bmatrix}
\Re(\delta\hat{u}^\infty) \\
\Im(\delta\hat{u}^\infty)
\end{bmatrix}
-
\begin{bmatrix}
\Re(\hat{\Jacobian}_{q_0})\\
\Im(\hat{\Jacobian}_{q_0})
\end{bmatrix}
\,\delta q
\right\|_{L^2(B,\mathbb{R}^2)}=: \|\delta\hat{u}^\infty_R - \hat{\Jacobian}_{q_0,R}\,\delta q \|_{L^2(B,\mathbb{R}^2)}.
\end{equation}
Equivalence between the norms on $L^2(B,\mathbb{C})$ and $L^2(B,\mathbb{R}^2)$ is asserted in \cite{BLECHER2026111226} so that we can carry all the estimates by considering complex data and $\hat{\Jacobian}_{q_0}$.

\begin{algorithm}[H]
\caption{Iterative Truncated SVD}
\label{alg:iterative-low-rank}
\begin{algorithmic}[1]

\Require Far-field data
         $\left\{
         u^{\infty}(\hat{x}_i;\hat{\theta}_j;k)
         \right\}_{\substack{i=1,\ldots,M_2\\ j=1,\ldots,M_1}}$,
         initial guess $q^{(0)}$,
         spectral tolerance $\epsilon>0$,
         stopping criterion $\eta>0$,
         maximum iterations $T\geq 1$

\Ensure Reconstructed contrast $\hat{q}$

% \State Build spatial grid $\{y_\ell\}$ on $B(0,1)\supset\Omega$

\For{$t=1,2,\ldots,T$}

  \State Set the current background $q_0=q^{(t-1)}$

  \State \textit{Forward solve:}
         Solve the Lippmann--Schwinger equation
         \eqref{eq:LS-plane} for
         $\{u_{q_0}^{t}(y;\hat{\theta}_j)\}_{j=1}^{M_1}$
         via block GMRES

  \State \textit{Adjoint solve:}
         For each $\hat{x}_i$, $i=1,\ldots,M_2$,
         solve \eqref{eq:adj-PDE} for
         $\lambda_{q_0}^{s}(y;\hat{x}_i)$ and set
         $
           \lambda_{q_0}^{t}(y;\hat{x}_i)
           =
           e^{-ik\hat{x}_i\cdot y}
           +
           \lambda_{q_0}^{s}(y;\hat{x}_i).
         $

  \State \textit{Assemble Jacobian:}
         Form the discrete operator via
         \eqref{eq:operator-adj}, with columns indexed by
         $y$ and rows by
         $(\hat{x}_i,\hat{\theta}_j)$, and apply the $p$-transform

  \State \textit{Data mismatch:}
         Compute
         $
           \delta\tilde{u}^{\infty}
           =
           u^{\infty}_{\mathrm{meas}}
           -
           \mathcal{F}(q_0),
         $
         apply the $p$-transform and form the real-augmented system
         \eqref{eq:real-aug-ls_continuous}.
         In the following steps, denote its Jacobian and
         residual by $\hat{J}_R$ and $\delta\hat{u}_R^{\infty}$,
         respectively

  \State \textit{Randomized SVD:}
         Apply Algorithm~4.2 in \cite{Random_SVD}
         to $\hat{J}_R$ with tolerance $\epsilon$ to obtain
         $\hat{J}_R\approx U\Sigma V^*$.
         Compute $\hat{S}$ in
         Section ~\ref{sec: implementation}
         and retain $\hat{S}$ leading modes

  \State \textit{Update:}
         Set
    $a_i=\langle\delta\hat{u}_R^{\infty},U_i\rangle$,
         $i=1,\ldots,\hat{S}$, and
         \[
           q^{(t)}
           =
           q^{(t-1)}
           +
           \sum_{i=1}^{\hat{S}}
           \frac{a_i}{\sigma_i}\,V_i.
         \]
  \If{$\left\|q^{(t)}-q^{(t-1)}\right\|_{L^2(B(0,1))}<\eta$}
    \State \Return $\hat{q}=q^{(t)}$
    % \Comment{Small-update stopping criterion}
  \EndIf

\EndFor

\State \Return $\hat{q}=q^{(T)}$

\end{algorithmic}
\end{algorithm}

\section{Error Analysis}
\label{sec: analysis}
In this section, we derive the error reduction at each Newton step for the iTSVD algorithm for a fixed frequency, then analyze the sufficient condition under which incorporating multi-frequency data reduces the reconstruction error.
Decompose the measurement mismatch as $\hat{\mathcal{F}}(q)-\hat{\mathcal{F}}(q_0)= \hat{\Jacobian}_{q_0}(q-q_0)+\hat{R}(q-q_0)$ after $p$-transform, the linearization part plus a nonlinear remainder term $\hat{R}(q-q_0)$. For each \(i\ge 1\), we have  the coefficient $a_i$ into
\begin{equation}\label{a_i}
    a_i:= \left\langle \mathcal{F}(q)-\mathcal{F}(q_0), \hat{u}_i \right\rangle
=
\left\langle \hat{\Jacobian}_{q_0}(q-q_0),\hat{u}_i\right\rangle_{L^2(B)}
+
\left\langle \hat{R}(q-q_0),\hat{u}_i\right\rangle_{L^2(B)}.
\end{equation}

For fixed $q_0$, define the forward operator and the resolvent
\[
 L_{q_0}=\Delta+k^2(1+q_0),\qquad
 \mathcal R_{q_0}(k)=L_{q_0,\mathrm{out}}^{-1}.
\]
Choose $\chi\in C_c^\infty(\mathbb R^2)$ such that
$0\leq\chi\leq1$ and $\chi\equiv 1$ on $\Omega$. 
\\
% \textcolor{cyan}{Label the assumption and cite it for all lemmas or theorems requiring it}
{\bf Assumption}: We consider the non-trapping and non-resonance case for the medium $q_0(x)$ and define the following bounds (see \cite{Resolvent,field_O1}), with emphasis on the dependence on $k$:
\begin{align}
 \beta_k
 &=k^2\|\chi\mathcal R_{q_0}(k)\chi\|_{L^2(\mathbb R^2)\to L^2(\mathbb R^2)},
 \label{eq:rem-beta}\\
 U_k
 &=\sup_{\hat\theta\in \mathbb{S}}
   \|u_{q_0}^t(\cdot;\hat\theta)\|_{2}.
 \label{eq:rem-U}
\end{align}

\begin{lemma}[Remainder estimate]\label{lem:remainder}
Let $q = q_0 + \delta q$, where $q_0$ satisfies the above assumption, and $\delta q \in H_0^2(\Omega)$ satisfies
\begin{equation}
 \beta_k\|\delta q\|_{\infty}\le \tau <1.
 \label{eq:rem-smallness}
\end{equation}
There exists a positive constant \(C(k) = O(k)\), where the constant in $O$ depends on $q_0$, $\Omega$, and $\tau$ such that
\begin{equation}\label{eq:remainder}
\|\hat{R}(\delta q)\|_{L^2(B)}
\leq
C(k)\|\delta q\|_\infty^2.
\end{equation}
% For fixed $q_0$, $\chi$, and $\Omega$, one may choose $C(k)=O(k)$ provided $U_k=O(1)$ and $1-\beta_k\|\delta q\|_\infty$ is bounded below by a positive constant independent of $k$.
\end{lemma}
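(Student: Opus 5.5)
We will write the remainder explicitly through the Lippmann--Schwinger structure and bound it with the resolvent bound $\beta_k$. Let $u=u^t_q$ and $u_0=u^t_{q_0}$ for a fixed incident direction $\hat\theta$. Set $w:=u-u_0$. Subtracting the two Helmholtz equations gives
\[
L_{q_0}w=-k^2\,\delta q\,u .
\]
With the outgoing condition this yields $w=-k^2\mathcal R_{q_0}(k)(\delta q\,u)$. Since $\delta q$ is supported in $\Omega$, where $\chi\equiv1$, we have $\chi w=-k^2\chi\mathcal R_{q_0}\chi(\delta q\,\chi u)$.

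\textbf{Step 1: the first-order field.} The linearized field is $\delta u=F'(q_0)[\delta q]=-k^2\mathcal R_{q_0}(\delta q\,u_0)$ by \eqref{eq:lin-PDE}. Set $r:=w-\delta u$. Then
\[
r=-k^2\mathcal R_{q_0}(\delta q\,w),
\]
which is quadratic in $\delta q$.

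\textbf{Step 2: bound $w$.} On $\Omega$ we have $\|w\|_2\le \beta_k\|\delta q\|_\infty(\|u_0\|_2+\|w\|_2)$. The smallness condition \eqref{eq:rem-smallness} therefore gives
\[
\|w\|_{L^2(\Omega)}\le \frac{\beta_k\|\delta q\|_\infty}{1-\tau}\,U_k .
\]
Consequently $\|r\|_{L^2(\Omega)}\le \beta_k\|\delta q\|_\infty\|w\|_2\le \frac{\beta_k^2U_k}{1-\tau}\|\delta q\|_\infty^2$.

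\textbf{Step 3: the far-field remainder.} By \eqref{eq:farfield-compact}, $\mathcal F(q)-\mathcal F(q_0)=\int_\Omega e^{-ik\hat x\cdot y}(q u-q_0u_0)\,dy$. Expanding,
\[
qu-q_0u_0=\bigl(q_0\delta u+\delta q\,u_0\bigr)+q_0 r+\delta q\, w .
\]
The bracketed term is exactly the integrand of $\Jacobian_{q_0}\delta q$ in \eqref{eq:linearized-operator}. Hence
\[
\hat R(\delta q)(p)=\int_\Omega e^{-ik\hat x\cdot y}\bigl(q_0 r+\delta q\,w\bigr)(y;\hat\theta)\,dy,\qquad (\hat x,\hat\theta)=P^{-1}(p).
\]
Cauchy--Schwarz in $y$ gives, pointwise in $p$,
\[
|\hat R(\delta q)(p)|\le |\Omega|^{1/2}\bigl(\|q_0\|_\infty\|r\|_2+\|\delta q\|_\infty\|w\|_2\bigr)\lesssim (\beta_k^2+\beta_k)\,U_k\,\|\delta q\|_\infty^2/(1-\tau).
\]
Integrating over $B$, which has finite measure, yields \eqref{eq:remainder} with
\[
C(k)\sim |B|^{1/2}|\Omega|^{1/2}(1+\|q_0\|_\infty)(\beta_k+\beta_k^2)U_k/(1-\tau).
\]
The supremum over $\hat\theta$ in $U_k$ is what makes the bound uniform in $p$. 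The regularity $\delta q\in H^2_0$ is only needed so that the fields are classical and the Fr\'echet derivative is justified.

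\textbf{Step 4: $k$-dependence (main obstacle).} To conclude $C(k)=O(k)$ we must invoke the cited non-trapping resolvent estimates \cite{Resolvent,field_O1}. These give $\|\chi\mathcal R_{q_0}(k)\chi\|\lesssim k^{-1}$, hence $\beta_k=O(k)$, and uniform field bounds $U_k=O(1)$. However, the crude product above gives $\beta_k^2U_k=O(k^2)$, not $O(k)$.

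To recover $O(k)$ I would avoid bounding $r$ through a second resolvent application. Instead I would use the adjoint representation \eqref{eq:adj-equivalence}. By that representation, $\int_\Omega e^{-ik\hat x\cdot y}q_0\, r\,dy=\int_\Omega \lambda^s_{q_0}(y;\hat x)\,\delta q\, w\,dy$. Therefore
\[
\hat R(\delta q)(p)=\int_\Omega \lambda^t_{q_0}(y;\hat x)\,\delta q(y)\,w(y;\hat\theta)\,dy .
\]
The adjoint total field also satisfies $\sup_{\hat x}\|\lambda^t_{q_0}(\cdot;\hat x)\|_2=O(1)$ by the same field bounds. We then get
\[
|\hat R|\le \sup_{\hat x}\|\lambda^t_{q_0}\|_2\,\|\delta q\|_\infty\,\|w\|_2\lesssim \beta_k U_k\|\delta q\|_\infty^2/(1-\tau)=O(k)\|\delta q\|_\infty^2 .
\]
This is the route I would actually present. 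The delicate points are verifying the uniform $O(1)$ bound for $\lambda^t_{q_0}$, which is a plane wave plus an outgoing correction handled by the same resolvent estimate, and checking that constants are independent of $p$.
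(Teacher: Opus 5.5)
Your final route in Step 4 is essentially the paper's proof: the paper likewise reduces the remainder to $\hat R(\delta q)(p)=\int_\Omega \lambda^t_{q_0}(y;\hat x)\,w(y;\hat\theta)\,\delta q(y)\,dy$ with $w=u^t_q-u^t_{q_0}$. It applies the adjoint identity directly to $w$, whose source is $\delta q\,u^t_q$, and then subtracts $\hat{\Jacobian}_{q_0}\delta q$, rather than splitting $w=\delta u+r$ as you do. It then bounds $\|w\|_2\le \beta_k U_k\|\delta q\|_\infty/(1-\beta_k\|\delta q\|_\infty)$ and concludes with Cauchy--Schwarz, giving $C(k)=\sqrt{\pi}\,U_k^2\beta_k/(1-\tau)=O(k)$.

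The point you flag as delicate is actually immediate: \eqref{eq:adj-PDE} is exactly the forward scattering problem with incident direction $-\hat x$, so $\lambda^t_{q_0}(\cdot;\hat x)=u^t_{q_0}(\cdot;-\hat x)$ and hence $\|\lambda^t_{q_0}(\cdot;\hat x)\|_2\le U_k$.
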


\begin{proof}
Write $\delta u=u^t_q-u^t_{q_0}$, then $\delta u$ is outgoing and satisfies
\begin{equation}
 L_{q_0}\delta u=-k^2\delta q(u_{q_0}^t+\delta u).
 \label{eq:rem-difference-pde}
\end{equation}
Since the source (in particular $\delta q$) in \eqref{eq:rem-difference-pde} is supported in
$\Omega$,
\[
 \chi \delta u=-k^2\chi\mathcal R_{q_0}(k)\chi\,[\delta q(u^t_{q_0}+\delta u)].
\]
$\chi=1$ on $\Omega$ gives
\[
 \|\delta u(\cdot;\hat\theta)\|_{2}
 \leq\beta_k\|\delta q\|_{\infty}\Bigl(
   \|u^t_{q_0}(\cdot;\hat\theta)\|_{2}
   +\|\delta u(\cdot;\hat\theta)\|_{2}\Bigr).
\]
Taking the supremum over $\hat{\theta}$ gives
\begin{equation}
 \sup_{\hat\theta\in \mathbb{S}}
 \|\delta u(\cdot;\hat\theta)\|_{2}
 \leq\frac{\beta_k U_k}{1-\beta_k\|\delta q\|_{\infty}}\,\|\delta q\|_{\infty}.
 \label{eq:rem-field-bound}
\end{equation}
The remainder can be written using \eqref{eq:adj-equivalence} and \eqref{eq:p_integral} as
\begin{align}
 \hat R_{q_0}(\delta q)
 & \quad =\hat{\mathcal F}(q_0+\delta q)
  -\hat{\mathcal F}(q_0)-\hat J_{q_0}\delta q
 \notag\\
 &\quad=\int_\Omega e^{-ik\hat x(p)\cdot y}
              \bigl(\delta q(y)u^t_q(y;\hat\theta(p))
                    +q_0(y)\delta u(y;\hat\theta(p))\bigr)\,dy -\hat J_{q_0}\delta q
 \notag\\
 &\quad=\int_\Omega\lambda_{q_0}^t(y;\hat x(p))
                u^t_q(y;\hat\theta(p))\delta q(y)\,dy - \hat J_{q_0}\delta q
 \notag\\
 &\quad=\int_\Omega\lambda_{q_0}^t(y;\hat x(p))
                  \delta u(y;\hat\theta(p))\delta q(y)\,dy.               
 \label{eq:rem-mixed-identity}
\end{align}
Since 
$\|\lambda_{q_0}^t(\cdot;\hat x)\|_{L^2(\Omega)}\leq U_k$,
Cauchy--Schwarz and \eqref{eq:rem-field-bound} imply
\[
 |\hat R_{q_0}(\delta q)|
 \leq U_k\|\delta q\|_{\infty}
       \|\delta u(\cdot;\hat\theta(p))\|_{2}
 \leq\frac{U_k^2\beta_k}{1-\beta_k\|\delta q\|_{\infty}}\,\|\delta q\|_{\infty}^2.
\]
Finally, taking the $L^2(B)$ norm gives
\begin{equation}
 \|\hat R_{q_0}(\delta q)\|_{L^2(B)}
 \leq
 \frac{\sqrt{\pi}\,U_k^2\beta_k}
      {1-\beta_k\|\delta q\|_{\infty}}
 \|\delta q\|_{\infty}^2.
 \label{eq:rem-full-bound}
\end{equation}

In particular for the non-trapping case, by the weighted semiclassical resolvent estimate stated in Theorem 2
% \((0.2)\)
in
Robert--Tamura \cite{Resolvent}
% (quoted from Yafaev ?)
,
there exists a constant \(C_{q_0,\chi}>0\), 
\begin{equation}\label{eq:resolvent estimate}
    \|\chi \mathcal{R}_{q_0}(k+i0)\chi\|_{L^2(\mathbb R^2)\to L^2(\mathbb R^2)}
\le C_{q_0,\chi}\,k^{-1},
\end{equation}
% The coefficient in \eqref{eq:rem-full-bound} gives the stated remainder estimate.
which yields $\beta_k\le C_{q_0,\chi}k$. With ~\eqref{eq:rem-smallness}, we take
\[
C(k) = \frac{\sqrt{\pi}\,U_k^2C_{q_0,\chi}k}
      {1-\tau}.
\]
Combining the argument in
\cite[Lemma~9.4]{field_O1} where the non-trapping resolvent estimate implies that
$U_k=O(1)$ uniformly in $k$ in ~\eqref{eq:rem-U}, we get $C(k) = O(k)$.
\end{proof}

The following result provides a sufficient condition for an error reduction in one step of iTSVD.
\begin{theorem}\label{thm:contraction}
Let $q, q_0 \in H_0^2(\Omega)$, and assume
\begin{align}&\label{eq:regularity}\sum_{i=1}^N |\left\langle q-q_0,\hat{v}_i\right\rangle|^2 \ge a \|q-q_0 \|_2^2,
\end{align} 
for some $a>0$, then there exists $\epsilon > 0$ depending on $a,\,k,\,q_0,\,N$ such that if
$\|q-q_0\|_{H^2(\Omega)}<\epsilon$, the update
\eqref{eq:update} satisfies
\begin{equation}\label{eq:contraction}
\|q-\hat{q}\|_{2}
< \|q-q_0\|_{2} .
\end{equation}
\end{theorem}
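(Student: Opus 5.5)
Write $e_0 := q - q_0$ and $\hat{e} := q - \hat{q}$. The plan is to expand $\hat{e}$ in the orthonormal basis $\{\hat{v}_i\}$ of $L^2(\Omega)$ and separate the linear part of the update from the nonlinear remainder. Since $\delta\hat{u}^\infty = \hat{\mathcal{F}}(q) - \hat{\mathcal{F}}(q_0) = \hat{\Jacobian}_{q_0} e_0 + \hat{R}(e_0)$, the coefficients \eqref{a_i} satisfy
\[
a_i = \langle \hat{\Jacobian}_{q_0} e_0, \hat{u}_i\rangle + \langle \hat{R}(e_0),\hat{u}_i\rangle = \hat{\sigma}_i \langle e_0,\hat{v}_i\rangle + r_i ,
\]
where $r_i := \langle \hat{R}(e_0),\hat{u}_i\rangle_{L^2(B)}$. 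The first equality is just the adjoint relation $\hat{\Jacobian}_{q_0}^*\hat{u}_i = \hat{\sigma}_i \hat{v}_i$.

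Plugging this into \eqref{eq:update} gives $\delta q = \sum_{i=1}^N \langle e_0,\hat v_i\rangle \hat v_i + \sum_{i=1}^N \hat\sigma_i^{-1} r_i \hat v_i$. Hence
\[
\hat{e} = e_0 - \delta q = \sum_{i>N}\langle e_0,\hat v_i\rangle\hat v_i - \sum_{i=1}^N \hat\sigma_i^{-1} r_i\hat v_i .
\]
By orthogonality, and using Bessel's inequality $\sum_i |r_i|^2 \le \|\hat R(e_0)\|_{L^2(B)}^2$,
\[
\|\hat e\|_2^2 = \sum_{i>N}|\langle e_0,\hat v_i\rangle|^2 + \sum_{i=1}^N \hat\sigma_i^{-2}|r_i|^2 \le (1-a)\|e_0\|_2^2 + \hat\sigma_N^{-2}\|\hat R(e_0)\|_{L^2(B)}^2 .
\]
Here the first term is bounded using assumption \eqref{eq:regularity}, which gives $\sum_{i>N}|\langle e_0,\hat v_i\rangle|^2 = \|e_0\|_2^2 - \sum_{i\le N}|\langle e_0,\hat v_i\rangle|^2 \le (1-a)\|e_0\|_2^2$. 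We also need $\hat\sigma_N>0$; if some $\hat\sigma_i$ with $i\le N$ vanished, the update would be undefined, so this is implicit in \eqref{eq:update}.

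Next we invoke Lemma~\ref{lem:remainder}. Once $\beta_k\|e_0\|_\infty \le \tau<1$ holds, it gives $\|\hat R(e_0)\|_{L^2(B)} \le C(k)\|e_0\|_\infty^2$. Then
\[
\|\hat e\|_2^2 \le (1-a)\|e_0\|_2^2 + \hat\sigma_N^{-2} C(k)^2 \|e_0\|_\infty^4 .
\]

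The main obstacle is the mismatch of norms. The remainder is controlled in $L^\infty$, while the target inequality is in $L^2$. A bound $\|e_0\|_\infty^4 \lesssim \|e_0\|_2^2\cdot(\text{small})$ requires more than $L^2$ smallness, and this is why the hypothesis is stated in $H^2$. In two dimensions the Sobolev embedding $H^2(\Omega)\hookrightarrow L^\infty(\Omega)$ gives $\|e_0\|_\infty \le C_\Omega\|e_0\|_{H^2}$. I would then split $\|e_0\|_\infty^4 = \|e_0\|_\infty^2\cdot\|e_0\|_\infty^2$. One factor is bounded by $C_\Omega^2\epsilon^2$. For the other factor I need to compare with $\|e_0\|_2^2$, and here a Gagliardo--Nirenberg/Agmon-type interpolation $\|e_0\|_\infty^2 \le C\|e_0\|_2\|e_0\|_{H^2}$ would give $\|e_0\|_\infty^4 \le C\epsilon^3\|e_0\|_2$. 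This is still not quadratic in $\|e_0\|_2$.

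To close the argument I would use \eqref{eq:regularity} a second time. It says that $e_0$ has a fixed fraction of its mass in the finite-dimensional span of $\hat v_1,\dots,\hat v_N$, on which all norms are equivalent. The constant of this equivalence depends on $N$ and $q_0$, and would need the $\hat v_i$ to be smooth, which is reasonable since they are images of the adjoint integral operator with smooth kernel. The delicate point is the tail $\sum_{i>N}$, whose $L^\infty$ norm is not controlled by its $L^2$ norm. Interpolation should handle this: a small $H^2$ norm together with $\|e_0\|_2 \ge$ its component in the span lets us bound $\|e_0\|_\infty \le C(N,q_0,a)\,\|e_0\|_{H^2}^{\theta}\|e_0\|_2^{1-\theta}$. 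If this fails, I would fall back to stating the constant in terms of the ratio $\|e_0\|_{H^2}/\|e_0\|_2$, which is plausibly what the dependence of $\epsilon$ on $a$ and $N$ encodes. With a bound of the form $\hat\sigma_N^{-2}C(k)^2\|e_0\|_\infty^4 \le \eta(\epsilon)\|e_0\|_2^2$ with $\eta(\epsilon)\to0$, I choose $\epsilon$ so small that $\beta_k C_\Omega\epsilon\le\tau$ and $\eta(\epsilon)<a$. This yields $\|\hat e\|_2^2 < \|e_0\|_2^2$ whenever $e_0\neq 0$, which is \eqref{eq:contraction}.
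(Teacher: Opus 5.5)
Everything up to
\[
\|\hat e\|_2^2 \le (1-a)\|e_0\|_2^2 + \hat\sigma_N^{-2}C(k)^2\|e_0\|_\infty^4
\]
is correct and is essentially the paper's computation. Your orthogonal splitting of $\hat e$ is a cleaner way of writing the paper's expansion of $\|q-\hat q\|_2^2-\|q-q_0\|_2^2$. You also check the smallness hypothesis of Lemma~\ref{lem:remainder} via $H^2\hookrightarrow L^\infty$, which the paper leaves implicit.

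The gap is the final step. You treat the $L^\infty$/$L^2$ mismatch as an unresolved obstacle. In its place you offer a second use of \eqref{eq:regularity} together with an interpolation bound with constant $C(N,q_0,a)$ that you do not prove. Your fallback would let $\epsilon$ depend on the ratio $\|e_0\|_{H^2}/\|e_0\|_2$, i.e.\ on $e_0$ itself, and that is not the theorem. The route also fails as sketched. Norm equivalence on $\mathrm{span}\{\hat v_1,\dots,\hat v_N\}$ says nothing about the sup norm of the tail $\sum_{i>N}\langle e_0,\hat v_i\rangle\hat v_i$. And \eqref{eq:regularity} only gives a lower bound on the $L^2$ mass of the leading part.

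The missing observation is elementary. You applied the Agmon inequality $\|e_0\|_\infty^2\le C\|e_0\|_2\|e_0\|_{H^2}$ to only one of the two factors of $\|e_0\|_\infty^4$. You spent the other factor on the Sobolev embedding, which throws away a factor of $\|e_0\|_2$. Applying it to both factors, i.e.\ squaring it, gives
\[
\|e_0\|_\infty^4\le C^2\|e_0\|_2^2\,\|e_0\|_{H^2}^2< C^2\epsilon^2\|e_0\|_2^2 ,
\]
which is exactly quadratic in $\|e_0\|_2$. This is the paper's step \eqref{eq:contraction-condition}, valid for $e_0\in H_0^2(\Omega)$ by extending by zero to $\mathbb{R}^2$. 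Now choose $\epsilon$ so that $\beta_kC_\Omega\epsilon\le\tau$ and $C^2C(k)^2\hat\sigma_N^{-2}\epsilon^2<a$. Then $\|\hat e\|_2^2\le\bigl(1-a+C^2C(k)^2\hat\sigma_N^{-2}\epsilon^2\bigr)\|e_0\|_2^2<\|e_0\|_2^2$ for $e_0\neq 0$. Here $\epsilon$ depends only on $a,k,q_0,N$ (and $\Omega$). No second use of \eqref{eq:regularity} and no smoothness of the $\hat v_i$ is needed.
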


\begin{proof}
% First for $d\leq 3$, $H^2(\Omega) \hookrightarrow L^\infty(\Omega) \hookrightarrow L^2(\Omega)$
We compute 
\begin{align*}
\|q-\hat{q}\|_{2}^2 - \|q-q_0\|_{2}^2
% & =  \int_{\Omega} |q(y) - q_0(y) - \sum_{i=1}^N \frac{a_i}{\sigma_i} \hat{v}_i(y)|^2 dy - \int_{\Omega} |q(y) - q_0(y)|^2 dy\\
& = \int_{\Omega} |\sum_{i=1}^N \frac{a_i}{\sigma_i} \hat{v}_i(y)|^2 - 2(q-q_0)\sum_{i=1}^N \frac{a_i}{\sigma_i} \hat{v}_i(y) dy \\
& = \sum_{i=1}^N |\frac{a_i}{\sigma_i}|^2 - \sum_{i=1}^N 2 \frac{a_i}{\sigma_i} \int_{\Omega} (q-q_0)(y)\, \hat{v}_i(y) dy\\
& \leq \sum_{i=1}^N |\frac{a_i}{\sigma_i} - \int_{\Omega} (q-q_0)(y)\, \hat{v}_i(y) dy |^2 -  \sum_{i=1}^N |\int_{\Omega} (q-q_0)(y)\, \hat{v}_i(y) dy |^2\\
& \leq \sum_{i=1}^N \frac{1}{\sigma_i^2} | \int_{B} \hat{R}(q-q_0)(p)\, \hat{u}_i(p) dp|^2 - \sum_{i=1}^N |\int_{\Omega} (q-q_0)(y)\, \hat{v}_i(y) dy |^2\\
& \leq \frac{1}{\sigma_N^2} \|\hat{R}(q-q_0) \|^2_2 - a \|q-q_0\|^2_{2} \\
& \leq \frac{C^2(k)}{\sigma^2_N} \|q-q_0 \|^4_\infty - a \|q-q_0\|^2_{2},
\end{align*}
where we used the definition of $a_i$ in \eqref{a_i} and Lemma~\ref{lem:remainder}. Since $\Omega \subset \mathbb{R}^2$ is a bounded Lipchitz domain, by the Gagliardo-Nirenberg interpolation\cite{evans}, there exists a constant $C_1$ depending on $\Omega$ such that
\begin{align}
    \frac{C(k)^2}{\sigma^2_N} \|q-q_0 \|^4_\infty
    - a \|q-q_0\|^2_{2}
    &\leq
    C_1 \frac{C^2(k)}{\sigma^2_N}
    \|q-q_0 \|^2_2
    \|q-q_0 \|^2_{H^2(\Omega)}
    - a \|q-q_0\|^2_{2}
    \notag \\
    & =
    \left(
    C_1 \frac{C^2(k)}{\sigma^2_N}
    \|q-q_0 \|^2_{H^2(\Omega)}
    - a
    \right)
    \|q-q_0 \|^2_2 .
    \label{eq:contraction-condition}
\end{align}
Hence, $\|q - q_0\|_{H^2(\Omega)}$ sufficiently small yields \eqref{eq:contraction}.
\end{proof}
Condition \eqref{eq:regularity} states that the error has a nonzero component in the subspace spanned by the leading $N$ singular modes. This excludes the degenerate case in which the error lies entirely outside the recoverable subspace and guarantees that the truncated update can produce a nontrivial decrease in the \(L^2\) norm of the error. In particular, an explicit sufficient condition for \eqref{eq:contraction} is obtained from \eqref{eq:contraction-condition}
\begin{align}
 &\label{eq:sufficient}C_1 \frac{C^2(k)}{\sigma^2_N} \|q-q_0 \|^2_{H^2(\Omega)}  < a,
\end{align}
which shows how the regularity
$\|q - q_0\|_{H^2(\Omega)}$, the threshold of singular values, and quadratic remainder constant $C(k)$ jointly determine the improvement condition of the update. In particular, $\sigma_N^{-2}$ reflects the noise amplification factor in the retained modes.
%In particular, $\sum\limits_{i=1}^N \sigma_i^{-2}$ reflects noise amplification in the retained modes.

\paragraph{Frequency Continuation}

To extend Algorithm \ref{alg:iterative-low-rank} to the multi-frequency case, we adopt the recursive linearization approach; that is, we use the reconstruction at each frequency as the initial guess at the next, higher frequency. Choosing the frequency increment requires balancing efficiency and convergence: larger increments reduce the number of frequency steps, but may place the current reconstruction outside the convergence region of Newton’s method at the next frequency. To guide the choice of the next frequency, $k:= k_2$, we derive a sufficient condition for an error reduction. The estimate of $k_2$ can be conversely derived from the convergence analysis, i.e., given an error estimate between $q$ and $q_0$, we find the largest $k_2$ that guarantees a better approximation. 
%Although the theoretical estimate from Newton's method is informative for choosing the frequency increment, it consists of $k$ to a high power which renders the estimate impractical; see \cite{Bao_Faouzi} for detailed analysis.

Following the assumptions in Theorem ~\ref{thm:contraction} and $C(k)$ defined in \eqref{eq:remainder} of Lemma~\ref{lem:remainder}, assume the current estimate is $q_0$, 
an admissible $k_2$ needs to satisfy\eqref{eq:sufficient}. By taking $N = S$ and considering the spectral behavior, $\sigma_N = O(\frac{1}{k_2})$, 
we get
    \[
    k_2^2 C^2(k_2) \|q-q_0 \|^2_{H^2(\Omega)} \lesssim 1 \Rightarrow k_2 \, C(k_2)\,\|q-q_0 \|_{H^2(\Omega)} \lesssim 1
    \]
Since $C(k_2) = O(k_2)$, we have
    \[
k_2 \lesssim \|q-q_0 \|^{-\frac{1}{2}}_{H^2(\Omega)}.
    \]

\paragraph{One Step Error Estimate}
To establish a recursive connection between the error tolerance and the wavenumber, it remains to estimate $\epsilon$ for a fixed $k$. Because our algorithm does not rely on the Born approximation, the relevant quantity is not a Born-linearization error, but rather the one-step update error associated with linearization around the current background $q_0$ at the wavenumber $k$.
% Together with the estimate derived above, which gives an admissible $k$ for a given $\epsilon$, this yields a closed loop between $\epsilon$ and $k$.?
% To have recursive connection between the, we also need to evaluate $\epsilon$ at given $k$. Since our algorithm does not rely on the Born approximation, an error estimate for a one-step update given $q_0$ and $k$ is due.
Define the one step update $\hat{q}$ as in \eqref{eq:update} and assume $q_0,q \in H^2(\Omega)$, then
\[
\|\hat{q}-q\|_{2} 
\le \frac{1}{\sigma_N}\,\|\hat{R}(q-q_0)\|_{L^2(B)} + \sqrt{1-a}\|q-q_0 \|_{2}.
\]
Such an estimate comes from Lemma 4.1 in \cite{zhou2024exploringlowrankstructureinverse}, where we replace the first term by $\hat{R}(q-q_0)$, and $\beta(\alpha) = \sigma_N$; the second term in Lemma 4.1 comes from the truncation error of the finite expansion of $\delta q$ by $\{v_i \}$, which is replaced by the assumption \eqref{eq:regularity}.

\section{Numerical Experiments}
\label{sec: numerical}
 In this section, we perform extensive numerical experiments to demonstrate the effectiveness of the proposed iTSVD algorithm in terms of efficiency and robustness on various setups for inverse scattering problems. In particular, our tests include far-field data, limited aperture data, random measurement data, near-field data, and frequency continuation. Specifically, we illustrate stability by adding noise to the data, and the efficiency by counting the number of Newton steps.  

Throughout this section, we set $\Omega = B$, and the total field inside the domain of interest $\Omega$, in which the unknown contrast $q(x)$ is supported, is computed by solving the Lippmann-Schwinger integral equation. We adopt \cite{ANDERSON2024113091} for fast and accurate evaluation of the volume potential $\mathcal{G}$.
Far-field data are then obtained by the integral  \eqref{eq:farfield-compact}. In all our tests, we assume no prior knowledge of $q$ by setting the initial guess $q_0 = 0$. To quantify the accuracy of the reconstruction $\tilde{q}$, we use the relative $L^2$ error with respect to the ground truth $q$
\begin{equation}\label{eq:e2}
e_2 = \frac{\|\tilde{q} - q\|_{L^2(\Omega)}}{\|q\|_{L^2(\Omega)}}.
\end{equation}

\paragraph{Effective rank estimate}
\label{para: rank estimate} If a discrete setting can resolve the wavelength, one can apply the continuous setup estimate, e.g., using the Shannon number~\eqref{eq:Shannon number}, as for the full aperture cases and limited aperture cases (see Section ~\ref{sec: limited aperture analysis}) in our test examples. 

For general discrete cases, we develop numerical approximations for the effective rank, e.g., for random sparse far-field data in Section ~\ref{sec: random data} with a quantitative analysis connecting the discrete case to the continuous case. For general random sparse near-field data, see Section ~\ref{sec: near field data}, we estimate the effective rank using RSVD for the linearized operator in homogeneous medium, $\hat{\mathcal{J}}_0$,  where no PDE needs to be solved since the Green's function is available in free space. Then the effective rank is used for subsequent Newton steps. This strategy can be used in an arbitrary setup. If needed, the effective rank estimate can be updated after every few Newton steps.

As a remark, iTSVD is quite tolerant with respect to the effective rank estimate due to its iterative nature and the threshold of small singular values when computing the TSVD solution to the linearized problem. If the estimate is conservative, i.e., larger than the optimal effective rank, more computational cost may be incurred in the Newton step. However, stability is controlled by thresholding. In contrast, if the estimate is aggressive, i.e., smaller than the optimal effective rank, more Newton steps may be needed for convergence. Overall, iTSVD is both efficient and robust. See more implementation details in Appendix~\ref{sec: implementation}.

\paragraph{Computation cost}
\label{para: compute cost}

For each iteration in Algorithm ~\ref{alg:iterative-low-rank}, the main computational cost consists of two parts. The first part is solving the forward and adjoint problems using the Lippmann-Schwinger integral equation. The volume integral operator is evaluated via polynomial density interpolation introduced in \cite{ANDERSON2024113091} that provides high-order numerical accuracy by resolving the singularity of the Helmholtz kernel $G_k$, which is then compressed by the Fast Multipole Method to accelerate the computation. Since these forward and adjoint scattering problems share the same PDE with only different forcing, i.e., sharing the same matrix numerically (see Section ~\ref{sec: implementation}), we use block GMRES to accelerate the linear solve. In particular, a preconditioner is available in \cite{anderson2026fasthighordersolverslippmannschwinger} to reduce the number of iterations for such an iterative method. Here, we count the total number of PDEs solved for the first part. For iTSVD, $(M_1+M_2)$ PDEs are solved, where \(M_1, M_2\) are the number of
incident directions/sources and observation directions/measurements, respectively. 

The second part comes from RSVD, which depends on the number of matrix-vector multiplications needed (see \cite{Random_SVD} for a detailed cost analysis). In our setting, the number is related to our effective rank estimate. For example, for full aperture far-field data, we use Shannon number $S=k^2$ in ~\eqref{eq:Shannon number}. To include a buffer zone for the spectral transition window of size $o(k^2)$, we sample $\floor{2S + 10}$ random vectors as discussed in Section~\ref{sec:spectral}. However, the matrix-free matrix-vector multiplications based on~\eqref{eq:adj-equivalence} only involve direct integral evaluations, which are much cheaper than solving PDEs in the first part. It is worth noting that the gradient-based method, such as the adjoint method, requires to solve $2M_1$ forward/adjoint problems in each iteration. Hence, the major factor for the overall computational cost is the number of iterations needed for convergence to a satisfactory result.
In our tests, we count the number of iterations for convergence with a stopping criterion $\eta = 10^{-2}$ on the $L^2$ norm of the update to demonstrate the efficiency of iTSVD.

\paragraph{Data Noise} \label{sec: noisy data}
Let $\mathbf{D} \in \mathbb{C}^{M_2 \times M_1}$ denote the noiseless measurement data.
% where \(M\) is the number of
% observation directions and incidence directions. 
Let $\iota $ denote the noise level and set
\[
\sigma_\iota
=
\iota \frac{\|\mathbf{D}\|_F}{\sqrt{M_1M_2}},
\]
where \(M_1, M_2\) are the number of
sources and observations, respectively. The noisy measurement data is generated as
\[
\mathbf{D}^\iota
=
\mathbf{D}+\mathbf{E},
\]
where the noise matrix \(\mathbf{E} \in \mathbb{C}^{M_2 \times M_1}\) is defined entry-wise by
\[
\mathbf{E}_{mn}
=
\frac{\sigma_\iota}{\sqrt{2}}
\left(
\xi_{mn}
+i\zeta_{mn}
\right),
\qquad
1\le m\le M_2,\quad 1\le n\le M_1,
\]
with
\[
\xi_{mn},\zeta_{mn}\overset{\mathrm{i.i.d.}}{\sim}\mathcal{N}(0,1).
\]
We compare the reconstruction between noiseless data and noisy data with noise level $\iota  = 0.2$, and take $M_1 = M_2 =: M$ with $M = 50$ in our tests.

\subsection{Full Aperture} \label{sec: numerical on full aperture}
\begin{example}\label{ex:multi_gaussian}
% Far-field data with 5 Gaussian bumps.
We consider a contrast consisting of five truncated Gaussian bumps with different amplitudes, centers, and widths:
    \[
q(x_1,x_2) = \sum_{i=1}^5 A_i \exp\big[-\big(\frac{(x_1 - c_{i,x})^2}{\sigma_{i,x}^2} + \frac{(x_2 - c_{i,y})^2}{\sigma_{i,y}^2}\big)      \big] \mathbf{1}_{\{|x_1 - c_{i,x}| \le 5\sigma_{i,x},|x_2 - c_{i,y}| \le 5\sigma_{i,y}\}} 
\]
with
\[
\begin{array}{c|c|c|c}
i & A_i & (c_{i,x},c_{i,y}) & (\sigma_{i,x},\sigma_{i,y}) \\
\hline
1 & 0.6 & (0,0) & (0.15,0.10) \\
2 & 1.0 & (-0.3,0.25) & (0.18,0.18) \\
3 & 0.6 & (0.3,0.25) & (0.10,0.15) \\
4 & 0.8 & (-0.4,-0.4) & (0.12,0.12) \\
5 & 1.0 & (0.5,-0.5) & (0.10,0.10) . 
\end{array}
\]
\end{example}
% \paragraph{Example 1} Let
% \[
% q(x_1,x_2) = \sum_{i=1}^5 A_i \exp\big[-\big(\frac{(x_1 - c_{i,x})^2}{\sigma_{i,x}^2} + \frac{(x_2 - c_{i,y})^2}{\sigma_{i,y}^2}\big)      \big] \mathbf{1}_{\{|x_1 - c_{i,x}| \le 5\sigma_{i,x},|x_2 - c_{i,y}| \le 5\sigma_{i,y}\}} 
% \]
% with
% \[
% \begin{array}{c|c|c|c}
% i & A_i & (c_{i,x},c_{i,y}) & (\sigma_{i,x},\sigma_{i,y}) \\
% \hline
% 1 & 0.6 & (0,0) & (0.15,0.10) \\
% 2 & 1.0 & (-0.3,0.25) & (0.18,0.18) \\
% 3 & 0.6 & (0.3,0.25) & (0.10,0.15) \\
% 4 & 0.8 & (-0.4,-0.4) & (0.12,0.12) \\
% 5 & 1.0 & (0.5,-0.5) & (0.10,0.10) . 
% \end{array}
% \]
Figure ~\ref{fig:multi-Gaussian} shows the reconstruction for Example ~\ref{ex:multi_gaussian} using a single frequency $k = 4\pi$ measurement without noise and with $20\%$ noise. Figure ~\ref{fig:Convergence of Multi-Gaussian} presents the relative error $e_2$ ~\eqref{eq:e2} w.r.t. Newton steps. iTSVD converges in 5 iterations with 500 PDEs solved for both noiseless and noisy data, while the gradient descent method takes 300 iterations with 30000 PDEs solved to reach a comparable result. The reduction in error with iterations demonstrates the importance of consecutive updates rather than a single linearized update around $q=0$.

% \begin{figure}[htbp]
% \centering
%     % \begin{subfigure}{0.2\linewidth}
%     % \captionsetup{labelformat=empty}  % Remove the alphabetic label
%     %     \includegraphics[width=\linewidth]{paper experiments/far-field/noisy data/Gaussian/fig_q0.png}
%     %     % \caption{initial guess $q_0$}
%     % \end{subfigure}
%     \begin{subfigure}{0.3\linewidth}
%     \captionsetup{labelformat=empty}  % Remove the alphabetic label
%         \includegraphics[width=\linewidth]{noisy data/Gaussian/fig_q_exact.png}
%         % \caption{exact $q$}
%     \end{subfigure}
%     \begin{subfigure}{0.3\linewidth}
%     \captionsetup{labelformat=empty}  % Remove the alphabetic label
%         \includegraphics[width=\linewidth]{noisy data/Gaussian/fig_q_recon_noiseless.png}
%         % \caption{noiseless}
%     \end{subfigure}
%     \begin{subfigure}{0.3\linewidth}
%     \captionsetup{labelformat=empty}  % Remove the alphabetic label
%         \includegraphics[width=\linewidth]{noisy data/Gaussian/fig_q_recon_noisy.png}
%         % \caption{with 20\% noise}
%     \end{subfigure}
%     \caption{From left to right: true $q$; reconstruction $\tilde{q}$ from noiseless data; reconstruction $\tilde{q}$ from data with $\delta=20\%$.}
%     \label{fig:multi-Gaussian}
% \end{figure}

\begin{figure}[htbp]
\centering
\begin{tabular}{c@{\vspace{2pt}}c@{\vspace{2pt}}c}
\includegraphics[width=0.25\linewidth]{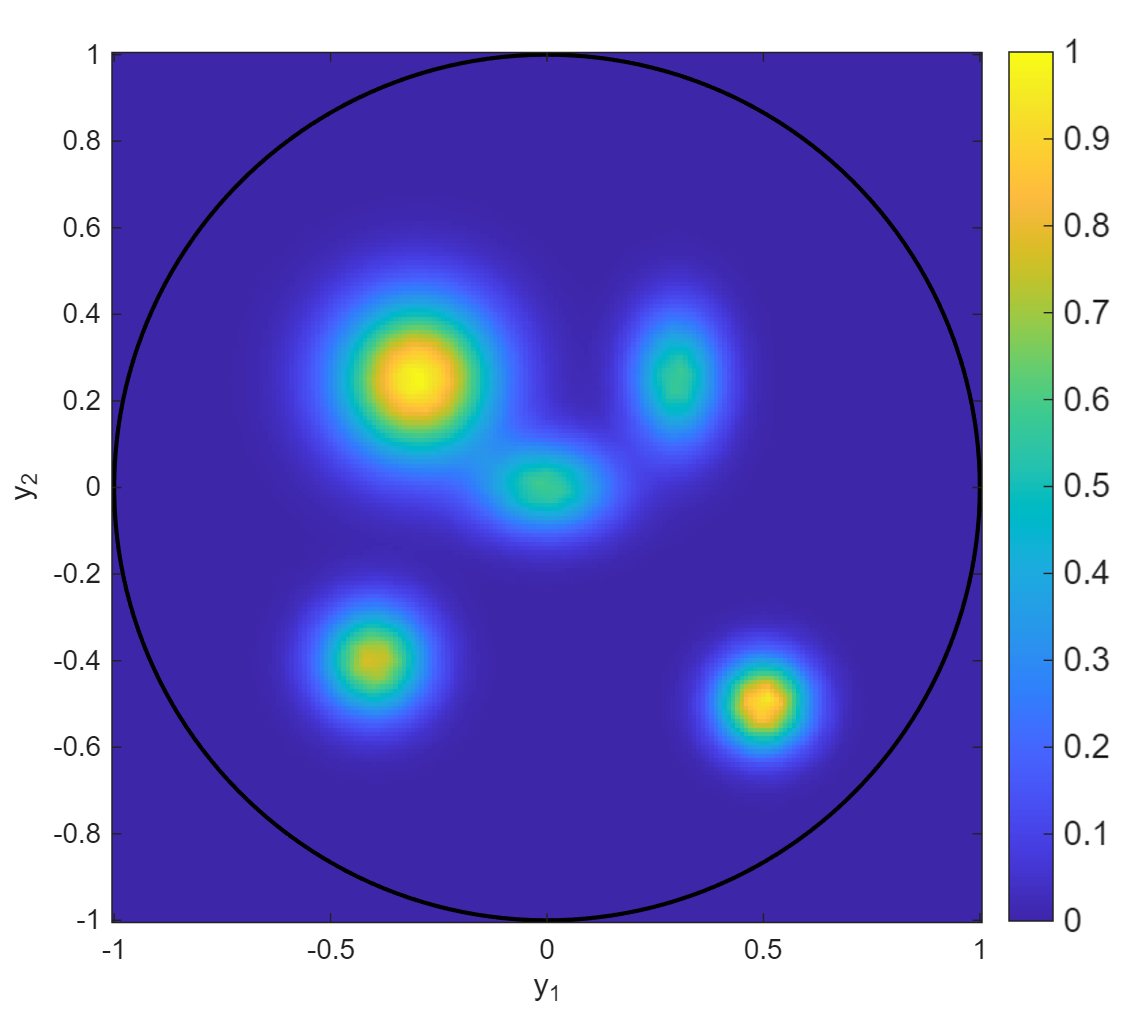}&
\includegraphics[width=0.25\linewidth]{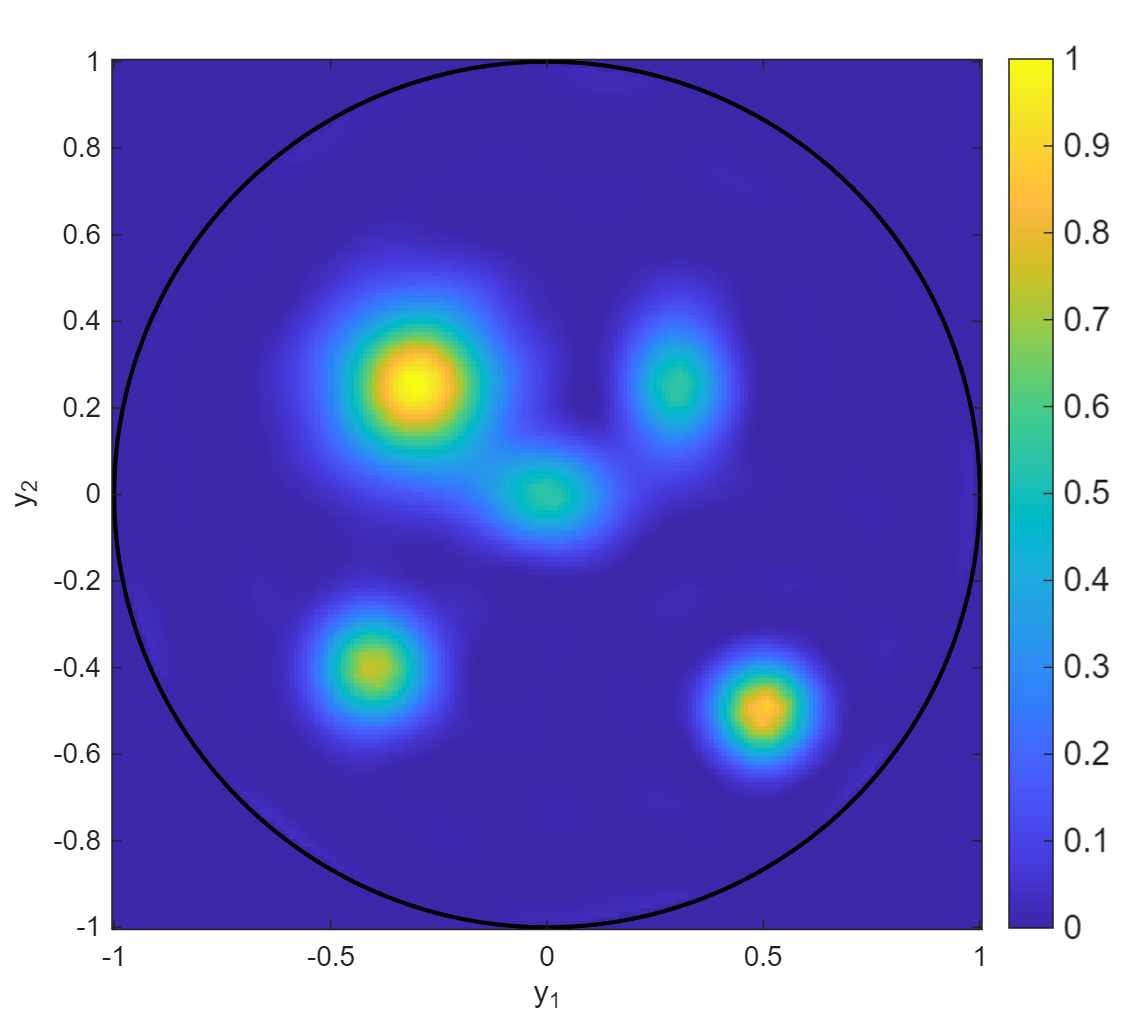}&
\includegraphics[width=0.25\linewidth]{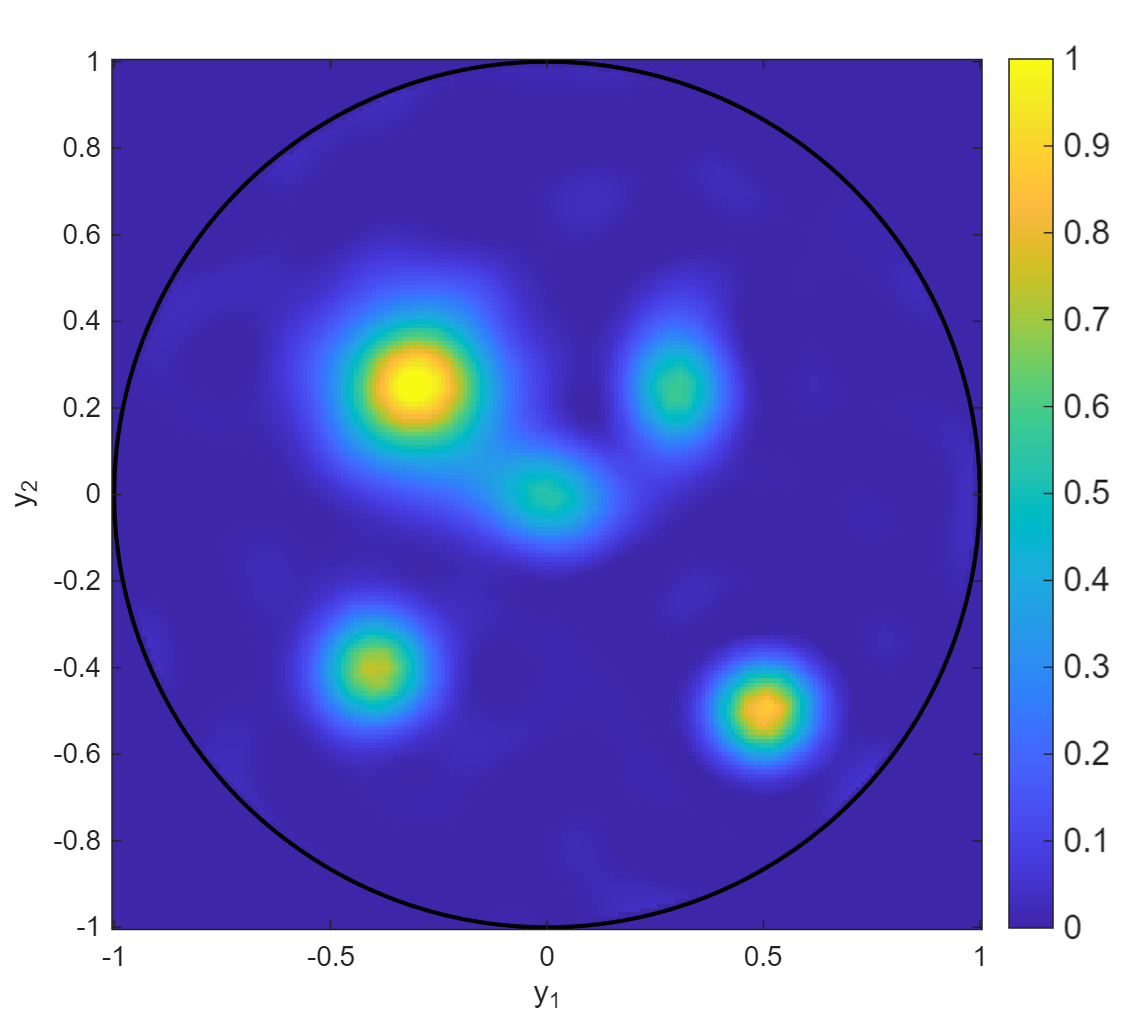}
\\
(a) true $q$ &(b) clean data  &(c) data with $20\%$ noise
\end{tabular}
\caption{Reconstruction results for Example ~\ref{ex:multi_gaussian}.}
\label{fig:multi-Gaussian}
\end{figure}

\begin{figure}[htbp]
\centering
\begin{tabular}{c@{\vspace{2pt}}c@{\vspace{2pt}}c}
\includegraphics[width=0.35\linewidth]{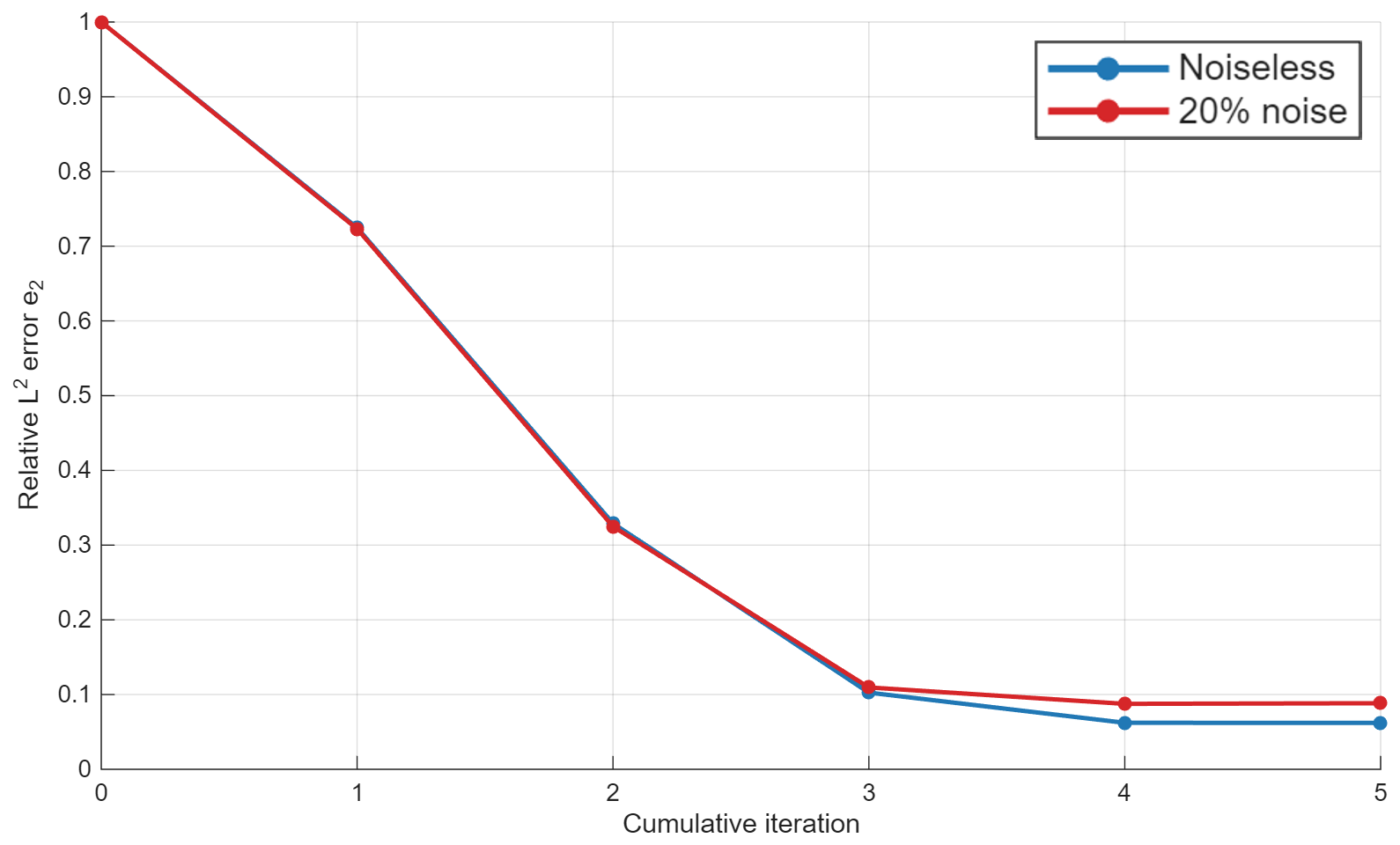}&
\includegraphics[width=0.35\linewidth]{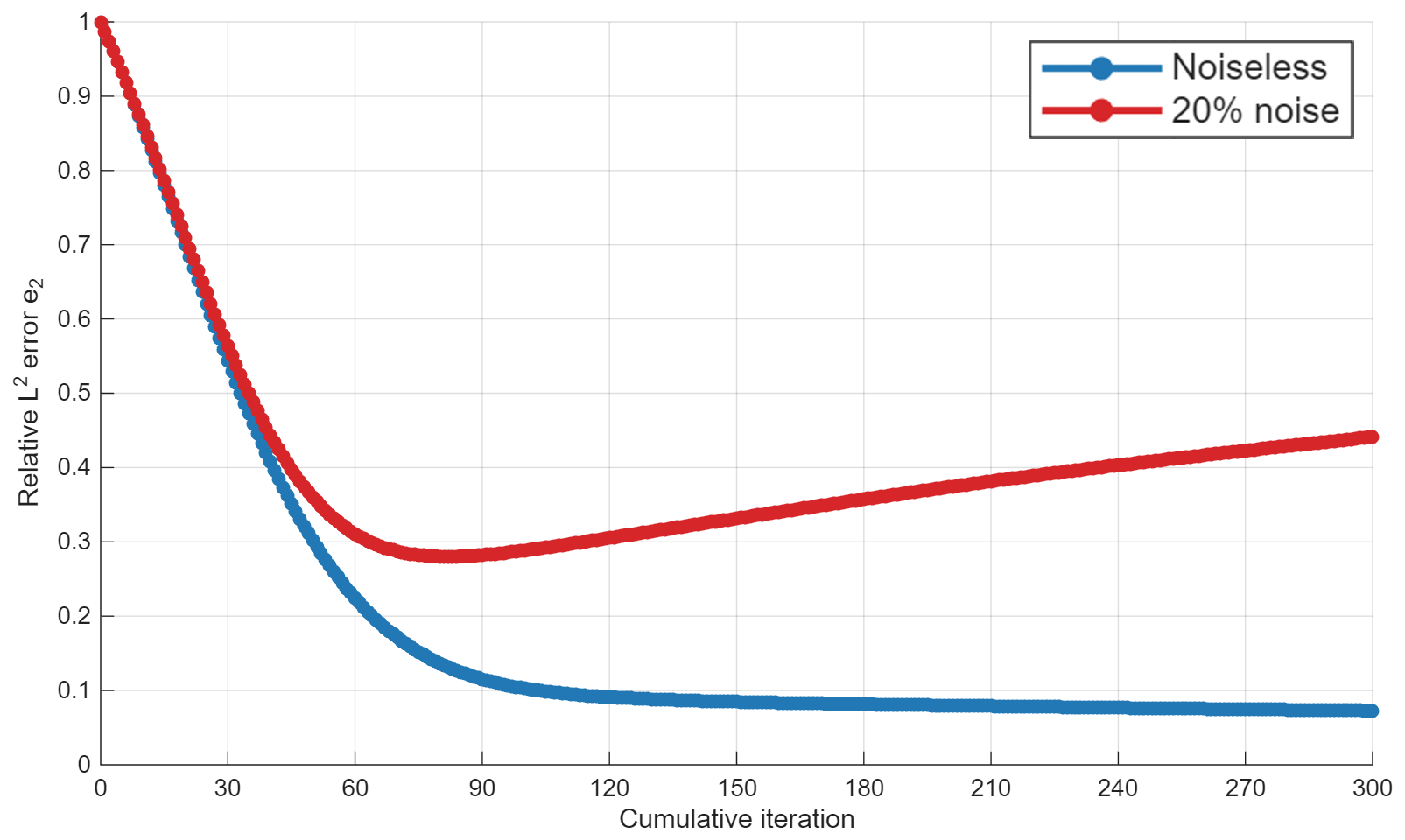}
\\
(a) iTSVD &(b) gradient descent
\end{tabular}
\caption{Comparison of convergence for (a) iTSVD and (b) gradient descent for Example ~\ref{ex:multi_gaussian}.}
\label{fig:Convergence of Multi-Gaussian}
\end{figure}

We also compare the reconstruction results between using the uniform-weighted least-squares formulation~\eqref{eq: discrete unweighted} and the frequency-balanced least-squares formulation~\eqref{eq:weighted discrete} based on the $p$-transform in Figure ~\ref{fig:numerical on p-transform}. The result using~\eqref{eq:weighted discrete} is significantly better, which demonstrates that the choice of data-misfit norm
% and its consistent discretization
can substantially affect the reconstruction accuracy.

\begin{figure}[htbp]
\centering
\begin{tabular}{c@{\vspace{2pt}}c@{\vspace{2pt}}c}
\includegraphics[width=0.2\linewidth]{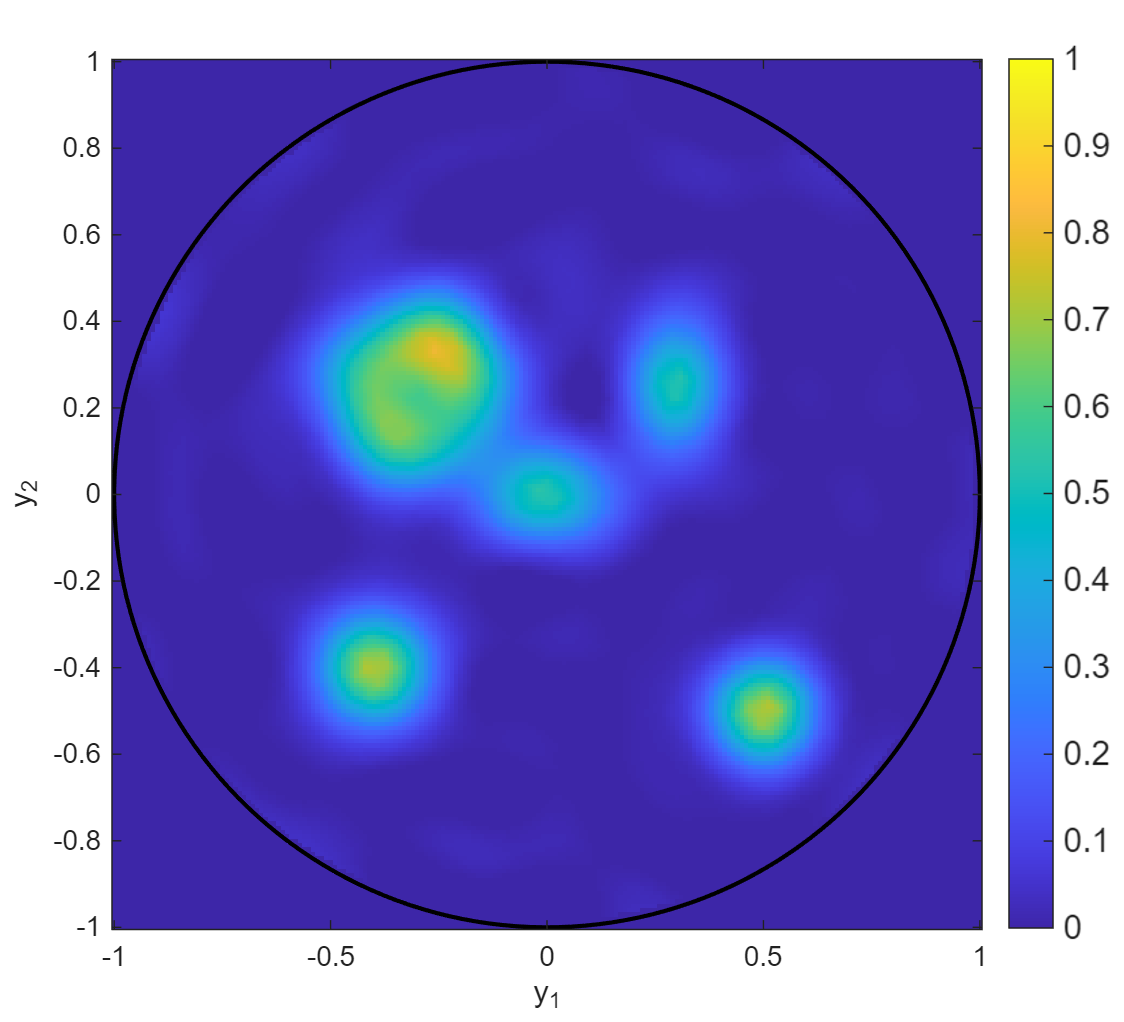}&
\includegraphics[width=0.2\linewidth]{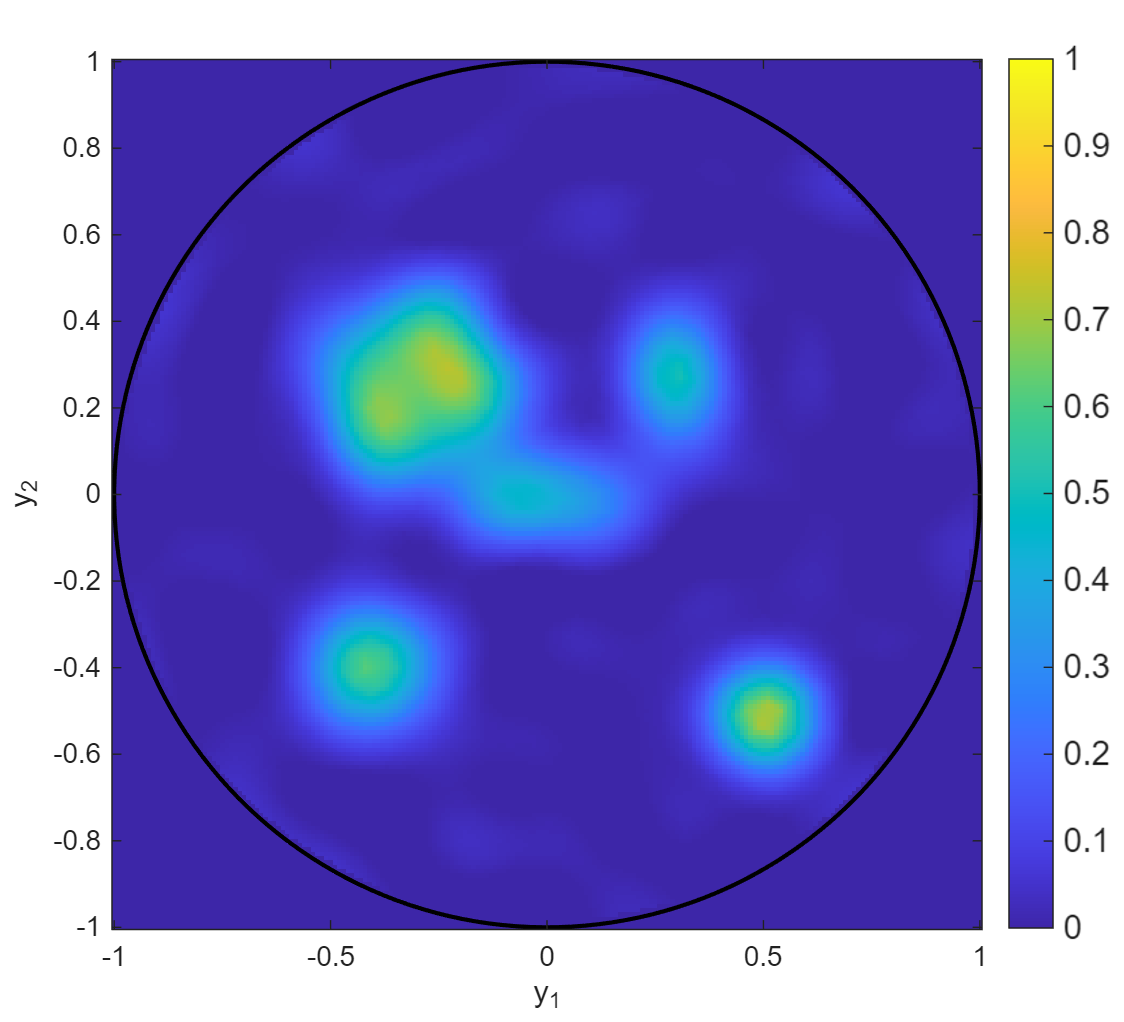}&
\includegraphics[width=0.3\linewidth]{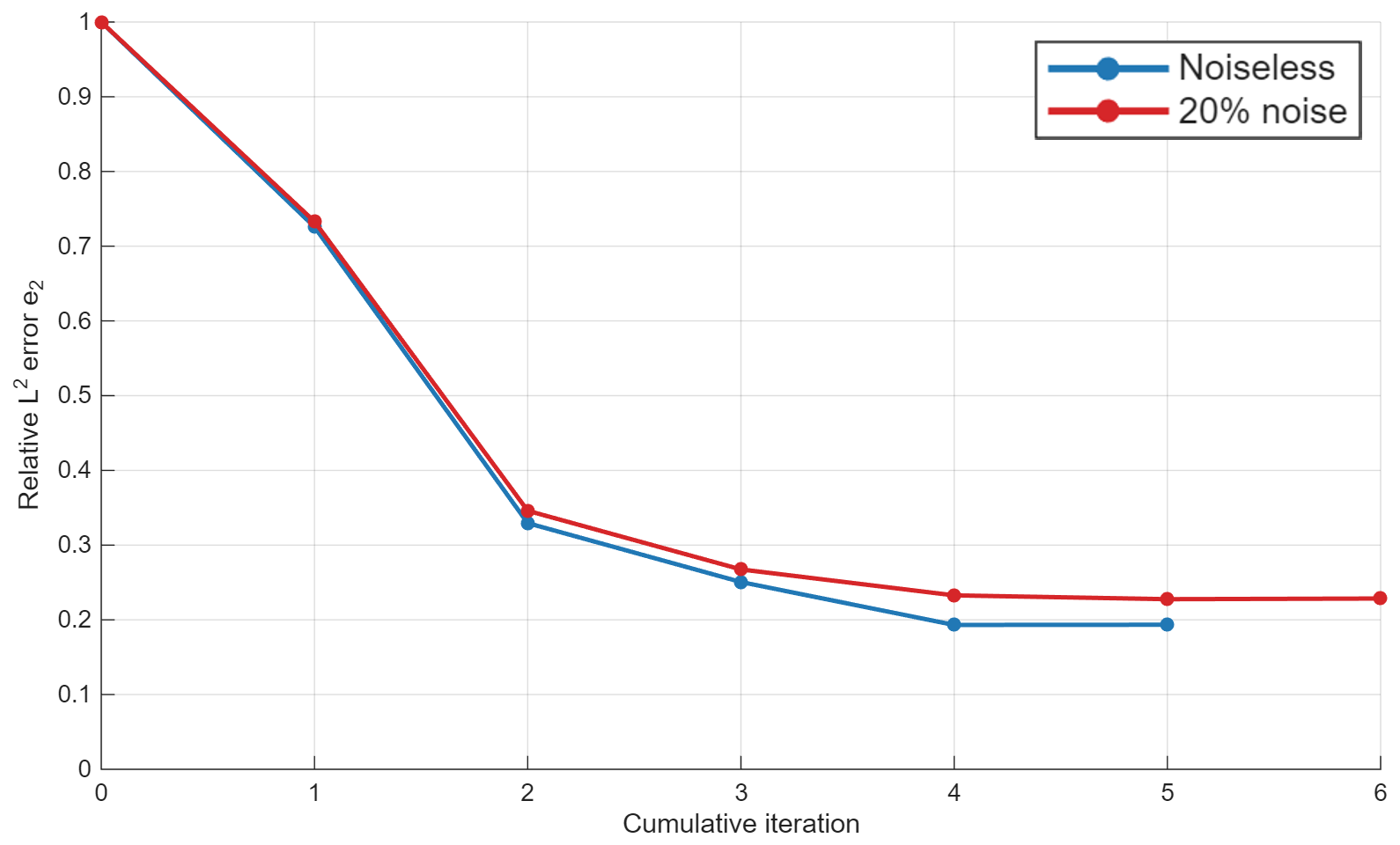}\\
\includegraphics[width=0.2\linewidth]{figures/noisy_data/Gaussian/fig_q_recon_noiseless.png}&
\includegraphics[width=0.2\linewidth]{figures/noisy_data/Gaussian/fig_q_recon_noisy.png}&
\includegraphics[width=0.3\linewidth]{figures/noisy_data/Gaussian/fig_convergence.png}
\\
(a) clean data &(b) data with $20\%$ noise &(c) convergence
\end{tabular}
\caption{Reconstruction results for Example~\ref{ex:multi_gaussian}. First row: results using uniformly weighted least squares~\eqref{eq: discrete unweighted}. Second row: results using frequency-balanced least squares~\eqref{eq:weighted discrete}.}
\label{fig:numerical on p-transform}
\end{figure}

% \begin{figure}[H]
%     \centering
%     \includegraphics[width=0.5\linewidth]{noisy data/Gaussian/fig_convergence.png}
%     \caption{Convergence of $e_2$.}
%     \label{fig:Convergence of Multi-Gaussian}
% \end{figure}

\begin{example}\label{ex: greengard}
We next consider a sign-changing contrast adapted from Example 2 in \cite{Greengard}, defined by
    \[
\begin{aligned}
q(x_1,x_2)
=
&
\frac{3}{20}
\left(1-\frac{x_1^2}{\sigma^2}\right)
\exp\left\{
-\left[
\frac{x_1^2}{\sigma^2}
+
\left(\frac{x_2}{\sigma}+1\right)^2
\right]
\right\}
-
\frac{1}{60}
\exp\left\{
-\left[
\frac{x_2^2}{\sigma^2}
+
\left(\frac{x_1}{\sigma}+1\right)^2
\right]
\right\}
\\
&
-
\sigma
\left[
\frac{2}{5}x_1
-
\left(\frac{x_1}{\sigma}\right)^3
-
\left(\frac{x_2}{\sigma}\right)^5
\right]
\exp\left(
-\frac{x_1^2+x_2^2}{\sigma^2}
\right)
, \qquad \sigma = \dfrac{1}{2\sqrt{\pi}}.
\end{aligned}
\] 
\end{example}
% \paragraph{Example 2} Let\[
% \begin{aligned}
% q(x_1,x_2)
% =
% 3\Bigg[
% &
% \frac{3}{20}
% \left(1-\frac{x_1^2}{\sigma^2}\right)
% \exp\left\{
% -\left[
% \frac{x_1^2}{\sigma^2}
% +
% \left(\frac{x_2}{\sigma}+1\right)^2
% \right]
% \right\}
% -
% \frac{1}{60}
% \exp\left\{
% -\left[
% \frac{x_2^2}{\sigma^2}
% +
% \left(\frac{x_1}{\sigma}+1\right)^2
% \right]
% \right\}
% \\
% &
% -
% \sigma
% \left[
% \frac{2}{5}x_1
% -
% \left(\frac{x_1}{\sigma}\right)^3
% -
% \left(\frac{x_2}{\sigma}\right)^5
% \right]
% \exp\left(
% -\frac{x_1^2+x_2^2}{\sigma^2}
% \right)
% \Bigg],
% \end{aligned}
% \]
We adopt this example from \cite{Greengard} with a different $\sigma$ since our spatial domain is $B(0,1)$ instead of $[-\frac{\pi}{2},\frac{\pi}{2}]^2$ as in the original experiment. In \cite{Greengard}, the recursive linearization method is used from $k = 1$ to $k = 9$ with frequency step $\delta k = 0.25$, and high fidelity is achieved with $L^2$ error smaller than $10^{-5}$ when $k = 9$.  Our algorithm converges in 3 steps using a single frequency $k=4\pi$. Figure ~\ref{fig:Greengard} shows the reconstruction with $20\%$ noise and without noise. Also worth noting is that iTSVD does not require a low-frequency start as the recursive linearization in  \cite{Greengard}, and the single frequency $k = 4\pi$ used in iTSVD is even higher than the highest frequency $k = 9$ in ~\cite{Greengard}.

\begin{figure}[H]
\centering
\begin{tabular}{c@{\vspace{2pt}}c@{\vspace{2pt}}cc}
\hspace{-0.3cm}\includegraphics[width=0.25\linewidth]{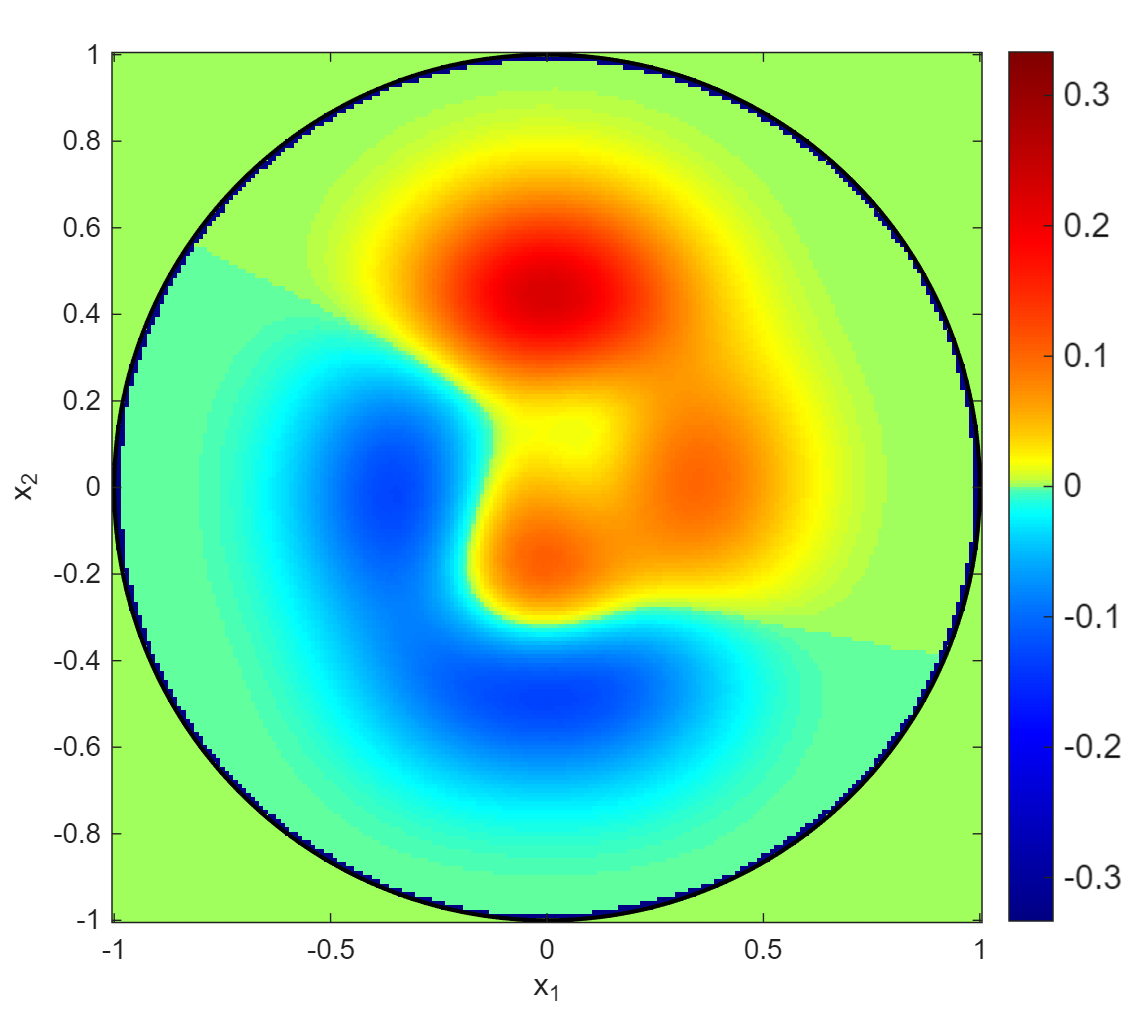}&
\includegraphics[width=0.25\linewidth]{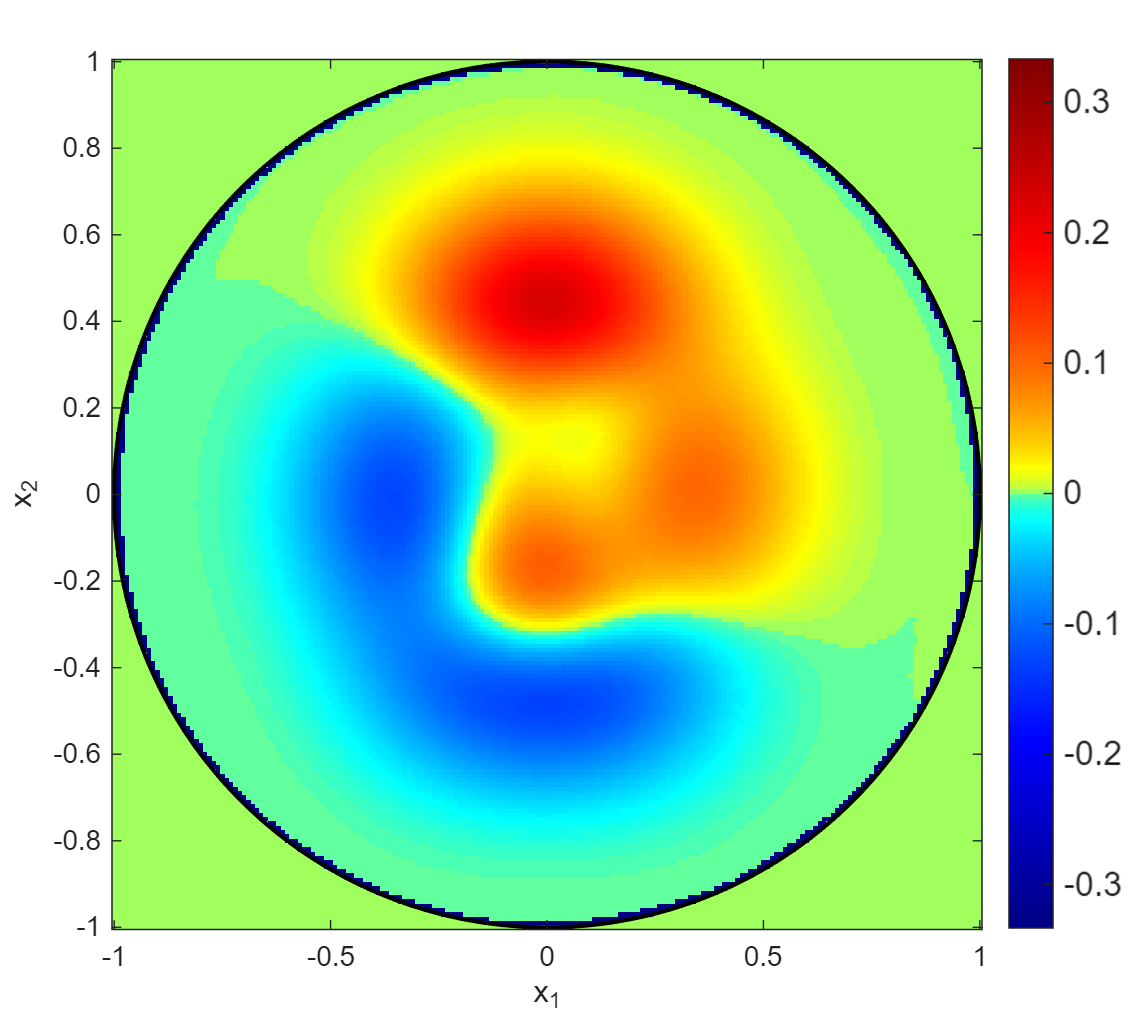}&
\includegraphics[width=0.25\linewidth]{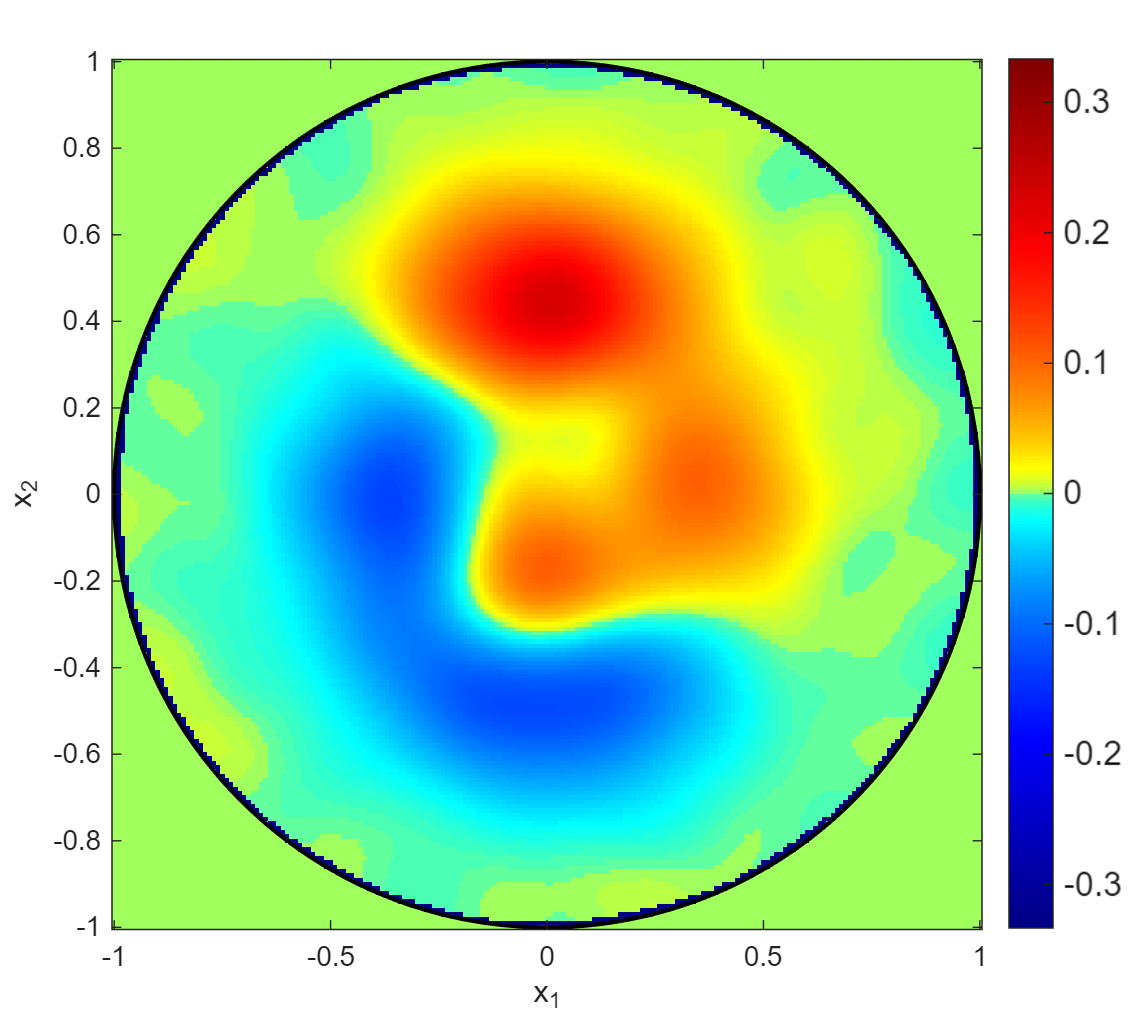} &
\hspace{-0.3cm}    \includegraphics[width=0.35\linewidth]{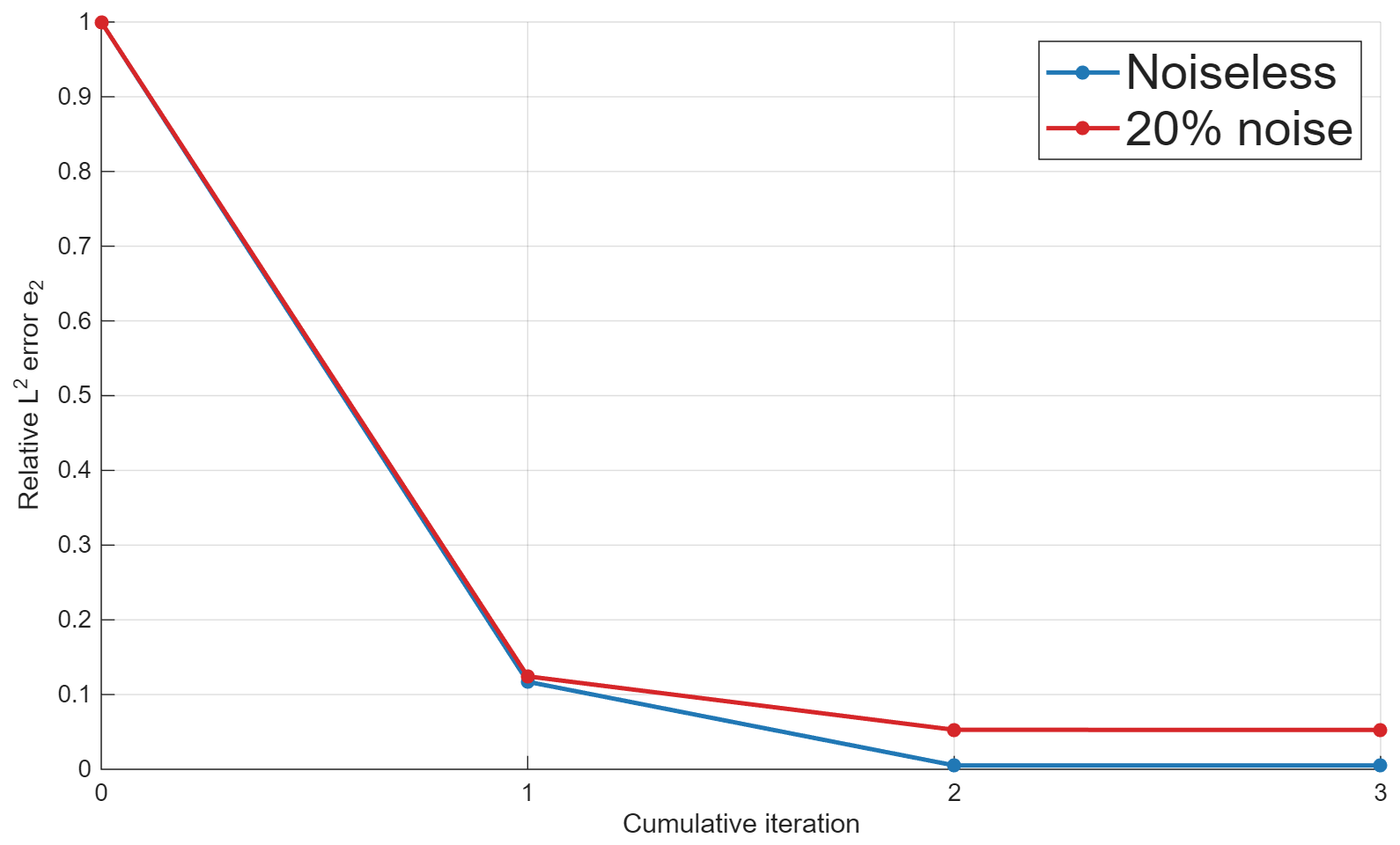}
\\
(a) true $q$ &(b) clean data &(c) data with $20\%$ noise & (d) convergence
\end{tabular}
\caption{Reconstruction results for Example ~\ref{ex: greengard}.}
\label{fig:Greengard}
\end{figure}

% \begin{figure}[H]
%     \centering
%     \includegraphics[width=0.5\linewidth]{noisy data/example 2/fig_convergence.png}
%     \caption{Convergence of $e_2$.}
%     \label{fig: Convergence of Greengard}
% \end{figure}

\begin{example}\label{ex: fine feature} We consider a compactly supported oscillatory contrast with relatively high spatial-frequency components, defined by
    \[
q(x_1,x_2)
=
\begin{cases}
0.7 \times \left[
0.7\left|\sin(3\pi x_1)\right|
+
0.3\left|\sin(6\pi x_2)\right|
\right]
\cos^2\left(\dfrac{\pi x_1}{1.4}\right)
\cos^2\left(\dfrac{\pi x_2}{1.4}\right),
&
|x_1|<0.7,\ |x_2|<0.7,
\\[8pt]
0,
&
\text{otherwise}.
\end{cases}
\]
\end{example}

% \paragraph{Example 3} Reconstruct a scatterer defined by
% \[
% q(x_1,x_2)
% =
% \begin{cases}
% 0.7 \times \left[
% 0.7\left|\sin(3\pi x_1)\right|
% +
% 0.3\left|\sin(6\pi x_2)\right|
% \right]
% \cos^2\left(\dfrac{\pi x_1}{1.4}\right)
% \cos^2\left(\dfrac{\pi x_2}{1.4}\right),
% &
% |x_1|<0.7,\ |x_2|<0.7,
% \\[8pt]
% 0,
% &
% \text{otherwise}.
% \end{cases}
% \]
In this example, the medium $q$ contains relatively high-frequency components, and we choose $k = 6\pi$. It takes 7 and 8 iterations to converge for the noiseless and noisy cases, respectively. An interesting phenomenon shown in Figure ~\ref{fig:Hierarchical}(d) is that the $L^2$ error does not decrease after the first Newton step, implying that $q$ lies outside the local contraction region linearized at the initial guess $q_0=0$. However, iTSVD still converges to a good result as an iterative method (see Figure ~\ref{fig:Hierarchical}). It illustrates the robustness of TSVD and the importance of using iterative linearization. Note that iTSVD is not an optimization method, so a monotone decrease of error with iteration may not be observed. 

On the other hand, Newton-type methods may need a good initial guess, especially when nonlinearity becomes stronger. In our case, when the amplitude 0.7 is increased to 1.0, the algorithm will diverge using $k = 6\pi$ starting from the initial guess $q_0=0$. However, this predicament can be resolved using frequency continuation as shown in Section ~\ref{sec: numerical on frequency continuation}.

\begin{figure}[htbp]
\centering
\begin{tabular}{c@{\vspace{2pt}}c@{\vspace{2pt}}cc}
\hspace{-0.3cm}\includegraphics[width=0.2\linewidth]{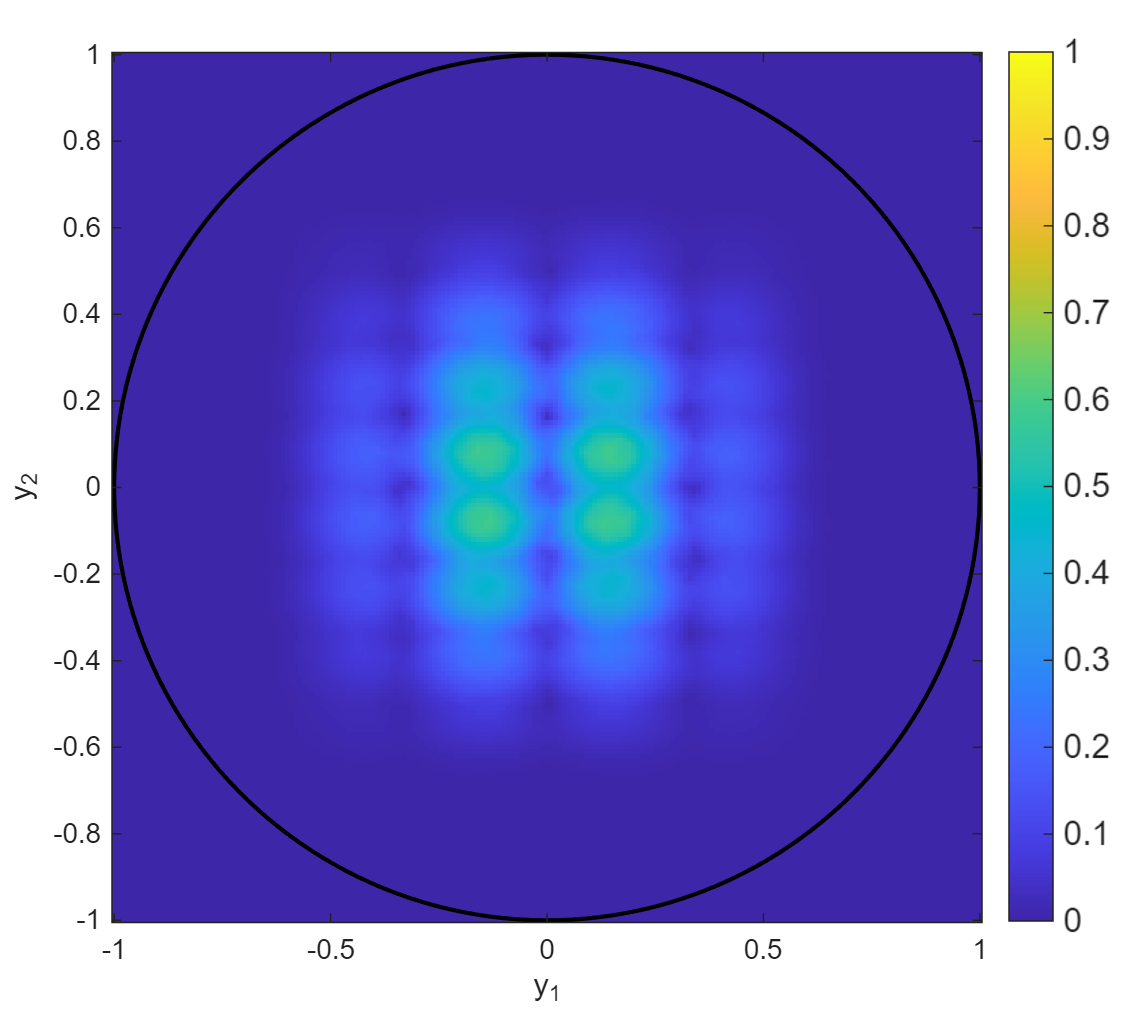}&
\includegraphics[width=0.2\linewidth]{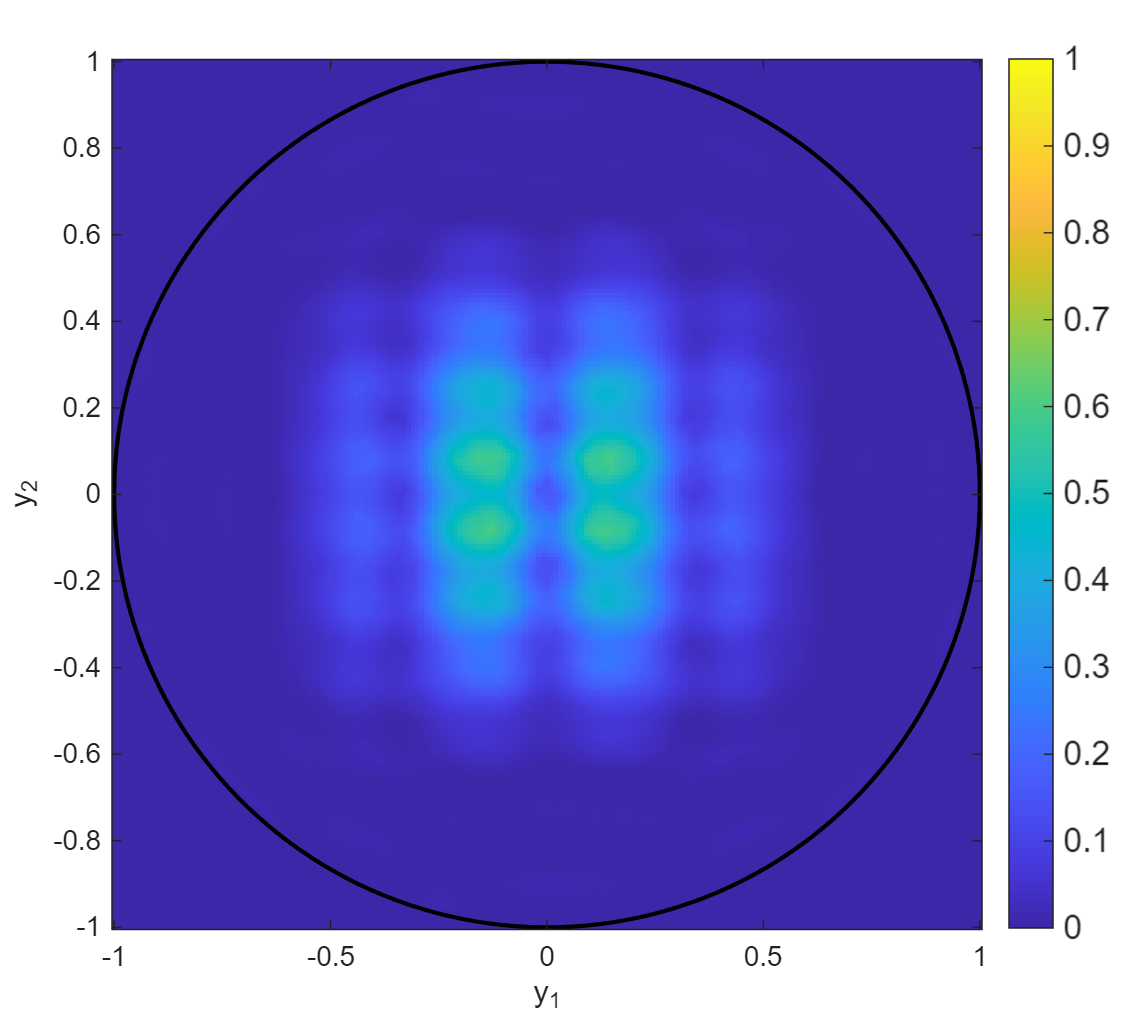}&
\includegraphics[width=0.2\linewidth]{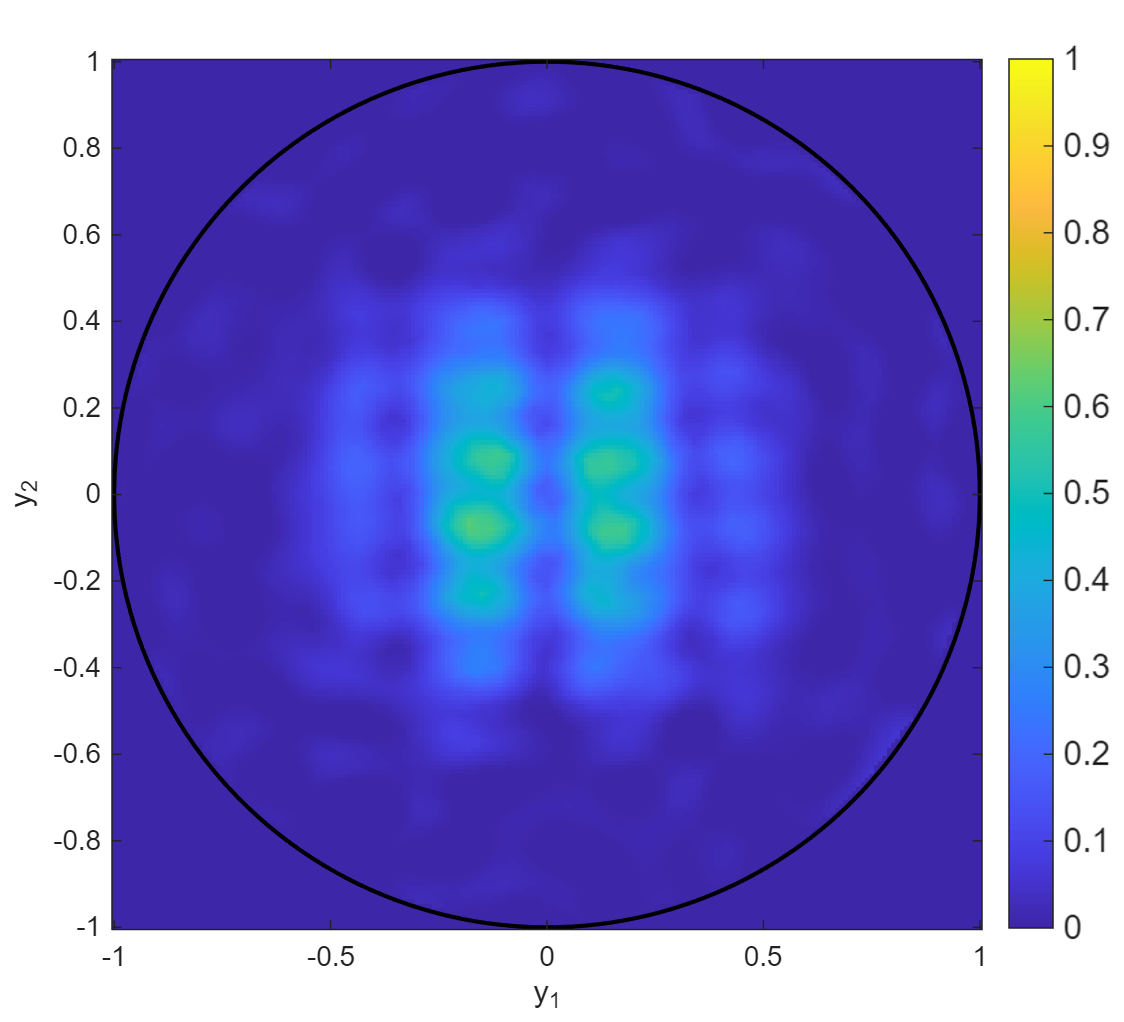} &
\hspace{-0.3cm}    \includegraphics[width=0.35\linewidth]{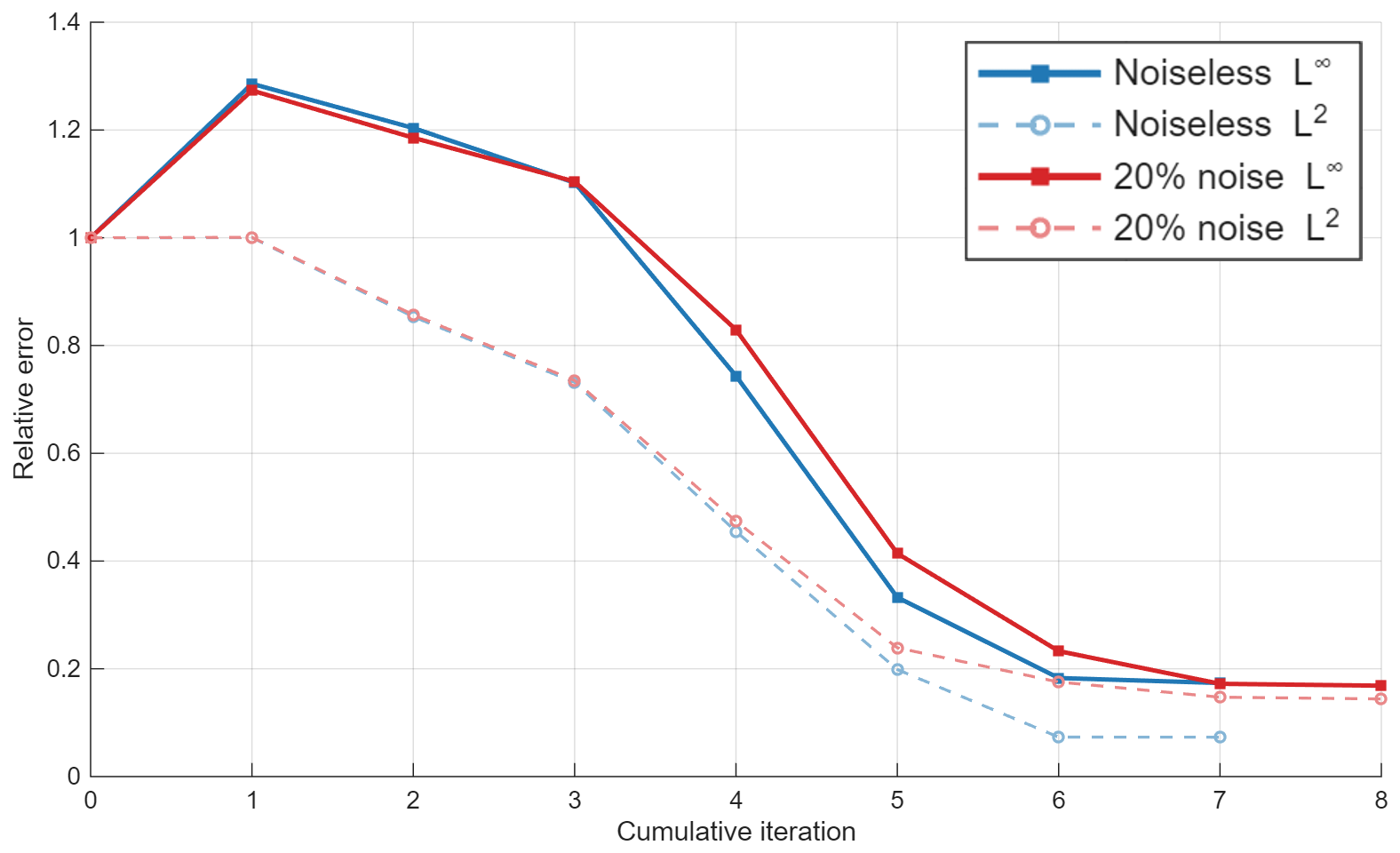}
\\
(a) true $q$ &(b) clean data &(c) data with $20\%$ noise & (d) convergence
\end{tabular}
\caption{Reconstruction results for Example ~\ref{ex: fine feature}.}
\label{fig:Hierarchical}
\end{figure}

%\subsection{3D Result}
\begin{example}
\label{ex:3d-five-gaussians}
We consider a contrast consisting of five Gaussian bumps truncated to the ball of radius $0.8$:
\[
 q(\mathbf{x})
 =\left[\sum_{j=1}^{5} A_j
       \exp\!\left(-\frac{|\mathbf{x}-c_j|^2}{\sigma_j^2}\right)\right]
       \mathbf 1_{\|\mathbf{x}\|_2\leq 0.8}(\mathbf{x}),
\]
where
\[
\begin{array}{c|c|c|c}
 j&A_j&c_j&\sigma_j\\
 \hline
 1&0.80&(-0.30,-0.20,-0.18)&0.15\\
 2&0.65&( 0.28,-0.22, 0.14)&0.14\\
 3&0.70&(-0.24, 0.26, 0.17)&0.16\\
 4&0.55&( 0.25, 0.24,-0.20)&0.13\\
 5&0.60&( 0,    0,    0.36)&0.14.
\end{array}
\]
\end{example}
This is a test for iTSVD on the 3D example. We take $k = 4\pi$ with $100$ incident directions and observation directions evenly distributed on $\mathbb{S}^2$. The reconstruction domain is $B^3(0,1)$. The Shannon number for this setup is $S = 1122$ according to ~\eqref{eq:Shannon number 3D}, which is used as the effective rank estimate.
% $[-1,1]^3$ with a $40\times40\times40$ grid. (? \textcolor{blue}{what is the rank estimate?}).
The $p$-transform Jacobian based on \eqref{eq:p transform in 3D} is used as the weight in discrete least squares. Figure ~\ref{fig:3D numerical} shows the reconstruction results and a convergence behavior similar to 2D, where both accuracy and efficiency are demonstrated.

% \begin{figure}[H]
% \centering
% \begin{tabular}{c@{\vspace{2pt}}c}
% \hspace{-0.3cm}\includegraphics[width=0.6\linewidth]{3D/comparison_1d.png}&
% % \includegraphics[width=0.25\linewidth]{3D/comparison_3d.png}&
% \includegraphics[width=0.3\linewidth]{3D/comparison_error_decay.png}
% \\
% (a) one-dimensional slice  &(b) convergence
% \end{tabular}
% \caption{Reconstruction results for Example ~\ref{ex:3d-five-gaussians}.}
% \label{fig:3D numerical}
% \end{figure}

\begin{figure}[htbp]
\centering
\begin{tabular}{c@{\vspace{2pt}}c@{\vspace{2pt}}c@{\vspace{2pt}}c}
\includegraphics[width=0.22\linewidth]{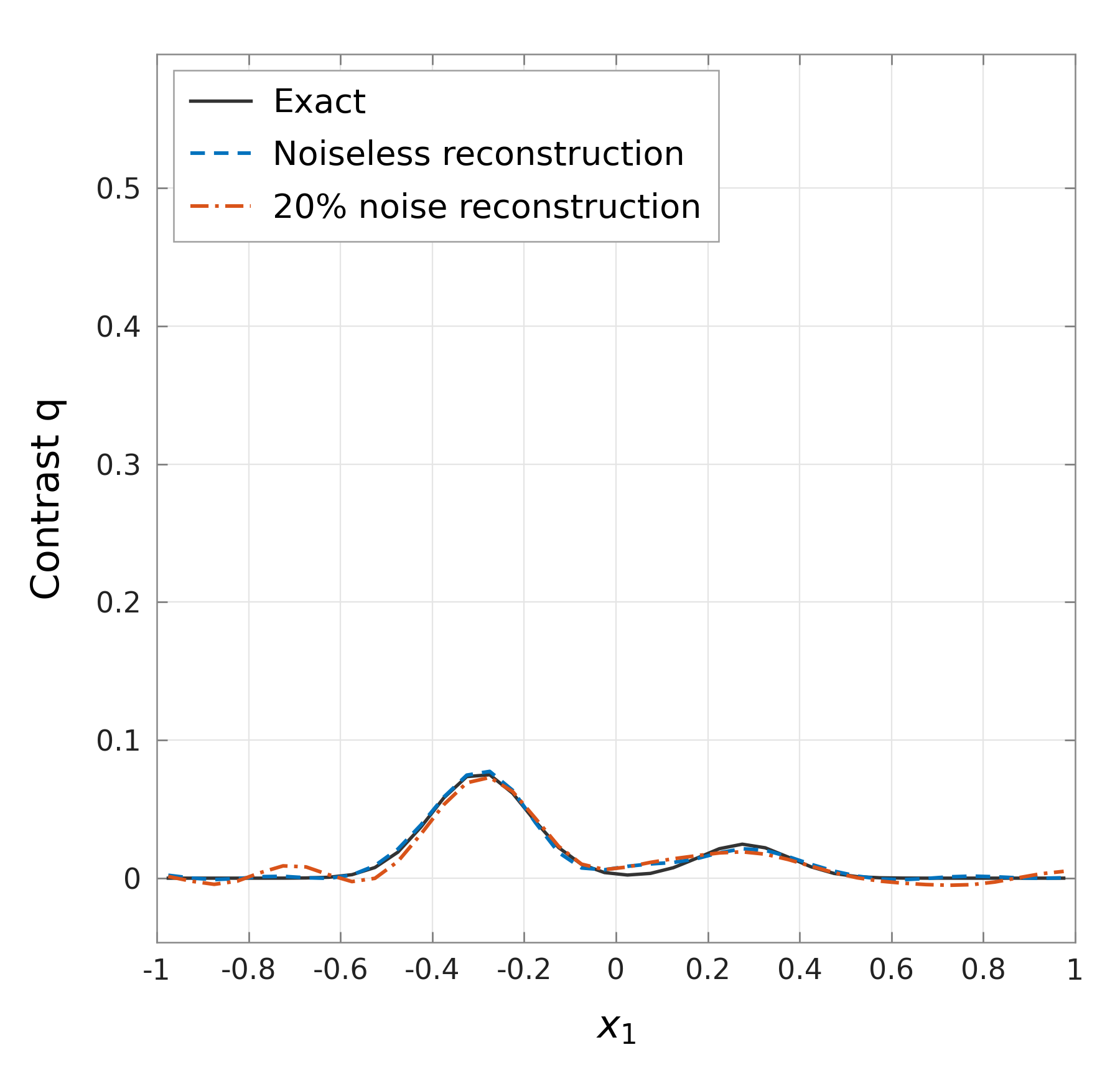}&
\includegraphics[width=0.22\linewidth]{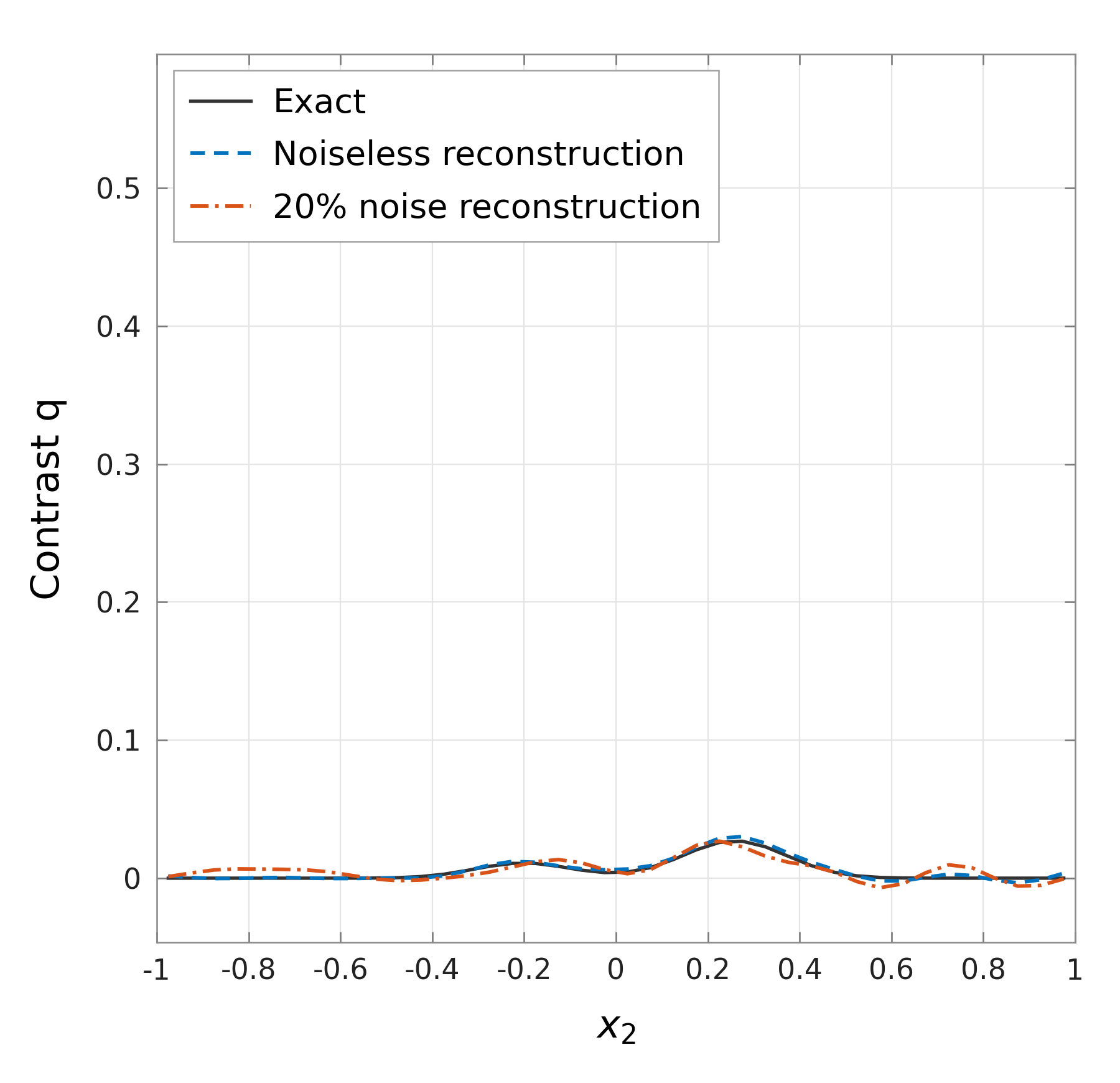}&
\includegraphics[width=0.22\linewidth]{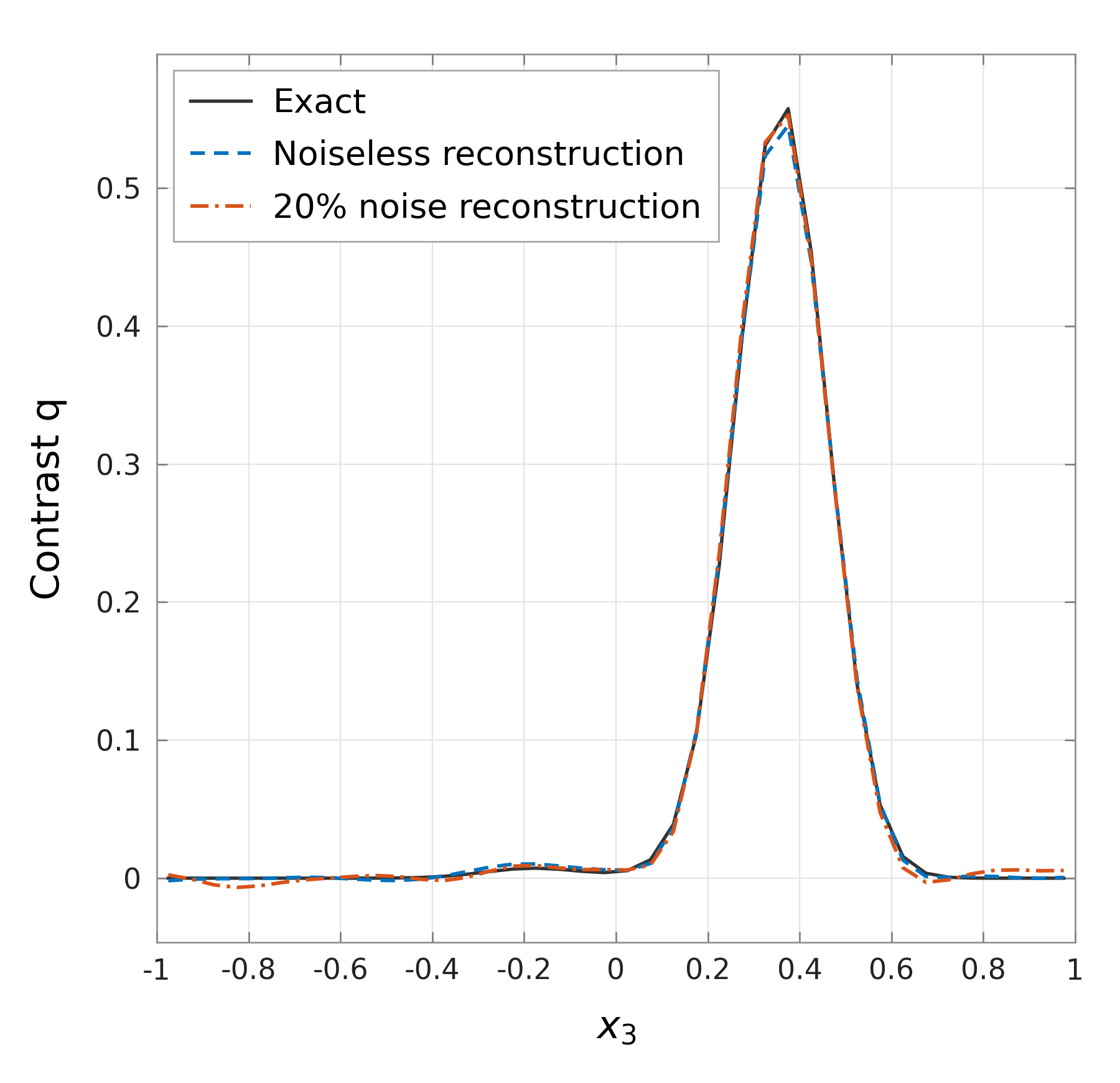}&
\includegraphics[width=0.3\linewidth]{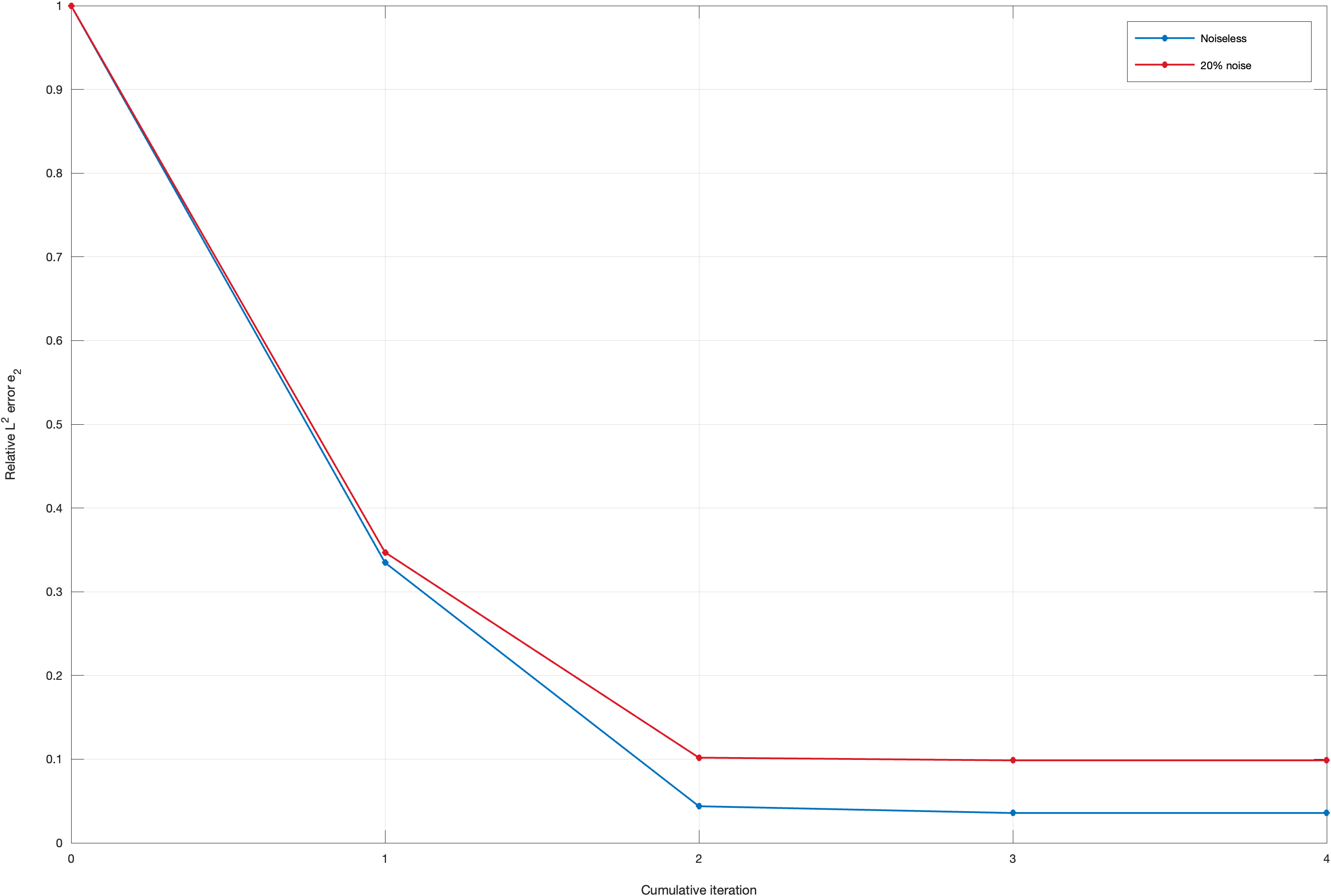}
\\
(a) $x_2 = x_3 = -0.025$ &(b) $x_1 = x_3 = -0.025$ &(c) $x_1 = x_2 = -0.025$ & (d) convergence
\end{tabular}
\caption{Reconstruction results for Example ~\ref{ex:3d-five-gaussians}.}
\label{fig:3D numerical}
\end{figure}

\subsection{Limited Aperture}
\label{sec: numerical on limited aperture}

In this section, we verify the discussions for limited aperture in Section ~\ref{sec: limited aperture analysis} and demonstrate how different aperture settings affect the reconstruction.
In general practice, a reconstruction result can be related to the extent to which the significant Fourier modes of the medium contrast $q(x)$ are contained in the frequency coverage after the $p$-transform ~\eqref{eq:p transform} corresponding to a specific aperture setup.   
%In particular, we verify the correspondence between the frequency domain coverage and reconstruction results for transmission/reflection data. To eliminate the effect of the aperture position, we choose the following symmetrical contrast.
\begin{example}[]\label{ex:single-gaussian}
To examine the effect of aperture size and relative orientation on reconstruction quality, we consider a symmetrical truncated Gaussian contrast defined by
\[
q(x_1,x_2)=
\begin{cases}
1.5\exp\!\left(-50(x_1^2+x_2^2)\right),
&
|x_1|\le \dfrac{1}{\sqrt{2}},\quad
|x_2|\le \dfrac{1}{\sqrt{2}},
\\[8pt]
0,
&
\text{otherwise}.
\end{cases}
\]
\end{example}
Set $\omega = |c_{\hat{\theta}} -c_{\hat{x}}|$ as described in Section ~\ref{sec: limited aperture analysis}. Figure ~\ref{fig:numerical on single Gaussian}(a) shows the reconstruction error for different aperture windows $L$ and different configurations quantified by the separation angle $\omega$ between the center of the incident aperture and the center of the observation aperture. First, with the same configuration, i.e., the same $\omega$, a larger aperture means a larger coverage in the frequency domain, resulting in better reconstruction. Moreover, the measurement configuration, i.e., the relative position between the incident and observation angles characterized by $\omega$, also leads to different coverage in the frequency domain and results in different reconstruction results. For example, for this particular $q(x)$ which is mainly composed of low Fourier modes, our tests show that for all aperture window sizes $L$, the transition from the reflection ($\omega = \pi$) data to the transmission ($\omega = 0$) data leads to better reconstruction in the $L^2$ norm due to the shift of the Fourier domain coverage from higher frequency to lower frequency as shown in Figure~\ref{fig:limited_angle}.  The extreme case is when $L=\frac{7}{4}\pi$, as shown in Figure~\ref{fig:numerical on single Gaussian}(a) and Figure~\ref{fig:limited_angle}(c), the transmission data cover enough low-frequency modes and result in a reconstruction as good as the full aperture data, while the reflection data contain not enough low-frequency information and result in a very poor reconstruction for this smooth $q(x)$.  

We also take $\omega = 0$ and $\omega = \pi$ to show the reconstruction plots for different apertures $L$ in Figure ~\ref{fig:numerical on aperture}. In the case of transmission data $\omega=0$, with respect to the center direction of the aperture,  we see that the resolution in the transverse direction is significantly better than that in the range direction, especially when the aperture is relatively small, e.g., $L=\frac{3\pi}{4}$. This corresponds to the geometry of the Fourier coverage region shown in Figure ~\ref{fig:limited_angle}, where modes along the transverse direction are covered to a greater extent than those along the range direction. 

%These results also verify the illumination of the different frequency components with different separations $\omega$ since $q$ is smooth and contains mostly low-frequency components. Inversion at large $p$ yields little update, while inversion at small $p$ yields a significant update. This is not a claim that the reconstruction is better in the case of transmission for any $q$.
% \begin{figure}[H]
%     \centering
%     \includegraphics[width=0.5\linewidth]{limited aperture/limited aperture position + window/reference_error_vs_separation_new.png}
%     \caption{$e_2$ for different separation $\omega$ with $L=\frac{3}{4}\pi, \frac{5}{4}\pi, \frac{7}{4}\pi, 2\pi$.}
%     \label{fig:e_2 with different L and separation delta}
% \end{figure}

\begin{figure}[htbp]
\centering
\begin{tabular}{c@{\vspace{2pt}}c@{\vspace{2pt}}c}
\includegraphics[width=0.3\linewidth]{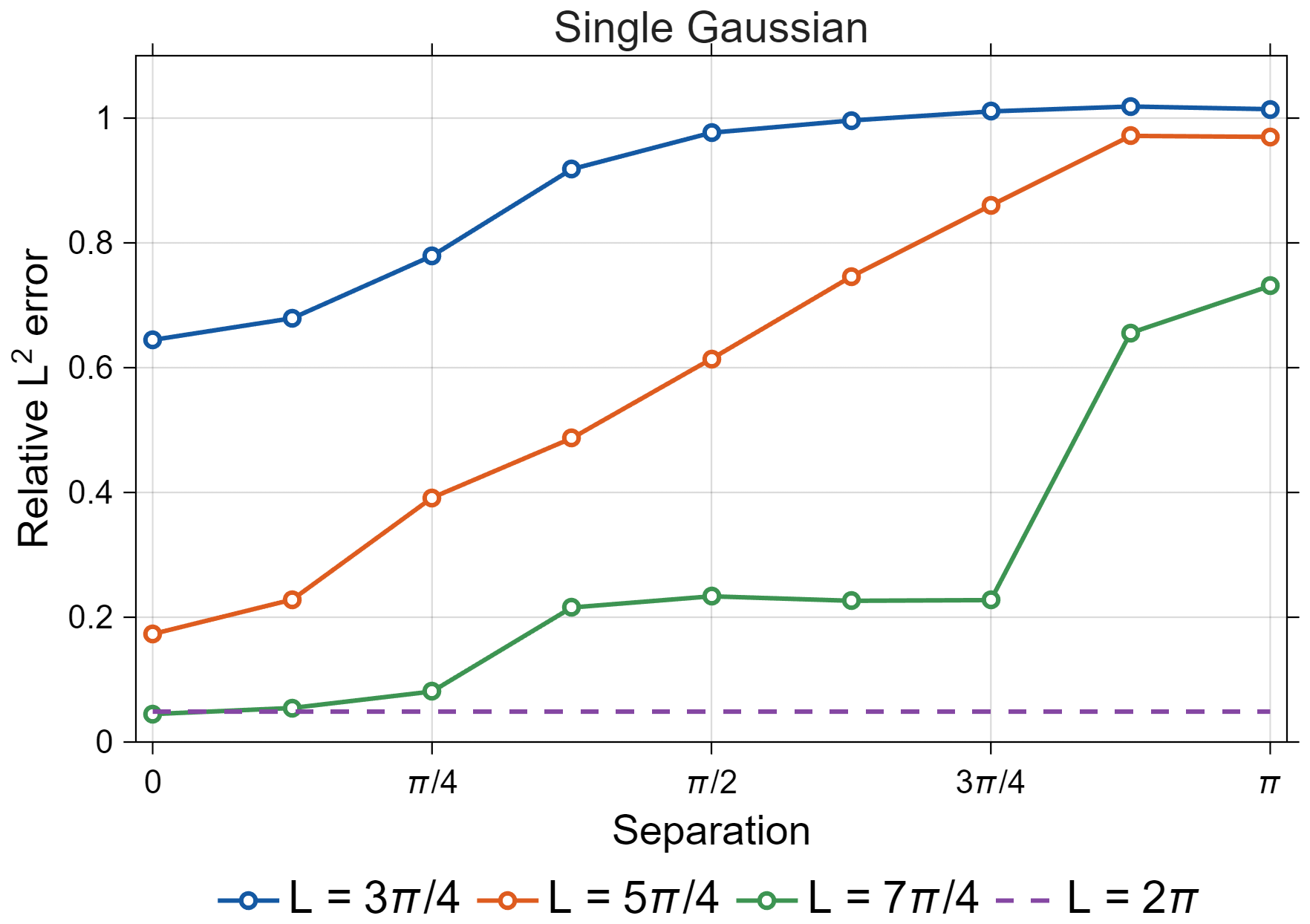}&
\includegraphics[width=0.2\linewidth]{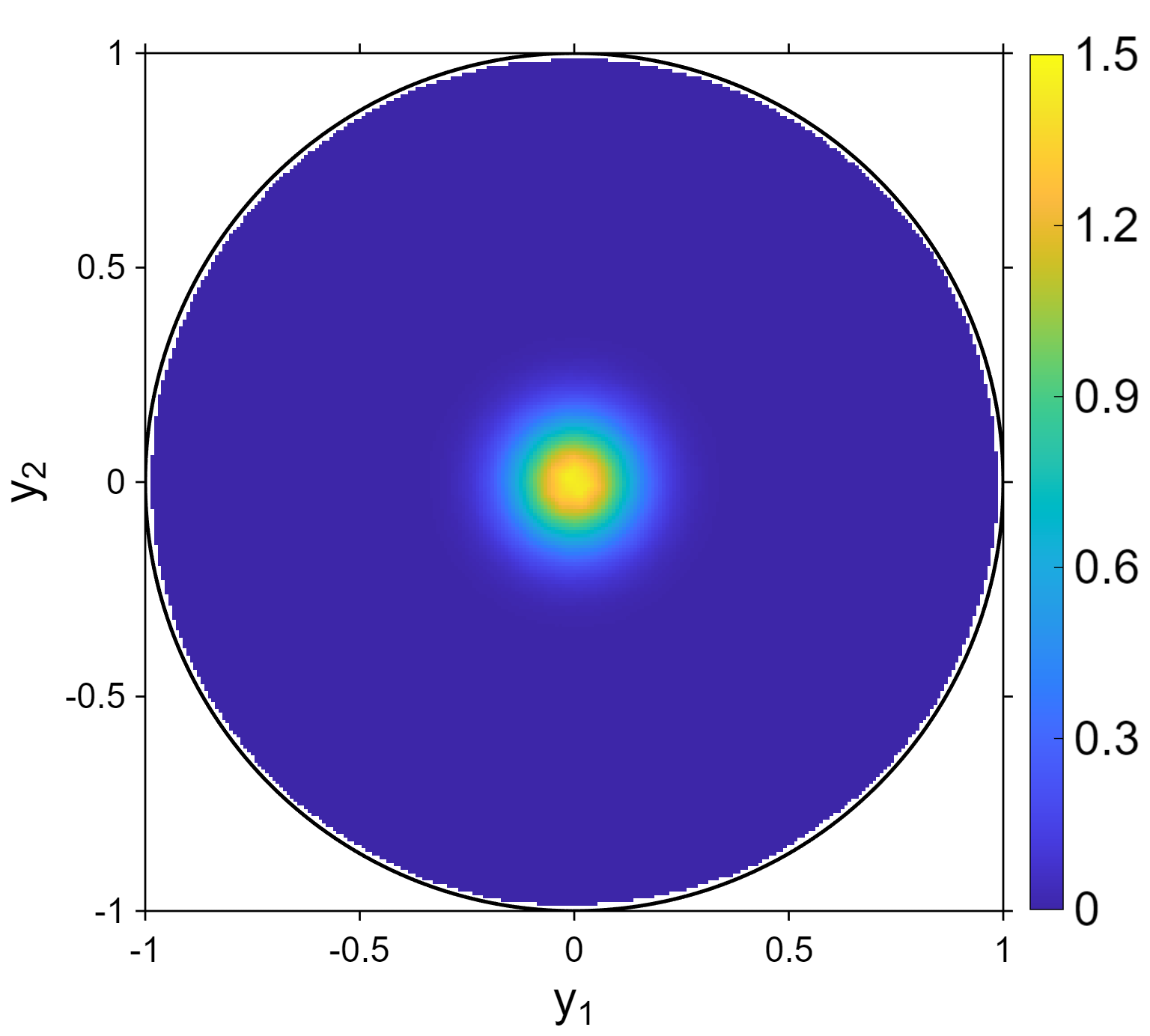}&
\includegraphics[width=0.2\linewidth]{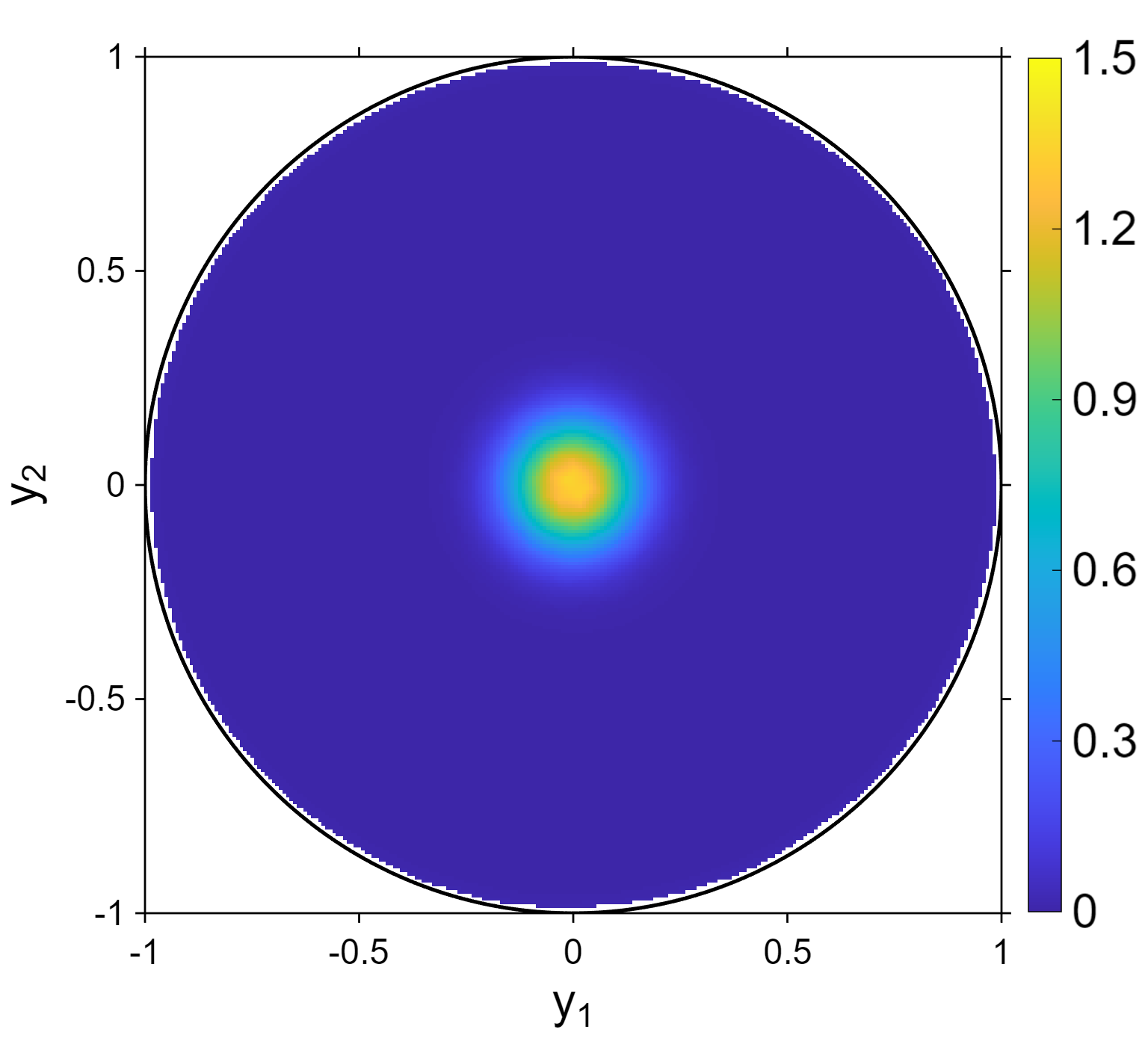}
\\
(a) $e_2$ v.s. $\omega$ for different $L$ &(b) true $q$  &(c) full aperture
\end{tabular}
\caption{Reference for Example ~\ref{ex:single-gaussian}.}
\label{fig:numerical on single Gaussian}
\end{figure}

% \begin{figure}[H]
%     \centering
%     \includegraphics[width=0.9\linewidth]{limited aperture/limited aperture position + window/reference_reconstructed_single_gaussian.png}
%     \caption{Reconstruction $\tilde{q}$ for different $L$ and separation $\omega$.}
%     \label{fig:reconstruction with different L and delta}
% \end{figure}

\begin{figure}[htbp]
\centering
\begin{tabular}{c@{\vspace{2pt}}c@{\vspace{2pt}}c}
\includegraphics[width=0.2\linewidth]{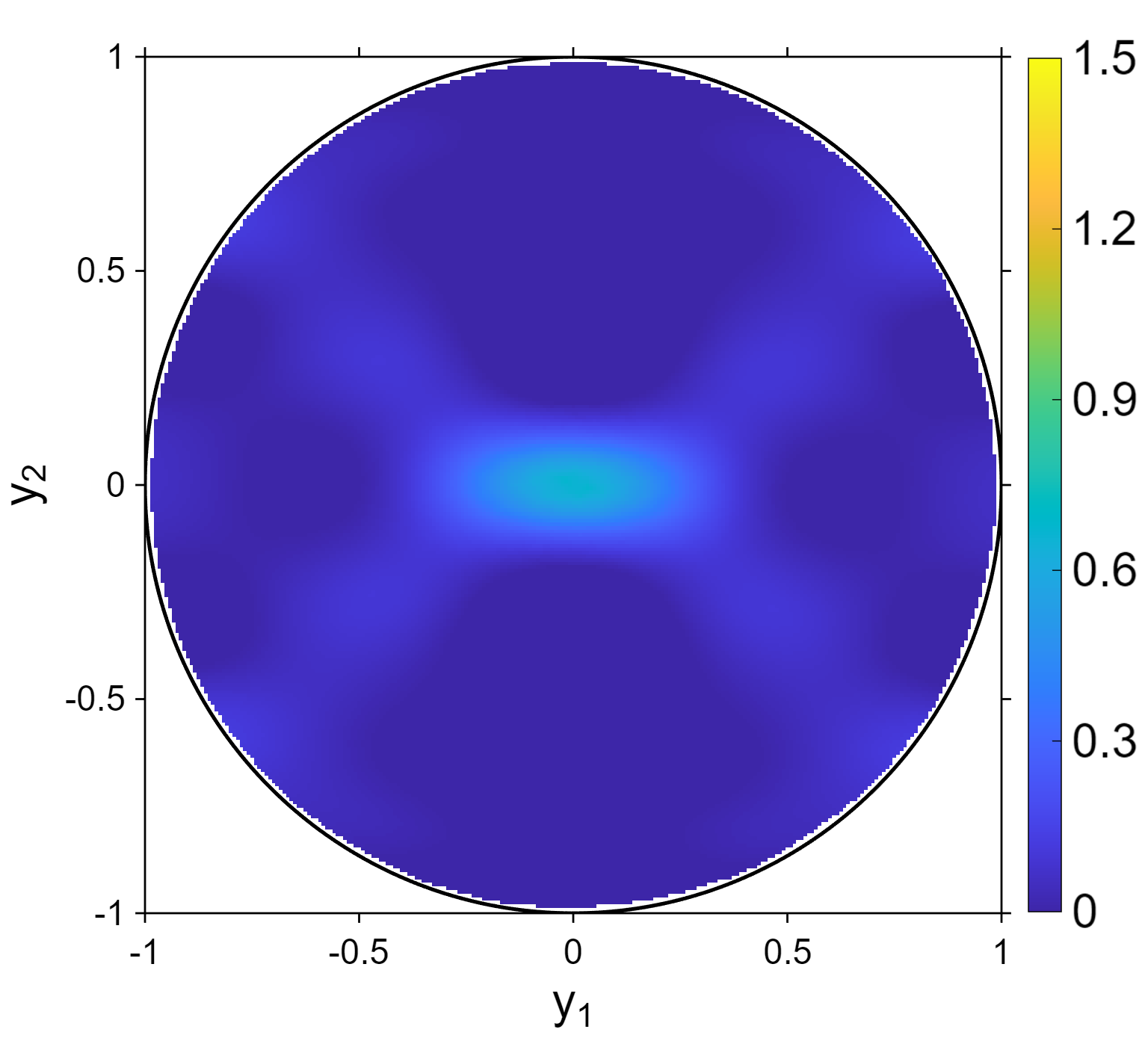}&
\includegraphics[width=0.2\linewidth]{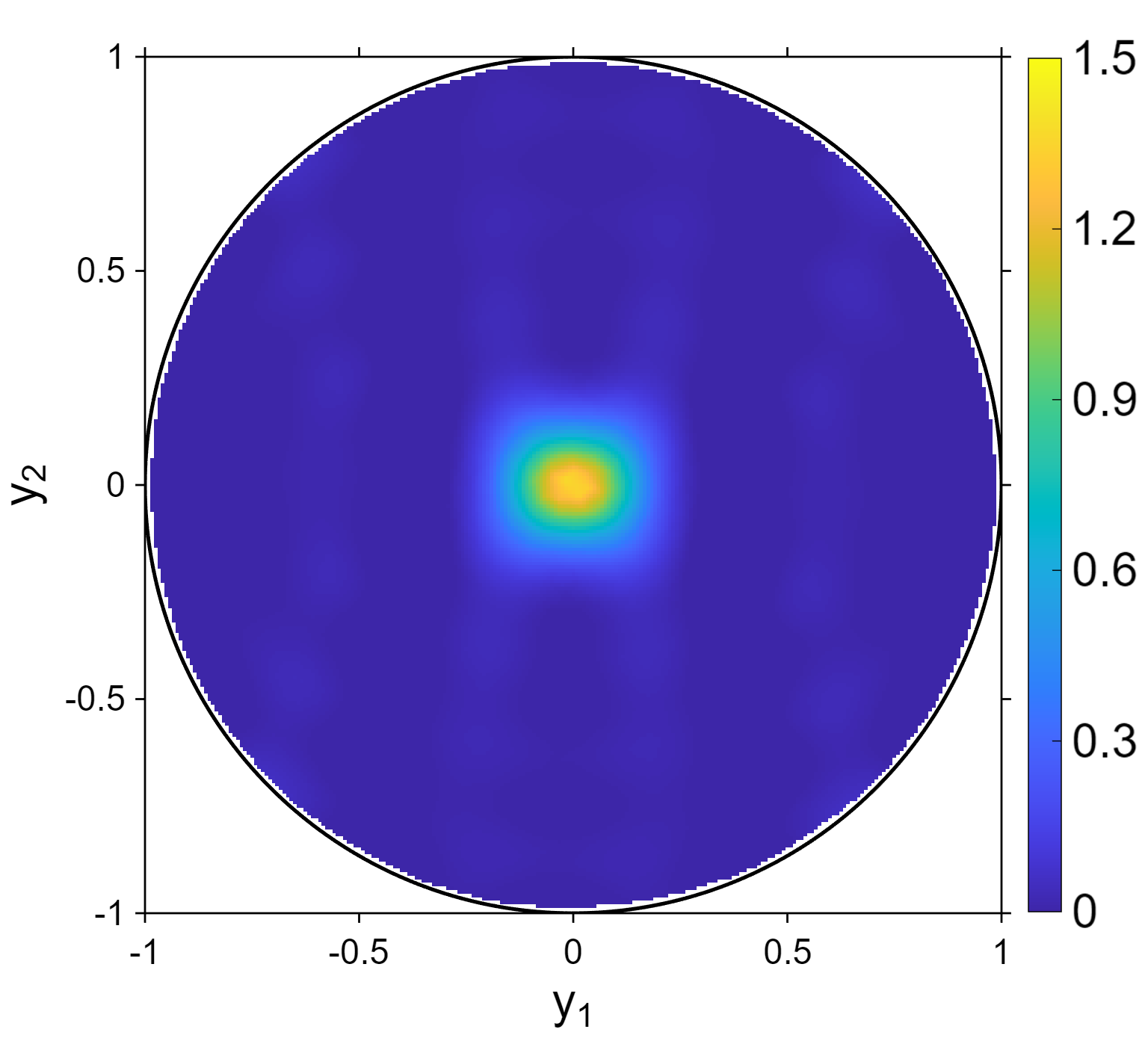}&
\includegraphics[width=0.2\linewidth]{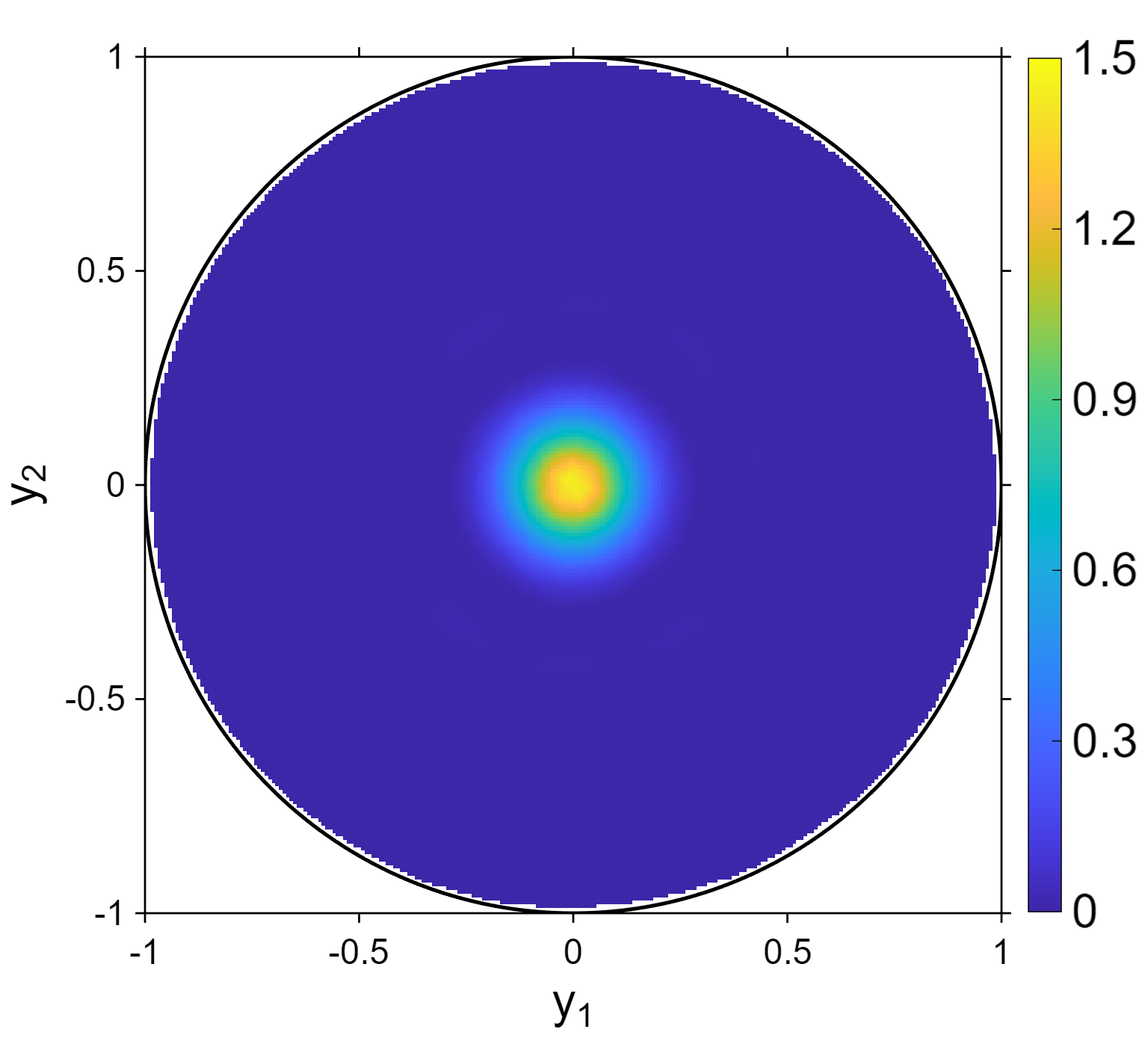}\\
\includegraphics[width=0.2\linewidth]{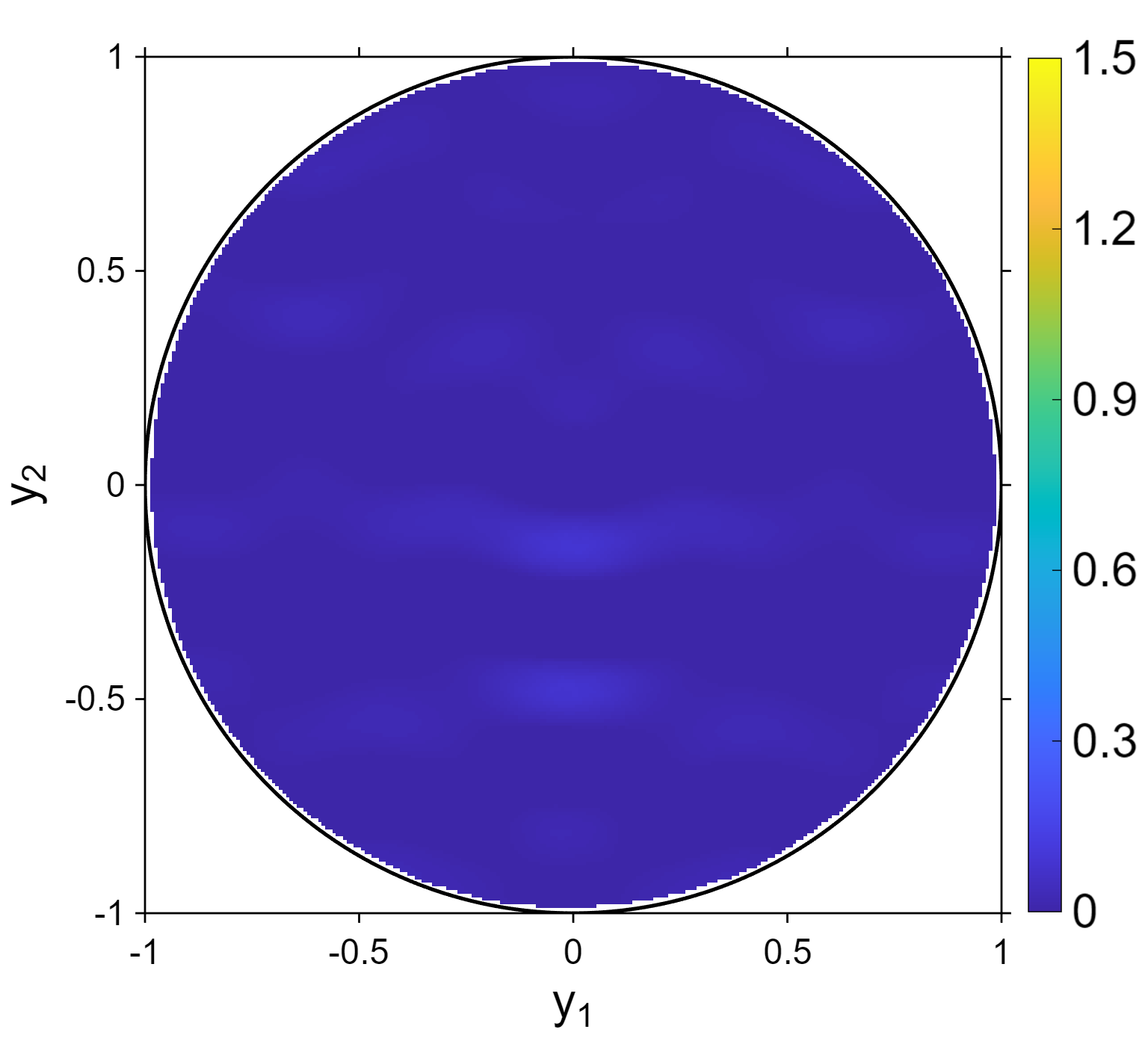}&
\includegraphics[width=0.2\linewidth]{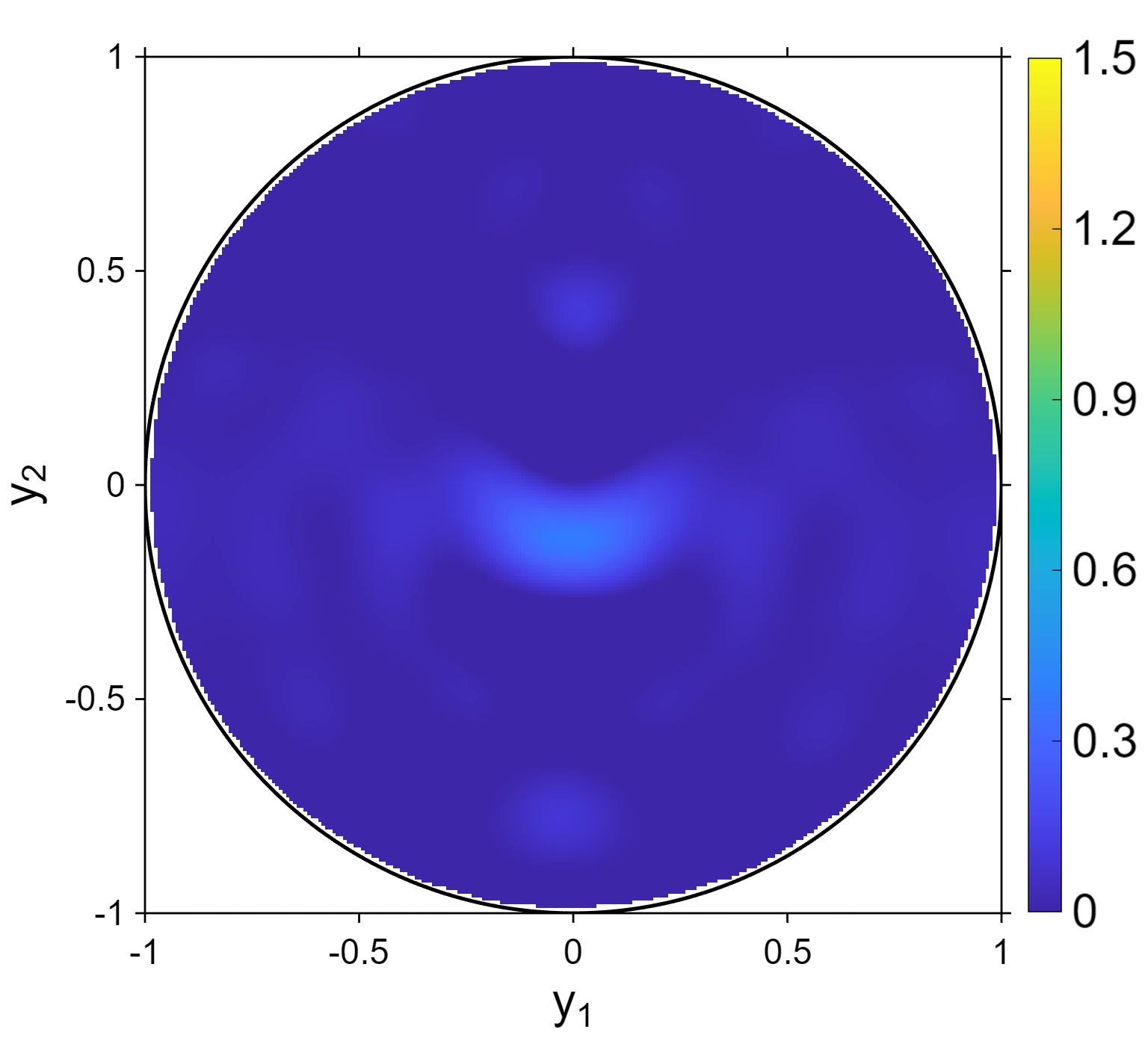}&
\includegraphics[width=0.2\linewidth]{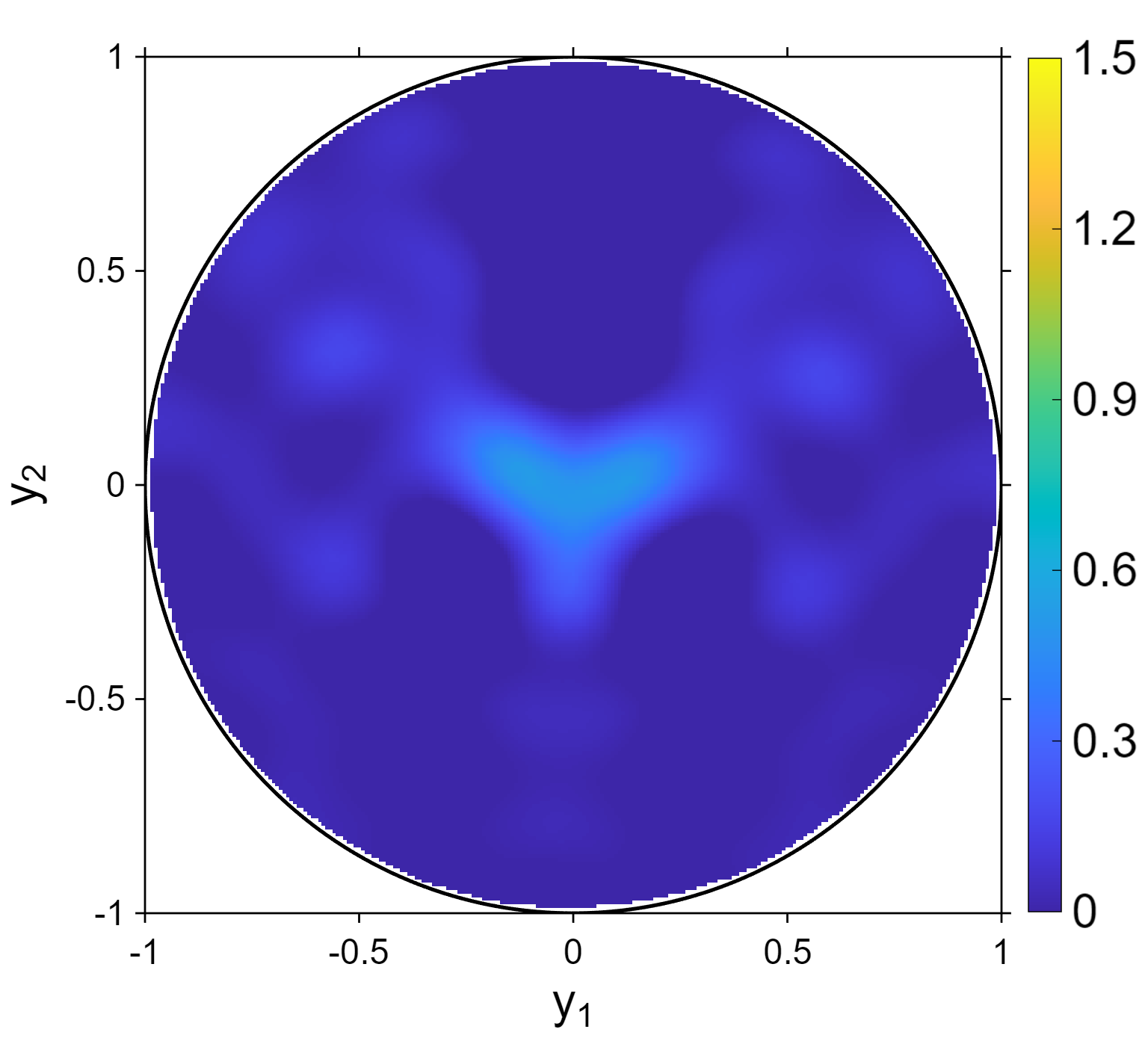}
\\
(a) $L=\frac{3}{4}\pi$ &(b) $L=\frac{5}{4}\pi$ &(c) $L=\frac{7}{4}\pi$
\end{tabular}
\caption{Reconstruction results for Example ~\ref{ex:single-gaussian}. 1st row: results from transmission. 2nd row: results from reflection.}
\label{fig:numerical on aperture}
\end{figure}

\begin{example}[]\label{ex:low and high}
We consider a contrast combining a localized low-frequency component with an oscillatory component with higher frequencies:
\[
q(x_1,x_2)=
\begin{cases}
\displaystyle
0.8\exp\!\left[
-\frac{
    \dfrac{(x_1+0.65)^2}{0.11^2}
    +\dfrac{x_2^2}{0.16^2}
}{
    2\left(1-\dfrac{(x_1+0.65)^2}{0.23^2}
             -\dfrac{x_2^2}{0.36^2}\right)
}
\right],
&
\displaystyle
\frac{(x_1+0.65)^2}{0.23^2}
+\frac{x_2^2}{0.36^2}<1,
\\[14pt]
\displaystyle
0.45\exp\!\left[
-\frac{
    \dfrac{(x_1-0.20)^2}{0.612^2}
    +\dfrac{x_2^2}{0.756^2}
}{
    2\left(1-\dfrac{(x_1-0.20)^2}{0.60^2}
             -\dfrac{x_2^2}{0.82^2}\right)
}
\right]\cos(6.8\pi x_2),
&
\displaystyle
\frac{(x_1-0.20)^2}{0.60^2}
+\frac{x_2^2}{0.82^2}<1,
\\[14pt]
0, & \text{otherwise}.
\end{cases}
\]
\end{example}

We compare two limited-aperture configurations as shown in Figure~\ref{fig:numerical on low-high}(a). The aperture size is $L=\frac{3}{2}\pi$. For the transmission setup, where the observation directions are the same as the incident directions, i.e., $c_{\hat{\theta}}=c_{\hat{x}}$, the corresponding Fourier domain coverage after the $p$-transform covers all low-frequency modes while leaving some high-frequency modes in the vertical directions in the Fourier disk uncovered. In contrast, for the reflection setup, where the observation aperture is opposite to the incident aperture, the corresponding Fourier domain coverage contains much fewer low-frequency modes, in particular those along the vertical direction, while covering high-frequency modes including those in the vertical direction near frequency $2k$. These observations correlate well with the reconstruction results shown in Figure~\ref{fig:numerical on low-high}(c). Of course, full aperture data cover all Fourier modes up to frequency $2k$ and hence produce the best result shown in Figure~\ref{fig:reference on low-high}(b).

% \begin{figure}[H]
%     \centering
%     \includegraphics[width=0.3\linewidth]{limited aperture/aperture difference/aperture_coverage.png}
%     \caption{Aperture setups of transmission and reflection with aperture window $L = \frac{3}{2}\pi$, and their corresponding coverage in $p$-space.}
%     \label{fig:aperture setup}
% \end{figure}

% \begin{figure}[H]
%     \centering
%     \includegraphics[width=1.0\linewidth]{limited aperture/aperture difference/fig_q_recon_comparison.png}
%     \caption{Left to right: true $q$, reconstruction $\tilde{q}$ with limited aperture in case of transmission, reflection, and with full aperture.}
%     \label{fig:aperture diff}
% \end{figure}

% \begin{figure}[H]
% \centering
% \begin{tabular}{c@{\vspace{2pt}}c@{\vspace{2pt}}cc}
% \hspace{-0.3cm}\includegraphics[width=0.25\linewidth]{limited aperture/aperture difference/fig_q_exact.png}&
% \includegraphics[width=0.25\linewidth]{limited aperture/aperture difference/fig_q_recon_transmission.png}&
% \includegraphics[width=0.25\linewidth]{limited aperture/aperture difference/fig_q_recon_reflection.png} &
% \hspace{-0.3cm}    \includegraphics[width=0.25\linewidth]{limited aperture/aperture difference/fig_q_recon_full_aperture.png}
% \\
% (a) true &(b) transmission &(c) reflection & (d) full aperture
% \end{tabular}
% \caption{Reconstruction results for Example 5.}
% \label{fig:aperture diff}
% \end{figure}

\begin{figure}[H]
\centering
\begin{tabular}{c@{\vspace{2pt}}c}
\includegraphics[width=0.25\linewidth]{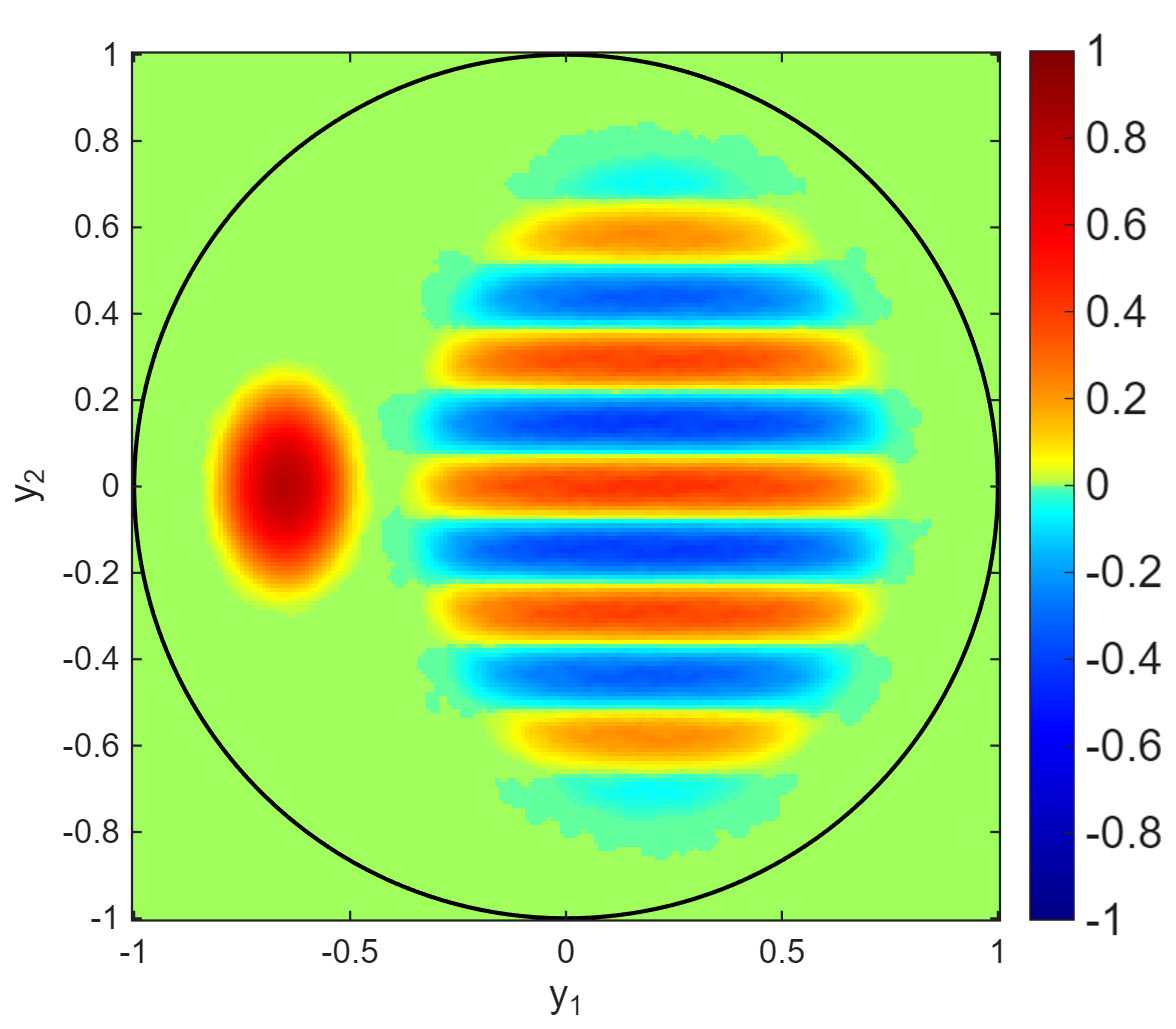}&
\includegraphics[width=0.25\linewidth]{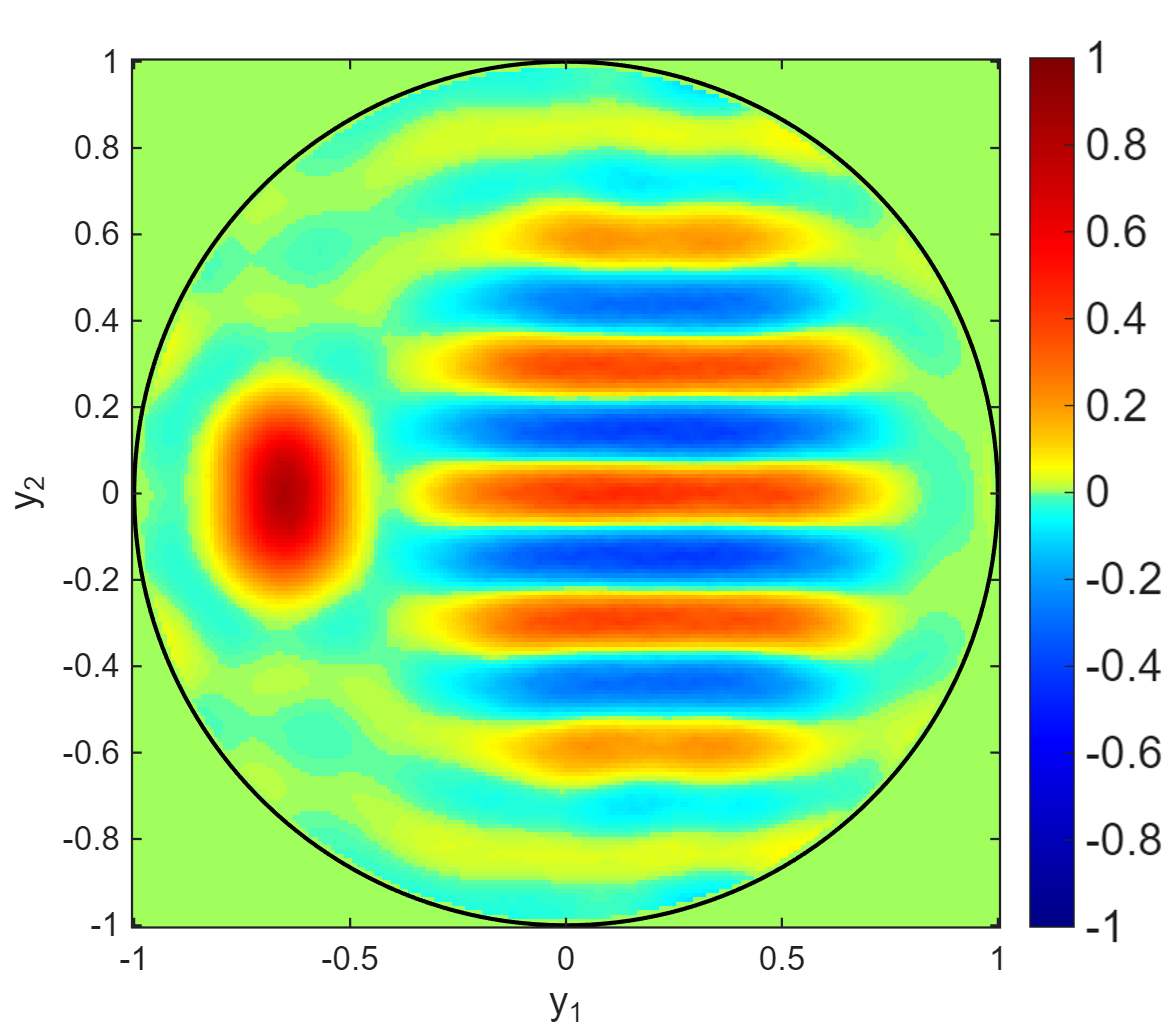}
\\
(a) true $q$ &(b) full aperture
\end{tabular}
\caption{Reference for Example ~\ref{ex:low and high}.}
\label{fig:reference on low-high}
\end{figure}

\begin{figure}[H]
\centering
\begin{tabular}{c@{\vspace{2pt}}c@{\vspace{2pt}}c}
\includegraphics[width=0.25\linewidth]{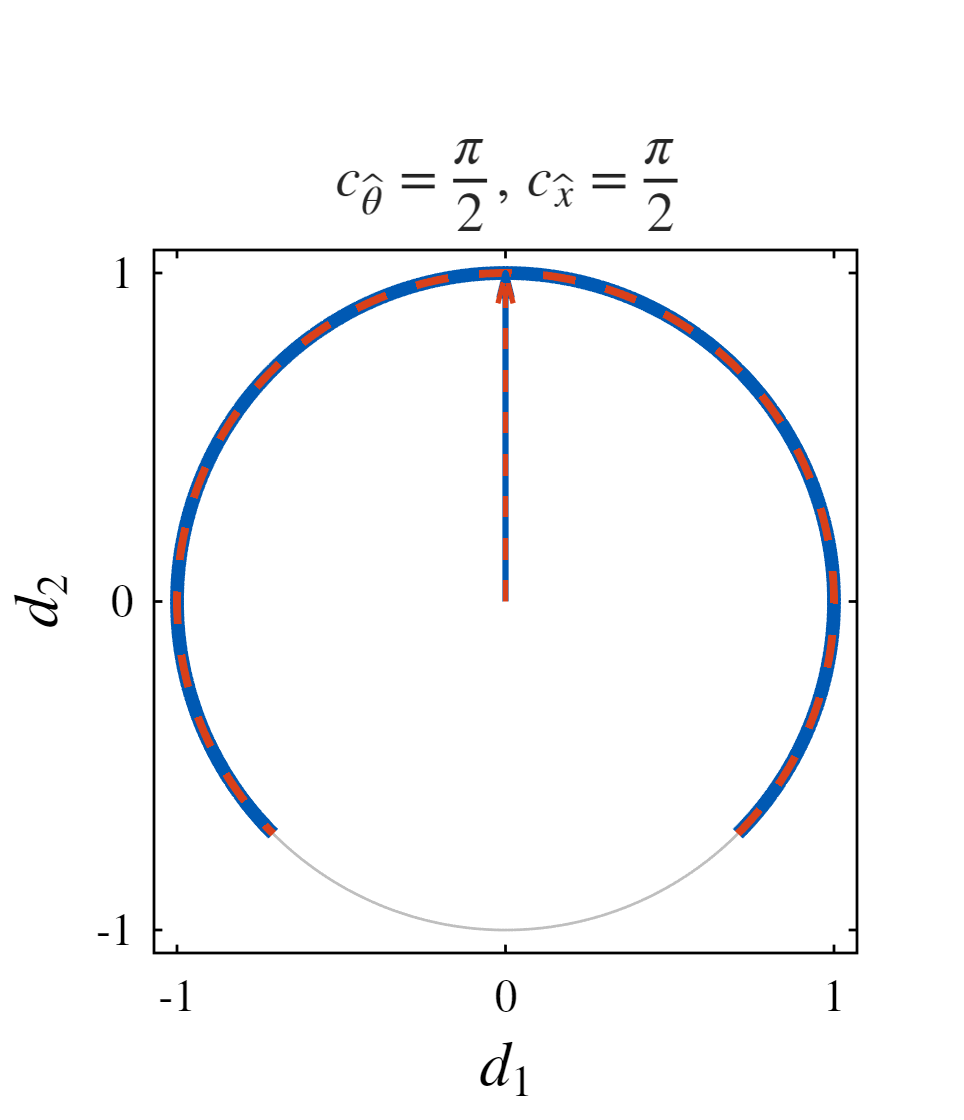}&
\includegraphics[width=0.25\linewidth]{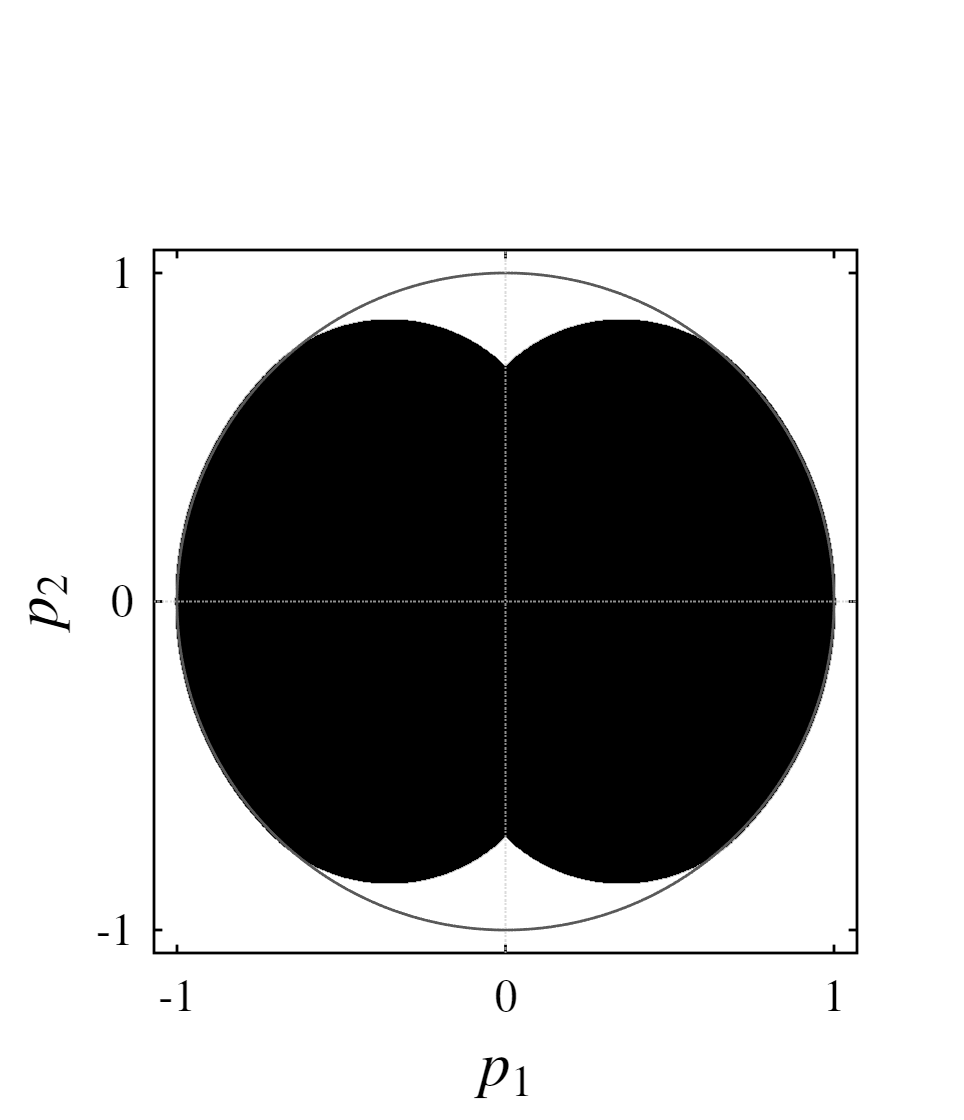}&
\includegraphics[width=0.25\linewidth]{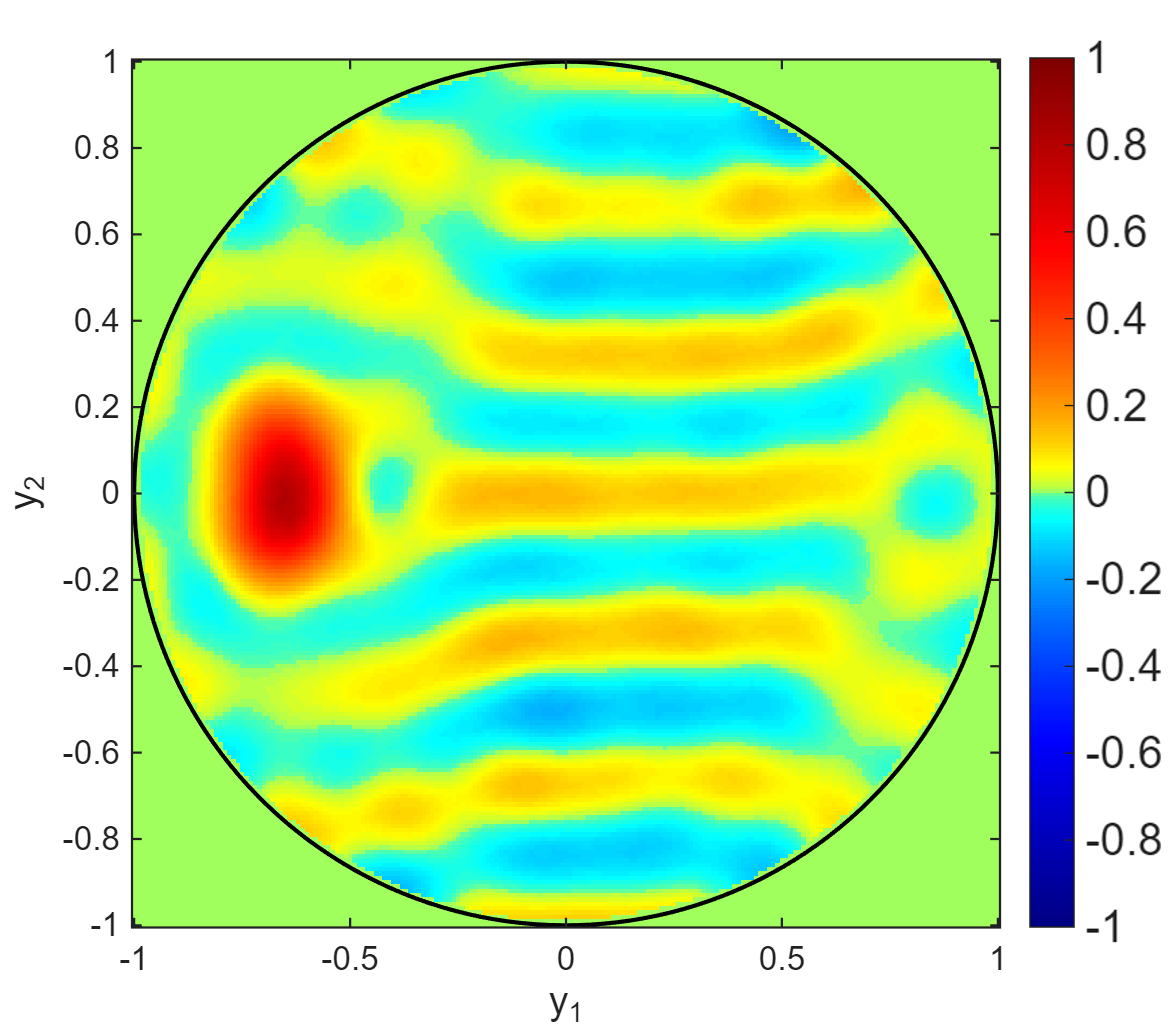}\\
\includegraphics[width=0.25\linewidth]{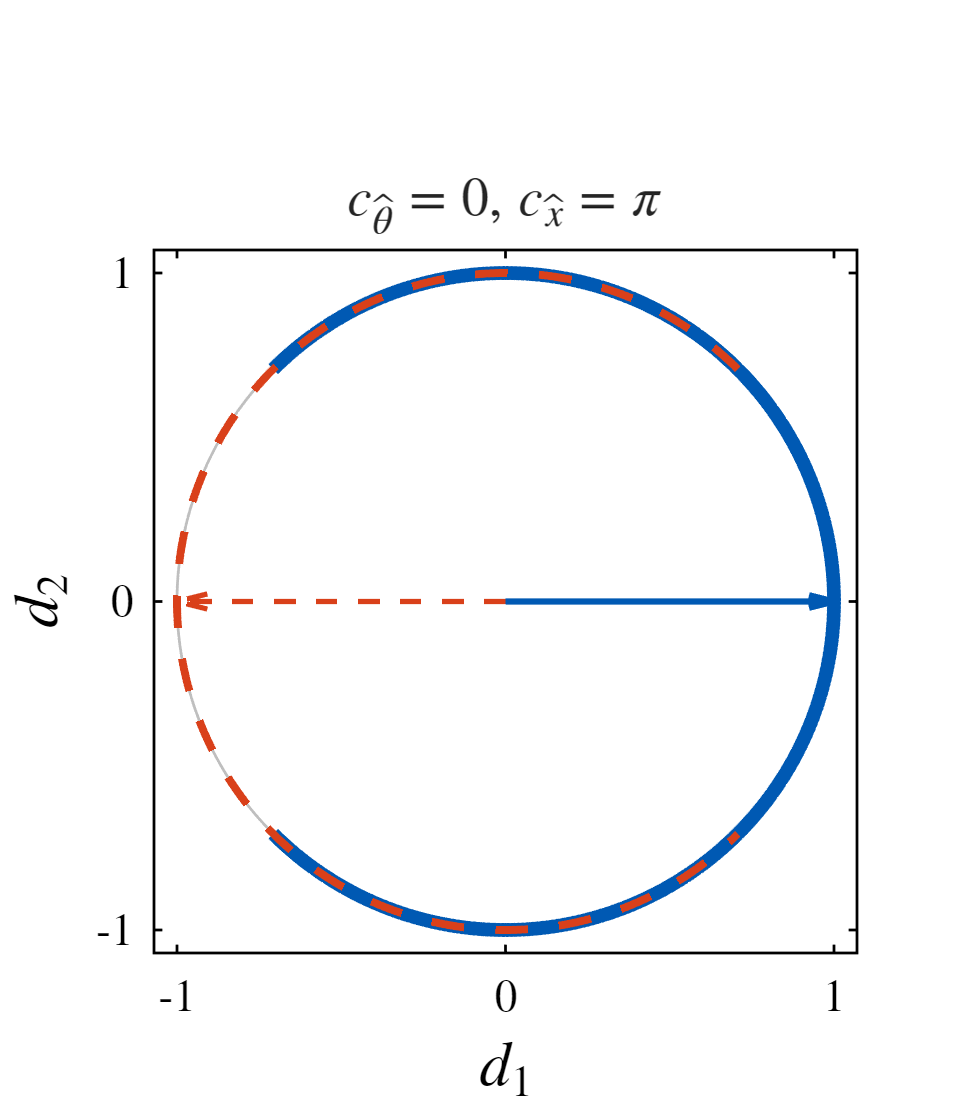}&
\includegraphics[width=0.25\linewidth]{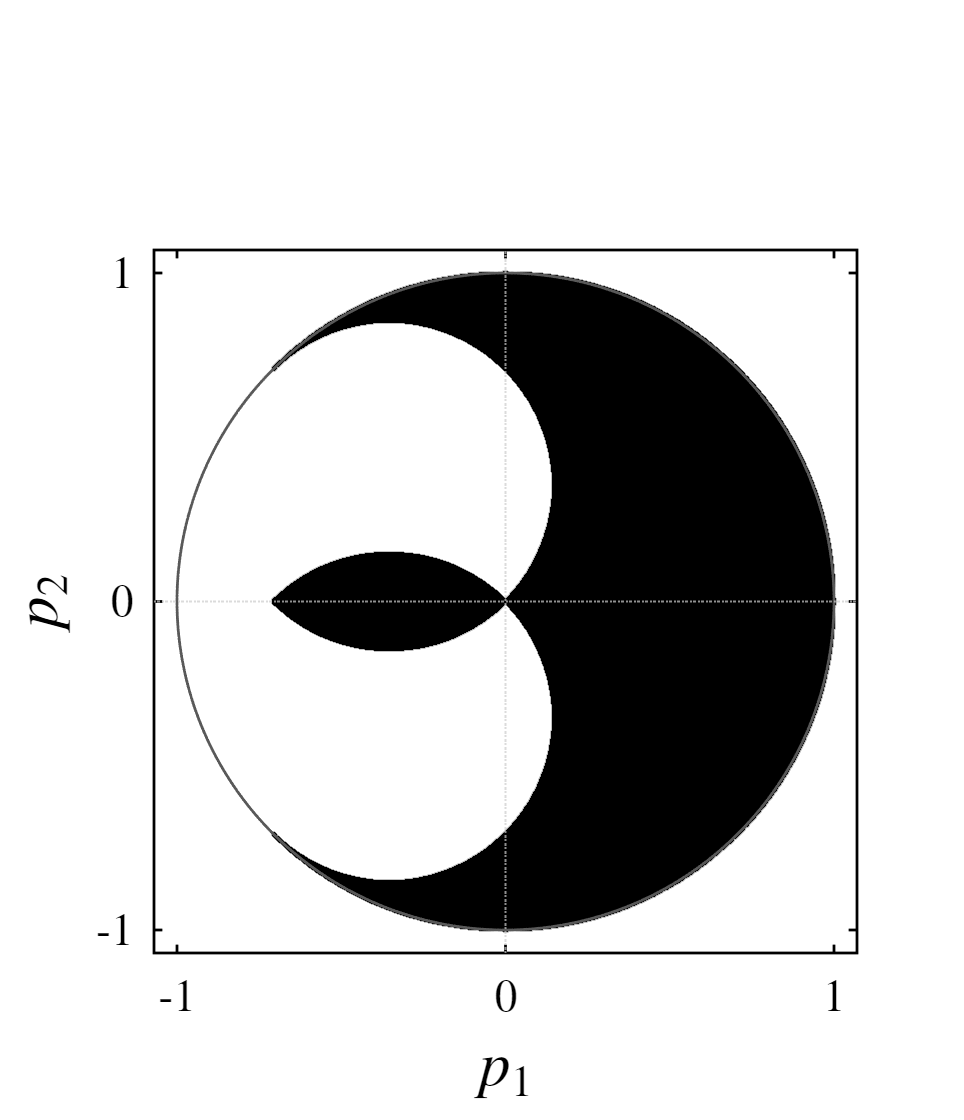}&
\includegraphics[width=0.25\linewidth]{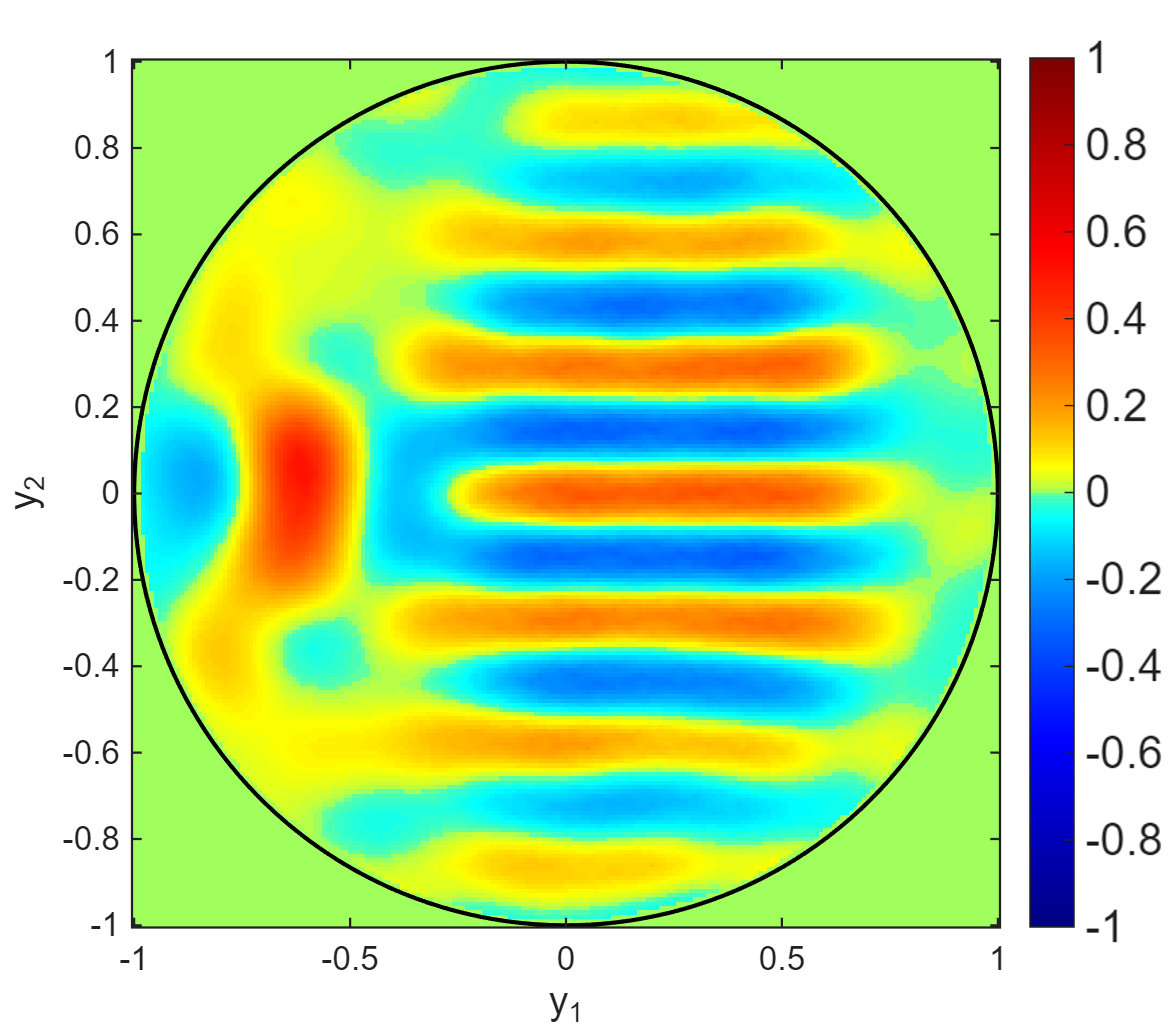}
\\
(a) aperture setup &(b) $p$-space coverage &(c) reconstruction
\end{tabular}
\caption{Reconstruction results for Example ~\ref{ex:low and high}. 1st row: transmission. 2nd row: reflection.}
\label{fig:numerical on low-high}
\end{figure}

\subsection{Discrete Sparse Data}
\label{sec: random data}
We live in a discrete world in practice. Each measurement, e.g., far-field data corresponding to a pairing of an incident direction $\hat{\theta}$ and observation direction $\hat{x}$, is mapped to a point $p=\frac{\hat{\theta}-\hat{x}}{2}$ in the frequency domain. If the measurement is dense enough to resolve the wavelength $\frac{2\pi}{k}$, then one can use the corresponding continuous limit to approximate the effective rank estimate. However, if the measurement is sparse or random, one needs to find a discrete approximation. Here, we propose a practical while empirical/ad hoc way to estimate the effective rank for an arbitrary discrete set of measurements for far-field data. The main idea is to extend the definition of the Shannon number~\eqref{eq:Shannon number} in the continuous setting to the discrete setting. Hence, for any discrete data set (after $p$-transform) $\{\hat{u}^\infty(p_j)\}_{j=1}^N$, one needs to define its numerical measure $\mu_r(\{p_j\}_{j=1}^N)$ in the frequency domain, although its mathematical measure is 0. A natural way is to associate each point $p_j$ with a ball centered at $p_j$ of an appropriate radius $r$. Here, we use the Born approximation for a homogeneous medium~\eqref{eq:J0} as the starting point. We follow the analysis in ~\cite{Shannon_number_estimate} to motivate our argument. The Shannon number is derived in the continuous case~\cite{Shannon_number_estimate} from evaluating the integral  
\begin{equation}\label{eq:trace}
\int_{\Omega} D(x,x)dx, 
\end{equation}
which is the trace of the correlation operator between two Fourier modes $x,x'\in \Omega$ in  $B(0,1)$: \begin{equation}\label{eq:correlation integral}
    D(x,x') = (2\pi)^{-2} \int_{B(0,1)} e^{i2k\xi\cdot (x-x')}d\xi
\end{equation}
Finding the appropriate radius can be formulated as an integral approximation problem: $\forall p\in B(0,1)$, find the largest admissible $r$ such that the correlation integral~\eqref{eq:correlation integral} on a small ball centered at $p_j$, $B(p_j,r)$, can be approximated by the constant phase at $p_j$. In other words, we choose the maximum radius $r = r_{max}$ such that
\begin{equation}\label{eq:3D integral}
\sup_{x,x' \in \Omega}|(2\pi)^{-2} \int_{B(p_j,r)} e^{i2k\xi\cdot (x-x')} d\xi - (2\pi)^{-2}  e^{i2kp_j \cdot (x-x')} \pi r^2|
\end{equation}
is small enough so that the trace integral~\eqref{eq:trace} can be approximated well.
%it is crucial to connect the continuous case and the discrete case in the need to evaluate an area of the frequency domain while $\mu(\{p_j\}_{j=1}^N) = 0$. Hence, a density argument connecting $\{p_j\}_{j=1}^N$ in the frequency domain to a positive measure needs to be made, i.e., $\{p_j\}_{j=1}^N$ is only dense in $\mathcal{K}_1$ for some $\mathcal{K}_1 \subset B(0,1)$.
% A sparse set of data implies a lack of information in $p$-space, where we need to evaluate the area of frequency domain for Shannon number estimate.
% We could translate this argument into a quantitative question: How big of an area in the frequency domain can a single data point represent? For simplicity, we consider the homogeneous case and refer to \cite{Shannon_number_estimate} for analysis. The Shannon number is derived from the continuous case by evaluating the integral
% \[
% \int_{\Omega} D(x,x)dx, 
% \]
% where $D(x,x') = (2\pi)^{-2} \int_{B(0,1)} e^{i2k\xi\cdot (x-x')}d\xi$. For the discrete case, we find $\mathcal{K}_1 \subseteq B(0,1)$ in which $\{p_j\}_{j=1}^N$ is dense. This leads to an integral approximation problem, i.e., for every $p_j$ and a tolerance $\epsilon_3$, we find the largest admissible radius $r$ such that
% \[
% \sup_{x,x' \in B(0,1)}|(2\pi)^{-2} \int_{B(p_j,r)} e^{i2k\xi\cdot (x-x')} d\xi - (2\pi)^{-2}  e^{i2kp_j \cdot (x-x')} \pi r^2| \leq \epsilon_3.
% \]
% In 2D, the integrand in ~\eqref{eq:3D integral} is replaced by $|1 - \frac{4kr}{2J_1(4kr)}|$ with a proper scaling. 
Once $r_{max}$ is chosen, we define $\mathcal{K}\subset B(0,1)$ by
\begin{equation}\label{eq:discrete}
\mathcal{K}
=
\left(\bigcup_{j=1}^{N} B(p_j,\, r_{\max})\right)\cap B(0,1)
\end{equation}
as the domain covered by the discrete measurement data and set $\mu_r(\{p_j\}_{j=1}^N)=\mu(\mathcal{K})$. We apply Green's theorem and refer to \cite{p-area} for computing the area of $\mathcal{K}$. Here we remark that $r_{max}$ imposes a local quadrature approximation rather than decorrelation across different modes. 
Let $\phi(p) = e^{i 2 k p\cdot (x-x')}$ and $\phi(p_0) = e^{i 2 k p_0\cdot (x-x')}$. From the approximation point of view,
\[
|\phi(p) - \phi(p_0)| \leq 4k|p-p_0|, \quad \forall x,x'\in B(0,1),
\]
which implies $r_{max} \sim O(1/k)$. 

For a random discrete measurement, it is difficult to carry out the spectral analysis as in ~\cite{Eigenvalue_distribution,hughes2026wavepacketseigenvalueestimates}, which is only applicable to $\mathcal{K}$ with a regular shape, e.g., an ellipse or a cube. Numerical experiments suggest that an empirical $r_{max} = \frac{1}{\sqrt{2}k}$ in~\eqref{eq:discrete} provides a good effective rank estimate based on the Shannon number formula~\eqref{eq:Shannon number}. First, we use discrete data to verify the effective rank estimate by choosing a disk in $p$-space with a different radius $R$, in which case the continuous Shannon number can be computed easily. We use a Fibonacci grid to sample on each disk (the selected points in Figure ~\ref{fig:effective rank} correspond to the Fibonacci grid in 2D) such that the minimum number of points is needed to cover the disk with $r_{max} = \frac{1}{\sqrt{2}k}$. The spectrum and the dashed line indicating the effective rank estimate using the discrete data in Figure ~\ref{fig:effective rank} show that the empirical $r$ is a good choice.
% It is important to note that when $\{p_j\}_{j=1}^N$ are randomly distributed in $B(0,1)$, our rank estimate above is conservative in that it allows retaining more modes without jeopardizing stability.
\begin{figure}[htbp]
\centering
\begin{tabular}{c@{\vspace{2pt}}c@{\vspace{2pt}}c@{\vspace{2pt}}c}
\includegraphics[width=0.2\linewidth]{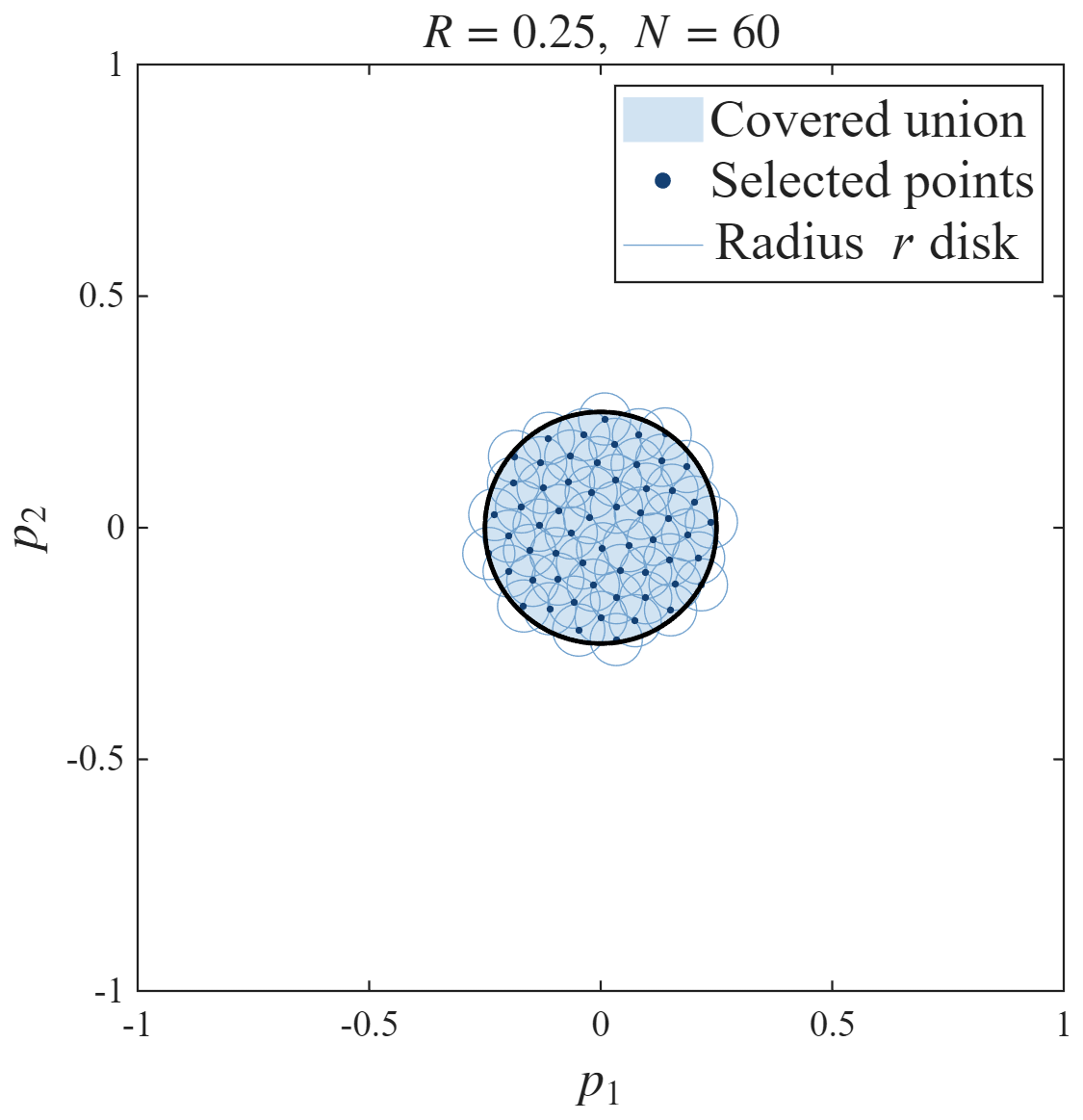}&
\includegraphics[width=0.2\linewidth]{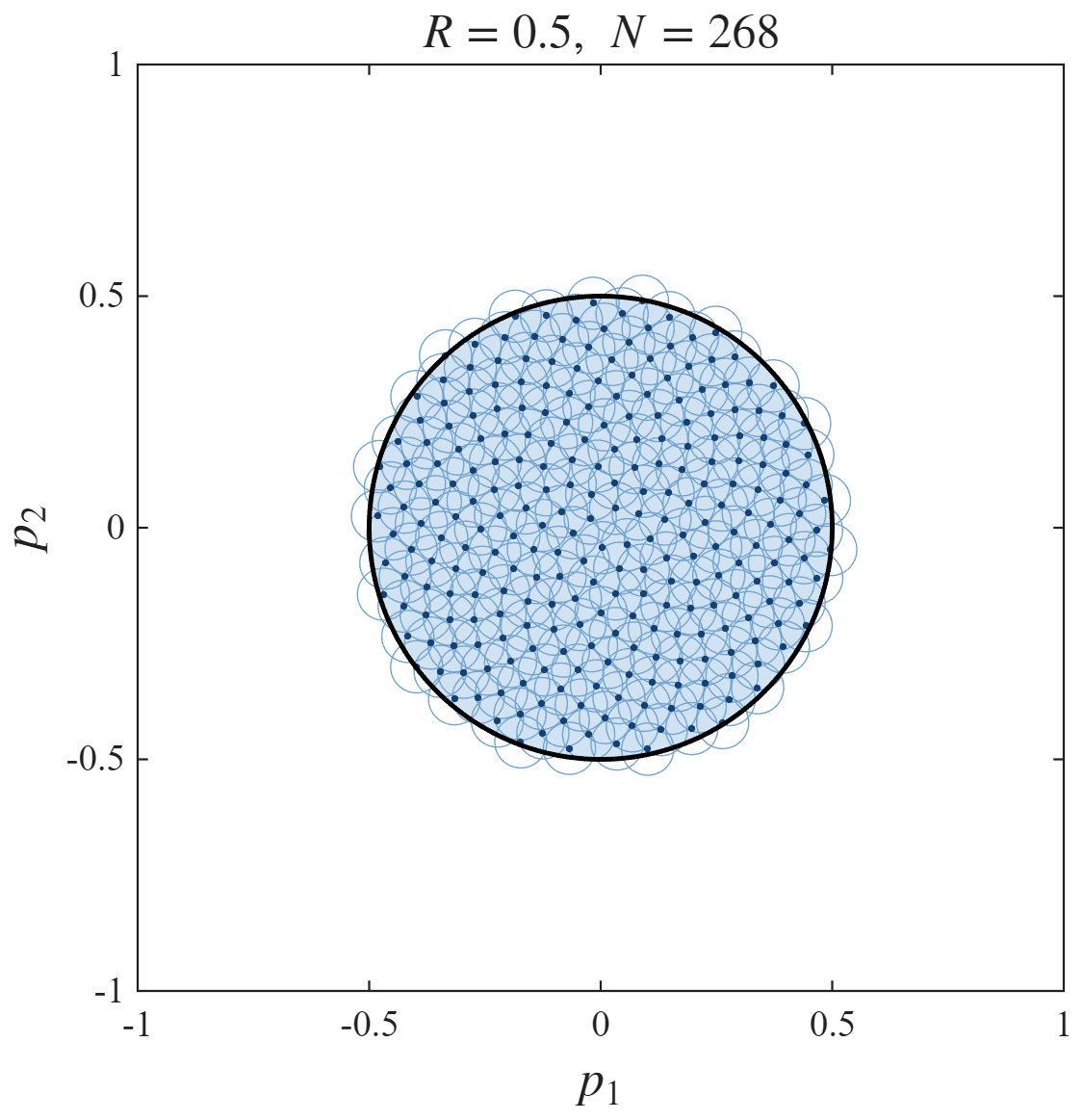}&
\includegraphics[width=0.2\linewidth]{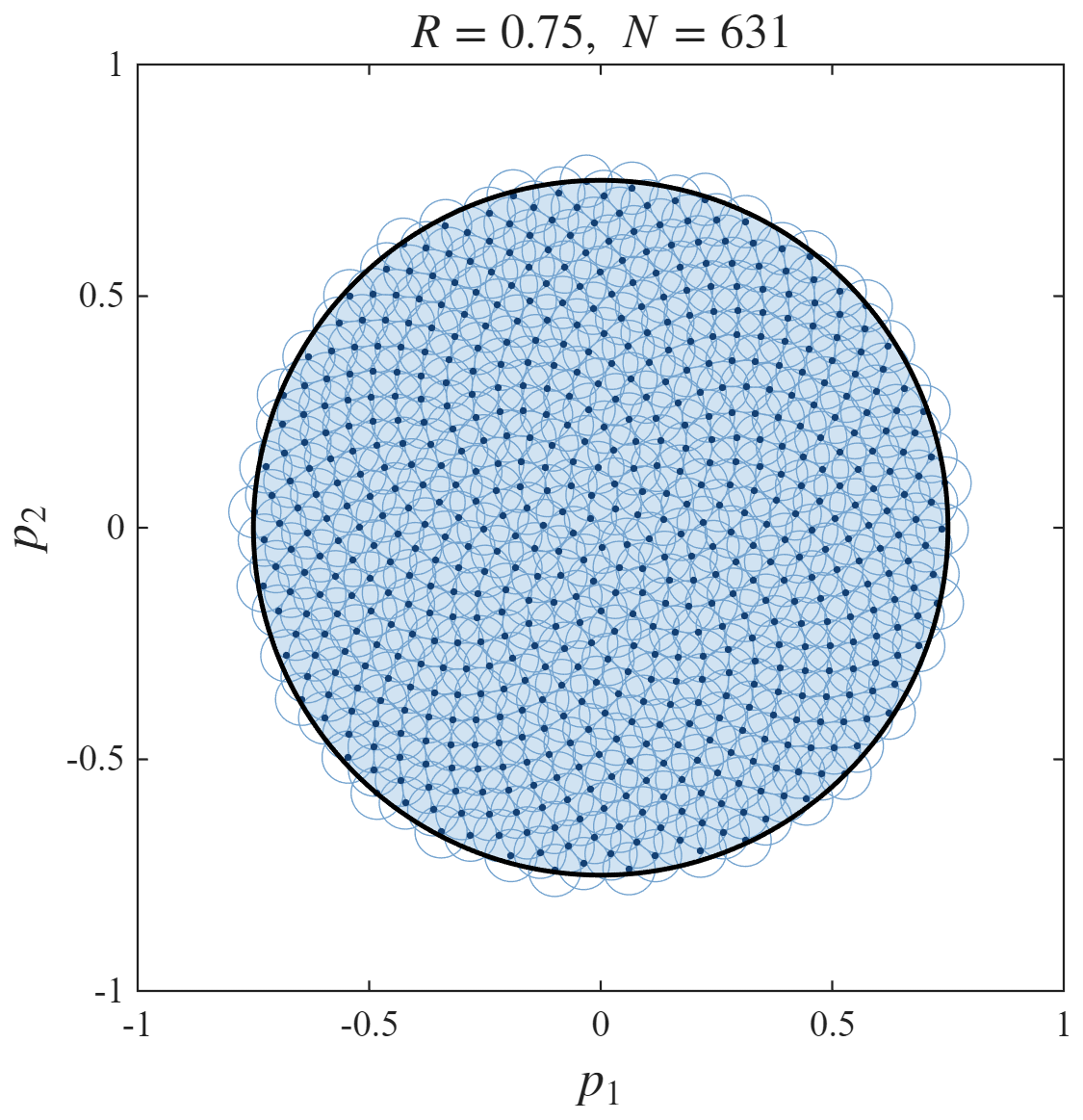}&
\includegraphics[width=0.2\linewidth]{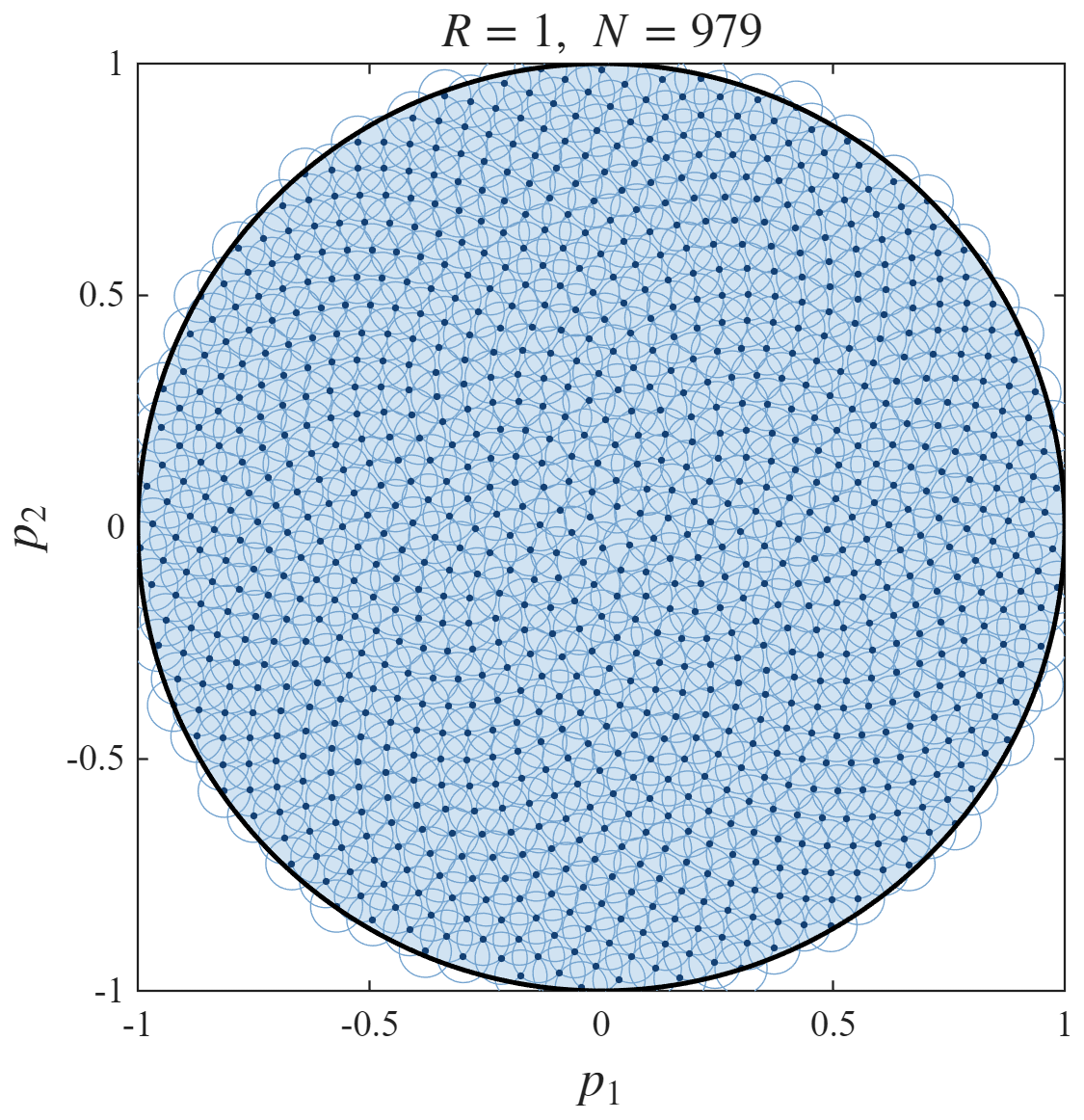}\\
\includegraphics[width=0.2\linewidth]{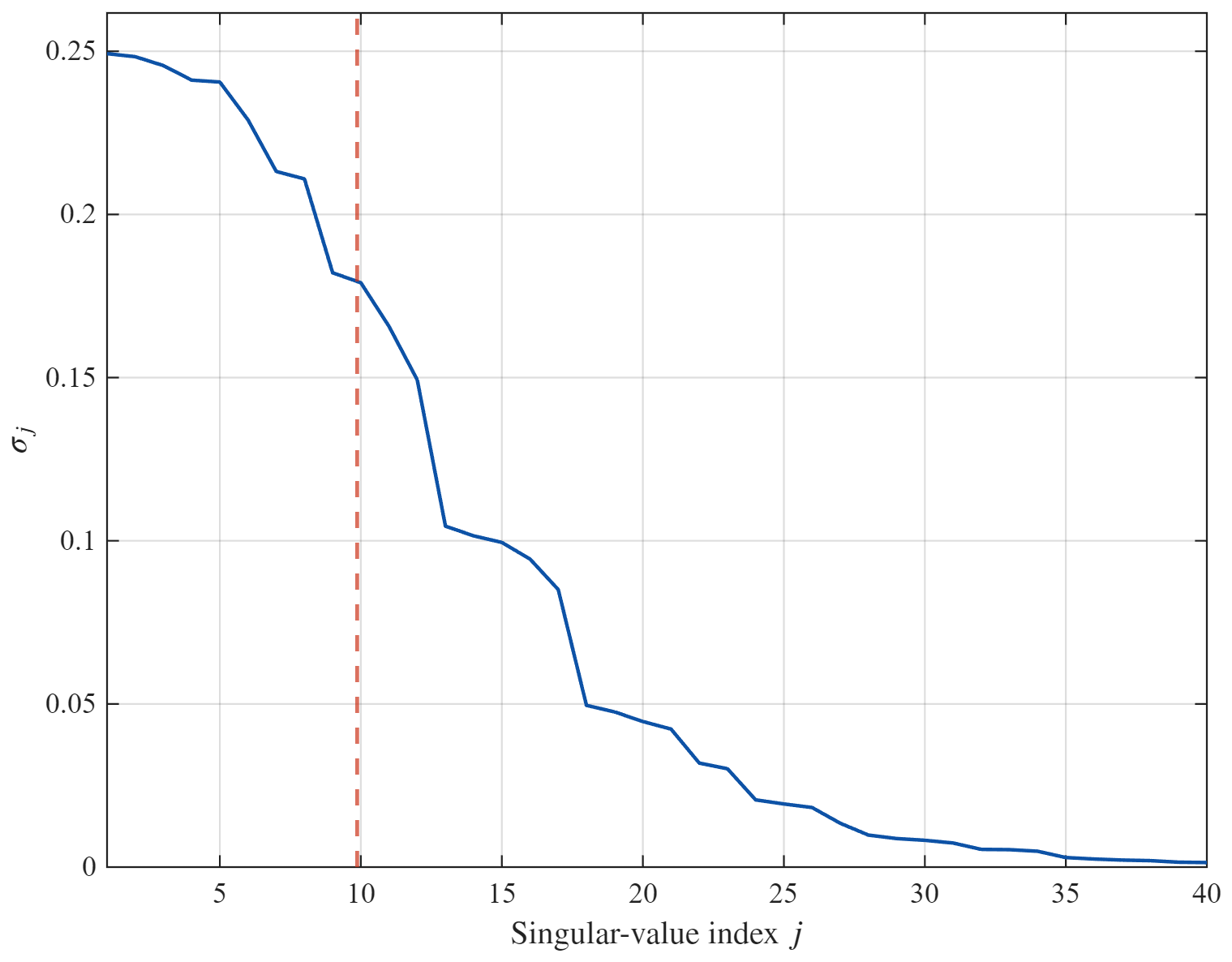}&
\includegraphics[width=0.2\linewidth]{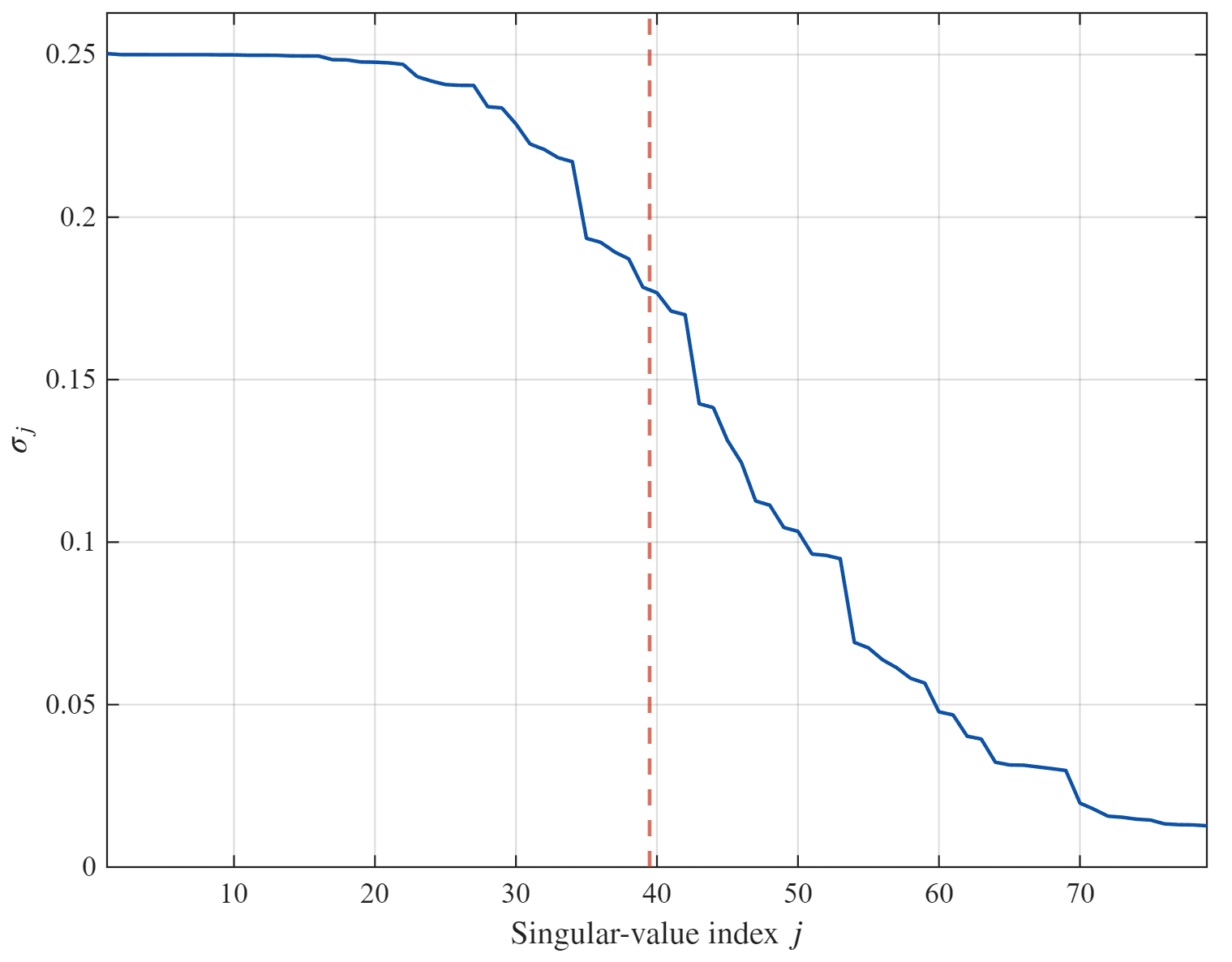}&
\includegraphics[width=0.2\linewidth]{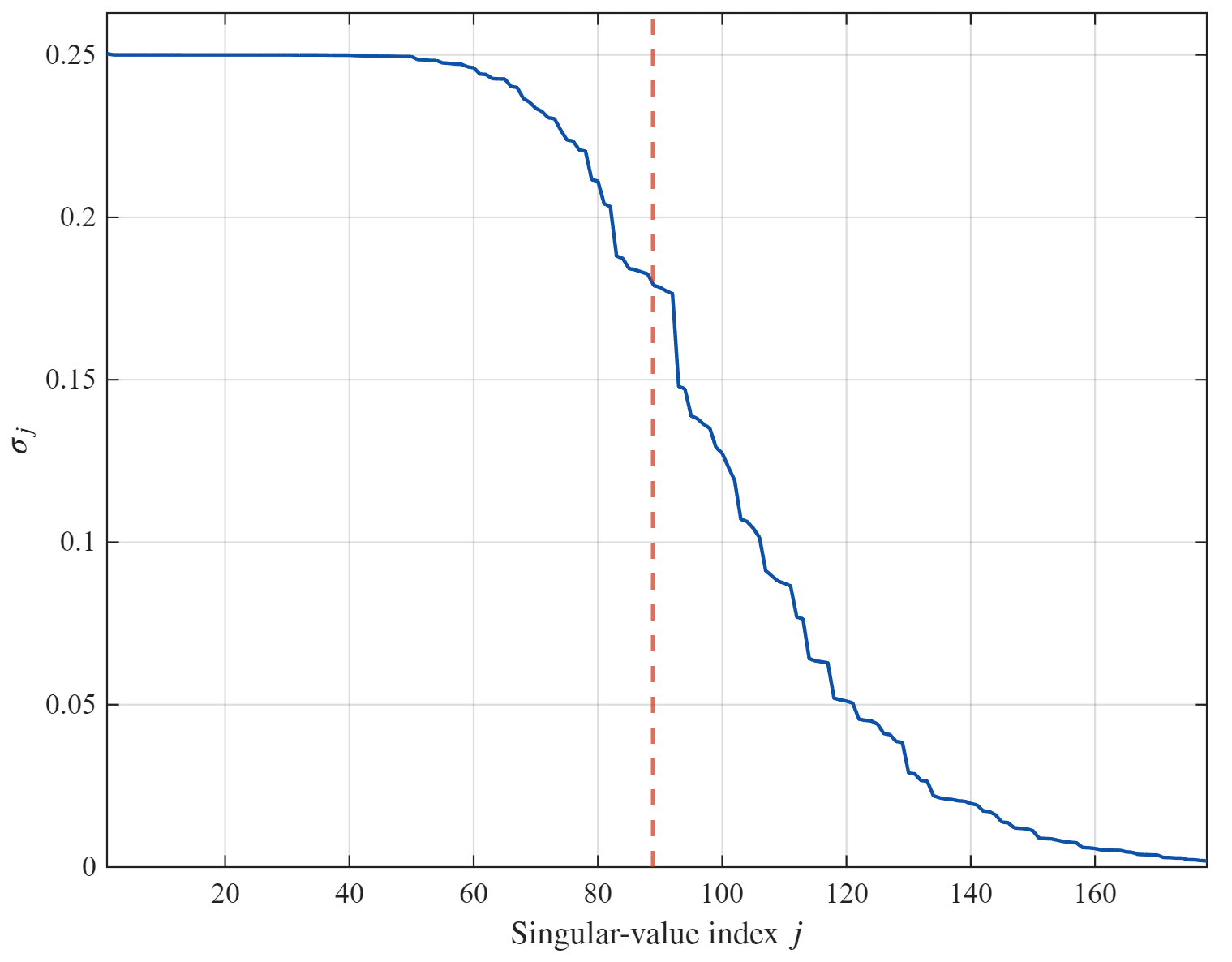}&
\includegraphics[width=0.2\linewidth]{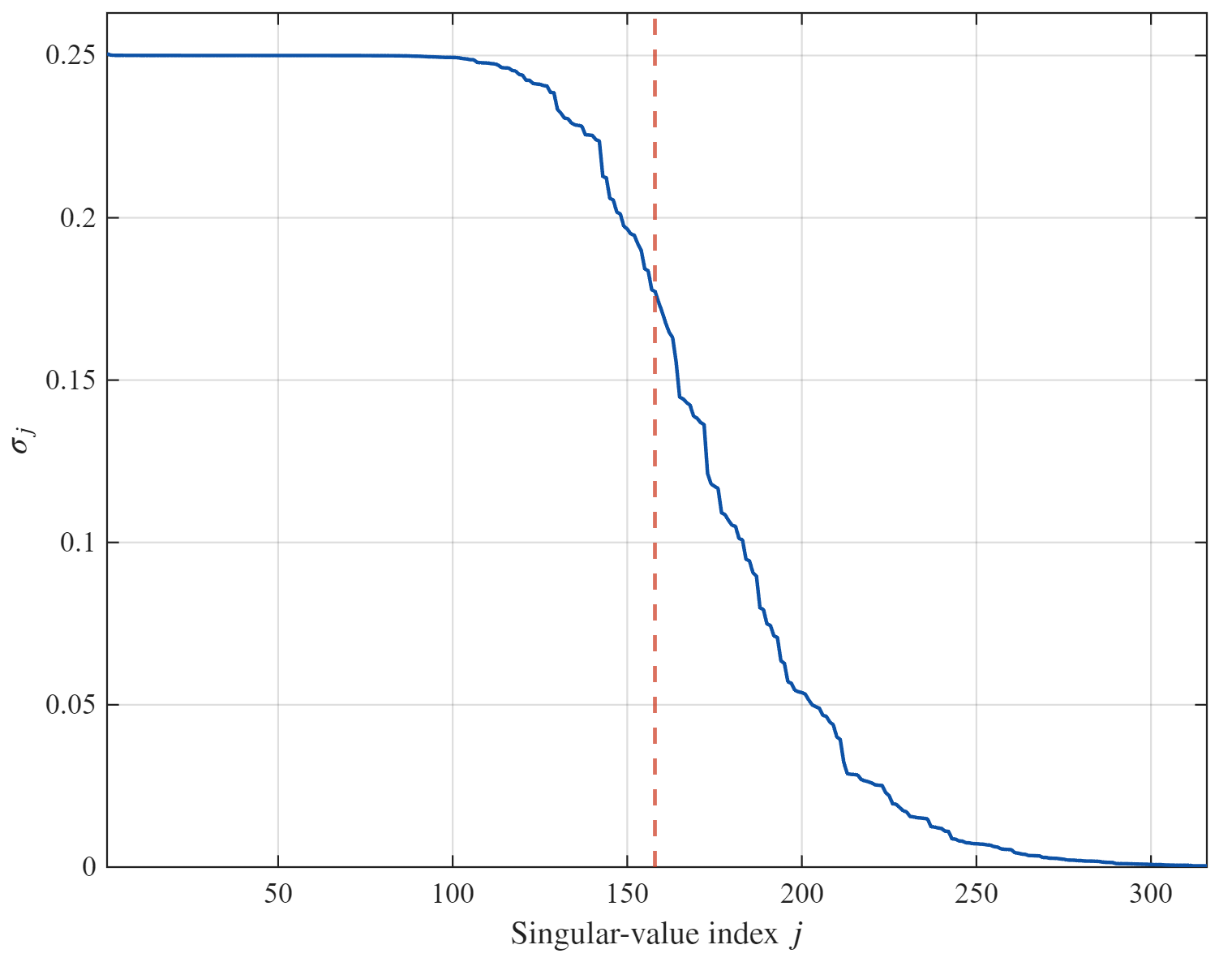}
\\
(a) $R=0.25$ &(b) $R=0.5$ &(c) $R=0.75$&(d) $R=1$
\end{tabular}
\caption{1st row: union coverage with empirical $r$. 2nd row: spectrum and rank estimate.}
\label{fig:effective rank}
\end{figure}

Next, we use the empirical $r_{max}$ in~\eqref{eq:discrete} and the Shannon number formula~\eqref{eq:Shannon number} to estimate the effective rank for random measurement data. We choose a different number of incident directions and observation directions $M$ randomly and independently from the full aperture set with 100 measurements to alter the sparsity of the data, which changes the spectral behavior of the linearized operator and does not correspond to any continuous setup.
The randomness of the data is controlled by a fixed random seed in MATLAB for reproducibility. We still consider Example ~\ref{ex:multi_gaussian} and compare $e_2$ with different $M$, see Figure \ref{fig:e_2 for different aperture M}(a). The stability of the algorithm is still observed. It also shows that with the same number of measurements, an even distribution of incident directions and observation directions gives better reconstruction than a random distribution, since an even distribution gives the largest subset $\mathcal{K}$ which covers more frequency components. As $\mathcal{K} \rightarrow B(0,1)$, data enriched in the frequency domain gives a better reconstruction.
\begin{remark} \label{remark: limited general}
    The limited aperture case can be considered as a special case of sparse random data.
\end{remark}

% \begin{figure}[H]
%     \centering   \includegraphics[width=0.5\linewidth]{random sparse data/fig_error_vs_M.png}
%     \caption{$e_2$ for different $M$.}
%     \label{fig:e_2 for different aperture M}
% \end{figure}

\begin{figure}[htbp]
\centering
\begin{tabular}{c@{\vspace{2pt}}c}
\includegraphics[width=0.4\linewidth]{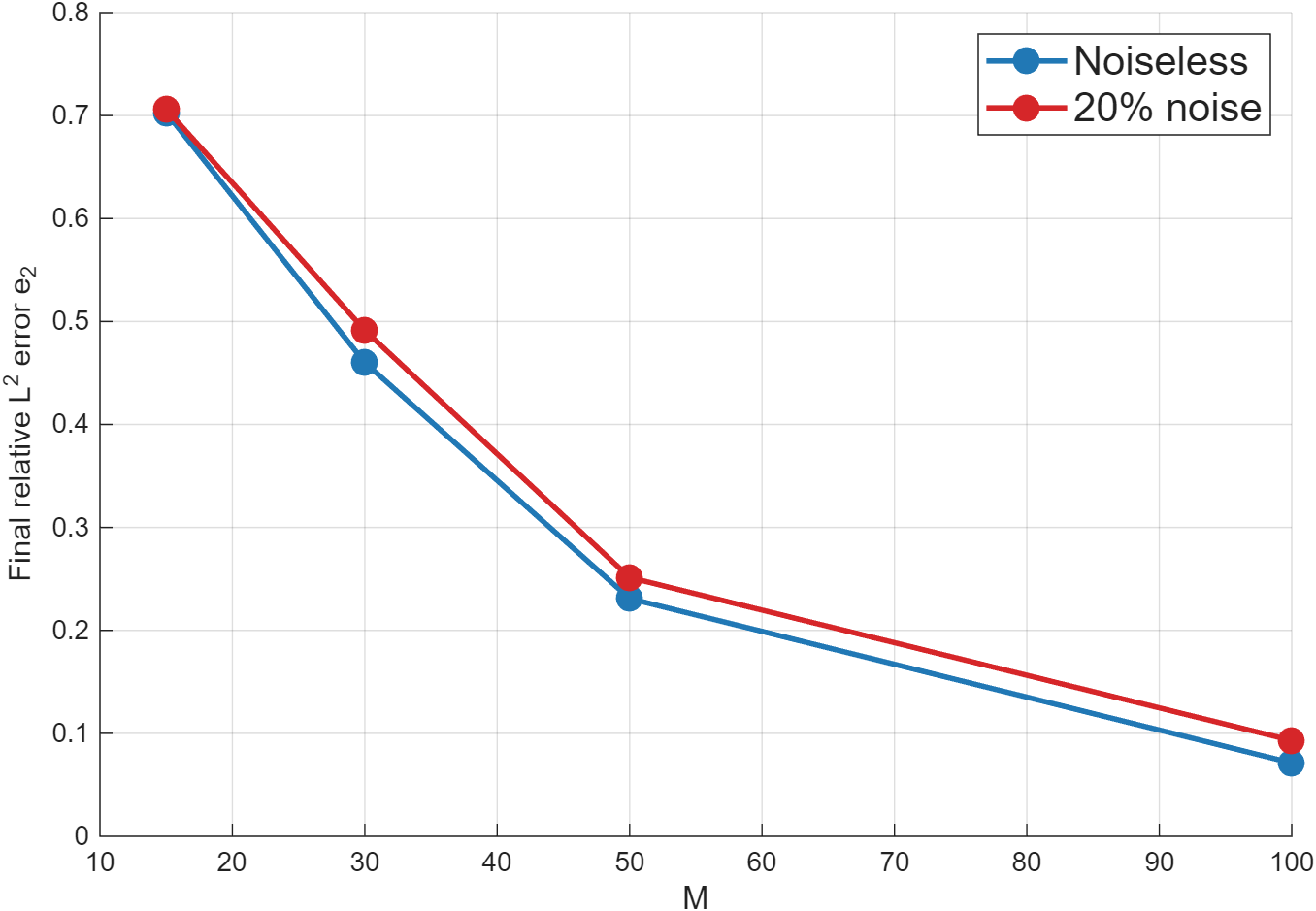}&
\includegraphics[width=0.4\linewidth]{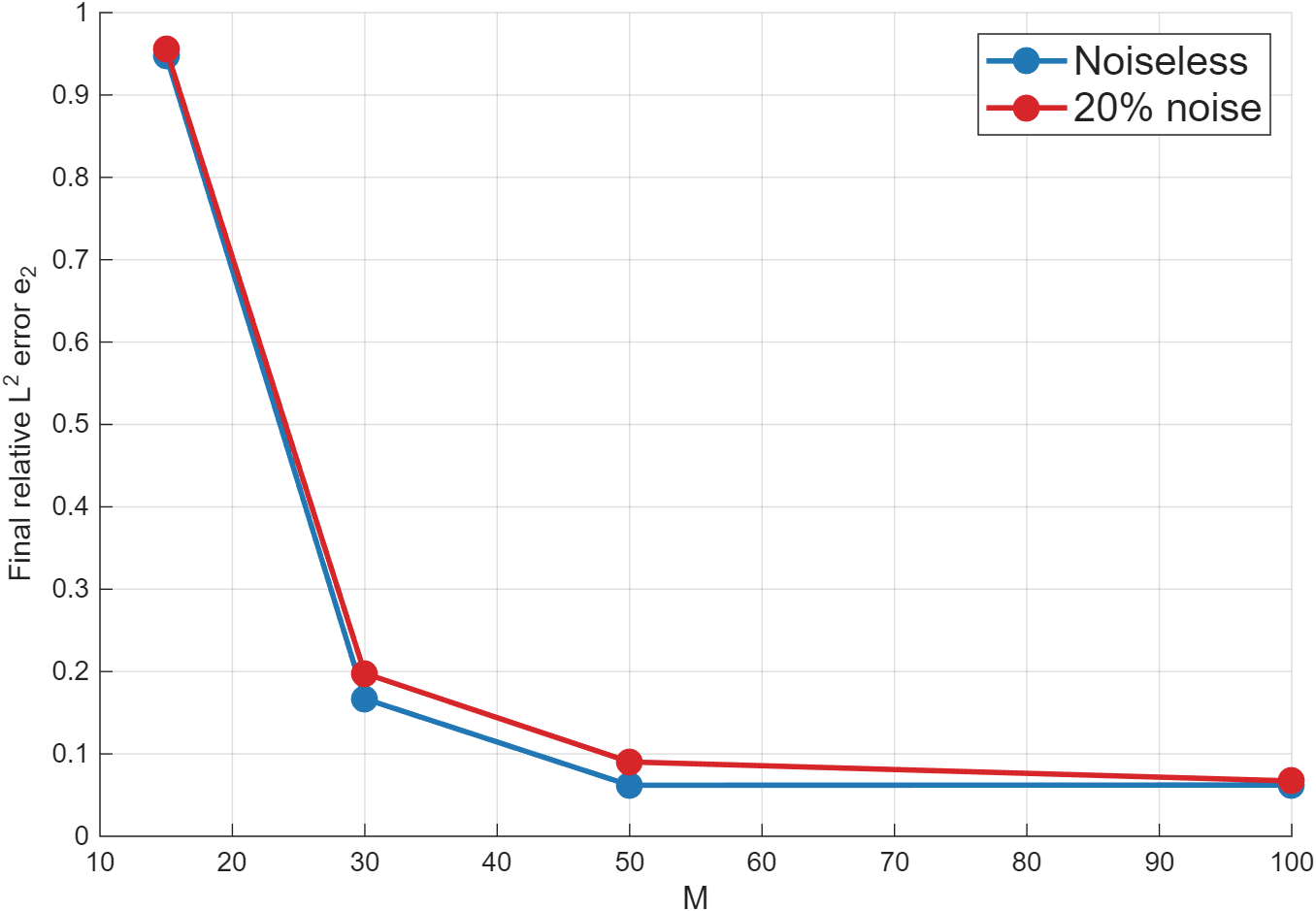}
\\
(a) random distribution &(b) even distribution
\end{tabular}
\caption{$e_2$ v.s. $M$ with different distributions}
\label{fig:e_2 for different aperture M}
\end{figure}

% \subsection{Near Field}
% \zhfn{Put this as future work since we only have experiments in this subsection?}
% We also applied the algorithm to the near-field case. A key difference is to find an optimal cutoff, in which case there is no reference for justification to separate the signal space and the noise space. In particular, we need to minimize the Rayleigh quotient with respect to the homogeneous medium case. This could be hard since the geometry of the boundary where the measurement is taken could be complex such that there is no simple parametrization. Therefore, the numerical experiment in this section mainly focuses on the capacity of the algorithm. For simplicity, we consider the following geometry setup. Let $\partial B = \partial B(0,2)$ be the boundary domain for measurement, then the operator mapping from contrast to data is given by
% \[\mathcal{F}_{near}f = \int_{B(0,1)} G_k(2\hat{x},y)e^{iky\cdot \hat{\theta}}f(y)dy,\]
% where $\hat{x},\hat{\theta} \in \mathbb{S}$. Here, we choose the spectral cutoff based on the estimate from far-field case and regularization $\varepsilon$ according to the noise, i.e.
% \[
% S_{near} = \min\{S,\#\{\sigma(F_{near}):\sigma(F_{near}) > \varepsilon \} \}.
% \]

\subsection{Frequency Continuation} \label{sec: numerical on frequency continuation}
In this section, we test iTSVD on frequency continuation as discussed in Section ~\ref{sec: analysis}. The idea is to start the reconstruction from a relatively lower-frequency reconstruction (hence weaker nonlinearity as well as faster and more stable) on a coarse scale. Then use the reconstruction as a better initial guess for the next higher frequency. This tests the issue in Example ~\ref{ex: fine feature} mentioned in Section ~\ref{sec: noisy data}. We modify the amplitude of Example ~\ref{ex: fine feature} from $0.7$ to $1.0$, and use multiple frequency data $k = 2\pi,\,4\pi,\,6\pi$ and $8\pi$ consecutively for the final reconstruction. We also use multiple frequency data $k = 4\pi,\,8\pi$ for comparison to show that Algorithm ~\ref{alg:iterative-low-rank} can still achieve good reconstruction with a larger frequency leap. Figure~\ref{fig:Hierarchical-convergence} plots the true $q$ and shows the convergence for the frequency continuation approach with 2 different frequency sequences. Only for this example, we adopt the Hierarchical Poincaré-Steklov (HPS) solver of \cite{HPS_solver} as the forward solver to obtain a higher solution accuracy since a higher frequency is used. Figure ~\ref{fig:multi_k_hier} gives the reconstruction at each frequency, which takes $11$ and $12$ iterations in total for the noiseless and noisy cases, respectively. The result also verifies that higher-frequency data are needed to resolve finer features.

% \begin{figure}[H]
% \centering
%     \begin{subfigure}{0.3\linewidth}
%     \captionsetup{labelformat=empty}  % Remove the alphabetic label
%         \includegraphics[width=\linewidth]{frequency continuation/hier/fig_q_exact.png}
%         % \caption{exact $q$}
%     \end{subfigure}
%     \begin{subfigure}{0.45\linewidth}
%     \captionsetup{labelformat=empty}  % Remove the alphabetic label
%         \includegraphics[width=\linewidth]{frequency continuation/hier/fig_convergence.png}
%         % \caption{noiseless}
%     \end{subfigure}
%     \caption{From left to right: true $q$, convergence of $e_2$; the dash line indicates the change of frequency.}
%     \label{fig:Hierarchical-convergence}
% \end{figure}

\begin{figure}[htbp]
\centering
\begin{tabular}{c@{\vspace{2pt}}c@{\vspace{2pt}}c}
\includegraphics[width=0.25\linewidth]{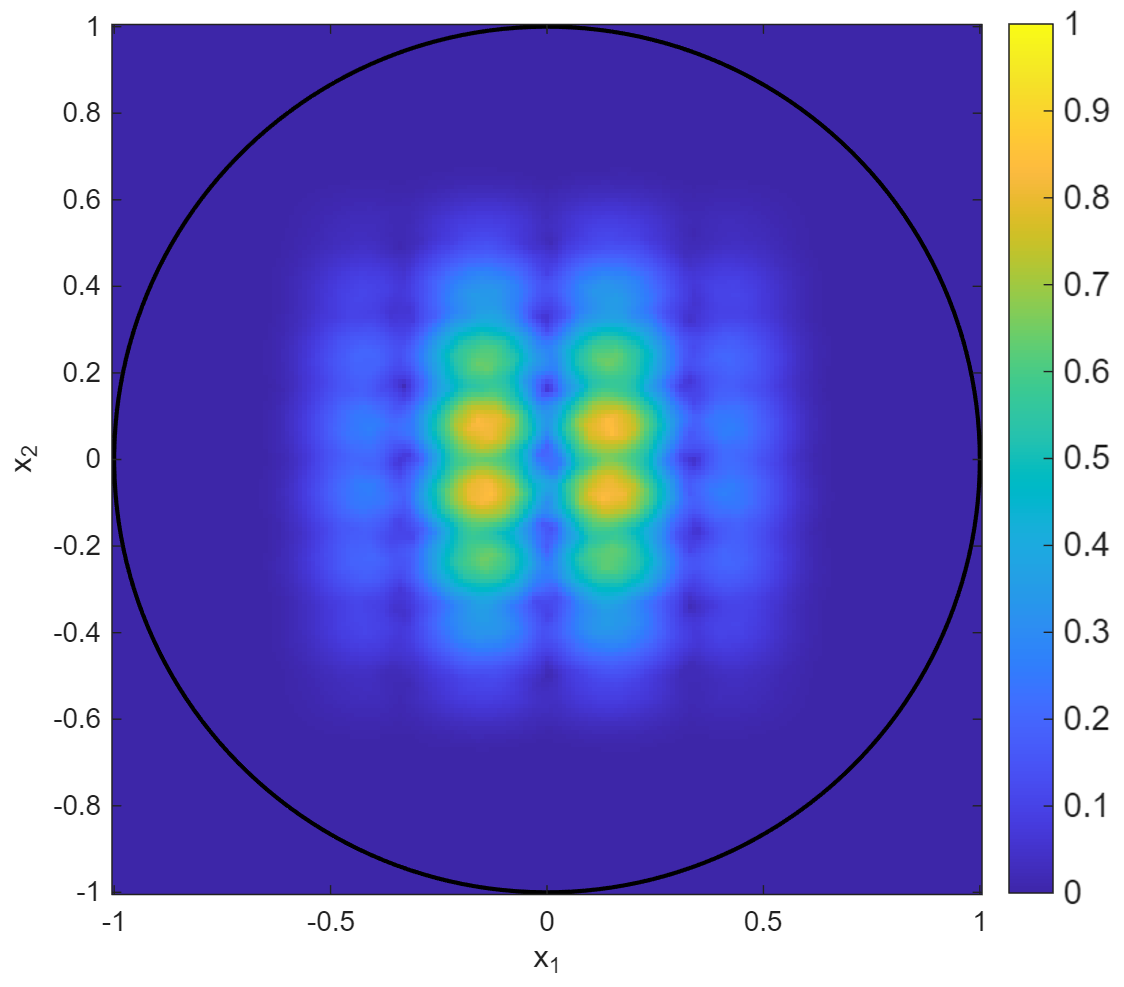}&
\includegraphics[width=0.35\linewidth]{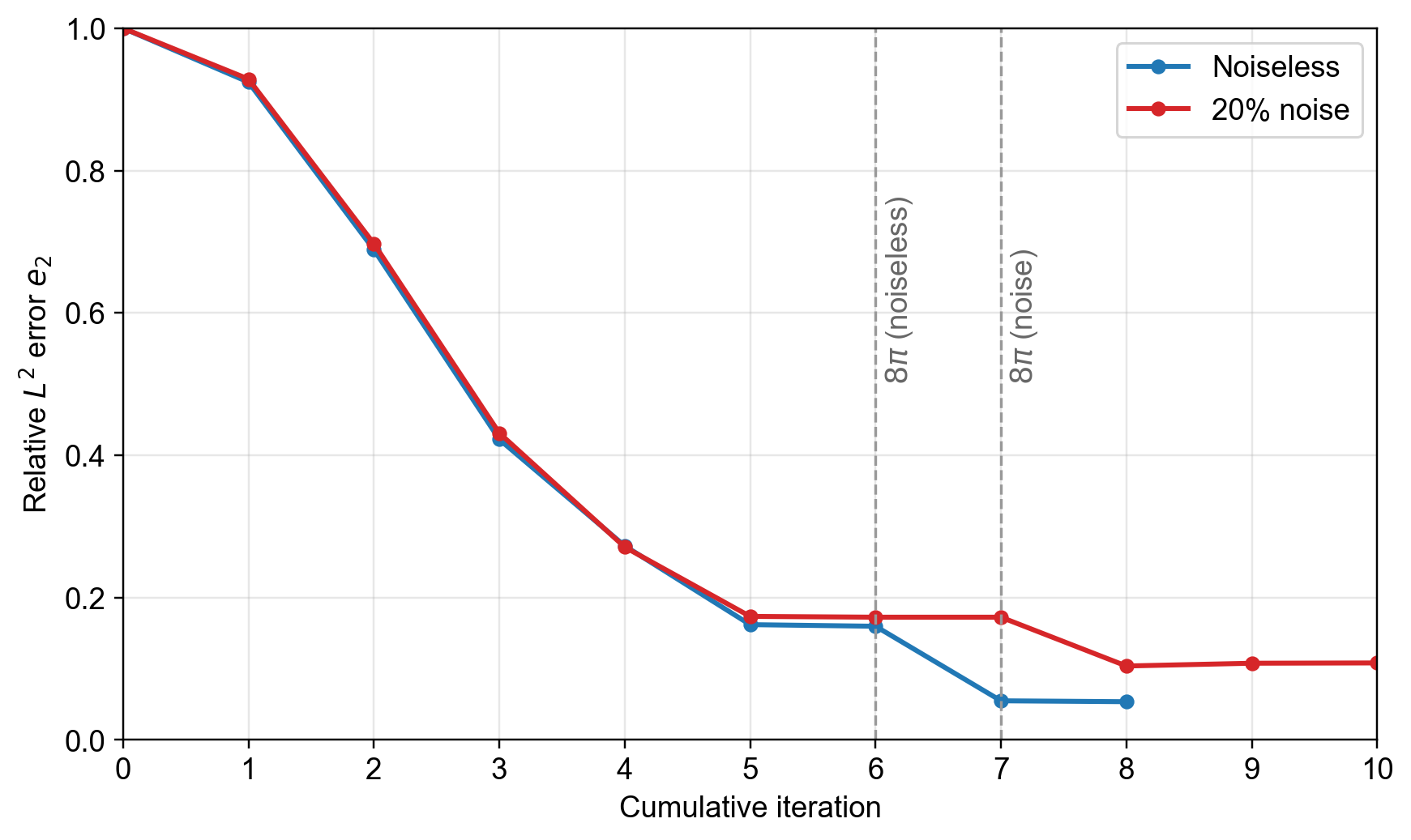}&
\includegraphics[width=0.35\linewidth]{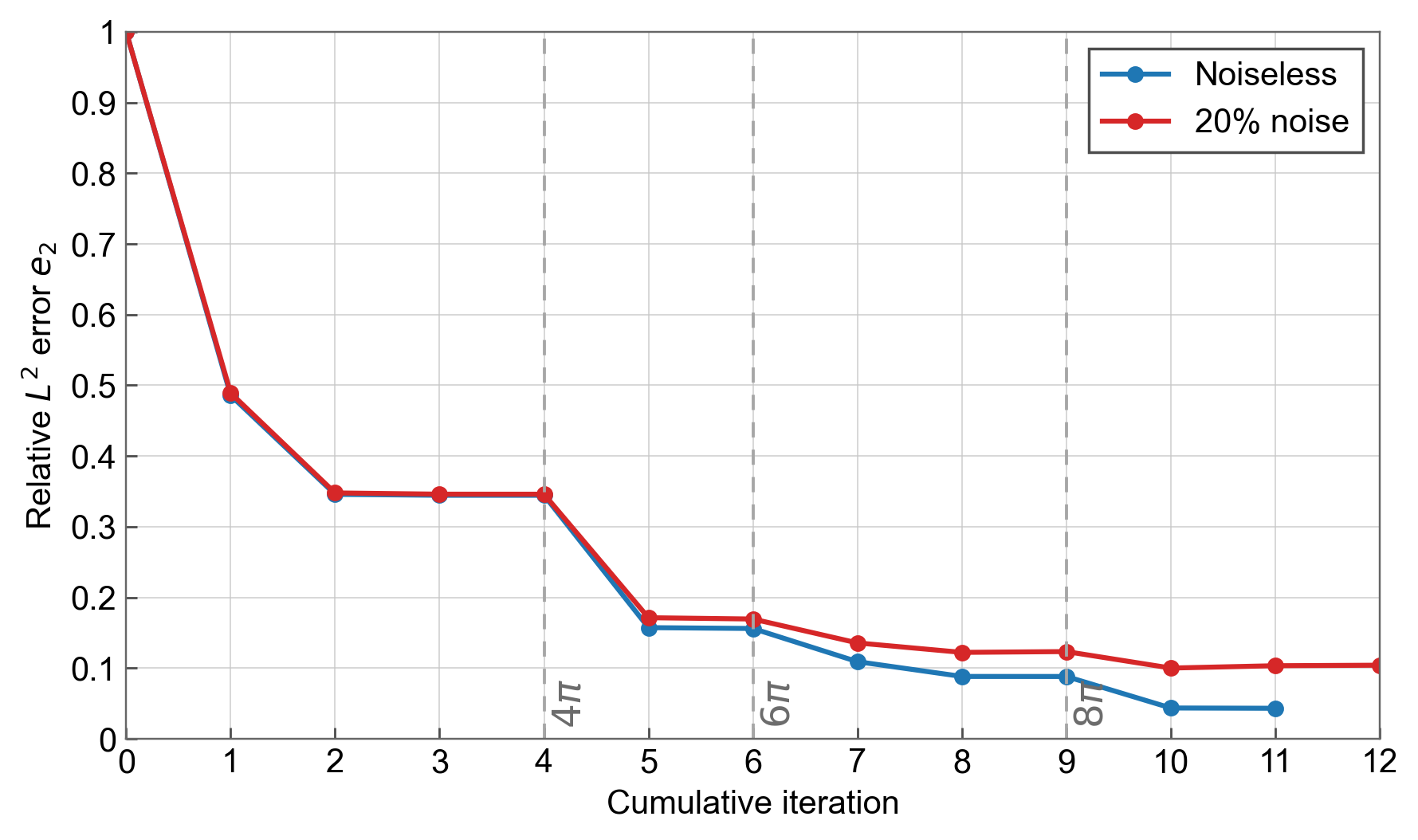}
\\
(a) true $q$ &(b) convergence with $k = 4\pi,\,8\pi$ &(c) convergence with $k = 2\pi,\,4\pi,\,6\pi,\,8\pi$
\end{tabular}
\caption{Reference and Convergence}
\label{fig:Hierarchical-convergence}
\end{figure}

\begin{figure}[htbp]
\centering
\begin{tabular}{c@{\vspace{2pt}}c@{\vspace{2pt}}c@{\vspace{2pt}}c}
\includegraphics[width=0.25\linewidth]{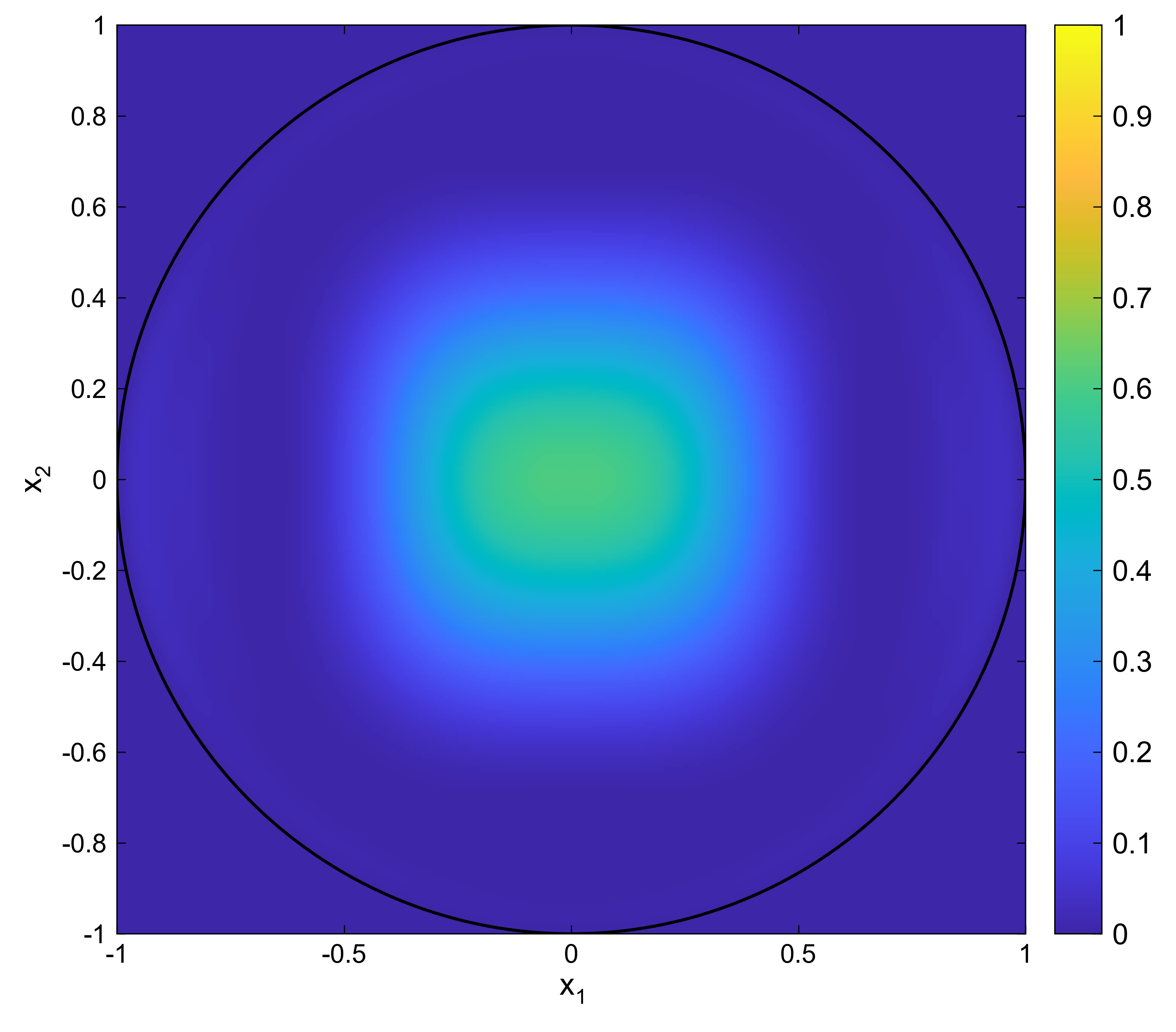}&
\includegraphics[width=0.25\linewidth]{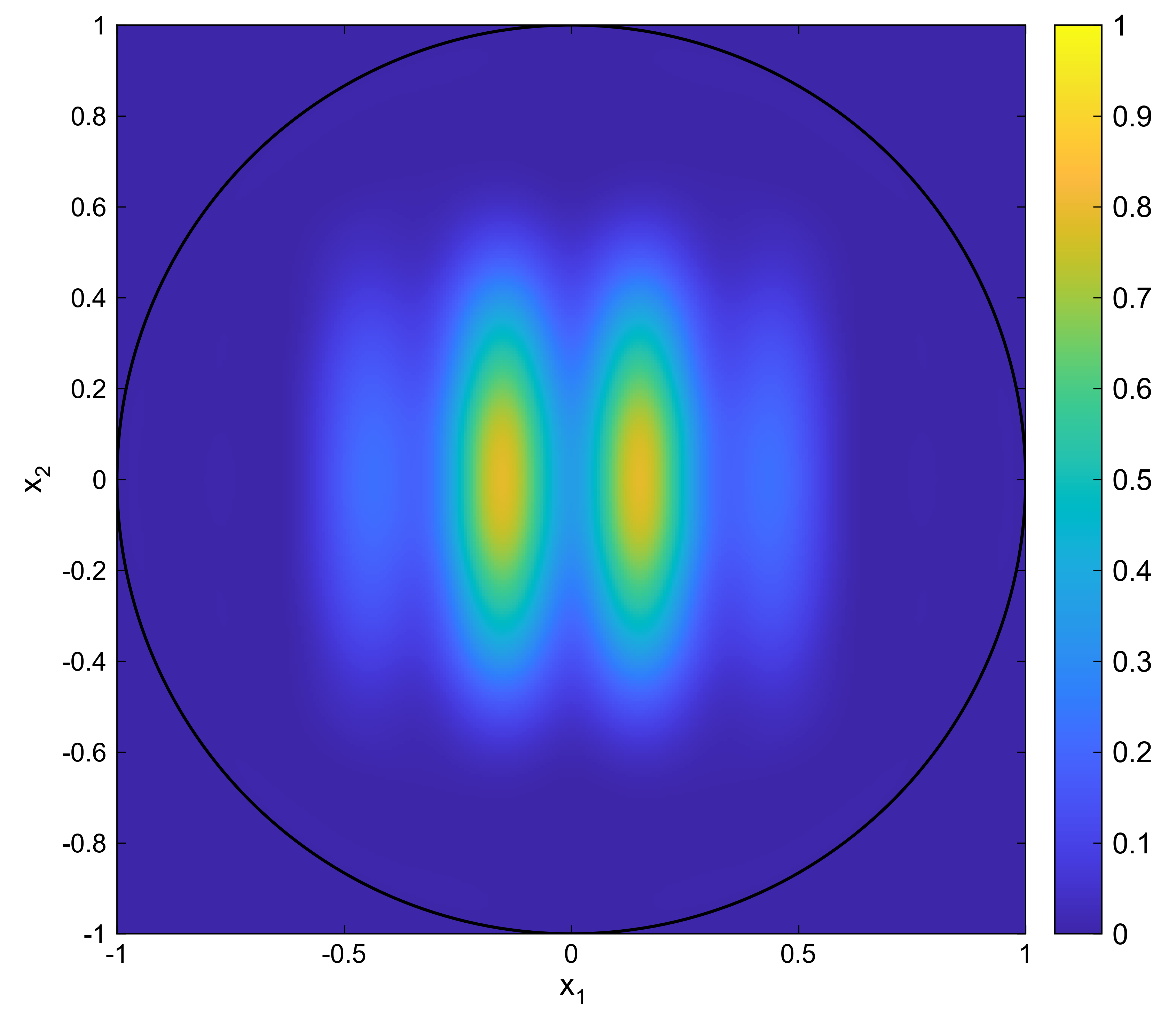}&
\includegraphics[width=0.25\linewidth]{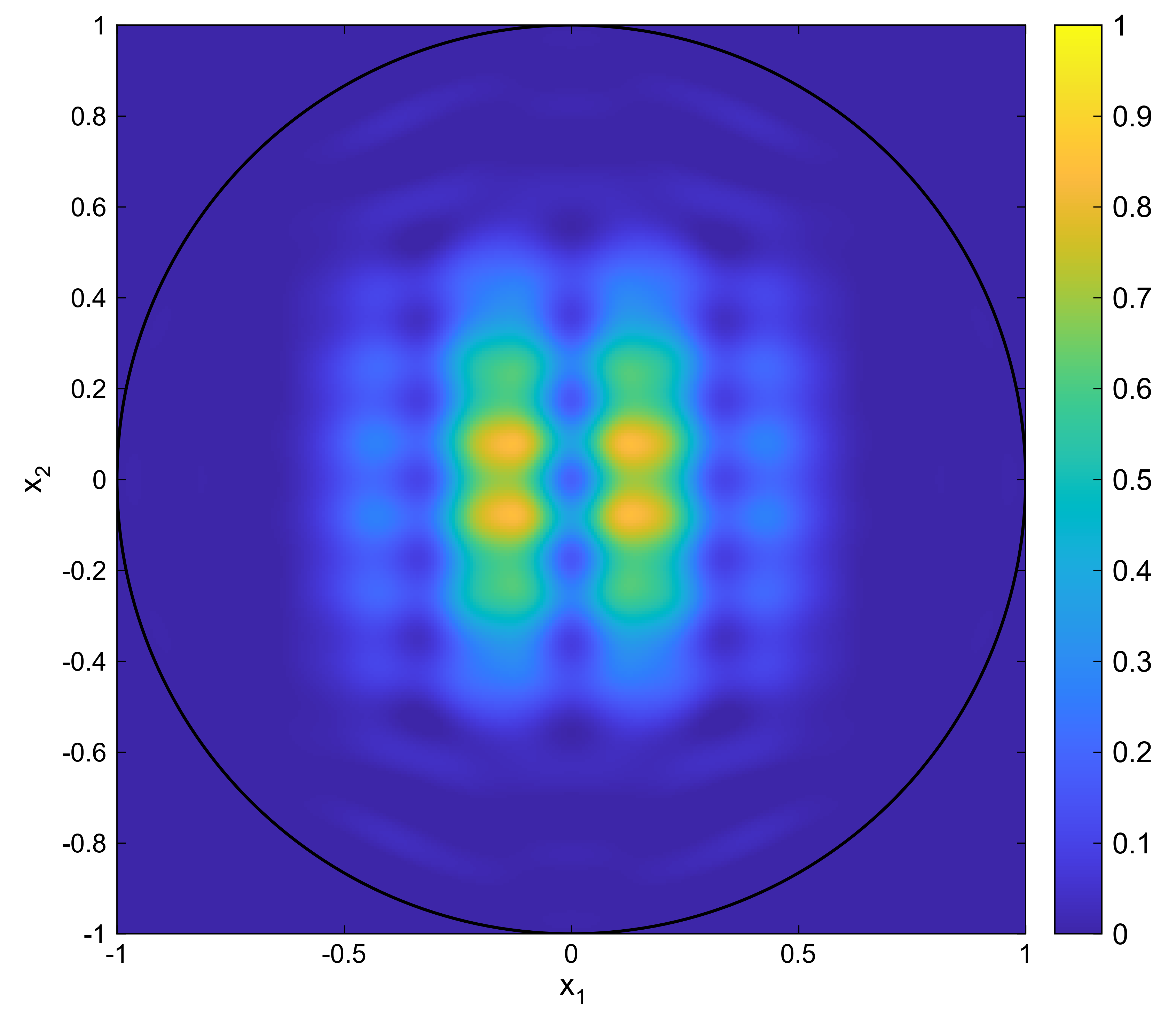}&
\includegraphics[width=0.25\linewidth]{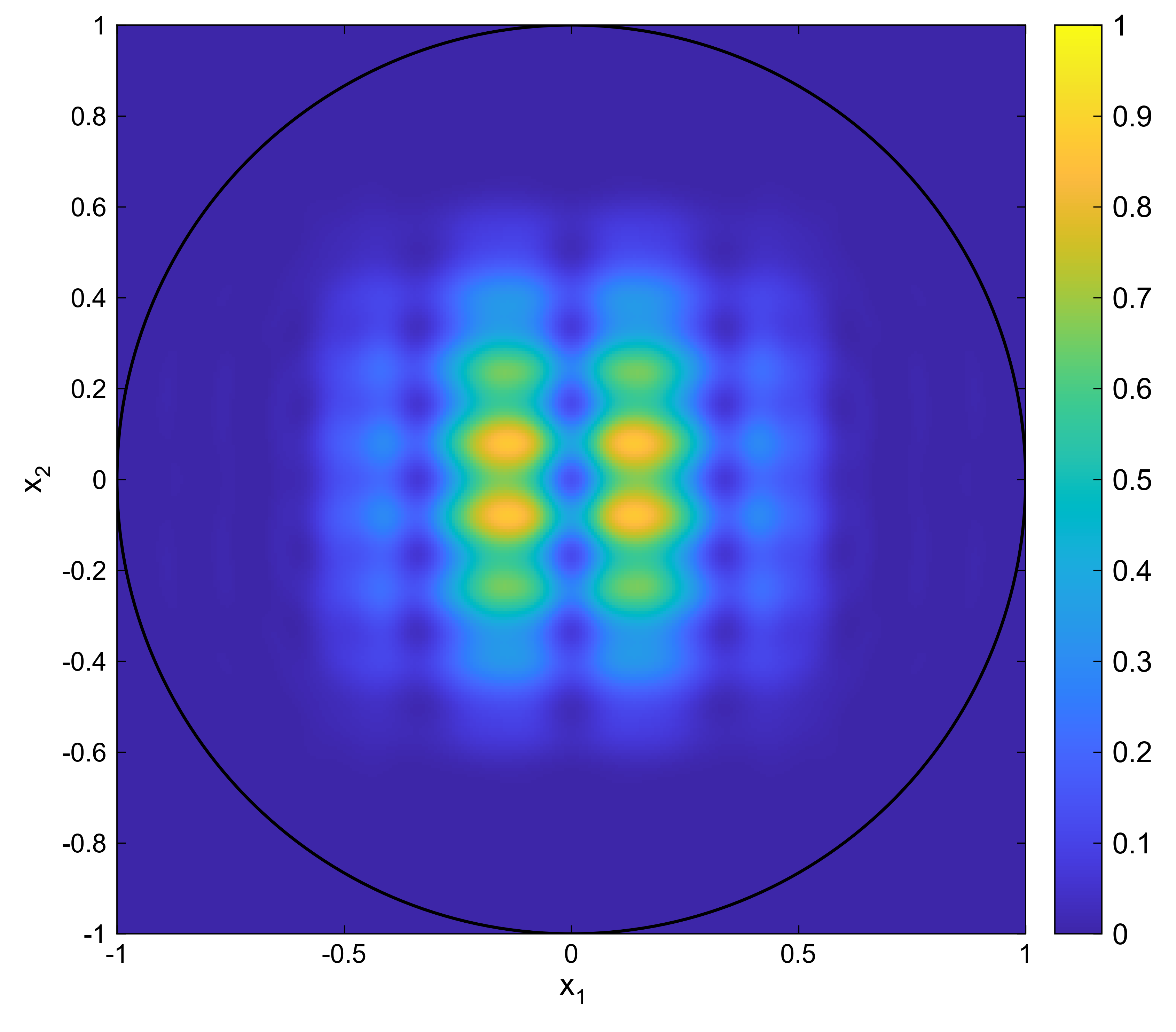}\\
\includegraphics[width=0.25\linewidth]{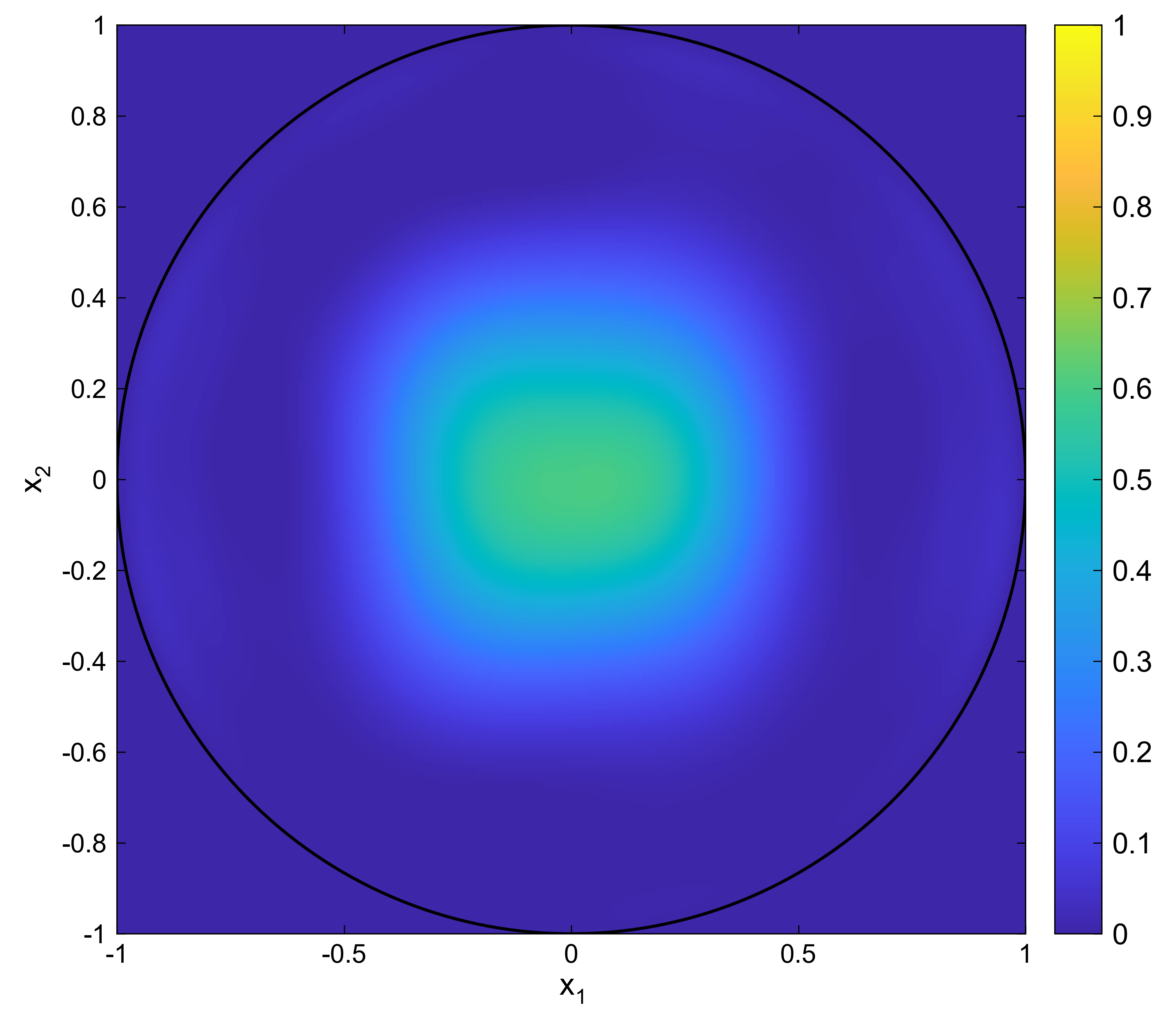}&
\includegraphics[width=0.25\linewidth]{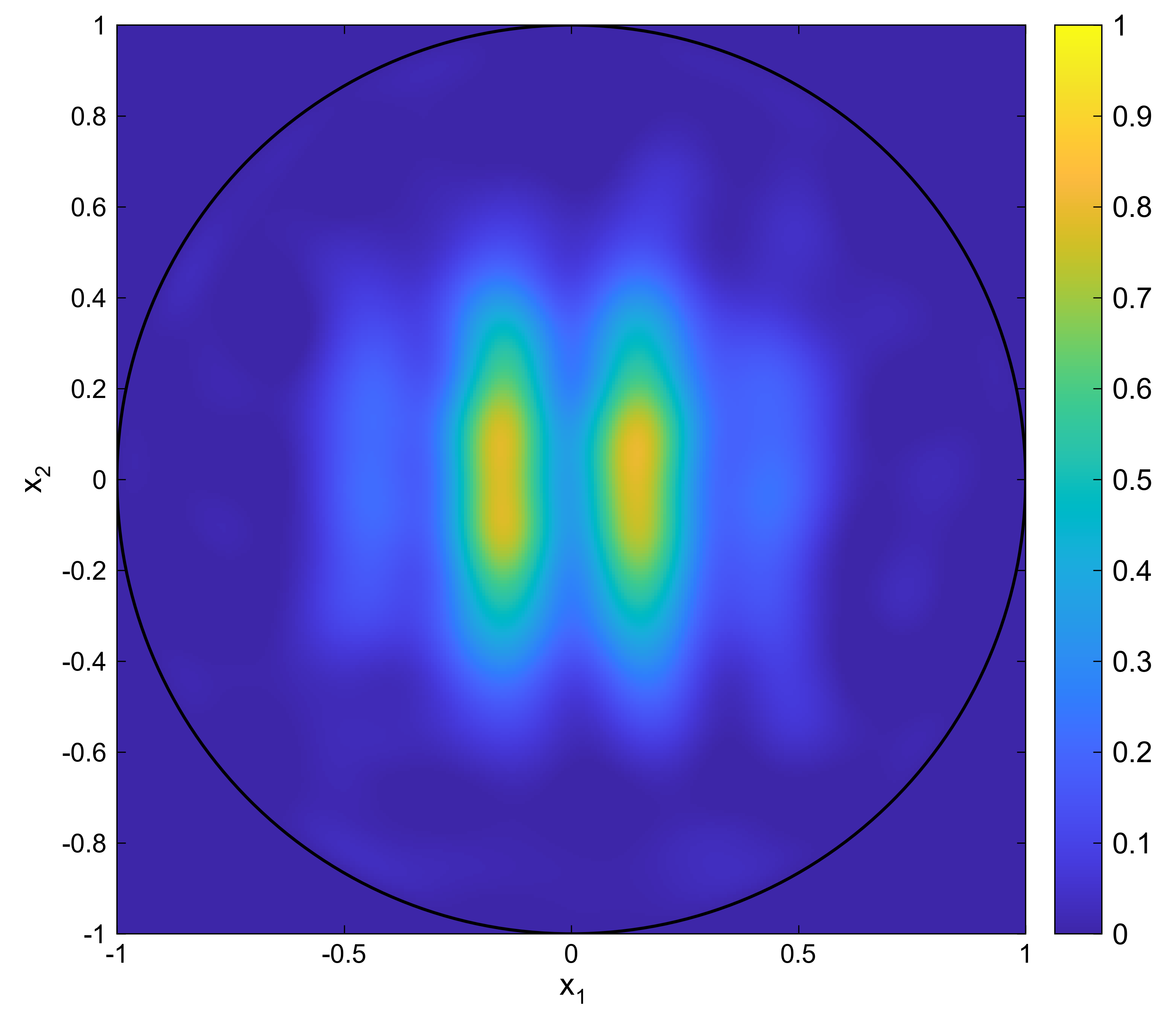}&
\includegraphics[width=0.25\linewidth]{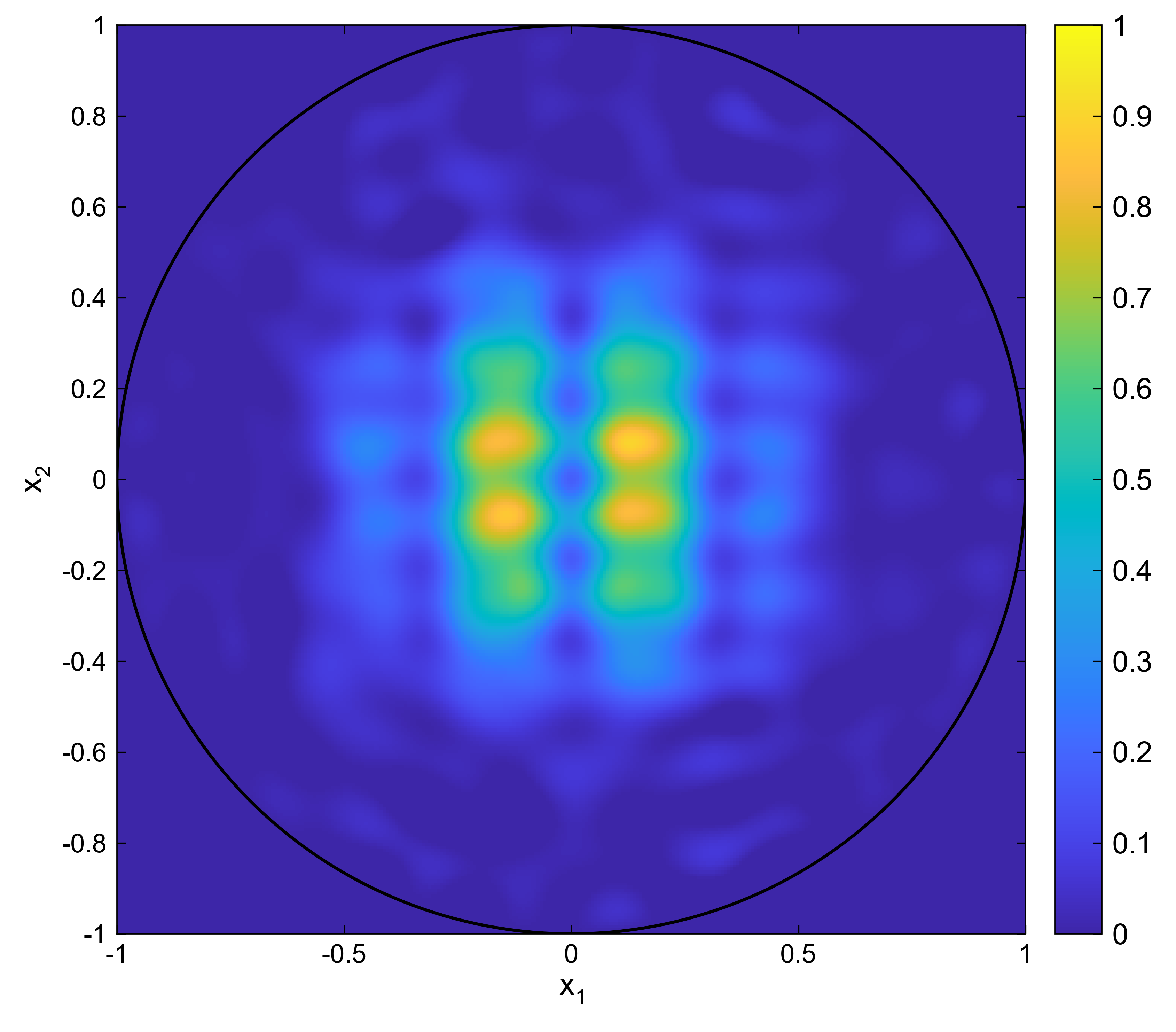}&
\includegraphics[width=0.25\linewidth]{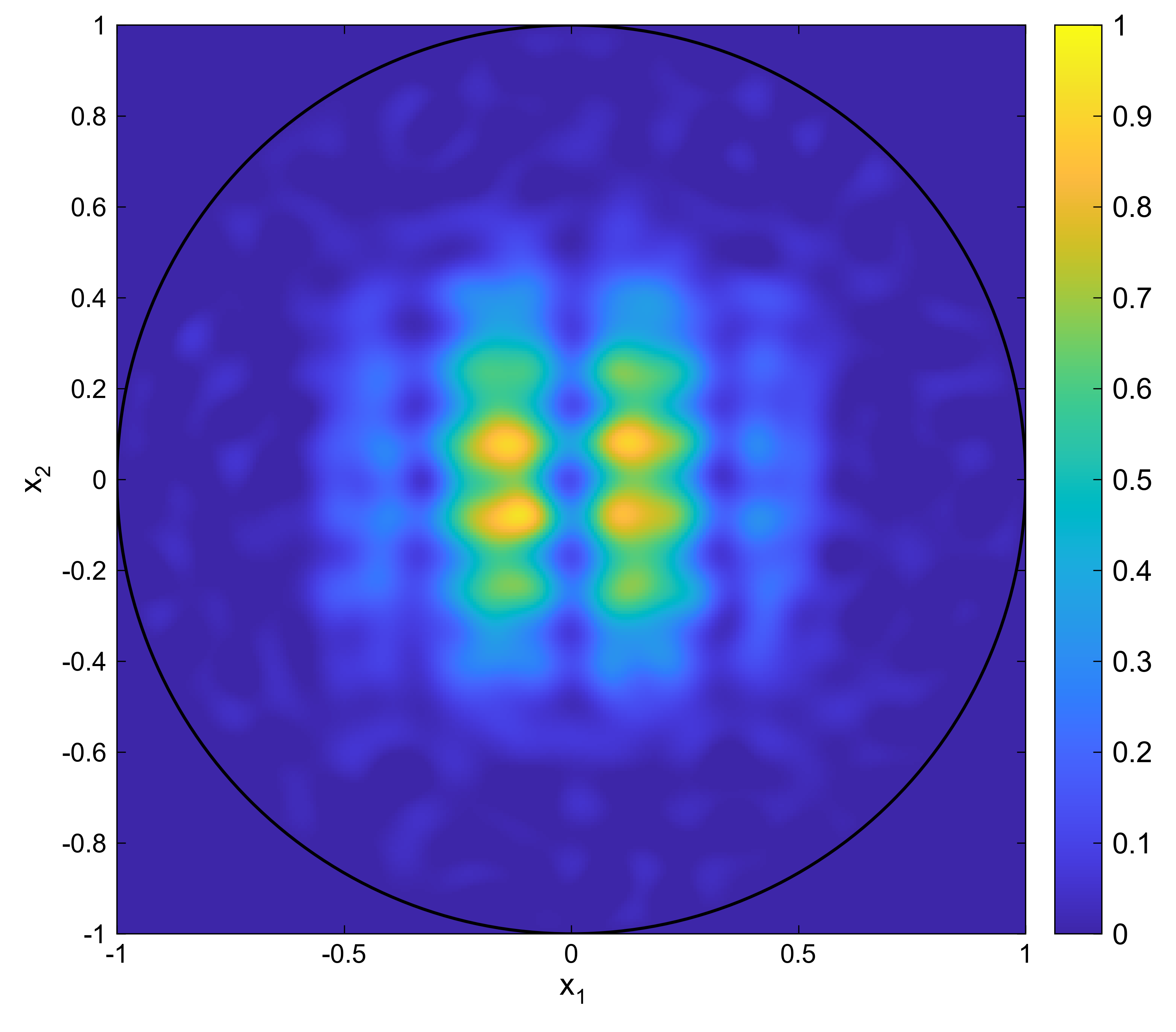}
\\
(a) $k=2\pi$ &(b) $k=2\pi, 4\pi$ &(c) $k=2\pi, 4\pi, 6\pi$ &(d)$k=2\pi, 4\pi, 6\pi, 8\pi$, 
\end{tabular}
\caption{Results by frequency continuation. 1st row: clean data. 2nd row:  data with $20\%$ noise.}
\label{fig:multi_k_hier}
\end{figure}

\subsection{Near-Field Data}
\label{sec: near field data}
\begin{example}[]\label{ex:BAO}
\[
q(x_1,x_2)=
\begin{cases}
0.3(1-3x_1)^2 e^{-9x_1^2-(3x_2+1)^2}\\
\qquad -\left(0.6x_1-27x_1^3-243x_2^5\right)
  e^{-9(x_1^2+x_2^2)}
-0.03e^{-(3x_1+1)^2-9x_2^2},
& x_1^2+x_2^2<1,\\[1ex]
0, & x_1^2+x_2^2\geq 1.
\end{cases}
\]
\end{example}
For this test, we use near-field data to show behavior and reconstruction results similar to the far-field case. We position $50$ point sources and $50$ measurements at evenly distributed locations on $\partial B(0,2)$. Since there is no $p$-transform for near-field data, we use equally weighted least squares for all data. This example is taken from Section 2.6 of \cite{Bao_example}. The efficiency and robustness of iTSVD are again demonstrated by 1) the use of a single frequency $k=4\pi$, which aligns with the highest frequency used in \cite{Bao_example}, and 2) the number of iterations, 8, for both noiseless and noisy data to achieve similar accuracy; see Figure ~\ref{fig:near-field}. In comparison, the reconstruction in \cite{Bao_example} is based on recursive linearization from $k = 0.5\pi$ to $k = 4\pi$ with frequency step size $\delta k = 0.5\pi$, with 20 Landweber iterations for each frequency, and the final relative $L^2$ error is $1.41\times 10^{-2}$ with noiseless data and $2.55\times 10^{-2}$ with $9\%$ noise, which is comparable to our results.

\begin{figure}[htbp]
\centering
\begin{tabular}{c@{\vspace{2pt}}c@{\vspace{2pt}}c@{\vspace{2pt}}c@{\vspace{2pt}}c}
\includegraphics[width=0.2\linewidth]{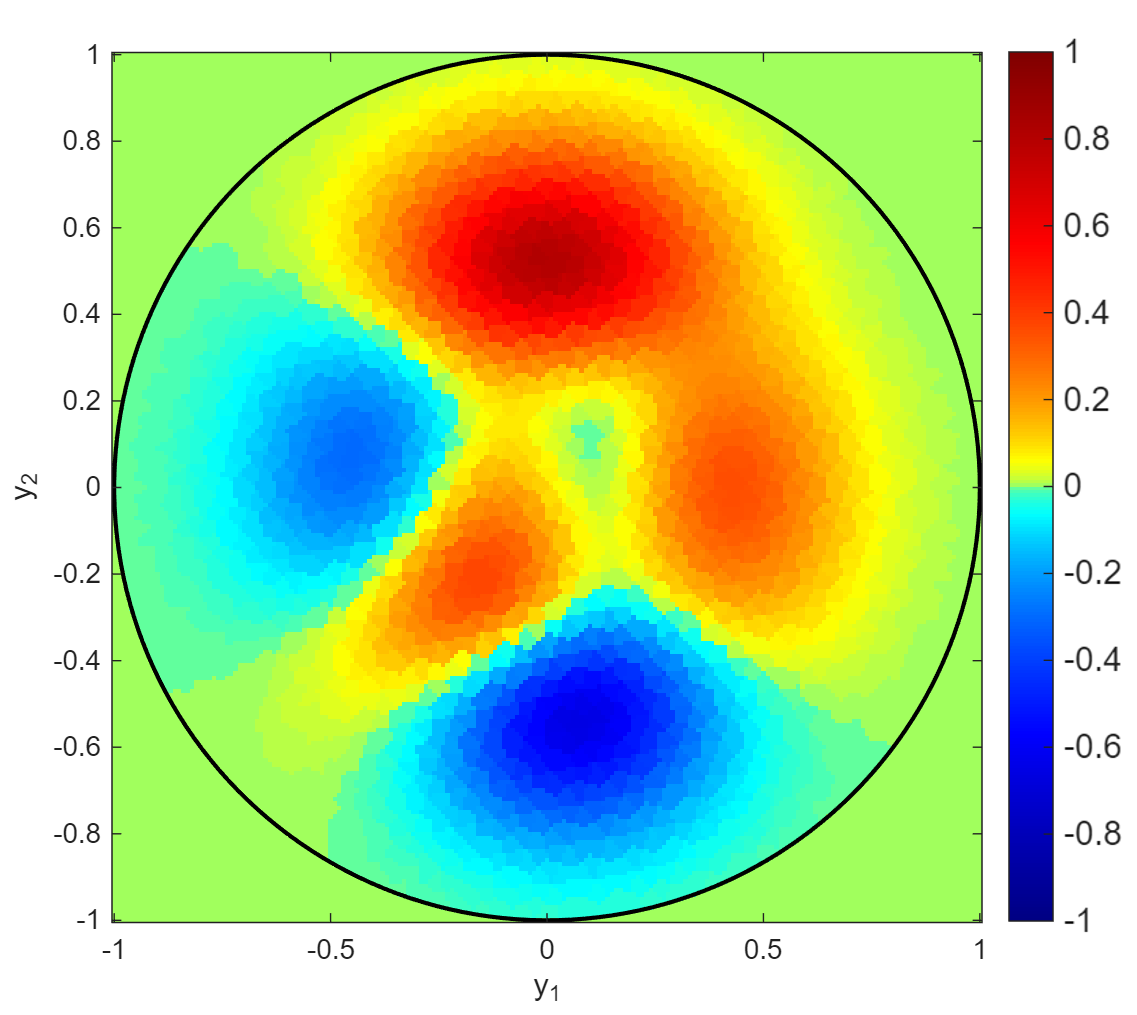}&
\includegraphics[width=0.2\linewidth]{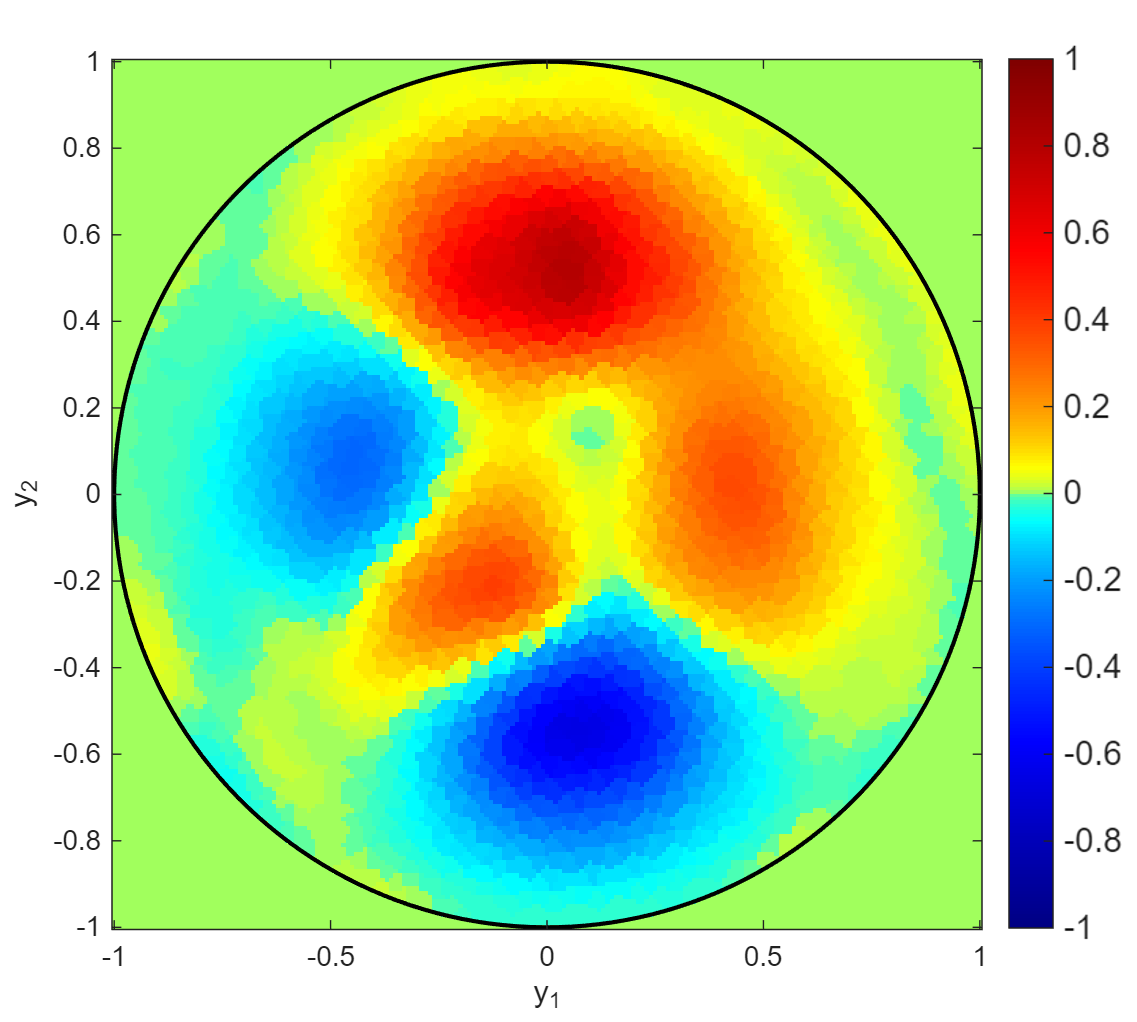}&
\includegraphics[width=0.2\linewidth]{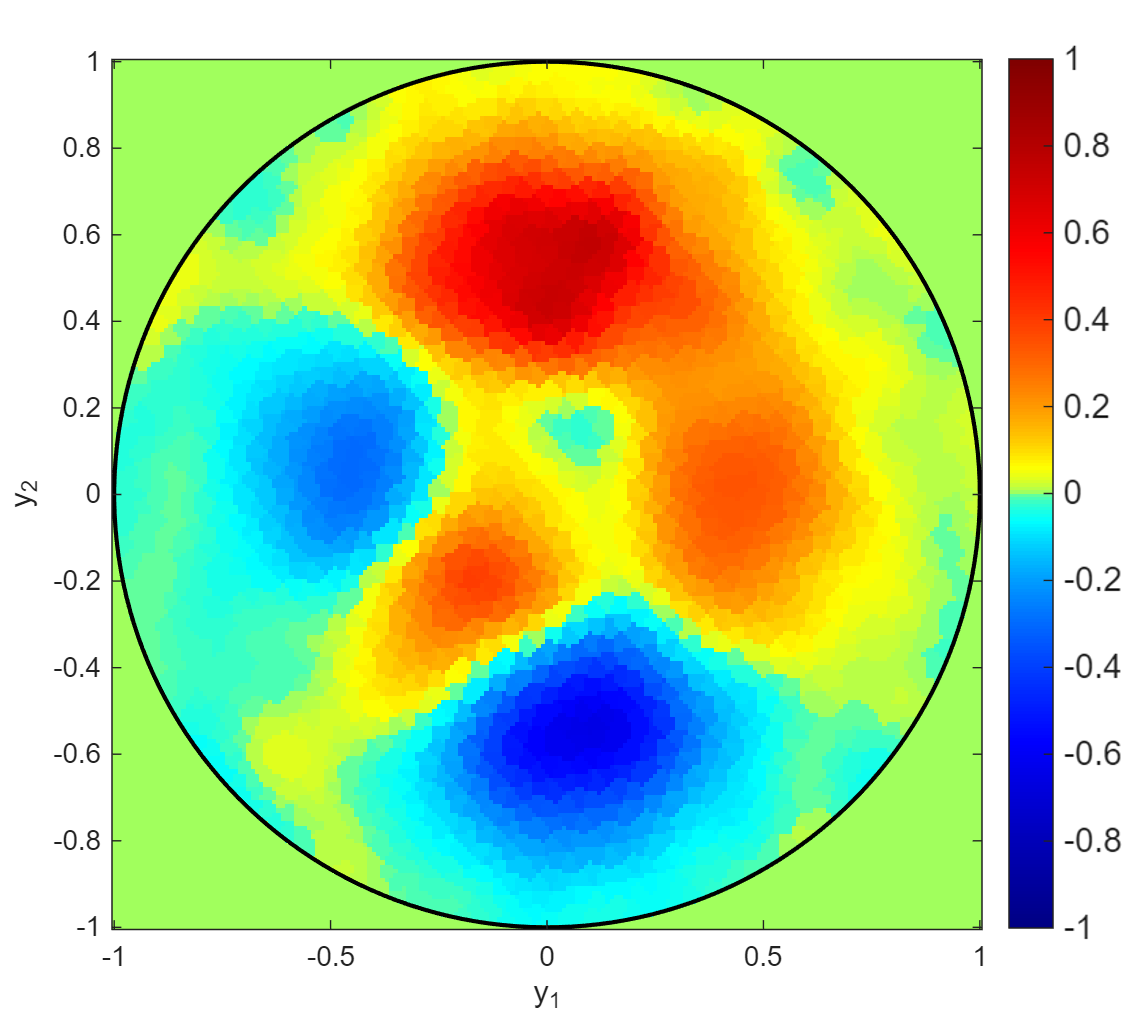}&
\includegraphics[width=0.3\linewidth]{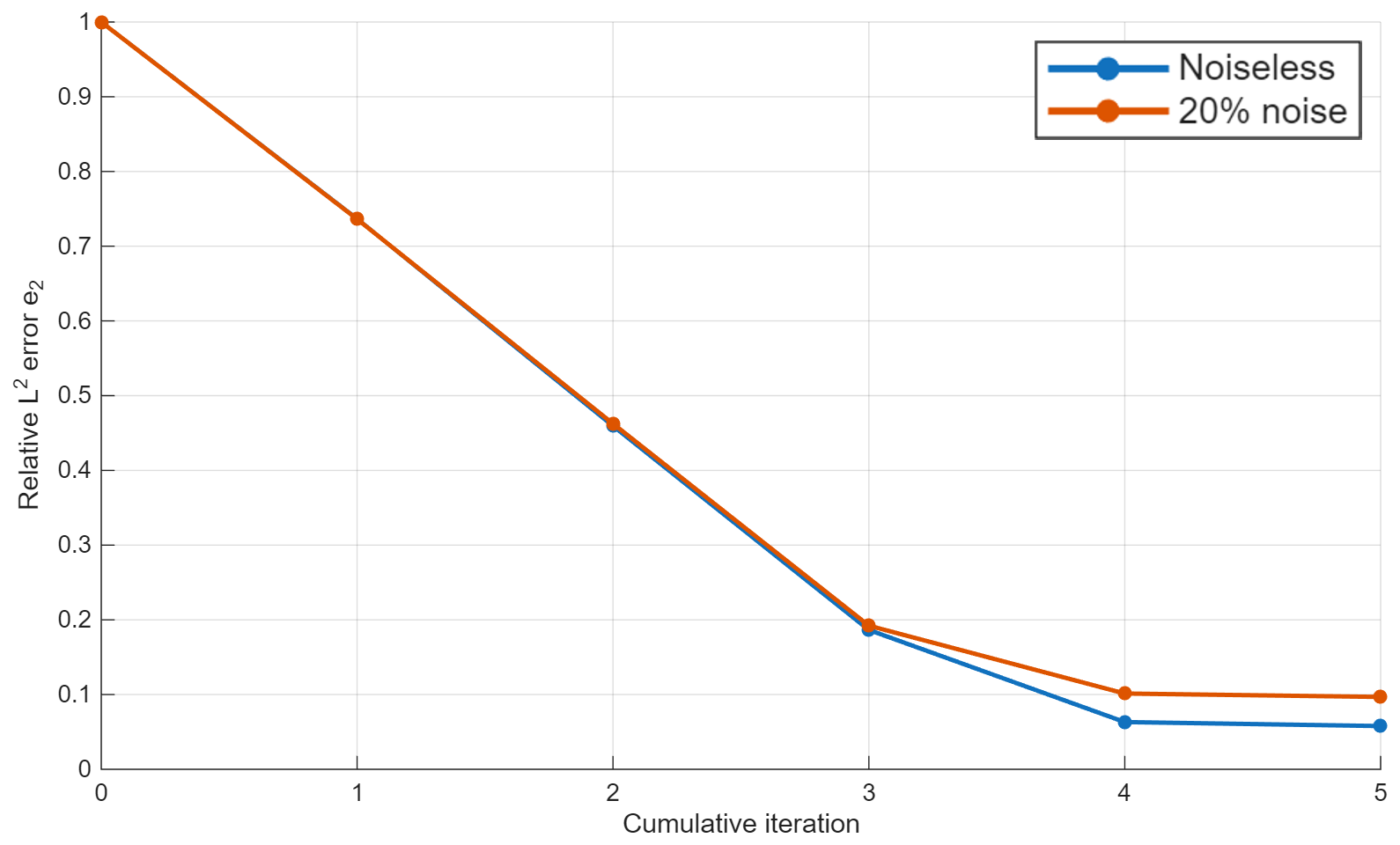}
\\
(a) true $q$ &(b) clean data &(c) data with $20\%$ noise & (d) convergence
\end{tabular}
\caption{Reconstruction results for Example ~\ref{ex:BAO}.}
\label{fig:near-field}
\end{figure}

\paragraph{Consistent threshold for effective rank estimate}
In general, the effective rank can be affected by many factors, e.g., the domain geometry, measurement setups, noise level, etc. Also, the $p$-transform doesn't apply to general measurements either. Here, we propose a different rank estimate strategy to test the near-field case: use RSVD for the linearized operator for the first iteration for a homogeneous medium to reveal the rank structure and use it for subsequent iterations. The two key rationales behind this are: 1) when everything else is fixed, the effective rank, or the dimension of signal space, is quite stable with respect to the medium perturbation for the wave operator; 2) for a homogeneous medium, RSVD is simple and efficient, as no PDE needs to be solved for the matrix-free formulation~\eqref{eq:operator-adj} since the total field is the corresponding incoming wave field. We use this strategy to test on different sets of near-field measurements with $M = 20,30,40,50$ evenly distributed sources and observations on $\partial B(0,2)$. In the first iteration with initial guess $q_0(x)=0$, we perform RSVD and pick a cutoff threshold $\underline{\sigma} = 0.1$ for TSVD solution. The number of singular values above this threshold is used as the effective rank for the rest of the iterations. Figure ~\ref{fig:near field SVD with fixed threshold} shows the singular value at the threshold rank in each iteration for different $M$, which shows consistency and stability in using the rank estimate of the first iteration. The reconstruction results in Figure ~\ref{fig:near field reconstruction with fixed threshold} also demonstrate the effectiveness of the strategy.

\begin{figure}[H]
\centering
\setlength{\tabcolsep}{2pt}

\begin{tabular}{c@{\vspace{2pt}}c@{\vspace{2pt}}c}
\includegraphics[width=0.3\linewidth]{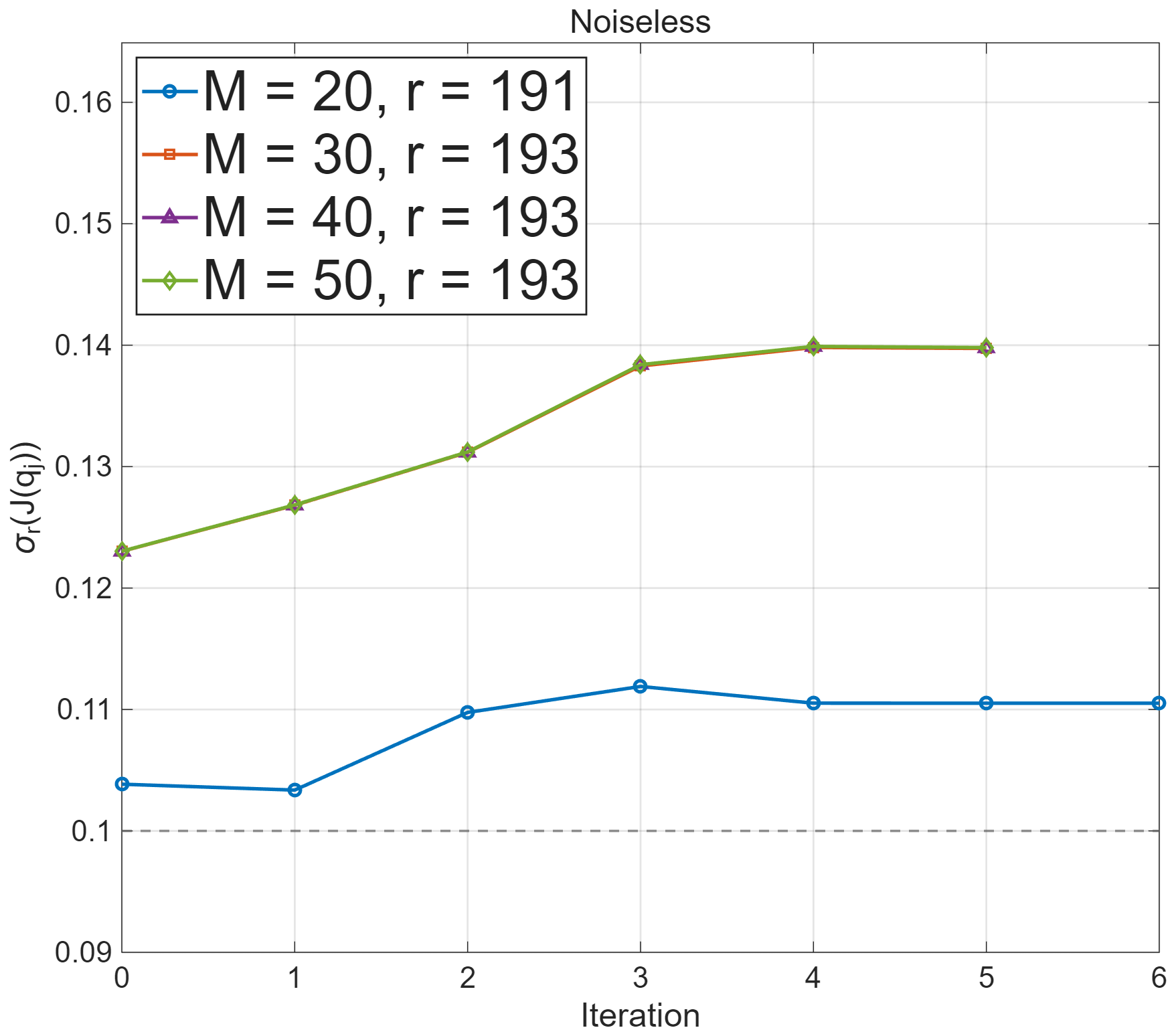}&
\includegraphics[width=0.3\linewidth]{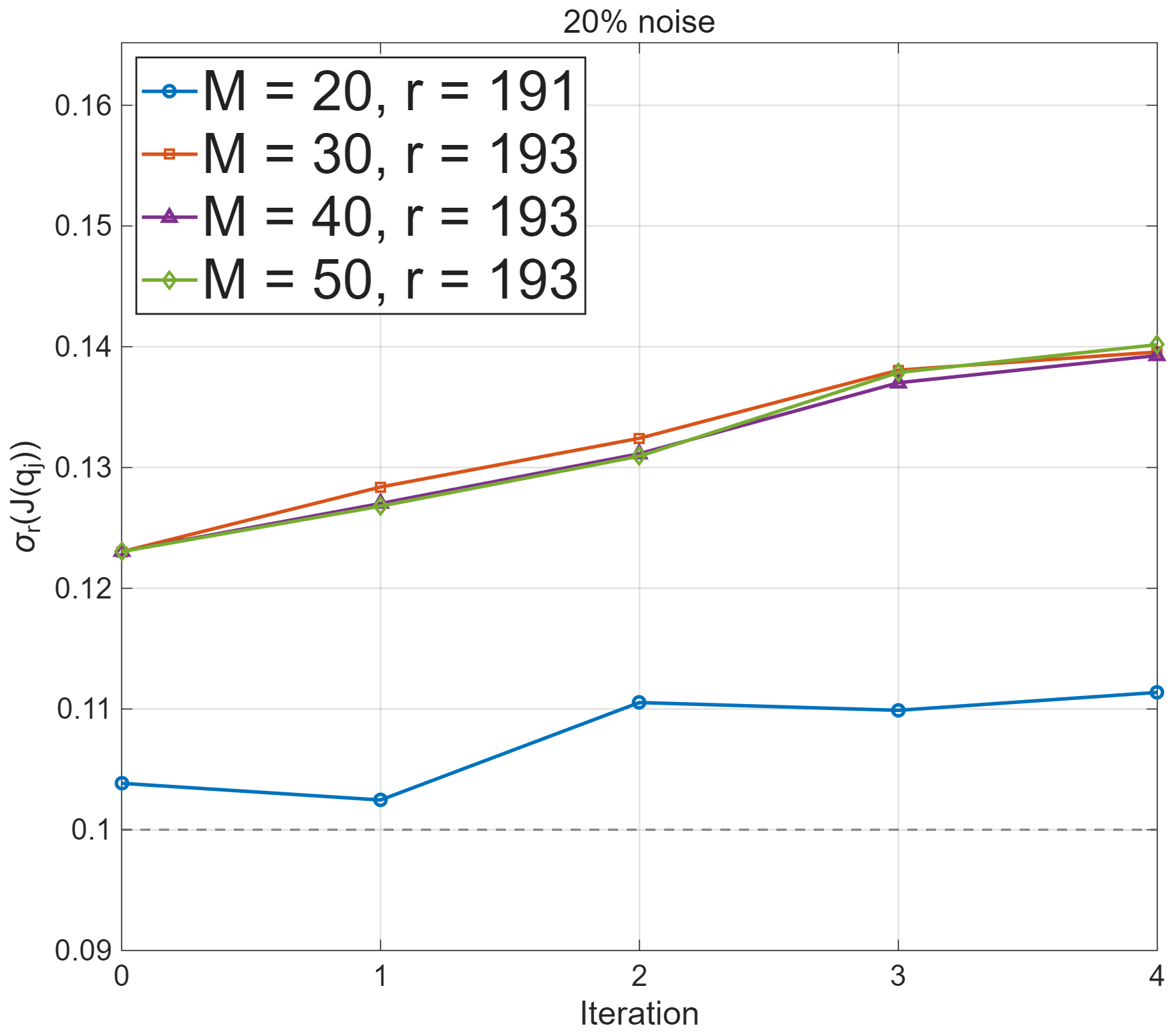}
\\
(a) clean data &(b) data with $20\%$ noise
\end{tabular}
\caption{Singular value at the threshold rank in each iteration with different measurements.}
\label{fig:near field SVD with fixed threshold}

\par\vspace{8pt}

\begin{tabular}{c@{\vspace{2pt}}c@{\vspace{2pt}}c@{\vspace{2pt}}c}
\includegraphics[width=0.2\linewidth]{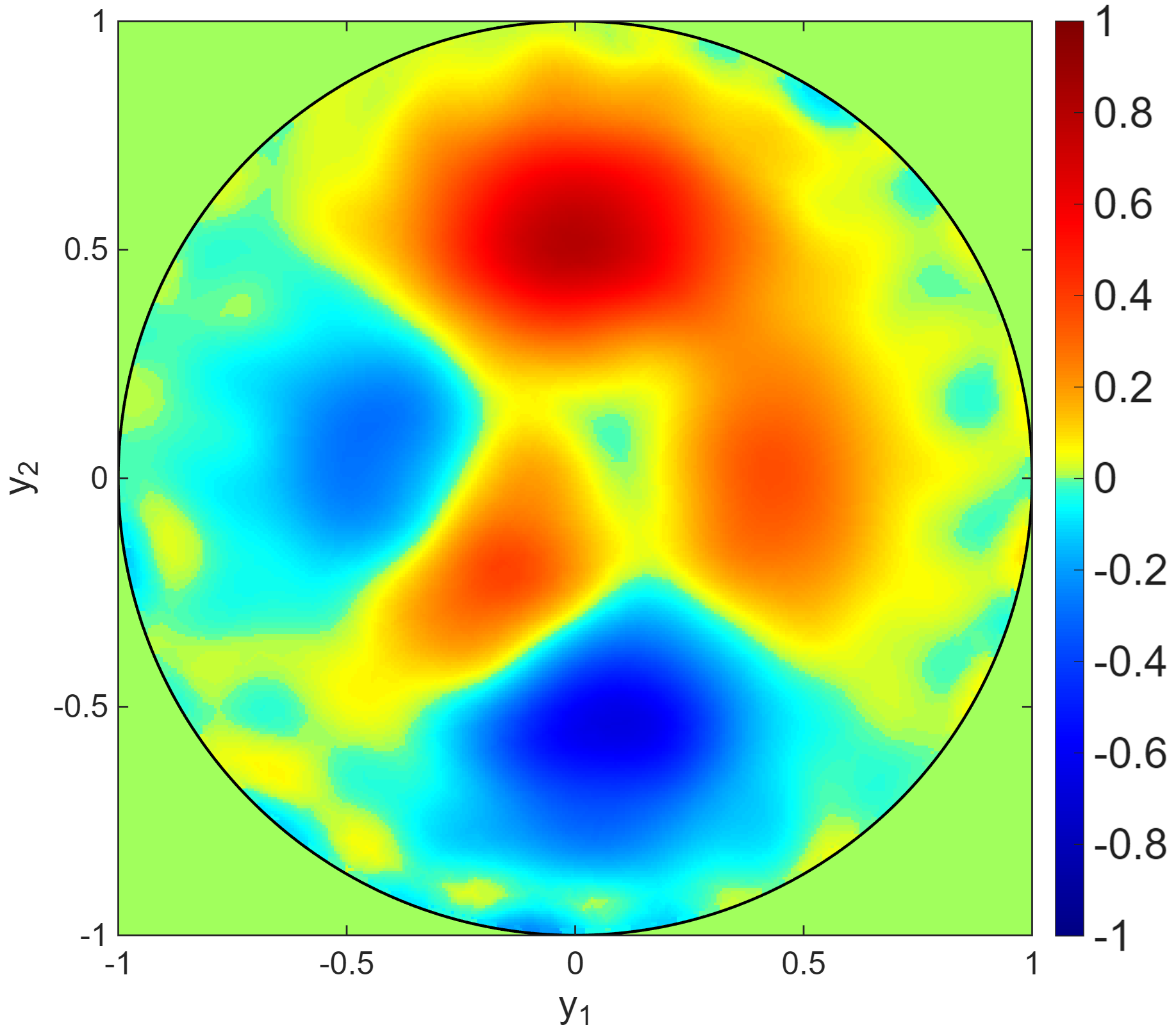}&
\includegraphics[width=0.2\linewidth]{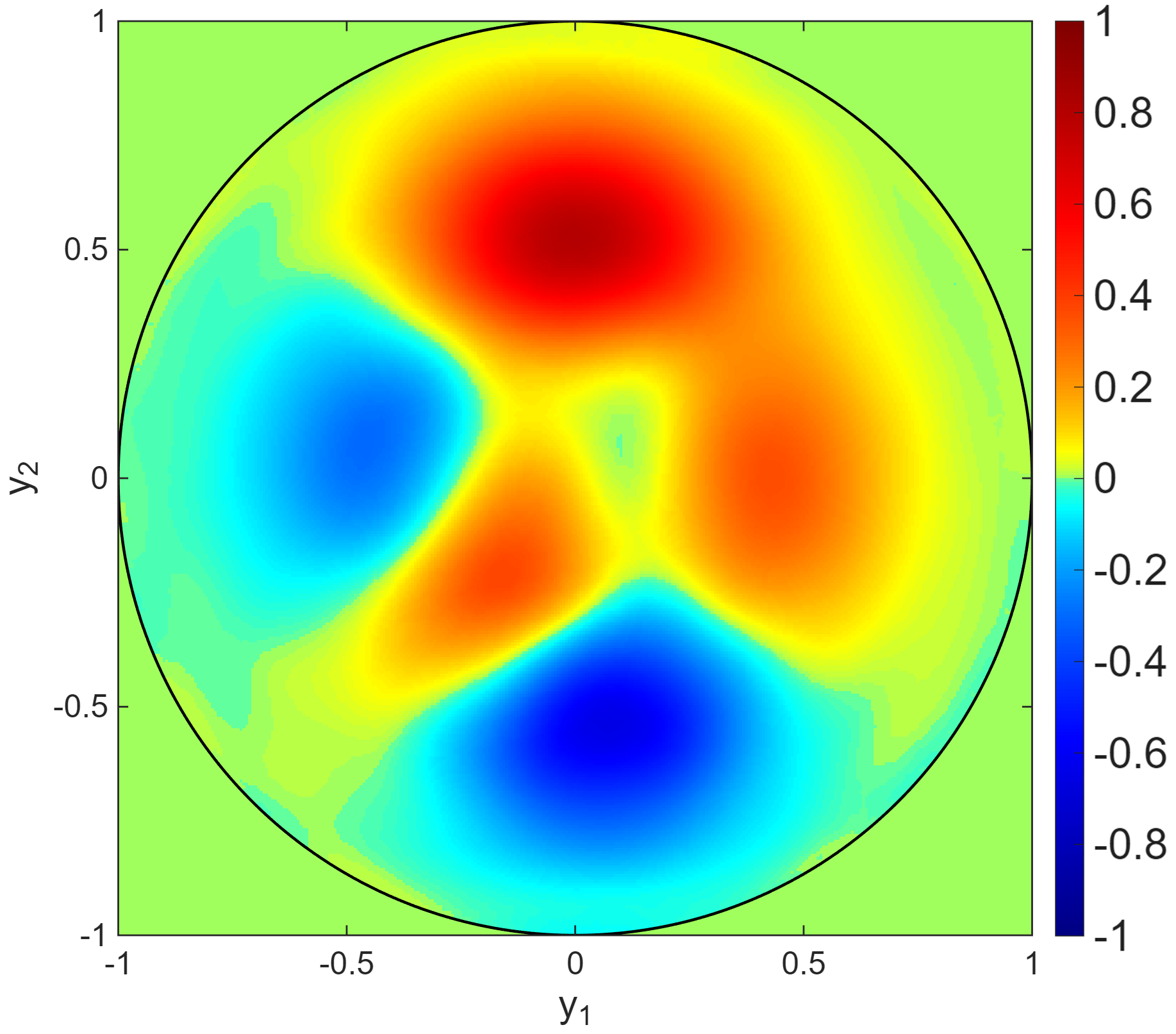}&
\includegraphics[width=0.2\linewidth]{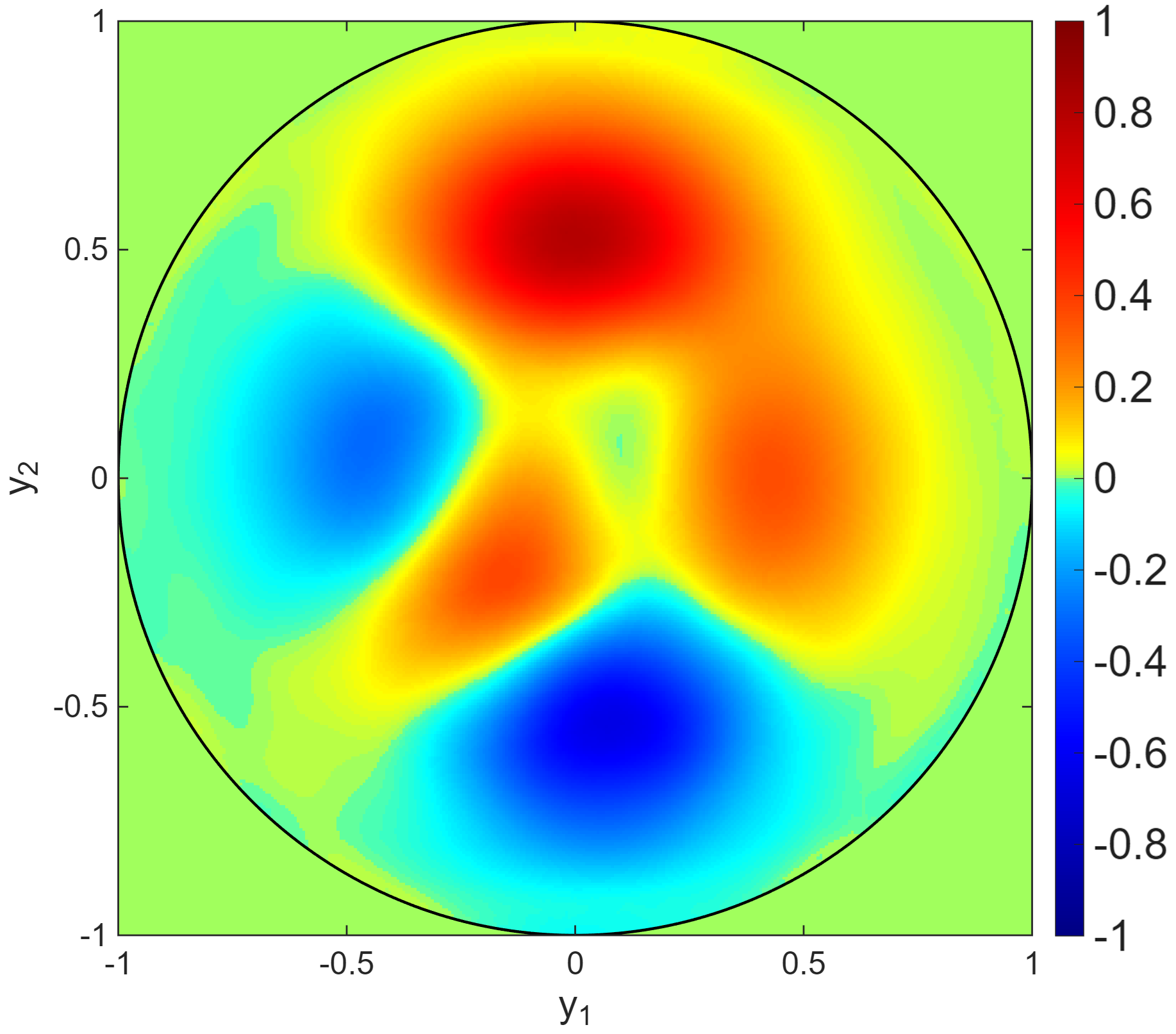}&
\includegraphics[width=0.2\linewidth]{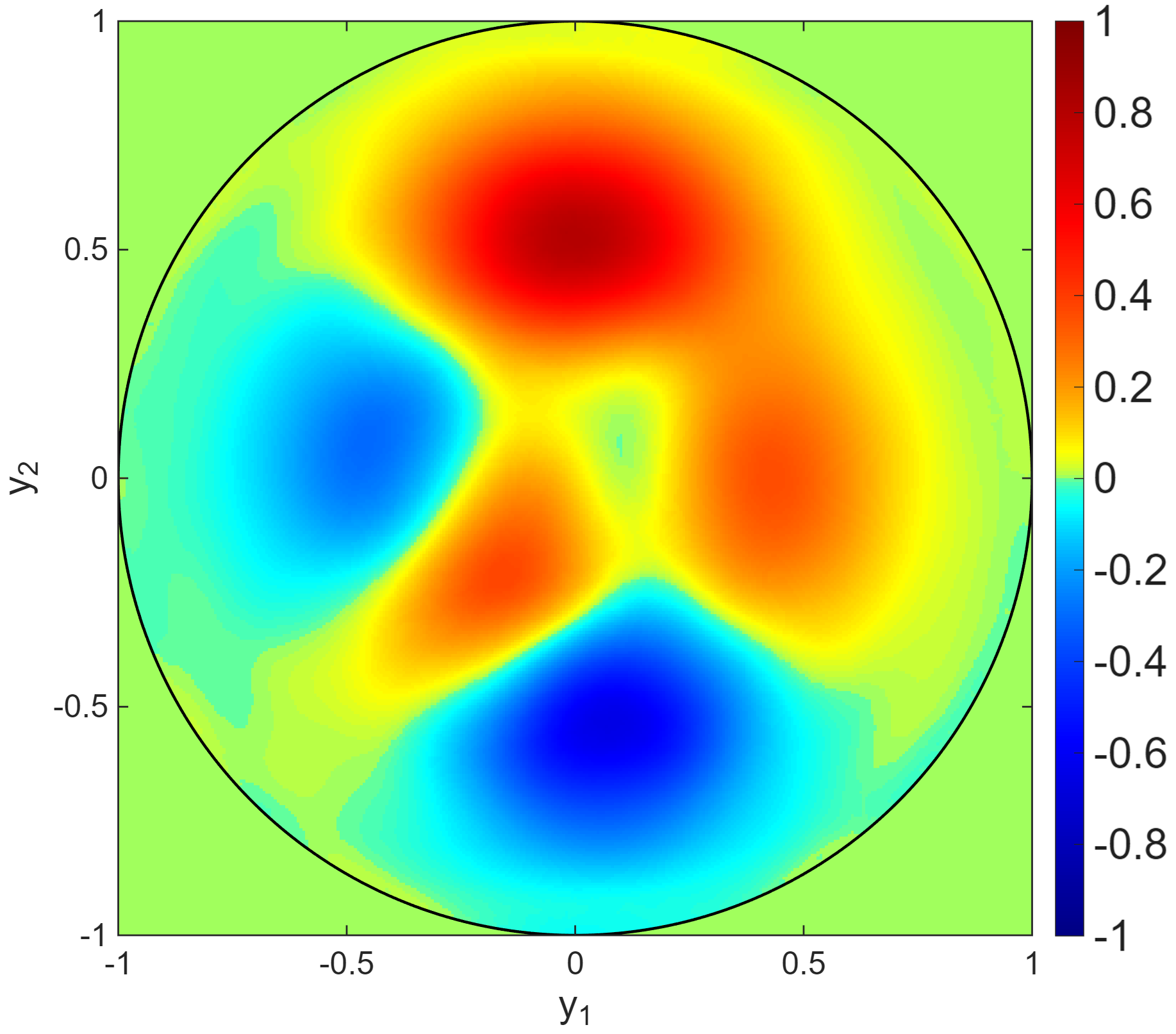}\\
\includegraphics[width=0.2\linewidth]{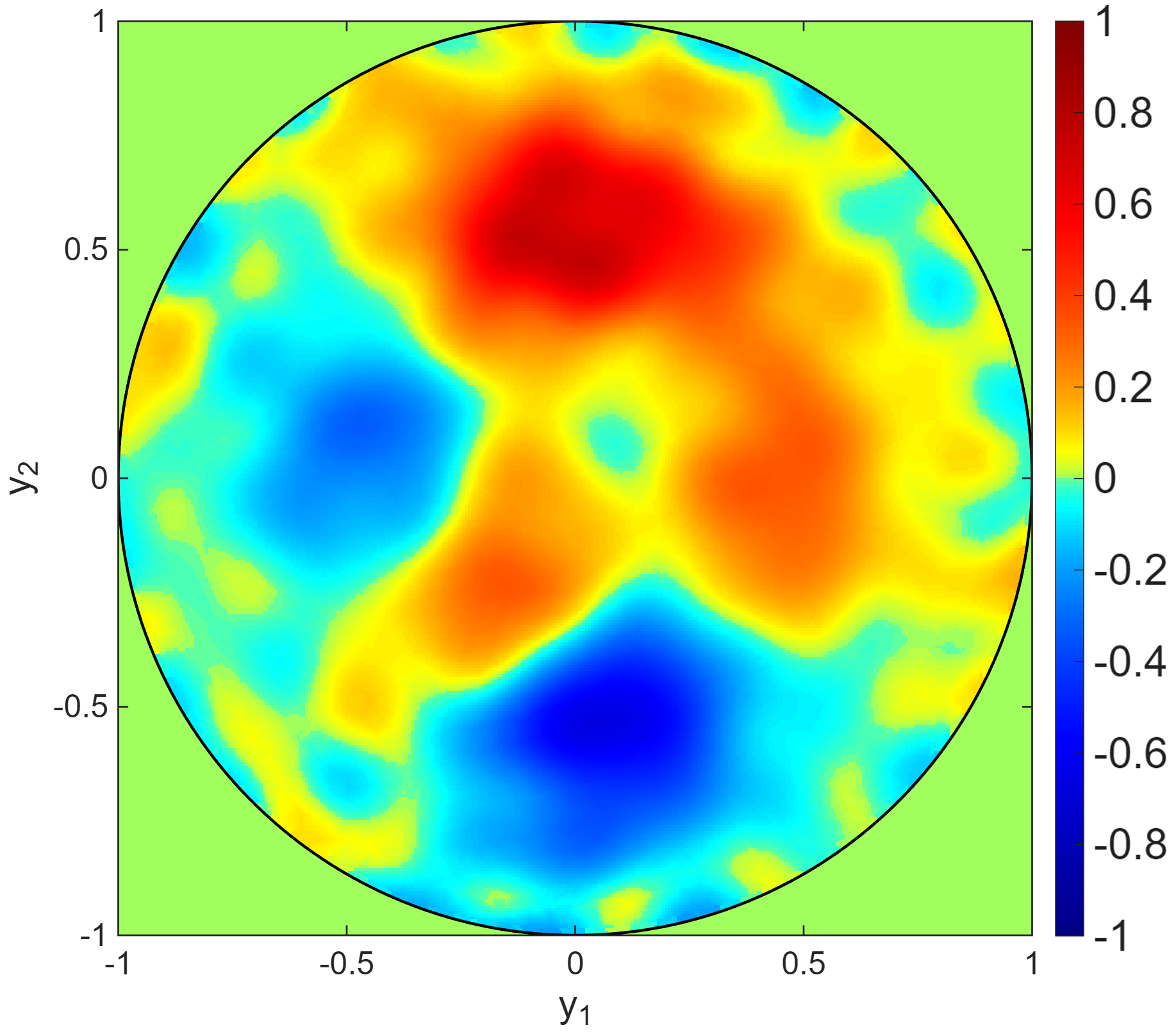}&
\includegraphics[width=0.2\linewidth]{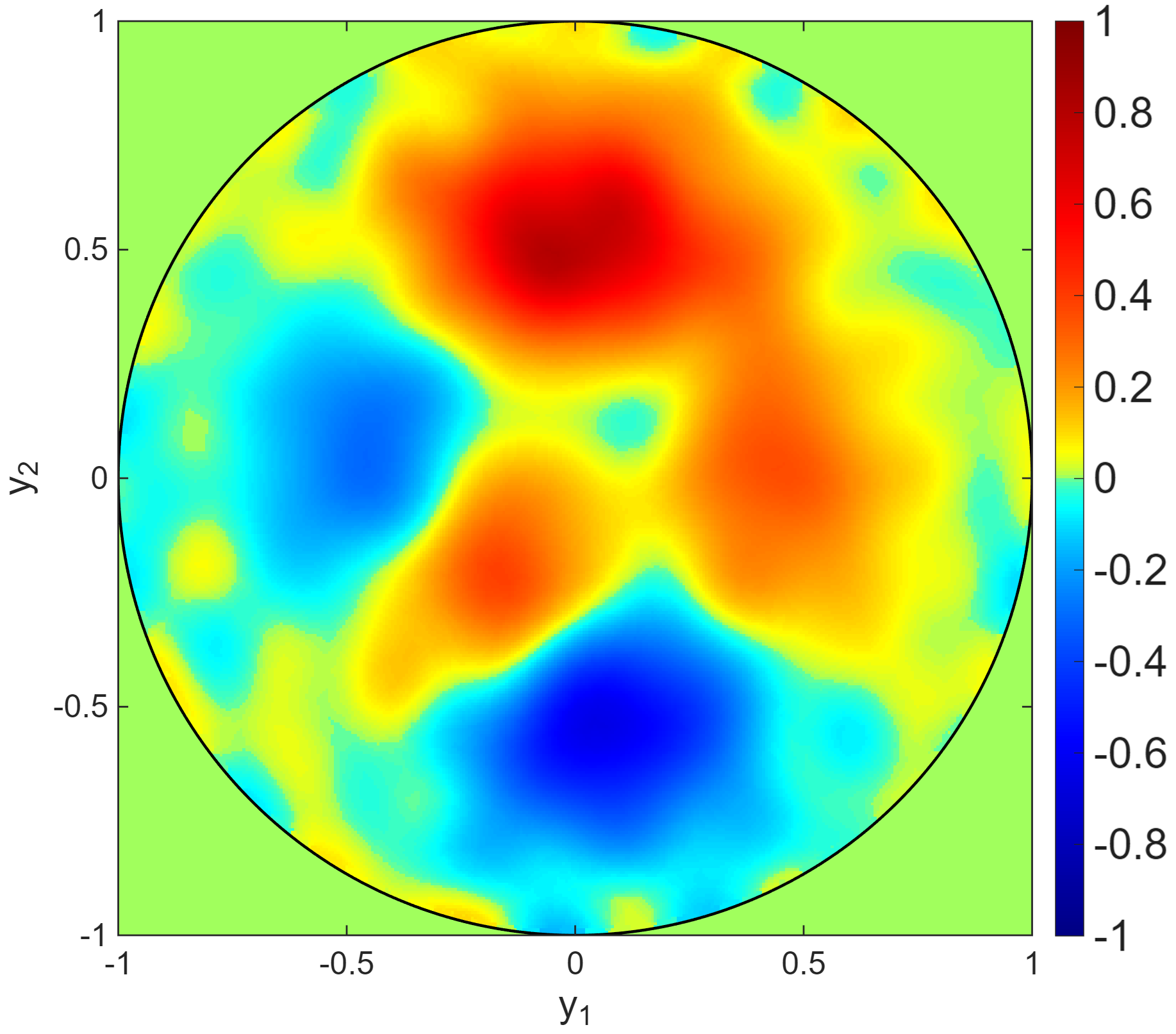}&
\includegraphics[width=0.2\linewidth]{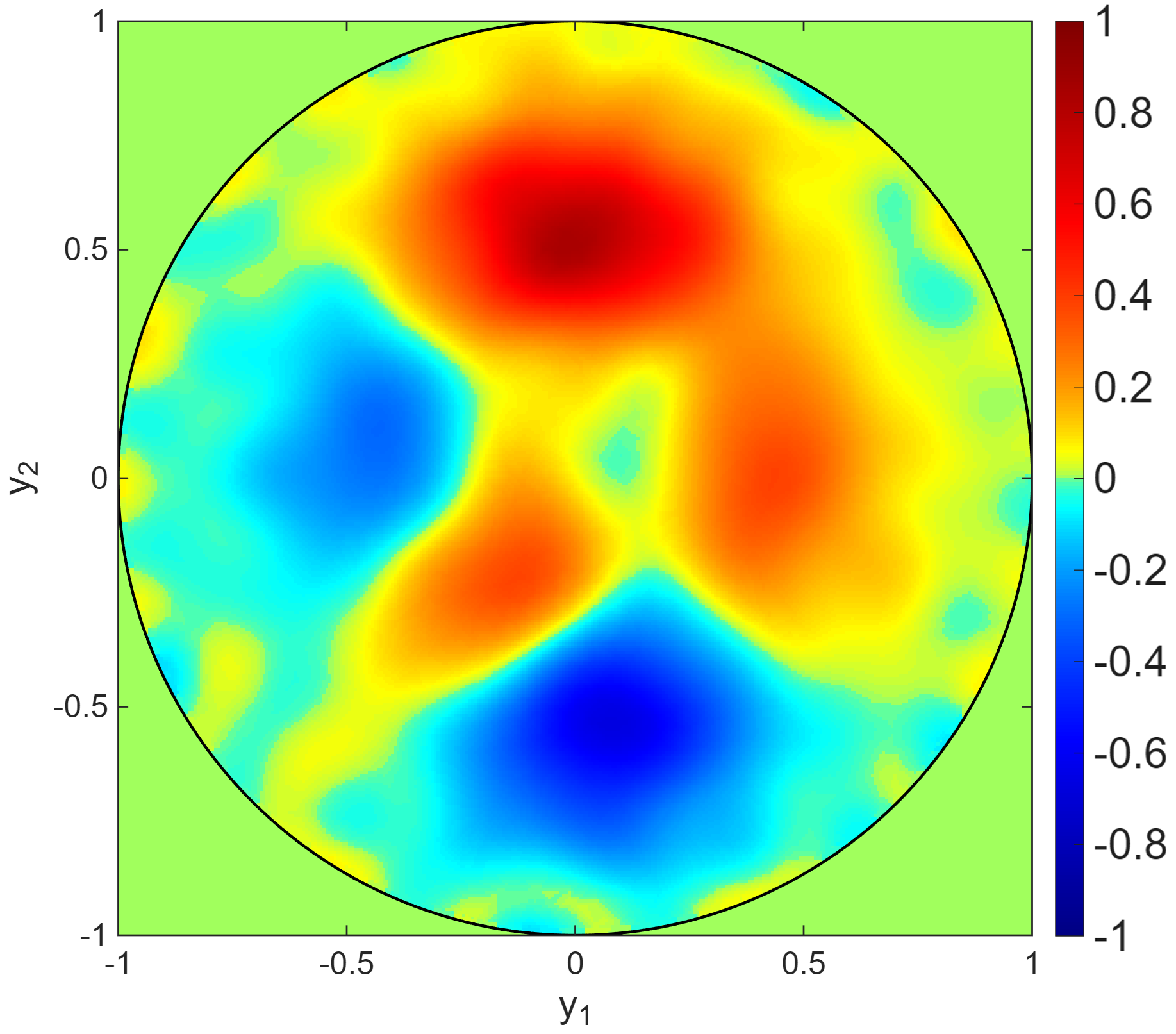}&
\includegraphics[width=0.2\linewidth]{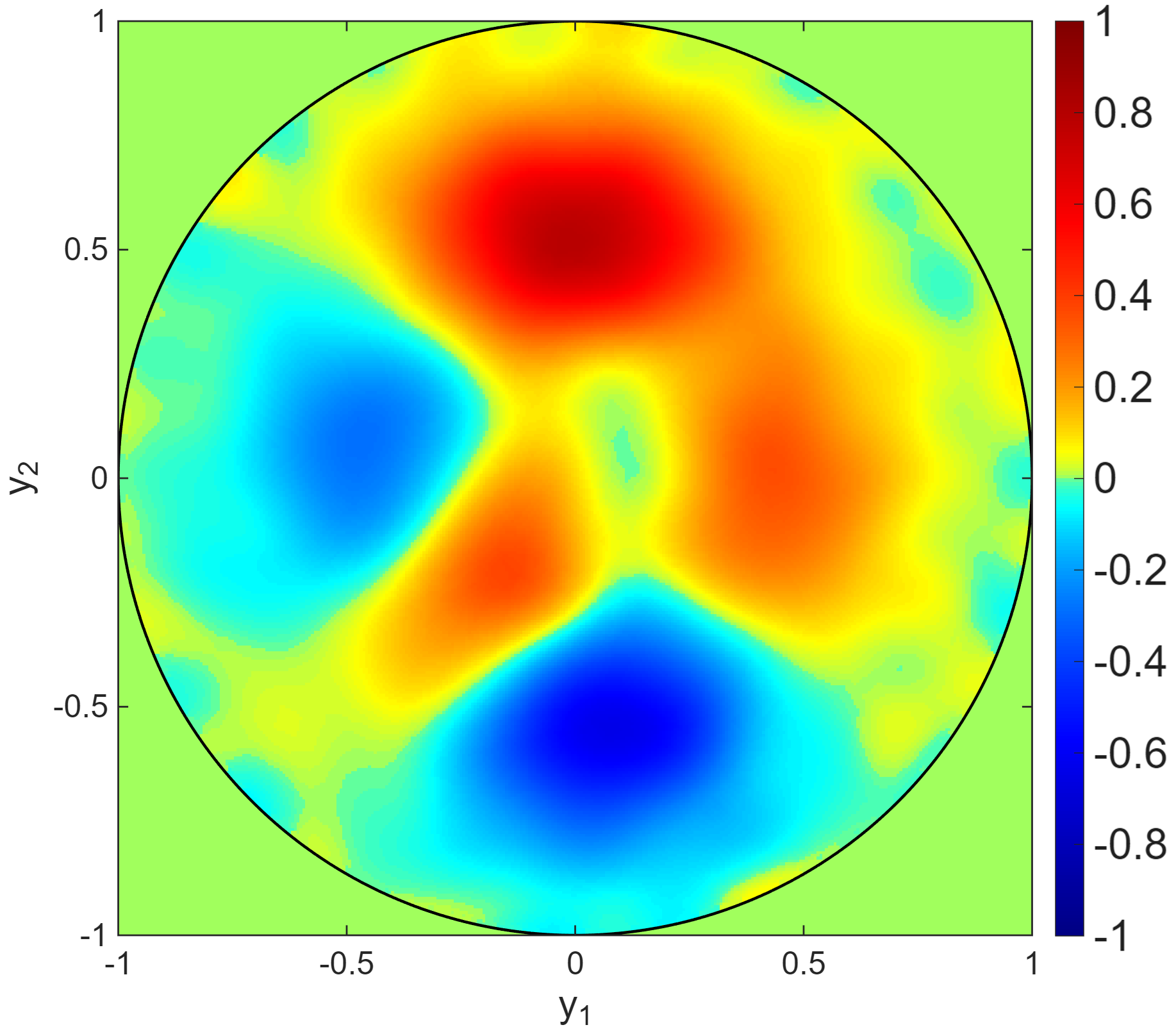}
\\
(a) $M=20$ &(b) $M=30$ &(c) $M=40$&(d) $M=50$
\end{tabular}
\caption{Reconstruction results for different sets of measurements. 1st row: clean data. 2nd row: data with $20\%$ noise.}
\label{fig:near field reconstruction with fixed threshold}
\end{figure}

\section{Conclusion and future work}
\label{sec: conclusion}
In this work, an iterative truncated SVD (iTSVD) algorithm based on Newton's method is proposed to solve inverse scattering problems. The efficiency and robustness of iTSVD come from two key ingredients: 1) an effective rank estimate for the linearized operator for a stable TSVD inversion, taking into account the forward model and measurement; 2) random SVD based on a matrix-free formulation, both feasible and efficient in practice. Extensive experiments are presented and compared with gradient-based methods to demonstrate the efficiency of the proposed method.

We plan to study and extend iTSVD to other inverse problems based on different forward models in the future.

\paragraph{Acknowledgment.} The authors thank Dr. Shixu Meng and Dr. Thomas G. Anderson for helpful discussion and for sharing their codes. H. Zhao is partially supported by NSF DMS 2309551.

\appendix\label{sec: appendix}

\section{Proof of Adjoint Equivalence}\label{sec: adjoint equivalence}
In this section, we prove \eqref{eq:adj-equivalence} in 2D. Extension to 3D is straightforward.

Choose \(R>0\) sufficiently large such that \(\Omega\subset B_R\subset \mathbb{R}^2\). Since
\(q_0\) and \(\delta q\) are supported in \(\Omega\), the source terms in
\eqref{eq:lin-PDE} and \eqref{eq:adj-PDE} are also supported
in \(\Omega\).
Multiplying \eqref{eq:lin-PDE} by \(\lambda_{q_0}^s\), multiplying
\eqref{eq:adj-PDE} by \(\delta u\), and subtracting, we obtain
\begin{equation}\label{eq:subtract_pde_2d}
\lambda_{q_0}^s\Delta\delta u
-
\delta u\,\Delta\lambda_{q_0}^s
=
-k^2\lambda_{q_0}^s\delta q\,u^t_{q_0}
+
k^2\delta u\,q_0 e^{-ik\hat x\cdot y}.
\end{equation}
Integrating \eqref{eq:subtract_pde_2d} over \(B_R\) and applying Green's
second identity gives
\begin{align}
\int_{\partial B_R}
\left(
\lambda_{q_0}^s\partial_\nu\delta u
-
\delta u\,\partial_\nu\lambda_{q_0}^s
\right)dS
&=
-k^2\int_\Omega
\lambda_{q_0}^s\delta q\,u^t_{q_0}\,dy
+
k^2\int_\Omega
\delta u\,q_0 e^{-ik\hat x\cdot y}\,dy .
\label{eq:green_identity_2d}
\end{align}
It remains to show that the boundary term vanishes as \(R\to\infty\). Outside
a sufficiently large ball, both \(\delta u\) and \(\lambda_{q_0}^s\) solve the
homogeneous Helmholtz equation
\[
(\Delta+k^2)v=0
\]
and satisfy the outgoing Sommerfeld radiation condition. Therefore, they admit the far-field asymptotics
\begin{equation}\label{eq:delta_u_far_field_2d}
\delta u(y)
=
\frac{e^{ikR}}{\sqrt R}
\delta u^\infty(\hat y)
+
o(R^{-\frac{1}{2}}),
\end{equation}
and
\begin{equation}\label{eq:lambda_far_field_2d}
\lambda_{q_0}^s(y)
=
\frac{e^{ikR}}{\sqrt R}
\lambda^\infty(\hat y)
+
o(R^{-\frac{1}{2}}),
\end{equation}
where \(R=|y|\) and \(\hat y=y/|y|\). Hence
\[
\delta u(y)=O(R^{-\frac{1}{2}}),
\qquad
\lambda_{q_0}^s(y)=O(R^{-\frac{1}{2}}).
\]
Since \(\hat y\) is fixed under the radial derivative,
\[
\partial_r\left(\frac{e^{ikR}}{\sqrt R}v^\infty(\hat y)\right)
=
\left(ik-\frac{1}{2R}\right)
\frac{e^{ikR}}{\sqrt R}v^\infty(\hat y).
\]
Hence
\[
\partial_r v
=
ikv-\frac{1}{2R}v+o(R^{-\frac{1}{2}}).
\]
% and therefore
% \[
% \partial_r v-ikv
% =
% -\frac{1}{2R}v+O(R^{-3/2})
% =
% O(R^{-3/2}),
% \]
% because \(v=O(R^{-1/2})\).
Let $v$ = $\delta u$ and $\lambda_{q_0}^s$, then
\[
\partial_r\delta u
=
ik\delta u
-
\frac{1}{2R}\delta u
+
o(R^{-\frac{1}{2}}),
\]
and
\[
\partial_r\lambda_{q_0}^s
=
ik\lambda_{q_0}^s
-
\frac{1}{2R}\lambda_{q_0}^s
+
o(R^{-\frac{1}{2}}).
\]
On \(\partial B_R\), one has \(\partial_\nu=\partial_r\). Therefore as $R \rightarrow \infty$, 
\begin{align*}
\lambda_{q_0}^s\partial_r\delta u
-
\delta u\,\partial_r\lambda_{q_0}^s
&=
\lambda_{q_0}^s
\left(
ik\delta u-\frac{1}{2R}\delta u+o(R^{-\frac{1}{2}})
\right)
-
\delta u
\left(
ik\lambda_{q_0}^s-\frac{1}{2R}\lambda_{q_0}^s+o(R^{-\frac{1}{2}})
\right) = o(R^{-1}).
\end{align*}
Since $|\partial B_R|=2\pi R=O(R),$
we obtain
\[
\int_{\partial B_R}
\left(
\lambda_{q_0}^s\partial_r\delta u
-
\delta u\,\partial_r\lambda_{q_0}^s
\right)dS
=
o(1)
\qquad \text{as }R\to\infty .
\]
Passing to the limit \(R\to\infty\) in \eqref{eq:green_identity_2d}, we obtain
\[
0
=
-k^2\int_\Omega
\lambda_{q_0}^s\delta q\,u^t_{q_0}\,dy
+
k^2\int_\Omega
\delta u\,q_0 e^{-ik\hat x\cdot y}\,dy .
\]
Dividing by \(k^2\) gives
\[
\int_\Omega 
\lambda_{q_0}^s(y;\hat{x})\,
u^t_{q_0}(y;\hat\theta)\,
\delta q(y)\,dy
=
\int_\Omega 
e^{-ik\hat x\cdot y}\,
q_0(y)\,
\delta u(y;\hat\theta)\,dy .
\]
This proves \eqref{eq:adj-equivalence}.

\section{More Implementation Details}
\label{sec: implementation}

% Since $\hat{u} \in L^2(B,\mathbb{C})$, $\hat{\Jacobian}_{q_0}: L^2(\Omega;\mathbb{R}) \to L^2(B,\mathbb{C})$, and $\delta q \in L^2(\Omega;\mathbb{R})$, we split the real and imaginary part of $\hat{\Jacobian}_{q_0}$ to obtain a viable basis for $\delta q$, i.e.,
% \begin{equation}\label{eq:real-aug-ls_continuous}
% \min_{\delta q}
% \left\|
% \begin{bmatrix}
% \Re(\hat{u}^{\infty}) \\
% \Im(\hat{u}^{\infty})
% \end{bmatrix}
% -
% \begin{bmatrix}
% \Re(\hat{\Jacobian}_{q_0})\\
% \Im(\hat{\Jacobian}_{q_0})
% \end{bmatrix}
% \,\delta q
% \right\|_{L^2(B,\mathbb{R}^2)}=: \|\hat{u}^\infty_R - \hat{\Jacobian}_{q_0,R}\,\delta q \|_{L^2(B,\mathbb{R}^2)}.
% \end{equation}
% Equivalence between the norms on $L^2(B,\mathbb{C})$ and $L^2(B,\mathbb{R}^2)$ is asserted in \cite{BLECHER2026111226} so that we can carry all the estimates by considering $\hat{\Jacobian}_{q_0}$. The forward solutions and adjoint solutions are obtained by solving the corresponding integral equations:
% \[
% (I-\mathcal K_{q_0})u_{q_0}^t(y;\hat{\theta}) = u^{in}(y;\hat{\theta}),
% \]
% \[
% (I-\mathcal K_{q_0}) \lambda_{q_0}^t(y;\hat{x}) = e^{-ik\hat{x}\cdot y}.
% \]
% The corresponding mesh is generated by a Julia package, while the algorithm is implemented in MATLAB. Specifically, the mesh size is set to $h = 0.1$, resulting in $2283$ elements in $\Omega$. We also choose to set $h = 0.05$, resulting in $8916$ elements for the sake of accuracy when a higher frequency is chosen.

{\bf Rank estimate} 
In our numerical experiments, when the Shannon number $S$ in~\eqref{eq:Shannon number} is used as the effective rank estimate, we treat the logarithmic singular-value spectrum as a planar curve \(\mathbf{r}(t)=(x(t),y(t))=(t,\log_{10}\sigma(t))\), and search within a small range around $S$ for the point with the maximum curvature in the spectrum to characterize the transition between the signal subspace and the noise subspace in each iteration. We denote $\hat{S}$ as the index of such a point which is close to $S$ in practice. We numerically approximate the curvature \(\kappa(t)=\dfrac{|y''(t)|}{\left(1+y'(t)^2\right)^{3/2}}\), where \(y(t)=\log_{10}\sigma(t)\) by using \(y_i=\log_{10}(\max(\sigma_i,\varepsilon_{\mathrm{mach}}))\), \(y_i'\approx (y_{i+1}-y_{i-1})/2\), \(y_i''\approx y_{i+1}-2y_i+y_{i-1}\), to compute $\kappa_i$.  The curvature is evaluated in a search window \([0.9S,1.1S]\). In addition, we select the beginning of the sharp bend in the singular-value decay by choosing the first index satisfying \(\kappa_i\geq0.8\max_j\kappa_j\) as $\hat{S}$ for stability reasons.

% \begin{figure}[H]
% \centering
% \begin{tabular}{c@{\vspace{2pt}}c@{\vspace{2pt}}cc}
% \hspace{-0.3cm}\includegraphics[width=0.25\linewidth]{3D/fig_profile_x1.png}&
% \includegraphics[width=0.25\linewidth]{3D/fig_profile_x2.png}&
% \includegraphics[width=0.25\linewidth]{3D/fig_profile_x3.png} &
% \hspace{-0.3cm}    \includegraphics[width=0.25\linewidth]{3D/fig_convergence.png}
% \\
% (a) $x_2 = x_3 = 0$ &(b) $x_1 = x_3 = 0$ &(c) $x_1 = x_2 = 0$ & (d) convergence
% \end{tabular}
% \caption{Reconstruction results for Example ~\ref{ex:3d-five-gaussians}.}
% \label{fig:3D numerical}
% \end{figure}

% \bibliographystyle{plain}  
\bibliography{reference}

\end{document}